\newcommand{\N}{{\mathbb{N}}}
\newcommand{\Z}{{\mathbb{Z}}}
\newcommand{\Zplus}{{\mathbb{Z}^+}}
\newcommand{\calA}{{\mathcal A}}
\newcommand{\calB}{{\mathcal B}}
\newcommand{\calD}{{\mathcal D}}
\newcommand{\calE}{{\mathcal E}}
\newcommand{\calG}{{\mathcal G}}
\newcommand{\calH}{{\mathcal H}}
\newcommand{\calL}{{\mathcal L}}
\newcommand{\calS}{{\mathcal S}}
\newcommand{\calV}{{\mathcal V}}
\newcommand{\calZ}{{\mathcal Z}}
\newcommand{\ol}{\overline}
\newcommand{\uloopr}[1]{\ar@'{@+{[0,0]+(-4,5)}@+{[0,0]+(0,10)}@+{[0,0] +(4,5)}}^{#1}}
\newcommand{\uloopd}[1]{\ar@'{@+{[0,0]+(5,4)}@+{[0,0]+(10,0)}@+{[0,0]+ (5,-4)}}^{#1}}
\newcommand{\dloopr}[1]{\ar@'{@+{[0,0]+(-4,-5)}@+{[0,0]+(0,-10)}@+{[0, 0]+(4,-5)}}_{#1}}
\newcommand{\dloopd}[1]{\ar@'{@+{[0,0]+(-5,4)}@+{[0,0]+(-10,0)}@+{[0,0 ]+(-5,-4)}}_{#1}}
\newcommand{\Cfin}{C_{\rm fin}}
\newcommand{\Cinf}{C_{\infty}}
\newcommand{\Cvfin}{C_{v,{\rm fin}}}
\newcommand{\Cvinf}{C_{v,\infty}}
\newcommand{\Mod}{\mbox{\rm-Mod}}
\newcommand{\luloop}[1]{\ar@'{@+{[0,0]+(-8,2)}@+{[0,0]+(-10,10)}@+{[0, 0]+(2,2)}}^{#1}}
\newcommand{\SGr}{\mathbf{SGr}}
\newcommand{\SSGr}{\mathbf{SSGr}}
\newcommand{\FSGr}{\mathbf{FSGr}}
\newcommand{\Sets}{\mathbf{Sets}}
\newcommand{\Mon}{\mathbf{Mon}}
\newcommand{\FP}{\mathbf{FP}}
\newcommand{\taubar}{\overline{\tau}}
\newcommand{\ebar}{\overline{e}}
\newcommand{\Hbar}{\overline{H}}
\newcommand{\Ehat}{\hat{E}}
\newcommand{\Rhat}{\hat{R}}
\newcommand{\thetahat}{\hat{\theta}}
\newcommand{\etatil}{\tilde{\eta}}
\newcommand{\phitil}{\tilde{\phi}}
\newcommand{\ftil}{\tilde{f}}
\newcommand{\qtil}{\tilde{q}}
\newcommand{\vtil}{\tilde{v}}
\newcommand{\Ctil}{\tilde{C}}
\newcommand{\Etil}{\tilde{E}}
\newcommand{\Ftil}{\tilde{F}}
\newcommand{\Mtil}{\tilde{M}}
\newcommand{\Stil}{\tilde{S}}
\newcommand{\Xtil}{\tilde{X}}
\newcommand{\bfr}{\mathbf{r}}
\newcommand{\bfrtil}{\tilde{\mathbf{r}}}
\newcommand{\bfrho}{\boldsymbol{\rho}}
\newcommand{\bfsig}{\boldsymbol{\sigma}}
\newcommand{\mon}[1]{\calV(#1)} 
\newcommand{\HCS}{\calH_{C,S}}
\newcommand{\Ctilfin}{\tilde{C}_{\rm fin}}
\newcommand{\Si}{{\rm Sink}}
\DeclareMathOperator{\Idem}{Idem}
\DeclareMathOperator{\Tr}{Tr}
\newcommand{\ACS}{\calA_{C,S}}
\newcommand{\bgast}{\mbox{\Large$*$}}
\DeclareMathOperator{\id}{id}
\DeclareMathOperator{\wt}{wt}
\theoremstyle{plain}
\newtheorem{theorem}{Theorem}[section]
\newtheorem{lemma}[theorem]{Lemma}
\newtheorem{proposition}[theorem]{Proposition}
\newtheorem{corollary}[theorem]{Corollary}
\theoremstyle{definition}
\newtheorem{definition}[theorem]{Definition}
\newtheorem{example}[theorem]{Example}
\newtheorem{remark}[theorem]{Remark}
\newtheorem*{remark*}{Remark}
\newtheorem*{assumption*}{Assumption}
\newtheorem{construction}[theorem]{Construction}
\newtheorem{subsec}[theorem]{}
\numberwithin{equation}{section}
\begin{document}

\title[Leavitt path algebras of separated graphs]{Leavitt path algebras of separated graphs}%

\author{P. Ara}

\address{Departament de Matem\`atiques, Universitat Aut\`onoma de Barcelona,
08193 Bellaterra (Barcelona), Spain.} \email{para@mat.uab.cat}

\author{K. R. Goodearl}

\address{Department of Mathematics, University of
California, Santa Barbara, CA 93106. }\email{goodearl@math.ucsb.edu}

\thanks{The first-named author was partially supported by DGI MICIIN-FEDER
MTM2008-06201-C02-01, and by the Comissionat per Universitats i
Recerca de la Generalitat de Catalunya. A substantial part of this
work was carried out during a research stay of the first-named
author at the Department of Mathematics of UCSB in 2009, supported
by the grant PR2008 0250 from the Ministerio de Ciencia e
Innovaci\'{o}n, Spain.}

\subjclass[2000]{Primary 16D70; Secondary 46L35, 06F05, 16E20, 16G20, 16S10, 20M14}

\keywords{Graph algebra, Leavitt path algebra, separative cancellation,
refinement monoid, nonstable K-theory, V-monoid, ideal lattice, trace ideal}


\begin{abstract} The construction of the Leavitt path algebra associated to a directed
graph $E$ is extended to incorporate a family $C$ consisting of partitions of the sets of
edges emanating from the vertices of $E$. The new algebras, $L_K(E,C)$, are analyzed in terms of
their homology, ideal theory, and K-theory. These algebras are proved to be hereditary, and it
is shown that any conical abelian monoid occurs as the monoid $\mon{L_K(E,C)}$ of isomorphism classes of finitely generated projective modules over one of these algebras. The lattice of trace ideals of $L_K(E,C)$ is determined by graph-theoretic data, namely as a lattice of certain pairs consisting of a subset of $E^0$ and a subset of $C$. Necessary conditions for $\mon{L_K(E,C)}$ to be a refinement monoid are developed, together with a construction that embeds $(E,C)$ in a separated graph $(E_+,C^+)$ such that $\mon{L_K(E_+,C^+)}$ has refinement.
\end{abstract}
\maketitle

\section{Introduction}
\label{sect:intro}

Leavitt introduced in \cite{lea} a class of algebras $L_K(m,n)$ (in current notation), for
$1\le m\le n$, over an arbitrary field $K$, which have a universal
isomorphism between the free modules of ranks $m$ and $n$. Some years
later and independently, Cuntz constructed and studied in
\cite{Cuntz77}, \cite{Cuntz81} the class of $C^*$-algebras $\mathcal
O _n$ nowadays known as {\it Cuntz algebras}, which are generated by
$n$ isometries $S_1,\dots ,S_n$ such that $\sum _{i=1}^n
S_iS_i^*=1$. It turns out that the Leavitt algebra $L_{\mathbb
C}(1,n)$ of type $(1,n)$ is isomorphic to a dense $*$-subalgebra of
the Cuntz algebra $\mathcal O _n$, and both algebras are simple and
purely infinite for $n\ge 2$.

\smallskip

Cuntz and Krieger \cite{CuntzKrieger} generalized the construction
of the Cuntz algebras $\mathcal O _n$ by considering a class of
$C^*$-algebras associated to finite square  matrices with entries in
$\{0,1\}$, and established an important connection between these
\emph{Cuntz-Krieger algebras} and the theory of dynamical systems.
Subsequently, it was realized that the Cuntz-Krieger algebras are
specific cases of a more general structure, the \emph{graph
$C^*$-algebras} $C^*(E)$ initially studied in depth in \cite{KPR}. We refer
the reader to \cite{Raeburn} for further information on this
important class of $C^*$-algebras. The $C^*$-analogs $U_{(m,n)}^{\text
nc}$ of the Leavitt
algebras $L_K(m,n)$ for $1<m\le n$ were studied by Brown
\cite{Brown} and McClanahan \cite{McCla1} in the case $n=m$ and by
McClanahan \cite{McCla2} in general. However, no description in terms
of graphs has been developed so far for this class of
$C^*$-algebras.

\smallskip

\emph{Leavitt path algebras} $L_K(E)$, natural algebraic versions of
graph $C^*$-algebras, were introduced  and studied firstly in
\cite{AA1} and \cite{AMP} for row-finite graphs, and in \cite{AA2}
and \cite{Tomf07} for general graphs; see also \cite{AB},
\cite{ABC}, \cite{AGPS}, \cite{APS}, \cite{Gdirlim}. These algebras
generalize the classical Leavitt algebras of type $(1,n)$ in much
the same way as graph $C^*$-algebras generalize the Cuntz algebras
$\mathcal O_n$. The Leavitt path algebra of a directed graph
$E=(E^0, E^1, r,s)$ is obtained from $E$ by first adding a copy of
$E^1$, written $E^{1*}=\{ e^*\mid e\in E^1 \}$, and extending $r$
and $s$ so that $r(e^*)=s(e)$ and $s(e^*)=r(e)$. The Leavitt path
algebra over a field $K$, $L_K(E)$, is then defined to be the
quotient of the ordinary path algebra of $(E^0, E^1\sqcup E^{1*},
r,s)$ obtained by imposing the Cuntz-Krieger relations. The new algebras we introduce are also quotients of the path algebra of $(E^0, E^1\sqcup E^{1*},
r,s)$, but with Cuntz-Krieger relations only imposed on selected sets of edges.

\medskip

The present paper initiates the study of much larger classes of algebras and $C^*$-algebras based on the concept of a \emph{separated graph}
$(E,C)$, namely a directed graph $E$ together with a family $C$ that
gives partitions of the set of edges departing from each vertex
of $E$. The associated Leavitt path algebra $L_K(E,C)$ and graph $C^*$-algebra $C^*(E,C)$ incorporate the existing versions $L_K(E)$ and $C^*(E)$ for a particular choice of $C$ on the one hand, and many algebras of classical Leavitt and Cuntz type on the other. As an immediate indicator of the broader generality of the new classes of algebras, we mention the following examples. First, any free
product of algebras $L_K(1,n)$ (or even $L_K(1,\aleph)$, where
$\aleph$ is an arbitrary cardinal) appears as $L_K(E,C)$ for
suitable $E$ and $C$ (Proposition \ref{1-vertex}). Second, for any
$n\ge m\ge 1$, there is a separated graph $(E,C)$ such that
$L_K(E,C) \cong M_{m+1}(L_K(m,n)) \cong M_{n+1}(L_K(m,n))$ and the
corner of $L_K(E,C)$ corresponding to one vertex of $E$ is
isomorphic to $L_K(m,n)$ itself (Proposition \ref{keyexample}). Similarly, free products of Cuntz algebras $\mathcal O_n$, and matrix $C^*$-algebras $M_{m+1}(U_{(m,n)}^{\text
nc})$, appear as $C^*(E,C)$ for suitable $E$ and $C$  \cite{AGsepCstar}.
\smallskip

The construction of the graph $C^*$-algebra $C^*(E,C)$ of a
separated graph $(E,C)$ is given in \cite{AGsepCstar}, where we also
prove that the Leavitt path algebra $L_{\mathbb C}(E,C)$ embeds as a
dense $*$-subalgebra of $C^*(E,C)$. As mentioned above, the
$C^*$-algebras $C^*(E,C)$ enable us to incorporate into the theory
the $C^*$-algebras $U_{(m,n)}^{\text nc}$ studied by Brown and
McClanahan. Moreover, by using results of Thomsen \cite{Thomsen} we
will compute the $K$-theory of these algebras. Also, in a
forthcoming paper we will attach a dynamical system to a suitable
quotient $\mathcal O _{m,n}$ of the $C^*$-algebra $U_{(m,n)}^{\text
nc}$, generalizing the classical construction of Cuntz and Krieger
for $\mathcal O _n$. Indeed, the $C^*$-algebra $\mathcal O_{m,n}$ is
the crossed product corresponding to the universal $(m,n)$-dynamical
system, where an \emph{$(m,n)$-dynamical system} consists of a
compact Hausdorff space $\Omega $, admitting two clopen
decompositions $
 \Omega=\bigsqcup _{i=1}^n U_i=\bigsqcup _{j=1}^m V_j$, together
with homeomorphisms
$$\Delta_i \colon U_i\to U_1\,,\qquad \Theta_j \colon V_j\to V_1\,,\qquad \Phi_{11} \colon V_1\to U_1\,,$$
for $i=2,\dots ,n$ and $j=2,\dots , m$.
\medskip

Our main motivation for the study of the new classes of algebras $L_K(E,C)$ and $C^*(E,C)$ is
of a $K$-theoretical nature. While ordinary Leavitt path
algebras constitute quite a large class, their monoids of finitely generated
projective modules  satisfy some restrictive conditions, as was
proved in \cite{AMP}. In particular, these monoids are always {\it
separative monoids} and satisfy the {\it Riesz refinement property}
(see below for definitions). For the resolution of some important
open problems in both ring theory and $C^*$-algebra theory, we need a
larger class of algebras, whose monoids satisfy the latter property
but not the former. The main open problem we wish to address, in the
context of $C^*$-algebras, is the construction of a $C^*$-algebra of
real rank zero containing both finite and properly infinite {\it
full} projections. It is worth to mention that R\o rdam has
constructed in \cite{Rordam} examples of simple $C^*$-algebras
having both finite and (properly) infinite projections. However, R\o rdam's
examples do not have real rank zero. The analogous question of
whether an {\it exchange ring} can have both finite and properly
infinite full idempotents seems to be wide open. Indeed, such an
example would provide a solution to the fundamental \emph{Separativity
Problem} for exchange rings of \cite{AGOP}. Note that by
\cite[Theorem 7.2]{AGOP}, the $C^*$-algebras of real rank zero are
exactly the $C^*$-algebras that happen to be exchange rings, and
that for an exchange ring $R$, the monoid $\mon{R}$ is always a
refinement monoid \cite[Corollary 1.3]{AGOP}.  The methods developed
in the present paper enable us to construct Leavitt path algebras
$L_K(E_+,C^+)$ of separated graphs $(E_+,C^+)$ having finite and
properly infinite full idempotents, and such that the monoids
$\mon{L_K(E_+,C^+)}$ satisfy the refinement property. For instance,
one can obtain such an algebra $L_K(E_+,C^+)$ by refining the Leavitt
path algebra $L_K(E,C)$ attached to the classical Leavitt algebra
$L_K(m,n)$, as shown in Example \ref{mainrevisited}. Although the
algebra $L_K(E_+,C^+)$ is not an exchange ring, we expect that the
use of techniques of universal localization, as in \cite{AB}, will lead to the
construction of a class of exchange algebras with the same structure
of finitely generated projective modules.

\begin{subsec}
{\bf Contents.} We now explain in more detail the contents of this paper. In Section
2, we will define our basic object of study, the Leavitt path
algebras $L_K(E,C)$ of separated graphs $(E,C)$, as well as the
corresponding class of \emph{Cohn path algebras} $C_K(E,C)$,
generalizing a class of algebras studied by Cohn in \cite{Cohn}, and
algebras we call \emph{Cohn-Leavitt algebras}. The Cohn path
algebras are obtained from the Leavitt path algebras by omitting one
of the so-called Cuntz-Krieger relations, and the Cohn-Leavitt
algebras interpolate between the two classes by admitting a
selection of Cuntz-Krieger relations. (See Definitions
\ref{def:LPASG}, \ref{def:CPASG}, and \ref{defCLalg} for the precise
definitions.) We will denote by $CL_K(E,C,S)$ the Cohn-Leavitt
algebra associated to $(E,C,S)$, where $S$ is a family consisting of
some of the finite sets in $C$. The Cohn-Leavitt algebras not only
provide an interesting larger class of algebras that can be analyzed
by the same techniques, but they allow us to write all Leavitt path
algebras (in fact, all Cohn-Leavitt algebras) as direct limits of
algebras based on finite graphs (see Proposition
\ref{CLalg-dirlim}). The algebras $L_K(E,C)$, $C_K(E,C)$, and
$CL_K(E,C,S)$ are homologically well behaved: all are hereditary (in
an appropriate nonunital sense when $E$ has infinitely many
vertices); moreover, the $K$-algebra unitizations of these
algebras are hereditary in the standard sense (Theorem
\ref{CLhereditary} and proof).

\smallskip

Much of our effort is devoted to analyzing the abelian monoids $\mon{A}$ associated to the algebras $A$ under
consideration. (As recalled below, $\mon{A}$ consists of the isomorphism classes of finitely generated projective $A$-modules,
equivalently, the Murray-von Neumann equivalence classes of idempotent matrices over $A$.)
In the present setting, these monoids are entirely determined by graph-theoretic data, just
as in the case of ordinary Leavitt path algebras. We refer to \cite{AMP} for the latter,
where a monoid $M(E)$ was defined for an arbitrary row-finite graph $E$ and shown to be naturally
isomorphic to $\mon{L_K(E)}$. Here, we define analogous monoids $M(E,C,S)$ and construct natural
isomorphisms $M(E,C,S) \cong \mon{CL_K(E,C,S)}$ (Theorem \ref{computVMECS}).
(The non-separated case reduces to that of ordinary Leavitt path algebras, and extends the result of \cite{AMP} to non-row-finite graphs.)

\smallskip

The monoids $M(E,C)\cong \mon{L_K(E,C)}$ (corresponding to Leavitt path algebras of separated graphs $(E,C)$) provide
a measure of the breadth of this new class of algebras: Every conical abelian monoid is
isomorphic to $M(E,C)$ for some $(E,C)$ (Proposition \ref{allconical}). Combined with
Theorems \ref{CLhereditary} and \ref{computVMECS}, this provides both an extension and a more ``visual''
version of a result of Bergman and Dicks \cite{BD}: Given any conical abelian monoid $M$, there exists a hereditary
$K$-algebra $A= L_K(E,C)$ such that $\mon{A}\cong M$ (Corollary \ref{cam=V}).

\smallskip

In the case of a non-separated graph $E$, the monoid $M(E)\cong \mon{L_K(E)}$ is a refinement monoid, as proved
for row-finite graphs in \cite{AMP} and extended here to arbitrary graphs (Corollary \ref{M(E)refinement}). We find
sufficient conditions for $M(E,C,S)$ to have refinement (Theorem \ref{generalrefinement}), and we develop a
construction by which a separated graph $(E,C)$ can be embedded in a separated graph $(E_+,C^+)$
such that $M(E_+,C^+)$ has refinement and preserves key properties of $M(E,C)$ (Theorem \ref{unitary3}).
In particular, if $M(E,C)$ does not have separative cancellation $(2x=x+y=2y \implies x=y$),
then $M(E_+,C^+)$ does not have it either. Further, if $M(E,C)$ is simple, we can arrange the construction so that
it embeds in the monoid of order-units of $M(E_+,C^+)$ together with zero and so that the
latter is a simple, divisible, refinement monoid (Theorem \ref{simple-emb}).
Given that $M(E,C)$ can be an arbitrary simple conical abelian monoid, this provides a ``visual'' version of an embedding theorem of Wehrung \cite{W}.

\smallskip

For ordinary Leavitt path algebras $L_K(E)$, the lattice of graded ideals (and even the lattice of all ideals,
if condition (K) holds) is isomorphic to a lattice formed from graph-theoretic data \cite{Tomf07}. Such a result
does not hold for separated graphs, since the algebras $L_K(E,C)$ and $CL_K(E,C,S)$ typically have far more complicated ideal
structures than $L_K(E)$. Nonetheless, we can capture the lattice of \emph{trace ideals} of $CL_K(E,C,S)$ (these coincide
with the idempotent-generated ideals). This lattice is isomorphic to the lattice of order-ideals
of $M(E,C,S)$ (Propositions \ref{LVCLisoTrCL} and \ref{LVAisoTrA}) and to a certain lattice
of pairs $(H,G)$ where $H$ is a hereditary subset of $E^0$ and $G\subseteq C$ (Theorem \ref{lattmonoid}).
Consequently, we derive necessary and sufficient conditions for $M(E,C,S)$ to be simple, equivalently, for $CL_K(E,C,S)$
to be ``trace-simple'' (Theorems \ref{simplicity} and \ref{Ccofinal}).
\end{subsec}

\begin{subsec} \label{introV(R)}
Recall that for a unital ring $R$, the monoid $\mon{R}$ is usually defined as the set of isomorphism classes $[P]$ of
finitely generated projective (left, say) $R$-modules $P$, with an addition operation given by $[P]+[Q]= [P\oplus Q]$.
For a nonunital version, see Definition \ref{FP}.

For arbitrary rings, $\mon{R}$ can also be described in terms of equivalence classes of idempotents from the ring $M_\infty(R)$
of $\omega\times\omega$ matrices over $R$ with finitely many nonzero entries. The equivalence relation is
\emph{Murray-von Neumann equivalence}: idempotents $e,f\in M_\infty(R)$ satisfy $e\sim f$ if and only if
there exist $x,y\in M_\infty(R)$ such that $xy=e$ and $yx=f$. Write $[e]$ for the equivalence class of $e$;
then $\mon{R}$ can be identified with the set of these classes. Addition in $\mon{R}$ is given by the rule $[e]+[f]= [e\oplus f]$,
where $e\oplus f$ denotes the block diagonal matrix $\left( \begin{smallmatrix} e&0\\ 0&f \end{smallmatrix} \right)$.
With this operation, $\mon{R}$ is an abelian monoid, and it is \emph{conical}, meaning that $a+b=0$ in $\mon{R}$ only when $a=b=0$.

An (abelian) monoid $M$ is said to be a {\it refinement monoid} if
whenever $a+b=c+d$ in $M$, there exist $x,y,z,t$ in $M$ such that
$a=x+y$ and $b=z+t$ while $c=x+z$ and $d=y+t$.
\end{subsec}

\begin{subsec} \label{Bergmanmachinery}
We will make frequent use of the machinery developed by Bergman in
\cite{Berg1} and \cite{Berg2}, which enables us to modify algebras
in such a way that we have total control on the $\calV$-monoid of
the resulting algebra. We repeatedly use one particular construction
from \cite{Berg2}, so we find it useful to explain here the
construction as well as the phrasing of it that we will use later
on. Let $R$ be a unital $K$-algebra and let $P$ and $Q$ be two
finitely generated projective left $R$-modules. Bergman constructs
in \cite[page 38]{Berg2} a unital $K$-algebra $S:=R\langle
i,i^{-1}\colon \ol{P}\cong \ol{Q}\rangle $, together with an algebra
homomorphism $R\to S$, such that there is a universal isomorphism
$i\colon \ol{P}\to \ol{Q}$, where $\ol{X}=S\otimes _R X$ for a left
$R$-module $X$. The universal property of $i$ is expressed as
follows. If $R\to T$ is an algebra homomorphism and $\phi \colon
T\otimes _R P\to T\otimes _R Q$ is an isomorphism of $T$-modules,
then there is a unique algebra homomorphism $\psi\colon S\to T$ such
that $\id_T\otimes_S\, i=\phi$, where $T$ is an $S$-module via $\psi$.

We shall refer to the algebra $S$ described above as \emph{the
Bergman algebra obtained from $R$ by adjoining a universal isomorphism
between $P$ and $Q$.} By \cite[Theorem 5.2]{Berg2}, the monoid
$\mon{S}$ is exactly the quotient monoid of $\mon{R}$ modulo the
congruence generated by $([P],[Q])$, so that we modify $\mon{R}$ by
just introducing a single new relation $[P]=[Q]$.
\end{subsec}

\begin{subsec}
Throughout the paper, $K$ will denote a field. All graphs in this
paper will be directed graphs $E=(E^0,E^1,s,r)$, where $E^0$ and
$E^1$ are sets, $E^0$ is nonempty, and $s$, $r$ denote the source and range maps
$E^1\rightarrow E^0$. We make no finiteness or
countability assumptions. Paths of positive length in $E$ are written in the form $\alpha= e_1e_2\cdots e_n$ where the $e_i\in E^1$ and $r(e_i)= s(e_{i+1})$ for $i<n$, while the paths of length zero in $E$ are identified with the vertices in $E^0$. The maps $s$ and $r$ are applied to paths in the obvious manner: $s(\alpha)=s(e_1)$ and $r(\alpha)= r(e_n)$ for $\alpha= e_1e_2\cdots e_n$, while $s(v)=r(v)=v$ for paths $v$ of length zero. We write ${\rm Path} (E)$ for the set of all paths in $E$. This is a partial semigroup, with $\alpha\beta$ defined and equal to the concatenation ``$\alpha$ followed by $\beta$" whenever $r(\alpha)= s(\beta)$. Concatenation with a path of length zero simply absorbs that vertex; e.g., $v\beta=\beta$ for any path $\beta$ with source vertex $v$.

 The path algebra of $E$ with coefficients in
$K$ will be denoted by $P_K(E)$; this is the $K$-algebra with basis ${\rm Path} (E)$ and multiplication induced from the partial multiplication in ${\rm Path} (E)$ together with the rule that $\alpha\beta=0$ for any paths $\alpha$, $\beta$ with $r(\alpha)\ne s(\beta)$.
\end{subsec}

\section{The algebras}
\label{sect:algebras}

In this section, we formally introduce separated graphs and the Leavitt, Cohn, and Cohn-Leavitt algebras based on them.

\begin{definition} \label{defsepgraph}
A \emph{separated graph} is a pair $(E,C)$ where $E$ is a graph,  $C=\bigsqcup
_{v\in E^ 0} C_v$, and
$C_v$ is a partition of $s^{-1}(v)$ (into pairwise disjoint nonempty
subsets) for every vertex $v$. (In case $v$ is a sink, we take $C_v$
to be the empty family of subsets of $s^{-1}(v)$.)

If all the sets in $C$ are finite, we shall say that $(E,C)$ is a \emph{finitely separated} graph.

The constructions we introduce revert to existing ones in case $C_v= \{s^{-1}(v)\}$
for each non-sink $v\in E^0$. We refer to a \emph{non-separated graph} or a \emph{trivially separated graph} in that situation.
\end{definition}

The separating partitions $C$ in the above definition can be viewed in terms of edge-labellings, if desired. On one hand,  if $\ell: E^1\to A$ is an edge-labelling, the sets $\ell^{-1}(a)\cap s^{-1}(v)$ for $a\in A$ partition $s^{-1}(v)$ for each $v\in E^0$, and the collection of nonempty such sets forms a separating partition $C$. On the other, given any separating partition $C$, the map $E^1\rightarrow C$ sending $e\in E^1$ to the unique set $X\in C$ such that $e\in X$ is an edge-labelling. The notation of partitions is more convenient for the work below than that of edge-labellings; moreover, nothing we do involves any relation between edges departing from different vertices.

We now introduce the first of three algebras based on a separated graph $(E,C)$. All three are quotients of the path algebra of the \emph{double} of $E$, that is, the graph $\Ehat$ obtained
from $E$ by adjoining, for each $e\in E^1$, an edge $e^*$ going in
the reverse direction of $e$, that is $s(e^*)=r(e)$ and
$r(e^*)=s(e)$. The map $e\mapsto e^*$ from $E^1\rightarrow \Ehat^1$ extends, first, to a bijection of $E^0\sqcup \Ehat^1$ with itself such that  $v^*=v$ for $v\in E^0$ and $(e^*)^*=e$ for $e\in E^1$; second, to an order-reversing bijection of ${\rm Path}(\Ehat)$ with itself; and, finally, to a $K$-algebra involution (i.e., an anti-automorphism of order $2$) on $P_K(\Ehat)$. This involution induces involutions on each of the three quotients of $P_K(\Ehat)$ defined below.

\begin{definition}
\label{def:LPASG} {\rm The {\it Leavitt path algebra of the
separated graph} $(E,C)$ with coefficients in the field $K$ is the
$K$-algebra $L_K(E,C)$ with generators $\{ v, e, e^*\mid v\in E^0,\
e\in E^1 \}$, subject to the following relations:}
\begin{enumerate}
\item[] (V)\ \ $vv^{\prime} = \delta_{v,v^{\prime}}v$ \ for all $v,v^{\prime} \in E^0$ ,
\item[] (E1)\ \ $s(e)e=er(e)=e$ \ for all $e\in E^1$ ,
\item[] (E2)\ \ $r(e)e^*=e^*s(e)=e^*$ \ for all $e\in E^1$ ,
\item[] (SCK1)\ \ $e^*e'=\delta _{e,e'}r(e)$ \ for all $e,e'\in X$, $X\in C$, and
\item[] (SCK2)\ \ $v=\sum _{ e\in X }ee^*$ \ for every finite set $X\in C_v$, $v\in E^0$.
\end{enumerate}

The path algebra $P_K(\Ehat)$ is the $K$-algebra with generating set $E^0\sqcup \Ehat^1$ and relations (V), (E1), (E2), so $L_K(E,C)$ is the quotient of $P_K(\Ehat)$ obtained by imposing the additional relations (SCK1), (SCK2).

The Leavitt path algebra $L_K(E)$ is just $L_K(E,C)$ where $C_v= \{
s^{-1}(v)\}$ if $s^{-1}(v)\ne \emptyset $ and $C_v=\emptyset $ if
$s^{-1}(v)=\emptyset$. Despite the great similarity in the
definitions, the Leavitt path algebras of separated graphs encompass
a much larger class of algebras than Leavitt path algebras of non-separated graphs. For
instance, they include free products of Leavitt algebras and algebras closely
related to the Leavitt algebras $L_K(n,m)$ (see Propositions \ref{1-vertex} and
\ref{keyexample} below).
\end{definition}

The Leavitt path algebras $L_K(E,C)$ can be viewed as algebraic analogs of the \emph{$C^*$-algebras of edge-labelled graphs} introduced by Duncan in \cite{Dun}.

In \cite{haz}, Hazrat gives a construction of \emph{weighted Leavitt path algebras} which incorporate all of the Leavitt algebras $L_K(n,m)$. Neither his construction nor ours is a particular case of the other.
Hazrat's basic data can be expressed as a separated graph $(E,C)$ such that each of the sets in $C$ is a finite collection of edges with the same source and the same range. However, the Cuntz-Krieger type relations he imposes involve sums of products $e^*e'$ and $e'e^*$ for edges $e$ and $e'$ that may lie in different sets in $C$. These relations agree with our (SCK1) and (SCK2) only when each vertex of $E$ emits at most one edge.
\medskip

Before we give some key examples of our construction, we develop a
normal form for the elements of $L_K(E,C)$. For this purpose, we will
use the Diamond Lemma \cite{Berg3}. We also define and study two other useful
constructs: the Cohn path algebra of a separated graph and the
Cohn-Leavitt path algebras of separated graphs with distinguished
subsets.

\begin{definition}
\label{def:CPASG} The {\it Cohn path algebra of the separated
graph} $(E,C)$ with coefficients in the field $K$ is the
$K$-algebra $C_K(E,C)$ with generators $\{ v, e, e^*\mid v\in E^0,\;
e\in E^1 \}$, subject only to the relations (V), (E1), (E2) and
(SCK1) of Definition \ref{def:LPASG}. In other words, $C_K(E,C)$ is the quotient of the path algebra $P_K(\Ehat)$ obtained by imposing only (SCK1).
\end{definition}

The $C^*$-analog of the Cohn path algebra of a non-separated graph
is the Toeplitz-Cuntz-Krieger $C^*$-algebra of the graph, see
\cite[Theorem 4.1]{FR}.

\medskip

We will
give $K$-bases of $C_K(E,C)$ and $L_K(E,C)$ by specifying particular
sets of paths in $\Ehat$, that is, by finding vector space sections
of the canonical surjections $P_K(\Ehat)\to C_K(E,C)$ and
$P_K(\Ehat)\to L_K(E,C)$. The choice is canonical in the case of the
Cohn algebra and depends on the choice of an edge in each finite
$X\in C$ in the case of the Leavitt algebra.

For two paths $\gamma ,\mu \in {\rm Path} (E)$ of positive length,
with $s(\gamma )=s(\mu )=v\in E^0$, we say that $\gamma $ and $\mu $
are {\it $C$-separated} if the initial edges of $\gamma $ and $\mu $
belong to different sets $X,Y\in C_v$.

\begin{proposition}
\label{basisCE} Let $(E,C)$ be a separated graph. Then the set
$\calB$ of those paths of the form
\begin{equation}
\label{normalform1} \lambda _1\nu_1 ^*\lambda _2\nu_2^*\cdots
\lambda _r\nu _r^* , \qquad \lambda_i, \nu_i \in {\rm Path}(E),\
r\ge 1,
\end{equation} such that $\nu
_i$ and $\lambda_{i+1}$ are $C$-separated paths for each $i=1,\dots
,r-1$, is a $K$-basis of $C_K(E,C)$. In particular,
$\nu_1,\dots,\nu_{r-1}$ and $\lambda_2,\dots,\lambda_r$ must have
positive length when $r>1$. However, $\lambda_1$ and $\nu_r$ are
allowed to have length zero.
\end{proposition}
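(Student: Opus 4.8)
The plan is to apply the Diamond Lemma of Bergman \cite{Berg3} to the presentation of $C_K(E,C)$ as a quotient of $P_K(\Ehat)$ by the relations (SCK1). First I would set up the reduction system: view $P_K(\Ehat)$ as the path algebra of $\Ehat$, with basis ${\rm Path}(\Ehat)$, and take as reduction rules the relations (SCK1), namely $e^*e' \to \delta_{e,e'}\,r(e)$ for $e,e'$ lying in a common $X\in C$. Concretely, every monomial path in $\Ehat$ containing a subword $e^*e'$ with $e,e'\in X\in C$ is reducible: if $e=e'$ it is replaced by the shorter path obtained by deleting $e^*e$, and if $e\neq e'$ it is killed. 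A path in $\Ehat$ is \emph{irreducible} (a normal form) precisely when it contains no such subword; one checks that such a path, after absorbing length-zero vertices, is either a vertex, or has the shape \eqref{normalform1}, with the $C$-separation condition on consecutive $\nu_i,\lambda_{i+1}$ being exactly the statement that no forbidden subword $e^*e'$ (same $X$) appears at the ``turnaround'' between a starred block and the following unstarred block. So the irreducible paths are exactly $\calB$ (together with the vertices, which are the $r=0$ or degenerate cases), and the Diamond Lemma will give that $\calB\cup E^0$ maps to a $K$-basis of $C_K(E,C)$; since a vertex $v$ is itself of the form \eqref{normalform1} with $r=1$, $\lambda_1=\nu_1=v$ of length zero, this is just $\calB$.

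The key step is verifying that all ambiguities of the reduction system resolve (are ``confluent''). The overlap ambiguities arise from two forbidden subwords sharing the middle letter: a word $e^*\,e'\,f^{*}$ would need $e,e'\in X$ and $e',f^{*}\cdots$ — but the second forbidden pattern is again of the form (dagger)$^*$(ddagger), i.e. needs $e'=g^*$ for some edge $g$; since $e'\in E^1$ is an honest (unstarred) edge, $e'$ cannot simultaneously be a starred edge, so no overlap of two rules of the form $e^*e'\to\cdots$ exists on a length-one overlap. The only genuine overlaps are of the form $e_1^*\,e_2$ where this same pair is reducible in ``two ways'' only when $e_1=e_2=e$, and there the single rule $e^*e\to r(e)$ applies; there is nothing to reconcile. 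Inclusion ambiguities: a forbidden word $e^*e'$ has length $2$ and cannot properly contain another forbidden word. Hence there are no nontrivial ambiguities to resolve, which makes this application of the Diamond Lemma especially clean. One must also exhibit a semigroup partial order on ${\rm Path}(\Ehat)$ compatible with the reductions and satisfying the descending chain condition: order paths first by length and then break ties arbitrarily (e.g. length-lexicographically); each rule strictly decreases length (from $2$ to $1$ or to $0$), and it is compatible with multiplication on both sides because path multiplication is concatenation (or zero), so replacing a subword by a shorter one shortens the whole path. Thus the hypotheses of the Diamond Lemma hold.

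From confluence, the Diamond Lemma yields that the images of the irreducible monomials form a $K$-basis of the quotient $P_K(\Ehat)/(\text{SCK1}) = C_K(E,C)$, and by the identification of irreducible monomials with $\calB$ described above, $\calB$ is a $K$-basis of $C_K(E,C)$. The final sentences of the statement are then immediate bookkeeping: in \eqref{normalform1} with $r>1$, if some $\nu_i$ ($i<r$) had length zero then $\nu_i^*=\nu_i$ is a vertex $v$ and the product $\lambda_i\nu_i^*\lambda_{i+1} = \lambda_i\lambda_{i+1}$ collapses, contradicting the assumption that the expression is written in the given reduced form with $C$-separated consecutive pieces (equivalently, the corresponding $\Ehat$-path would not be in the chosen irreducible set); similarly each $\lambda_{i+1}$ ($i\ge 1$) must have positive length, since a length-zero $\lambda_{i+1}$ would force $\nu_i$ and $\lambda_{i+1}$ not to be $C$-separated. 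Only $\lambda_1$ and $\nu_r$ are unconstrained, and allowing them length zero is exactly what produces, among other things, the vertices and the one-sided ``paths'' $\lambda_1\nu_1^*$ with $\lambda_1 = s(\nu_1)$ or $\nu_1 = s(\lambda_1)$.

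The main obstacle I anticipate is purely combinatorial rather than conceptual: being careful about the degenerate cases (length-zero $\lambda_1$ or $\nu_r$, and the bookkeeping that makes $\calB$ literally coincide with the set of irreducible paths including vertices), and stating the partial order and the (empty) ambiguity analysis precisely enough to invoke \cite{Berg3}. There is no deep difficulty, since the reduction system has no nontrivial ambiguities; the work is in translating ``irreducible path in $\Ehat$'' into the normal form \eqref{normalform1} and back without error, and in handling how length-zero segments interact with the involution and with concatenation.
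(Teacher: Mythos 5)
Your proof is correct, but it takes a genuinely different route from the paper's own proof of this proposition. The paper proves Proposition \ref{basisCE} by a direct construction: it takes the vector space $A$ with basis $\calB$, writes down an explicit multiplication table on $\calB\sqcup\{0\}$ (case analysis on how $\nu_r^*$ interacts with $\gamma_1$), verifies that this makes $\calB\sqcup\{0\}$ a semigroup, and then identifies the resulting associative algebra with $C_K(E,C)$; the Diamond Lemma is reserved for the more general Theorem \ref{basisCSE}, where the (SCK2) reductions create genuine overlap ambiguities. Your approach is essentially Theorem \ref{basisCSE} specialized to $S=\emptyset$, and your observation that the (SCK1) reductions $e^*e'\mapsto \delta_{e,e'}r(e)$ admit no overlap or inclusion ambiguities among themselves (since the middle letter of a would-be overlap would have to be simultaneously starred and unstarred) is exactly right. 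The one technical caveat is that Bergman's Diamond Lemma as stated in \cite{Berg3} applies to the free algebra on $E^0\sqcup\Ehat^1$, not directly to the path algebra $P_K(\Ehat)$; to invoke it literally you must include the structural reductions (V), (E1), (E2) and $ef\mapsto 0$ for $r(e)\ne s(f)$ in the reduction system, and then the ambiguity analysis is not quite empty — you pick up the routine vertex-absorption overlaps (the types (a)--(c) and part of (d) in the paper's proof of Theorem \ref{basisCSE}), all of which resolve trivially. What each approach buys: the paper's direct construction yields the multiplication formula on basis elements explicitly (useful elsewhere) and needs no rewriting machinery, while your route is more systematic, scales to the Cohn--Leavitt case, and makes the ``no ambiguities'' structure of the Cohn algebra transparent.
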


\begin{proof}
Let $A$ be the vector space with basis $\calB$. We define a binary
product on $\calB\sqcup\{0\}$ by the following formula: If $b=\lambda_1\nu_1 ^*
\cdots \lambda _r\nu _r^*$ and $b'=\gamma _1\mu_1 ^*\cdots \gamma
_s\mu _s^*$, then
$$bb' :=\begin{cases} \lambda _1\nu_1 ^*\cdots \lambda
_r(\nu ' _r) ^*\gamma _1'\mu_1 ^*\cdots \gamma _s\mu _s^* &
\text{if\ } \nu _r=\tau \nu _r' \text{\ and\ } \gamma _1=\tau \gamma
_1' \text{\ with\ }
\nu_r' \text{\ and\ } \gamma _1' \\ &\quad \text{being $C$-separated} \\
\lambda _1 \nu_1 ^*\cdots (\lambda _r\gamma
_1')\mu_1 ^*\cdots \gamma _s\mu _s^* & \text{if\ } \gamma _1=\nu_r\gamma _1' \text{\ for some\ } \gamma _1' \in {\rm Path}(E)\\
\lambda _1 \nu_1 ^*\cdots \lambda _r(\mu _1\nu_r')^* \cdots \gamma
_s\mu _s^* & \text{if\ } \nu_r=\gamma_1\nu _r' \text{\ for some\ } \nu_r' \in {\rm Path}(E)\\
0 & \text{otherwise.}
\end{cases}$$

It is readily seen that with this product, $\calB\sqcup\{0\}$ becomes a semigroup. Extending this product
linearly gives a structure of associative algebra to $A$. It is
clear that, with this structure, $A$ is isomorphic to $C_K(E,C)$.
This shows the result.
\end{proof}

We next introduce the ``mixed case'' of Cohn-Leavitt algebras.

\begin{definition} \label{defCLalg}
Let $(E,C)$ be a separated  graph. Let us denote by $C_{\rm fin}$ the subset of $C$ consisting of those
$X$ such that $| X | < \infty$, and let $S$ be any subset of $\Cfin$. Then let $CL_K(E,C,S)$ be the $K$-algebra with generators $\{ v, e, e^*\mid v\in E^0,\;
e\in E^1 \}$, subject to the relations (V), (E1), (E2) and
(SCK1) of Definition \ref{def:LPASG} together with the relations (SCK2) for the sets $X\in S$. Observe that $CL_K(E,C,\emptyset)=C_K(E,C)$ and
$CL_K (E,C, \Cfin )=L_K(E,C)$. We call $CL_K(E,C,S)$ the {\it
Cohn-Leavitt algebra of the triple $(E,C,S)$}.
\end{definition}

The $C^*$-analog of Cohn-Leavitt path algebras, for a
non-separated graph $E$ and a subset $V$ of regular vertices of $E$,
is introduced in \cite[Definition 3.5]{MT} under the name of {\it
relative graph algebra}. It is shown in \cite[Theorem 3.7]{MT} that
the relative graph $C^*$-algebra $C^*(E,V)$ is canonically
isomorphic to the graph $C^*$-algebra $C^*(E_V)$ of a suitable graph
$E_V$.

\medskip

We are now in a position to describe a basis of $CL_K(E,C,S)$, and
so, in particular, of $L_K(E,C)$.

\begin{definition}
\label{def:redform1} Let $(E,C)$ be a separated  graph and
$S\subseteq \Cfin$. For each $X\in S$ we select an edge $e_X\in X$.
Let $\gamma$ and $\nu$ be paths in $E$ such that $r(\gamma )=
r(\nu)$ and $|\gamma |>0$, $|\nu |>0$. Let $\alpha$ and $\beta$ be
the terminal edges of $\gamma $ and $\nu$ respectively. Then the
path $\gamma \nu ^*$ in $\Ehat$ is said to be \emph{reduced with
respect to $S$} in case for every $X\in S$ we have $(\alpha ,\beta
)\ne (e_X,e_X)$. Moreover, $\gamma \nu ^*$ is called \emph{reduced
with respect to $S$} in case either $\gamma$ or $\nu$ has length
zero, i.e., all real and ghost paths (including the trivial ones)
are automatically reduced with respect to any subset $S$ of $\Cfin$.

A path $p$ as in (\ref{normalform1}), such that $\nu_i$ and
$\lambda_{i+1}$ are $C$-separated paths for each $i=1,\dots ,r-1$,
is said to be \emph{reduced with respect to $S$} in case
$\lambda_i\nu^*_i$ is reduced with respect to $S$ for each
$i=1,\dots,r$.
\end{definition}

\begin{theorem}
\label{basisCSE} Let $(E,C)$ be a separated graph and let $S$ be a
subset of $\Cfin$. Then a $K$-basis of $CL_K(E,C,S)$ is given by the
family $\calB'$ consisting of all the paths in the set $\calB$
described in Proposition {\rm\ref{basisCE}} which are reduced with
respect to $S$.
\end{theorem}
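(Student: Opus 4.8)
The plan is to use the Diamond Lemma \cite{Berg3}, exactly as announced in the text before Proposition \ref{basisCE}, but now applied to the full set of relations defining $CL_K(E,C,S)$. The point of departure is the presentation of $CL_K(E,C,S)$ as a quotient of the free associative $K$-algebra on the symbols $\{v,e,e^*\}$. One sets up a reduction system whose rules rewrite: (V) products $vv'$; (E1)--(E2) the absorption of vertices at the ends of edges and ghost edges; (SCK1) each $e^*e'$ with $e,e'\in X\in C$, replacing $e^*e$ by $r(e)$ and $e^*e'$ by $0$ for $e\ne e'$; and (SCK2), for each $X\in S$, the rule rewriting $e_Xe_X^*$ (for the chosen distinguished edge $e_X$) as $r(e)-\sum_{e\in X,\,e\ne e_X}ee^*$. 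The claim then is that the irreducible monomials are precisely the elements of $\calB'$: they are the paths of the form \eqref{normalform1} with $\nu_i,\lambda_{i+1}$ $C$-separated and no factor $\lambda_i\nu_i^*$ ending in the forbidden pair $(e_X,e_X)$.

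The key steps are: (i) fix a monomial order on words making all the above rules reductions — a degree-lexicographic order with ghost letters suitably ordered works, but one must take care that the (SCK2) rule $e_Xe_X^*\rightsquigarrow r(e)-\sum_{e\ne e_X}ee^*$ is length-reducing or at least order-reducing, which it is once $e_Xe_X^*$ is declared to be the leading word of that relation (it has length two, the right side has length $\le 2$, and one orders so $e_Xe_X^* $ dominates the other $ee^*$); (ii) enumerate the overlap and inclusion ambiguities among the rules and check each resolves. This is the computational core. Overlaps of type (V)/(E1)/(E2) among themselves are the standard path-algebra bookkeeping and are trivial. The genuinely new ambiguities are: $e^*e'e''$ with $e,e',e''$ in a common $X$ (combining two instances of (SCK1)); $e_X^*e_Xe_X^*$ and $e_Xe_X^*e_X$ and $e_Xe_X^*e_X^*$-type overlaps combining (SCK1) with (SCK2); and $e'{}^*e_Xe_X^*$ with $e'\in X$, $e'\ne e_X$, combining (SCK1) with (SCK2). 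One checks each reduces to the same normal form using the defining relations — e.g. $e_X^*e_Xe_X^*$ gives on one side $r(e)e_X^*=e_X^*$ and on the other $e_X^*\bigl(r(e)-\sum_{e\ne e_X}ee^*\bigr)=e_X^*-\sum_{e\ne e_X}(e_X^*e)e^*=e_X^*$ by (SCK1). (iii) Having established confluence, the Diamond Lemma yields that the images of the irreducible words form a $K$-basis; one then identifies this set of irreducible words with $\calB'$, reusing the multiplication-table description from the proof of Proposition \ref{basisCE} to see that a word is irreducible iff it lies in $\calB$ (no further (V), (E1), (E2), (SCK1) reduction possible — this is Proposition \ref{basisCE}) \emph{and} is reduced with respect to $S$ (no (SCK2) reduction possible, i.e. no $\lambda_i\nu_i^*$ terminates in $(e_X,e_X)$).

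The main obstacle I expect is step (ii), the resolution of the ambiguities that mix (SCK1) with (SCK2): because the (SCK2) rewriting replaces a length-two word by an expression that is again a sum of length-two (and length-one) words, one has to be slightly careful that the ambiguity resolution still terminates and that the leading-word conventions are consistent, and one must make sure the chosen monomial order is actually compatible with \emph{all} the rules simultaneously (the subtlety being that (SCK2) is not strictly length-decreasing, so one leans on the lexicographic tie-break and must verify the right-hand side of the (SCK2) rule consists of strictly smaller words). A secondary, more bookkeeping-heavy obstacle is the translation between "irreducible word" and "element of $\calB'$": one should invoke Proposition \ref{basisCE} to handle the $S=\emptyset$ part (so that $\calB$ already accounts for all reductions except those coming from (SCK2)) and then argue that adding the (SCK2) rules for $X\in S$ exactly removes from $\calB$ those paths not reduced with respect to $S$, with no new ambiguities beyond those just checked. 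Given Proposition \ref{basisCE}, this reduces the work to the incremental analysis of the (SCK2) rules, which keeps the argument manageable.
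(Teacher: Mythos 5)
Your proposal follows essentially the same route as the paper: Bergman's Diamond Lemma applied to the reduction system given by (V), (E1), (E2), (SCK1) together with the rules $e_Xe_X^* \mapsto s(e_X)-\sum_{e\in X\setminus\{e_X\}}ee^*$ for $X\in S$, with the order-compatibility issue you correctly flag being handled in the paper by a weight function assigning $e_X$ weight $2$ (so $e_Xe_X^*$ strictly dominates every word appearing on the right-hand side), and with the same identification of irreducible words with $\calB'$. The only slips are cosmetic: the right-hand side of your (SCK2) rule should read $s(e_X)$ rather than $r(e)$, and the overlap of two (SCK1) rules that you list cannot actually occur, since the middle letter would have to be simultaneously a real and a ghost edge.
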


\begin{proof} Let $CL_K(E,C,S)\oplus K\cdot\tilde1$ be the formal unitization of $CL_K(E,C,S)$, where $\tilde1$ is a new identity element. We use Bergman's version of the Diamond Lemma \cite[Lemma 1.1, Theorem 1.2]{Berg3} to show that $\calB'\sqcup \{\tilde1\}$ is a basis for $CL_K(E,C,S)\oplus K\cdot\tilde1$, from which the theorem follows.

Let $W$ be the free abelian monoid on the set $E^0\sqcup E^1\sqcup (E^1)^*=
E^0\sqcup \Ehat^1$. For $X\in S$, choose $e_X\in X$ as in Definition
\ref{def:redform1}, and set $X' := X\setminus \{e_X\}$. Define a
weight function $\wt:W \to \Zplus$ so that
\begin{enumerate}
\item[] $\wt(v)=1$ for all $v\in E^0$,
\item[] $\wt(e_X)=2$ for all $X\in S$,
\item[] $\wt(f)=1$ for all other $f\in \Ehat^1$,
\end{enumerate}
and so that the weight of any word is the sum of the weights of its
letters. (In particular, $\wt(1_W)=0$.) Then define a partial order
$\le$ on $W$ by the following rule:
$$a\le b \ \iff\ \ a=b \text{\ or\ } \wt(a)<\wt(b).$$
Observe that $\le$ is a semigroup ordering on $W$, and that it
satisfies the descending chain condition.

Next, let $F$ denote the monoid algebra $K[W]$, that is, the free
unital $K$-algebra on $E^0\sqcup \Ehat^1$, and let $\calS$ be the
reduction system in $F$ consisting of the following pairs:
\begin{enumerate}
\item $(vw,\delta_{v,w}v)$ for $v,w\in E^0$,
\item $(ve,\delta_{v,s(e)}e)$ for $v\in E^0$ and $e\in \Ehat^1$,
\item $(ew,\delta_{w,r(e)}e)$ for $w\in E^0$ and $e\in \Ehat^1$,
\item $(ef,0)$ for $e,f\in \Ehat^1$ with $r(e)\ne s(f)$,
\item $(e^*f,\delta_{e,f}r(e)$ for $e,f\in X\in C$,
\item $(e_Xe^*_X,\, s(e_X)-\sum_{e\in X'} ee^*)$ for $X\in S$.
\end{enumerate}
Then $CL_K(E,C,S)$ may be presented as $F/I$ where $I$ is the ideal
$\langle W_\sigma-f_\sigma \mid (W_\sigma,f_\sigma) \in
\calS\rangle$ of $F$. Some of these reductions are redundant as far
as generating $I$ is concerned, namely the cases $(ab,0)$ of (2), (3), (4). However, these reductions are needed in order to resolve
certain ambiguities. Observe that the partial order $\le$ on $W$ is
compatible with $\calS$. (The assignment $\wt(e_X)=2$ ensures
compatibility with the reductions (6).)

In order to apply the Diamond Lemma, we must verify that all
ambiguities of $\calS$ are resolvable. Since the first terms
$W_\sigma$ of the pairs $(W_\sigma,f_\sigma) \in \calS$ all have
length 2 (as words in $W$), there are no inclusion ambiguities. There are four
families of overlap ambiguities, corresponding to certain products of the
following types:
\begin{enumerate}
\item[(a)] $vwx$, for $v,w,x\in E^0$,
\item[(b)] $vwe$, $vew$, $evw$, for $v,w\in E^0$ and $e\in \Ehat^1$,
\item[(c)] $vef$, $evf$, $efv$, for $v\in E^0$ and $e,f\in \Ehat^1$,
\item[(d)] $efg$, for $e,f,g\in \Ehat^1$.
\end{enumerate}
For instance, a product $vef$ as in (c) is an ambiguity only if $r(e)\ne s(f)$, or $e^*,f\in X$ for some $X\in C$, or $(e,f)= (e_X,e^*_X)$ for some $X\in C$.
We indicate resolutions for various cases of these ambiguities,
leaving the others to the reader. Reductions will be denoted by
$\longmapsto$.

The resolution of an overlap ambiguity of type (a) can be given as
follows:
\begin{align*}
(vw)x &\longmapsto (\delta_{v,w}w)x \longmapsto \delta_{v,w}\delta_{w,x}w \\
v(wx) &\longmapsto v(\delta_{w,x}w) \longmapsto
\delta_{w,x}\delta_{v,w}w.
\end{align*}
Those of type (b) are resolved in the same manner.

For overlaps of type (c), assume first that $r(e)\ne s(f)$. We then
have
\begin{align*}
(ve)f &\longmapsto (\delta_{v,s(e)}e)f \longmapsto 0 \\
v(ef) &\longmapsto v(0) = 0,
\end{align*}
and similarly for the cases $evf$ and $efv$. The case $evf$ with
$r(e)=s(f)$ resolves trivially. Otherwise, we only have ambiguities
to resolve for $ve^*f$ and $e^*fv$ with $e,f\in X\in C$, and for
$ve_Xe^*_X$ and $e_Xe^*_Xv$ with $X\in S$. The case $ve^*f$ resolves
as
\begin{align*}
(ve^*)f &\longmapsto (\delta_{v,r(e)}e^*)f \longmapsto \delta_{v,r(e)}\delta_{e,f} r(e) \\
v(e^*f) &\longmapsto v(\delta_{e,f} r(e))\longmapsto \delta_{e,f}
\delta_{v,r(e)} r(e),
\end{align*}
and the case $e^*fv$ is similar. Now consider $ve_Xe^*_X$ for $v\in
E^0$ and $X\in S$, say $X\in C_w$. On one hand,
$$(ve_X)e^*_X \longmapsto (\delta_{v,w}e_X)e^*_X \longmapsto \delta_{v,w} \bigl( w- \sum_{e\in X'} ee^* \bigr).$$
On the other hand,
$$v(e_Xe^*_X) \longmapsto v \bigl( w- \sum_{e\in X'} ee^* \bigr) \longmapsto \bigl( |X| \text{\ reductions} \bigr) \longmapsto \delta_{v,w} \bigl( w- \sum_{e\in X'} ee^* \bigr),$$
since $vw\longmapsto \delta_{v,w}w$ and $vee^* \longmapsto
\delta_{v,w}ee^*$ for $e\in X'$. The resolution of $e_Xe^*_Xv$ is
similar.

The overlaps of type (d) must be separated according to whether or
not $r(e)=s(f)$ and whether or not $r(f)=s(g)$. The cases in which
$r(e)\ne s(f)$ and/or $r(f)\ne s(g)$ are resolved in the same manner
as those above. For the remaining situation, note that there are no
overlap ambiguities $abc$ in which $ab$ and $bc$ are both of type
$e^*f$ or both of type $e_Xe^*_X$. This leaves only the cases
$e_Xe^*_Xf$ and $f^*e_Xe^*_X$ with $f\in X\in S$, say $X\in C_v$. We
give the resolution of $e_Xe^*_Xf$; that for $f^*e_Xe^*_X$ is
analogous. On one hand,
$$(e_Xe^*_X)f \longmapsto \bigl( v- \sum_{e\in X'} ee^* \bigr) f \longmapsto f- \sum_{e\in X'} ee^*f \longmapsto\cdots\longmapsto \begin{cases} f-0=e_X &(\text{if\ } f=e_X)\\ f-ff^*f\longmapsto 0 &(\text{if\ } f\ne e_X). \end{cases}$$
Thus, $(e_Xe^*_X)f$ reduces to $\delta_{f,e_X}e_X$. Since also
$$e_X(e^*_Xf) \longmapsto e_X (\delta_{e_X,f} r(e_X)) \longmapsto \delta_{f,e_X} e_X,$$
this ambiguity is resolved.

Thus, all ambiguities of $\calS$ are resolvable. Hence, the cosets
of the irreducible words in $W$ form a basis for $CL_K(E,C,S)\oplus
K\cdot\tilde1$. It only remains to show that the irreducible words
are precisely the elements of $\calB'\sqcup \{\tilde1\}$. That the
elements of the latter set are irreducible is clear. Conversely, let
$w= a_1a_2\cdots a_n$ be an irreducible word in $W$, where the
$a_i\in E^0 \sqcup \Ehat^1$. We cannot have any $a_i\in E^0$ unless
$n=0,1$, in which case either $w=\tilde1$ or $w=a_1\in E^0$. Now
assume that $n\ge2$ and that all $a_i\in \Ehat^1$. For
$i=1,\dots,n-1$, we must have $r(a_i)= s(a_{i+1})$, and we cannot
have either $(a_i,a_{i+1})= (e^*,f)$ with $e,f\in X\in C$ or
$(a_i,a_{i+1})= (e_X,e^*_X)$ with $X\in S$. From this, we conclude
that $w\in \calB'$, as desired, and the proof is complete.
\end{proof}

When $S=\Cfin $ we simply refer to the elements of $\calB '$ as the
\emph{reduced elements} of $\calB$. As $L_K(E,C) =CL_K(E,C,\Cfin)$,
we immediately get:

\begin{corollary}\label{basisL(E,C)}
Let $\calB$ be the canonical basis of $C_K(E,C)$ given in
Proposition {\rm\ref{basisCE}}. Then the set $\calB '$ of all the
reduced elements of $\calB$ is a $K$-basis for $L_K(E,C)$.
\end{corollary}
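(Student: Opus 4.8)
The plan is to obtain this as the special case $S=\Cfin$ of Theorem \ref{basisCSE}. First I would invoke Definition \ref{defCLalg}, which records that $CL_K(E,C,\Cfin)=L_K(E,C)$: imposing relation (SCK2) for every finite set $X\in C$ is exactly the list of defining relations of the Leavitt path algebra. Next I would check that the ad hoc terminology introduced just above the corollary is consistent, namely that calling the elements of $\calB'$ the \emph{reduced} elements of $\calB$ when $S=\Cfin$ agrees with Definition \ref{def:redform1}: a path $p=\lambda_1\nu_1^*\cdots\lambda_r\nu_r^*$ in $\calB$ is reduced with respect to $\Cfin$ precisely when each factor $\lambda_i\nu_i^*$ avoids the pattern $(e_X,e_X)$ on its terminal edges, for the chosen distinguished edge $e_X\in X$ of each $X\in\Cfin$. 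Once these two identifications are in place, Theorem \ref{basisCSE} applied with $S=\Cfin$ states exactly that $\calB'$ is a $K$-basis of $CL_K(E,C,\Cfin)=L_K(E,C)$, which is the assertion.

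Since the statement is a direct specialization of an already-established theorem, I do not expect any genuine obstacle. The only point that merits a sentence of care is the bookkeeping around the selection data: the set $\calB'$ a priori depends on the choice of an edge $e_X$ in each $X\in\Cfin$, and I would note that this is precisely the data already fixed in Definition \ref{def:redform1} and used in the proof of Theorem \ref{basisCSE} (taking $S=\Cfin$ there), so no additional choices are introduced and the corollary follows verbatim.
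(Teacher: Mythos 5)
Your proposal is correct and is exactly the argument the paper uses: the corollary is stated as an immediate specialization of Theorem \ref{basisCSE} to $S=\Cfin$, using the identification $L_K(E,C)=CL_K(E,C,\Cfin)$ and the convention that ``reduced'' means ``reduced with respect to $\Cfin$.'' Your extra remark about the choice of the edges $e_X$ being the same data already fixed in Definition \ref{def:redform1} is a reasonable point of care, though the paper does not spell it out.
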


\begin{remark}
\label{ind-field} Note that the fact that $K$ is a field does not
play any role in the determination of the bases $\calB$ and $\calB'$
of the algebras $C_K(E,C)$ and $L_K(E,C)$ respectively, so that the
same arguments would show that the corresponding $R$-algebras
$C_R(E,C)$ and $L_R(E,C)$ are free as $R$-modules, for any nonzero
unital commutative ring $R$.
\end{remark}

We are ready now to give some concrete examples. In the first place,
we analyse the algebras corresponding to separated graphs with only
one vertex. For a cardinal number $\aleph $, denote by $L_K(\aleph
)$ the Leavitt algebra of type $(1,\aleph )$, namely, the Leavitt
path algebra of the (non-separated) graph with one vertex and
$\aleph$ edges. (For $\aleph =1$, $L_K(1)=K[t,t^{-1}]$.)

\begin{proposition}
\label{1-vertex} Assume that $(E,C)$ is a separated graph and that
$|E^0|=1$. Then we have
$$L_K(E,C)\cong \bgast_{X\in C}\; L_K(|X|) , $$
that is, $L_K(E,C)$ is a free product over $K$ of Leavitt path
algebras of type $(1,|X|)$, for $X\in C$.
\end{proposition}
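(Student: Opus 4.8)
The plan is to exploit the universal property of $L_K(E,C)$ together with the universal property of the free product. Write $E^0 = \{v\}$, so that $C = C_v$ is a partition of $E^1 = s^{-1}(v)$ into the sets $X \in C$, each of cardinality $|X|$. For each $X \in C$, let $\Gamma_X$ be the one-vertex, $|X|$-edge non-separated graph with vertex $w_X$ and edge set $\{f^X_e \mid e \in X\}$, so that $L_K(\Gamma_X) = L_K(|X|)$ is generated by $w_X$ and the $f^X_e, (f^X_e)^*$ subject to (V), (E1), (E2), (SCK1) across the full set $\{f^X_e\}$, and (SCK2) $w_X = \sum_{e\in X} f^X_e (f^X_e)^*$ when $|X| < \infty$. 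Let $P = \bgast_{X\in C} L_K(|X|)$ be the free product amalgamating the identities: in $P$ all the $w_X$ are identified with a common unit $1_P$.

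First I would construct a homomorphism $\Phi\colon L_K(E,C) \to P$. Define $\Phi(v) = 1_P$, and for $e \in E^1$ with $e \in X \in C$, set $\Phi(e) = f^X_e$ and $\Phi(e^*) = (f^X_e)^*$. I must check that these images satisfy relations (V), (E1), (E2), (SCK1), (SCK2) of Definition \ref{def:LPASG}. Relation (V) is trivial since there is one vertex; (E1) and (E2) hold because $1_P$ is the identity of $P$ and each $f^X_e$ lives in the subalgebra $L_K(\Gamma_X)$ where it absorbs $w_X = 1_P$ on both sides. For (SCK1), given $e, e' \in X$ (same $X$, since the relation is only imposed within a single block), $\Phi(e^*)\Phi(e') = (f^X_e)^*f^X_{e'} = \delta_{e,e'} w_X = \delta_{e,e'} 1_P = \delta_{e,e'}\Phi(r(e))$, which is exactly (SCK1) in $L_K(\Gamma_X)$. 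For (SCK2), a finite $X \in C$ gives $\sum_{e\in X}\Phi(e)\Phi(e^*) = \sum_{e\in X} f^X_e(f^X_e)^* = w_X = 1_P = \Phi(v)$. Hence $\Phi$ exists and is a $K$-algebra homomorphism (unital, since $\Phi(v) = 1_P$).

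Next I would construct the inverse. For each $X$, the elements $v, e, e^*$ of $L_K(E,C)$ with $e \in X$ satisfy precisely the defining relations of $L_K(\Gamma_X) = L_K(|X|)$ — this is the content of checking (V), (E1), (E2), (SCK1), (SCK2) restricted to the block $X$, which hold in $L_K(E,C)$ by definition — so there is a homomorphism $\psi_X\colon L_K(|X|) \to L_K(E,C)$ sending $w_X \mapsto v$, $f^X_e \mapsto e$, $(f^X_e)^* \mapsto e^*$. Since all $\psi_X$ agree on the identity ($\psi_X(w_X) = v$), the universal property of the free product $P$ yields a homomorphism $\Psi\colon P \to L_K(E,C)$ restricting to $\psi_X$ on each factor. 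It is immediate on generators that $\Psi \circ \Phi = \id_{L_K(E,C)}$ and $\Phi \circ \Psi = \id_P$, so $\Phi$ is an isomorphism.

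The main obstacle is not any single hard step but a bookkeeping subtlety: one must be careful that relation (SCK1) in $L_K(E,C)$ is imposed only for $e, e'$ in a \emph{common} set $X \in C$, which is exactly what is needed for the block-wise maps $\psi_X$ to be well defined and for $\Phi$ to respect (SCK1) — there is no cross-block relation to worry about, which is why the free product (rather than some amalgamated or quotient construction) is the right target. A secondary point worth a sentence is the unital-versus-nonunital framing: since $|E^0| = 1$, $L_K(E,C)$ is unital with identity $v$, and the free product $P$ is taken in the category of unital $K$-algebras, so the identification of all $w_X$ with $1_P$ is automatic and no nonunital free product is needed. One could alternatively skip building $\Psi$ by hand and instead invoke the universal properties directly, but exhibiting the explicit mutually inverse maps on generators is cleanest.
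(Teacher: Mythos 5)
Your proposal is correct and follows essentially the same route as the paper: both directions are obtained by checking relations on generators (the block-wise maps $L_K(|X|)\to L_K(E,C)$ assembled via the universal property of the free product in one direction, and the verification that the images $1_P$, $f^X_e$, $(f^X_e)^*$ satisfy (V), (E1), (E2), (SCK1), (SCK2) in the other), and the two maps are mutually inverse on generators. No further comment is needed.
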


\begin{proof}
Denote by $L$ the free product $\bgast_{X\in C}\; L_K(|X|)$. To keep the
different algebras $L_K(|X|)$ apart (since many sets in $C$ may have
the same cardinality), identify the copy of $L_K(|X|)$ corresponding
to a set $X\in C$ with the Leavitt path algebra $L_K(E_X)$ where
$E_X$ is the subgraph of $E$ with $E^0_X= E^0$ and $E^1_X= X$. Each
inclusion map $E_X\to E$ induces a $K$-algebra homomorphism
$L_K(|X|) \to L_K(E,C)$, and the family of these homomorphisms
extends uniquely to a unital $K$-algebra homomorphism $\varphi: L\to
L_K(E,C)$.

Each $e\in E^1$ belongs to a unique $X\in C$, and the symbols $e$
and $e^*$ represent elements of both $L_K(E,C)$ and $L_K(|X|)$. Let
$\ebar$ and $\ebar^*$ denote the elements corresponding to $e$ and
$e^*$ in the canonical copy of $L_K(|X|)$ inside $L$. The elements
$1\in L$ and $\ebar$, $\ebar^*$ for $e\in E^1$ satisfy the defining
relations of $L_K(E,C)$, so there is a unique unital $K$-algebra
homomorphism $\psi: L_K(E,C)\to L$ sending $e\mapsto \ebar$ and
$e^*\mapsto \ebar^*$ for all $e\in E^1$. It is clear that
$\varphi\psi$ and $\psi\varphi$ are identity maps. This shows the
result.
\end{proof}

\begin{example}  \label{Lmn}

We consider now a key class of examples, the separated graphs which
correspond to the Leavitt algebras $L_K(m,n)$ for $1\le m \le n$.
Indeed we can think of these Leavitt path algebras as versions of
$L_K(m,n)$ which are generated by ``partial isometries". Let us
consider the separated graph $(E(m,n),C(m,n))$, where
\begin{enumerate}
\item $E(m,n)^0 := \{v,w\}$ (with $v\ne w$).
\item $E(m,n)^1 :=\{\alpha_1,\dots , \alpha_n,\beta_1,\dots ,\beta_m\}$ (with $n+m$ distinct edges).
\item $s(\alpha_i)=s(\beta _j) =v$ and $r(\alpha _i)=r(\beta _j)=w$
for all $i$, $j$.
\item $C(m,n)= C(m,n)_v := \bigl\{ \{\alpha_1,\dots ,\alpha_n\},\, \{\beta _1,\dots, \beta _m
\} \bigr\}$.
\end{enumerate}
We will show that the structure of
$A_{m,n}:=L_K(E(m,n),C(m,n))$ is closely related to the structure of
the classical Leavitt algebra $L_K(m,n)$.

Observe that the corner algebras $vA_{m,n}v$ and $wA_{m,n}w$ are full corners of
$A_{m,n}$, that is, $A_{m,n}vA_{m,n}=A_{m,n}wA_{m,n}=A_{m,n}$. In particular,
$vA_{m,n}v$, $wA_{m,n}w$, and $A_{m,n}$ are Morita equivalent to each other. We will describe
their structure below.
\end{example}

Recall that $L_K(m,n)$ is generated by elements $X_{ij}$ and $X^*_{ij}$,
for $i=1,\dots,m$ and $j=1,\dots,n$, such that $XX^* = I_m$ and $X^*X =
I_n$, where $X$ denotes the $m\times n$ matrix $(X_{ij})$ and $X^*$
denotes the *-transpose of $X$, i.e., the $n\times m$ matrix with entries
$(X^*)_{ji} = X^*_{ij}$.

The isomorphism $wA_{m,n}w \cong L_K(m,n)$ below was discovered by Pardo \cite{Par}.
We thank him for permission to give it here.

\begin{proposition}  \label{keyexample}
Let $m\le n$ be positive integers, and define $E(m,n)$, $C(m,n)$,
$A_{m,n}$ as above.
\begin{enumerate}
\item There are $K$-algebra isomorphisms
\begin{align*}
A_{m,n} &\cong M_{m+1}(L_K(m,n)) \cong M_{n+1}(L_K(m,n)) \\
vA_{m,n}v &\cong M_m(L_K(m,n)) \cong M_n(L_K(m,n))  &wA_{m,n}w
&\cong L_K(m,n).
\end{align*}
\item The monoids $\mon{L_K(m,n)}$, $\mon{A_{m,n}}$, $\mon{vA_{m,n}v}$, and $\mon{wA_{m,n}w}$ are all of the form $\langle x\mid mx=nx\rangle$, where the generator $x$ corresponds to the classes $[1]$, $[w]$, $[\alpha_1\alpha^*_1]$, $[w]$ in the four respective cases.
\item Moreover, $vA_{m,n}v$ is the Bergman algebra obtained from
$R:=M_n(K)*M_m(K)$ by adjoining a universal isomorphism between the
left $R$-modules $Rf_{11}$ and $Rg_{11}$, where $(f_{ij})_{i,j=1}^n$
and $(g_{ij})_{i,j=1}^m$ are sets of matrix units corresponding to
the factors $M_n(K)$ and $M_m(K)$ respectively.
\item There is a surjective unital $K$-algebra homomorphism $\rho: L_K(m,n) \rightarrow vA_{m,n}v$. If $\mon{L_K(m,n)}$ and $\mon{vA_{m,n}v}$ are identified with $\langle x\mid mx=nx\rangle$ as in {\rm(2)}, then $\mon{\rho}$ is given by multiplication by $m$ {\rm(}equivalently, multiplication by $n${\rm)}.
\end{enumerate}
\end{proposition}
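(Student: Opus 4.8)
The plan is to establish the four parts more or less in the order stated, using the presentations from Theorem~\ref{basisCSE} and the Bergman machinery of~\ref{Bergmanmachinery}, with part~(3) doing the real work and parts~(1), (2), (4) following as consequences. First, for part~(3), I would set $R := M_n(K)*M_m(K)$ and write down explicit matrix units $(f_{ij})$ and $(g_{ij})$ for the two factors. The key observation is that in $A_{m,n}$ the elements $\alpha_i\alpha_j^*$ ($1\le i,j\le n$) satisfy $(\alpha_i\alpha_j^*)(\alpha_k\alpha_\ell^*) = \delta_{jk}\alpha_i\alpha_\ell^*$ by (SCK1), and $\sum_i \alpha_i\alpha_i^* = v$ by (SCK2); similarly the $\beta_i\beta_j^*$ form a system of $m\times m$ matrix units summing to $v$. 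Thus there is a $K$-algebra homomorphism $R \to vA_{m,n}v$ sending $f_{ij}\mapsto \alpha_i\alpha_j^*$ and $g_{ij}\mapsto \beta_i\beta_j^*$. I would then check that $\alpha_1\beta_1^*$ (together with its adjoint $\beta_1\alpha_1^*$) implements an isomorphism $\ol{Rf_{11}} \xrightarrow{\sim} \ol{Rg_{11}}$ over $vA_{m,n}v$: indeed $(\alpha_1\beta_1^*)(\beta_1\alpha_1^*) = \alpha_1\alpha_1^* = f_{11}$ and $(\beta_1\alpha_1^*)(\alpha_1\beta_1^*) = \beta_1\beta_1^* = g_{11}$ using (SCK1). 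The universal property of the Bergman algebra $S := R\langle i,i^{-1}\colon \ol{Rf_{11}}\cong\ol{Rg_{11}}\rangle$ then yields a homomorphism $S \to vA_{m,n}v$. For the reverse direction, I would exhibit elements of $S$ satisfying the defining relations (V), (E1), (E2), (SCK1), (SCK2) of $A_{m,n}$ — taking $v\mapsto 1_R = \sum f_{ii} = \sum g_{jj}$, $w$ to a suitable orthogonal complement idempotent adjoined formally (or rather realizing $A_{m,n}$ via its corner), $\alpha_i \mapsto f_{i1}i^{-1}\cdot(\text{something})$ — and verify the two composites are mutually inverse. This is the step where bookkeeping with the non-unital/corner structure is fiddly, so I expect \textbf{part~(3) to be the main obstacle}, specifically matching up the two-vertex algebra $A_{m,n}$ with the one-idempotent corner $vA_{m,n}v$ and keeping track of $w$.

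Once (3) is in hand, part~(2) for $vA_{m,n}v$ follows from~\cite[Theorem 5.2]{Berg2} as quoted in~\ref{Bergmanmachinery}: $\mon{vA_{m,n}v}$ is $\mon{R}$ modulo the single relation $[Rf_{11}] = [Rg_{11}]$. Since $R = M_n(K)*M_m(K)$ is a free product of simple artinian rings, $\mon{R}$ is freely generated (as a monoid) by $[Rf_{11}]$ and $[Rg_{11}]$ subject only to $n[Rf_{11}] = [R] = m[Rg_{11}]$ — here one uses that the free product of the trivial-monoid data glues the two copies of $\N$ along their images of the unit, giving $\langle p,q \mid np = mq\rangle$ — and then imposing $p=q$ collapses this to $\langle x\mid nx = mx\rangle = \langle x\mid mx=nx\rangle$, with $x = [f_{11}] = [\alpha_1\alpha_1^*]$. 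For the other three monoids in~(2), I would invoke the Morita equivalences noted in Example~\ref{Lmn} (that $vA_{m,n}v$, $wA_{m,n}w$, and $A_{m,n}$ are mutually Morita equivalent via the full corners $v,w$), under which $\mon{-}$ is preserved and the listed idempotents correspond; the identification $\mon{L_K(m,n)} \cong \langle x\mid mx=nx\rangle$ with $x=[1]$ is classical (it is the defining feature $XX^*=I_m$, $X^*X=I_n$), so that case is essentially a citation.

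Part~(1) then follows by translating the monoid computation back into an algebra isomorphism, but more directly I would argue as follows: the Morita context given by $v$ and $w$ plus an explicit $n\times n$ (resp.\ $m\times m$, $(m{+}1)\times(m{+}1)$, $(n{+}1)\times(n{+}1)$) matrix of partial isometries built from the $\alpha_i,\beta_j$ realizes $A_{m,n}$ and its corners as matrix algebras over $wA_{m,n}w \cong L_K(m,n)$; concretely, $\{w\} \cup \{\alpha_i^*\alpha_i = w\}$ is degenerate, so instead one uses that $v = \sum_{i=1}^n \alpha_i\alpha_i^*$ exhibits $v$ as a sum of $n$ equivalent orthogonal idempotents each Murray--von Neumann equivalent to $w$, giving $vA_{m,n}v \cong M_n(wA_{m,n}w)$, and likewise $v = \sum_{j=1}^m\beta_j\beta_j^*$ gives $vA_{m,n}v\cong M_m(wA_{m,n}w)$; adjoining the row/column for $w$ itself gives the $M_{n+1}$ and $M_{m+1}$ presentations of $A_{m,n}$. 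Finally, for part~(4): $L_K(m,n)$ is the universal $K$-algebra on $X=(X_{ij})$, $X^*$ with $XX^*=I_m$, $X^*X=I_n$; mapping $X_{ij} \mapsto \beta_j^*\alpha_i \in vA_{m,n}v$ one checks $XX^* \mapsto (\beta_j^*\alpha_i\alpha_{i'}^*\beta_{j'})$ — wait, rather $X^*X = I_n$ becomes $\sum_j \alpha_i^*\beta_j\beta_j^*\alpha_{i'} = \alpha_i^*\alpha_{i'} = \delta_{ii'}w$ by (SCK2) and (SCK1), and $XX^* = I_m$ becomes $\sum_i \beta_j^*\alpha_i\alpha_i^*\beta_{j'} = \beta_j^*\beta_{j'} = \delta_{jj'}w$; so this defines a homomorphism $\rho\colon L_K(m,n) \to vA_{m,n}v$ (after identifying the unit of $L_K(m,n)$ with an appropriate idempotent, or passing through $wA_{m,n}w$). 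Surjectivity follows since the images generate: $\alpha_i\alpha_{i'}^* = \sum_j(\alpha_i\beta_j^*)(\beta_j\alpha_{i'}^*)$ etc.\ recover the matrix units. On $\mon{-} = \langle x\mid mx=nx\rangle$, $[1_{L_K(m,n)}] = [1]$ maps to $[v]$, and since $v = \sum_{i=1}^n\alpha_i\alpha_i^*$ is a sum of $n$ copies of $x=[\alpha_1\alpha_1^*]$ (equivalently $m$ copies via the $\beta$'s, using $mx=nx$), we get $\mon{\rho}(x) = nx = mx$, i.e.\ $\mon{\rho}$ is multiplication by $m$ (equivalently by $n$), as claimed.
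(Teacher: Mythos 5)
Your overall strategy is workable, but there is a genuine gap at the step you yourself flag as ``the main obstacle,'' namely the reverse direction in part~(3), and your proposed ordering of the parts makes that gap hard to close. The forward map $\Rhat \to vA_{m,n}v$ (sending $f_{ij}\mapsto \alpha_i\alpha^*_j$, $g_{ij}\mapsto\beta_i\beta^*_j$, $u\mapsto\alpha_1\beta^*_1$) is fine and is exactly what the paper does. But for the inverse you propose to ``exhibit elements of $S$ satisfying the defining relations of $A_{m,n}$'' -- this cannot work as stated, because the target of the desired inverse is the \emph{corner} $vA_{m,n}v$, not $A_{m,n}$ itself ($w\notin vA_{m,n}v$, and $\Rhat$ contains nothing to receive $w$). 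A corner is not given to you by generators and relations, so there is no list of relations to check. The paper resolves this by proving part~(1) \emph{first}, via explicit mutually inverse maps $\psi\colon A_{m,n}\to M_{m+1}(L_K(m,n))$ and $\phi$ back; this identifies $vA_{m,n}v$ with $M_m(L_K(m,n))$, which \emph{does} have a usable presentation (matrix units plus the $X_{ij}$, $X^*_{ij}$), and then one defines $\xi\colon M_m(L_K(m,n))\to\Rhat$ and checks that $\thetahat$ and $\xi\psi'$ are mutually inverse. So either prove (1) before (3), or supply an independent injectivity argument for your map $\Rhat\to vA_{m,n}v$ (surjectivity is easy since $\alpha_i\beta^*_j=\alpha_i\alpha^*_1\cdot\alpha_1\beta^*_1\cdot\beta_1\beta^*_j$, etc.); you do neither.

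Two smaller points. In part~(4) you have the direction of the paths reversed: $\beta^*_j\alpha_i$ begins and ends at $w$ (since $s(\beta^*_j)=r(\beta_j)=w$), so $X_{ij}\mapsto\beta^*_j\alpha_i$ lands in $wA_{m,n}w$ -- that assignment is essentially Pardo's isomorphism $wA_{m,n}w\cong L_K(m,n)$, not the surjection $\rho$ onto $vA_{m,n}v$, which must be $X_{ij}\mapsto\alpha_j\beta^*_i$; your verification of $XX^*=I_m$, $X^*X=I_n$ (producing $\delta w$) is consistent with the $wA_{m,n}w$ map, and the subsequent monoid computation is fine once the target is corrected. On the positive side, where your argument is complete it is a genuinely different and rather clean route: in part~(1) you replace the paper's explicit matrix formulas by the structural observation that $v=\sum_{i=1}^n\alpha_i\alpha^*_i$ (resp.\ $\sum_{j=1}^m\beta_j\beta^*_j$) decomposes $v$, and $v+w$ decomposes $1$, into pairwise orthogonal idempotents each Murray--von Neumann equivalent to $w$, which immediately gives the matrix-algebra identifications over $wA_{m,n}w$ -- though this still reduces everything to the isomorphism $wA_{m,n}w\cong L_K(m,n)$, which you do not actually prove. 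Likewise your computation of $\mon{vA_{m,n}v}$ directly from Bergman's coproduct and universal-isomorphism theorems ($\mon{M_n(K)*M_m(K)}=\langle p,q\mid np=mq\rangle$, then impose $p=q$) is a valid alternative to the paper's route through $\mon{L_K(m,n)}$ and Morita equivalence, and arguably explains \emph{why} the answer is $\langle x\mid mx=nx\rangle$.
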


\begin{proof} Set $A := A_{m,n}$ and $L := L_K(m,n)$. We construct various $K$-algebra homomorphisms between algebras presented by generators and relations. In all cases, it is routine to check that the appropriate relations are satisfied by the proposed images for the generators, and we omit these details.

(1) We identify $L$ with the diagonal copies of itself in the
various matrix algebras $M_d(L)$. Let $(e_{ij})_{i,j=1}^{m+1}$ be
the standard family of matrix units in $M_{m+1}(L)$, and observe
that $M_{m+1}(L)$ is presented by the generators $e_{ij}$, $X_{ij}$,
$X^*_{ij}$ together with the following three types of relations:
\begin{enumerate}
\item[(a)] The defining relations for the $X_{ij}$ and $X^*_{ij}$ in $L$.
\item[(b)] The matrix unit relations for the $e_{ij}$.
\item[(c)] The commutation relations $e_{kl}X_{ij}= X_{ij}e_{kl}$ and $e_{kl}X^*_{ij}= X^*_{ij}e_{kl}$ for all $i$, $j$, $k$, $l$.
\end{enumerate}
Moreover, $M_{m+1}(L)$ is a free left (or right) $L$-module with
basis $\{e_{ij} \mid 1\le i,j\le m+1\}$. Analogous statements hold
for $M_m(L)$, and we identify $M_m(L)$ with the corner
$eM_{m+1}(L)e$ where $e:= e_{11}+ \cdots+ e_{mm}$.

There exist a $K$-algebra homomorphism $\psi: A\rightarrow
M_{m+1}(L)$ such that
\begin{align*}
\psi(v) &= e  &\psi(w) &= e_{m+1,m+1} \\
\psi(\alpha_i) &= \sum_{l=1}^m X_{li}e_{l,m+1}  &\psi(\alpha^*_i) &= \sum_{l=1}^m X^*_{li}e_{m+1,l}  &&(i=1,\dots,n) \\
\psi(\beta_j) &= e_{j,m+1}  &\psi(\beta^*_j) &= e_{m+1,j}
&&(j=1,\dots,m),
\end{align*}
and a $K$-algebra homomorphism $\phi: M_{m+1}(L) \rightarrow A$ such
that
\begin{align*}
\phi(e_{ij}) &= \beta_i\beta^*_j  &&(i,j= 1,\dots,m) \\
\phi(e_{i,m+1}) &= \beta_i  &&(i=1,\dots,m) \\
\phi(e_{m+1,j}) &= \beta^*_j  &&(j=1,\dots,n) \\
\phi(e_{m+1,m+1}) &= w \\
\phi(X_{ij}) &= \beta^*_i\alpha_j+ \sum_{l=1}^m \beta_l\beta^*_i\alpha_j\beta^*_l  &&(i=1,\dots,m;\ j=1,\dots,n) \\
\phi(X^*_{ij}) &= \alpha^*_j\beta_i+ \sum_{l=1}^m
\beta_l\alpha^*_j\beta_i\beta^*_l &&(i=1,\dots,m;\ j=1,\dots,n).
\end{align*}
Moreover, $\phi$ and $\psi$ are mutual inverses. Thus, $A\cong
M_{m+1}(L)$.

Isomorphisms between $A$ and $M_{n+1}(L)$ are obtained in a similar
fashion, by interchanging the roles of the $\alpha_i$ and $\beta_j$
in the constructions of $\psi$ and $\phi$ above.

Since $\psi$ maps $v$ to $e$, it restricts to an isomorphism of
$vAv$ onto $eM_{m+1}(L)e \equiv M_m(L)$. Similarly, $vAv\cong
M_n(L)$. Since $\psi$ maps $w$ to $e_{m+1,m+1}$, it restricts to an
isomorphism of $wAw$ onto $e_{m+1,m+1} M_{m+1}(L) e_{m+1,m+1}$, and
the latter algebra is isomorphic to $L$.

(2) By \cite[Theorem 6.1]{Berg2}, $\mon{L}= \langle x\mid
mx=nx\rangle$ with $x$ corresponding to the class of the free module
$_LL$, that is, to the class $[1_L]$. Applying the last isomorphism
of part (1) immediately yields $\mon{wAw}= \langle x\mid
mx=nx\rangle$ with $x$ corresponding to $[w]$. In view of the
equivalence
$$Aw \otimes_{wAw} (-): wAw\Mod \longrightarrow A\Mod,$$
it follows that $\mon{A}= \langle x\mid mx=nx\rangle$ with $x$
corresponding to $[w]$. Note that $[w]= [\alpha_1\alpha^*_1]$ in
$\mon{A}$ and that $\alpha_1\alpha^*_1\in vAv$. In view of the
equivalence
$$vA \otimes_A (-): A\Mod \longrightarrow vAv\Mod,$$
it thus follows that $\mon{vAv}= \langle x\mid mx=nx\rangle$ with
$x$ corresponding to $[\alpha_1\alpha^*_1]$.

(3) Let $\Rhat$ be the Bergman algebra obtained from $R$ by
adjoining a universal isomorphism between the modules $Rf_{11}$ and
$Rg_{11}$. Thus, $\Rhat$ is presented by generators $f_{ij}$,
$g_{ij}$, $u$, $u^*$ where
\begin{enumerate}
\item[(a)] The $f_{ij}$ satisfy the relations for a complete set of $n\times n$ matrix units.
\item[(b)] The $g_{ij}$ satisfy the relations for a complete set of $m\times m$ matrix units.
\item[(c)] $u=f_{11}ug_{11}$, $u^*= g_{11}u^*f_{11}$, $uu^*= f_{11}$, and $u^*u= g_{11}$.
\end{enumerate}
There is a $K$-algebra homomorphism $\theta: R\rightarrow vAv$ such
that
\begin{align*}
\theta(f_{ij}) &= \alpha_i\alpha^*_j  &\theta(g_{ij}) &=
\beta_i\beta^*_j
\end{align*} for all $i$, $j$. The universal property of the Bergman construction implies that $\theta$ extends uniquely to a $K$-algebra homomorphism $\thetahat: \Rhat \rightarrow vAv$ such that
\begin{align*}
\theta(u) &= \alpha_1\beta^*_1  &\theta(u^*) &= \beta_1\alpha^*_1
\,.
\end{align*}
There is a $K$-algebra homomorphism $\xi: M_m(L) \rightarrow \Rhat$
such that
\begin{align*}
\xi(e_{ij}) &= g_{ij} &&(i,j=1,\dots,m) \\
\xi(X_{ij}) &= \sum_{l=1}^m g_{li}f_{j1}ug_{1l}    &&(i=1,\dots,n;\ j=1,\dots,m) \\
\xi(X^*_{ij}) &= \sum_{l=1}^m g_{l1}u^*f_{1j}g_{il} &&(i=1,\dots,n;\
j=1,\dots,m).
\end{align*}
Let $\psi': vAv\rightarrow M_m(L)$ be the isomorphism obtained by
restricting $\psi$ to $vAv$. Then $\thetahat$ and $\xi\psi'$ are
mutual inverses, proving that $\Rhat\cong vAv$.

(4) There is a $K$-algebra homomorphism $\rho: L\rightarrow vAv$
such that
\begin{align*}
\rho(X_{ij}) &= \alpha_j\beta^*_i  &\rho(X^*_{ij}) &=
\beta_i\alpha^*_j
\end{align*}
for $i=1,\dots,m$ and $j=1,\dots,n$. Since $vAv$ is generated by the
elements $\alpha_j\alpha^*_i$, $\alpha_j\beta^*_i$,
$\beta_i\alpha^*_j$, $\beta_i\beta^*_j$, we see that $\rho$ is
surjective. As in (2), $\mon{L}= \langle x\mid mx=nx\rangle$ with
$x$ corresponding to the class $[1]$, and  $\mon{vAv}= \langle x\mid
mx=nx\rangle$ with $x$ corresponding to $[\alpha_1\alpha^*_1]$.
Since $\mon{\rho}$ maps $[1]$ to $[v]= \sum_{l=1}^m
[\alpha_l\alpha^*_l]= m[\alpha_1\alpha^*_1]$, we conclude that
$\mon{\rho}$ is given by multiplication by $m$. In the given monoid,
this is the same as multiplication by $n$.
\end{proof}

\begin{remark}
\label{grading}{\rm There is a canonical way to define a $\Z$-graded
structure in any $L_K(E,C)$, namely by setting $\text{deg}(e)=1$ and
$\text{deg}(e^*)=-1$ for every $e\in E^1$, and $\text{deg}(v)=0$ for
every $v\in E^0$. However in the particular case of the algebra
$A_{m,n}=L_K(E(m,n),C(m,n))$, this $\Z$-grading does not seem too
interesting, since for instance all elements of $vA_{m,n}v$ are
homogeneous of degree $0$, and thus this grading is not compatible
with the natural grading induced by the surjective homomorphism
$\rho \colon L_K(m,n)\to vA_{m,n}v$ of Proposition
\ref{keyexample}(4), in which the elements $\alpha _j\beta_i^*$
have degree $1$ and the elements $\beta_i\alpha _j^*$ have
degree $-1$. Observe that this $\Z$-grading coincides with the
grading on $vA_{m,n}v$ induced by the $\Z$-grading on $A_{m,n}$
obtained by setting $\text{deg}(\alpha _i)=1$,
$\text{deg}(\alpha_i^*)=-1$, and
$\text{deg}(\beta_j)=\text{deg}(\beta_j^*)=0$ for all $i$, $j$. In
terms of the isomorphic algebra $\hat{R}$, this $\Z$-grading agrees
with the one obtained by giving degree $0$ to all elements in
$M_n(K)*M_m(K)$ and giving degree $1$ to $u$ and degree $-1$ to
$u^*$. }
\end{remark}

\section{Direct limits}
\label{sect:colimits}

In this section we introduce two categories $\SSGr$ and $\SGr$ of
separated graphs and we study the functoriality and continuity of
the constructions in Section \ref{sect:algebras}. In particular, each object of $\SSGr$ is a direct limit of subobjects based on finite graphs, from which we obtain that every Cohn-Leavitt algebra is a direct limit of Cohn-Leavitt algebras based on finite graphs. This will be used in the following section in determining the structure of the $\calV$-monoids of Cohn-Leavitt algebras. In the present section, we also prove that every Cohn-Leavitt algebra is hereditary.

\begin{definition} \label{defSSGr}{\rm
Define a category $\SSGr$ of separated graphs with distinguished
subsets as follows. The objects of $\SSGr$ are triples $(E,C,S)$,
where $(E,C)$ is a separated graph and $S$ is a subset of $\Cfin$. A
morphism $\phi: (E,C,S) \rightarrow (F,D,T)$ in $\SSGr$ is any graph
morphism $\phi: E\rightarrow F$ such that
\begin{enumerate}
\item $\phi^0$ is injective.
\item For each $X\in C$ there is a (unique) $Y\in D$ such that $\phi ^1 $
restricts to an injective map $X\to Y$. Note that the assignment
$X\mapsto Y$ defines a set map $\phitil\colon C\to D$. Moreover, for
all $v\in E^0$ and $X\in C_v$, we have $\phitil(X)\in
D_{\phi^0(v)}$. Hence, $\phitil(C_v) \subseteq D_{\phi^0(v)}$.
\item The map $\phitil\colon C\to D$ restricts to a map $S\to T$ and, for each $X\in S$,
$\phi ^1$ restricts to a bijection $X\to \phitil(X)$.
\end{enumerate}}
\end{definition}

\begin{definition} \label{defSGr}{\rm
Define a category $\SGr$ of separated graphs as the full subcategory
of $\SSGr$ whose objects are all triples $(E,C, \Cfin )$. When
working with the category $\SGr$ we will use the simplified notation
$(E,C)$ instead of $(E,C,\Cfin)$, since the third component is
determined by the second.

An \emph{SG-subgraph} of a separated graph $(F,D)$ is any separated
graph $(E,C)$ such that $E$ is a subgraph of $F$ and the inclusion
map $E\rightarrow F$ is a morphism in $\SGr$. }
\end{definition}

\begin{proposition}
\label{colim} The categories $\SSGr$ and $\SGr$ admit arbitrary
direct limits. Indeed, given a directed system $\calD:= \{
(E_i,C_i,S_i),\; f_{ji}\mid i,j\in I,\; j\ge i\}$ in either category, the underlying sets $E^0$, $E^1$, $C$, $S$ of a direct limit $(E,C,S)$ of this system
are just direct limits of the respective systems of sets $E_i^0$, $E_i^1$
$C_i$, $S_i$ in the category of sets. Moreover, $E$ is a
direct limit of the system $\{E_i,f_{ji}\}$ in the category of
directed graphs.
\end{proposition}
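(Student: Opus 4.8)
The plan is to construct the direct limit object $(E,C,S)$ explicitly as the system of set-theoretic direct limits, verify that the necessary structure maps descend, and then check the universal property. First I would form $E^0 := \varinjlim E_i^0$ and $E^1 := \varinjlim E_i^1$ in $\mathbf{Sets}$, using the transition maps $f_{ji}^0$ and $f_{ji}^1$; since each $f_{ji}^0$ is injective, the canonical maps $E_i^0 \to E^0$ are injective, and one checks that the source and range maps $s_i, r_i$ are compatible with the transitions (because the $f_{ji}$ are graph morphisms), hence induce $s, r : E^1 \to E^0$, giving a graph $E$ which is visibly the direct limit of $\{E_i, f_{ji}\}$ in directed graphs. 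Next I would form $C := \varinjlim C_i$ and $S := \varinjlim S_i$ in $\mathbf{Sets}$ via the maps $\widetilde{f_{ji}} : C_i \to C_j$ (and their restrictions $S_i \to S_j$). Using injectivity of $\phi^0$ and the compatibility conditions (2),(3) of Definition \ref{defSSGr}, I would define, for each class $X \in C$ represented by some $X_i \in C_i$, the subset of $E^1$ obtained as the direct limit (union) of the images of the $X_j$ for $j \ge i$; one must check this is well-defined (independent of the representative) and that these subsets are pairwise disjoint and partition $s^{-1}(v)$ for each $v \in E^0$.

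The main obstacle I anticipate is precisely this last point: showing that the family $C$, as a set of subsets of $E^1$, genuinely forms a partition of each $s^{-1}(v)$, and that $S$ consists of finite sets on which the structure maps behave correctly. The subtlety is that in the direct limit, a set $X_i \in C_i$ may grow (new edges adjoined in later $E_j$) or may merge with other classes only if the transition maps force it — but condition (2) says each $X_i$ maps \emph{injectively} into a \emph{unique} $Y_j \in C_j$, so classes never merge and the partition structure is preserved in the colimit. For $X \in S$, condition (3) forces $\phi^1$ to restrict to a \emph{bijection} $X_i \to \widetilde{f_{ji}}(X_i)$, so sets in $S$ do not grow; hence each $X \in S$ is finite, and the colimit map $S_i \to S$ together with $X \mapsto$ (its image in $E^1$) is a bijection on each such $X$. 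I would verify that the canonical maps $f_i : (E_i, C_i, S_i) \to (E, C, S)$ satisfy the three morphism conditions, which is routine given the above.

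Finally I would check the universal property. Given a cocone $\{g_i : (E_i,C_i,S_i) \to (F,D,T)\}$ over $\calD$, the underlying graph morphisms induce a unique graph morphism $g : E \to F$ by the universal property of the direct limit in directed graphs, and $g^0$ is injective because $E^0 = \varinjlim E_i^0$ with injective transition and structure maps and each $g_i^0$ is injective on a directed union. I would then verify that $g$ satisfies conditions (2) and (3) of Definition \ref{defSSGr} relative to $C, S$: for a class $X \in C$ represented by $X_i$, the set $\widetilde{g_i}(X_i) \in D$ is independent of $i$ by compatibility with the transitions, giving $\widetilde{g}(X)$, and injectivity (resp. bijectivity for $X \in S$) of $g^1$ on $X$ follows by taking a directed union of the injective (resp. bijective) restrictions $g_i^1|_{X_i}$. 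Uniqueness of $g$ as a morphism in $\SSGr$ is immediate from uniqueness at the level of underlying graphs. The statement for $\SGr$ is then the special case $S_i = (C_i)_{\mathrm{fin}}$, once one observes that $\varinjlim (C_i)_{\mathrm{fin}} = C_{\mathrm{fin}}$, which holds because a class $X \in C$ is finite if and only if it is the image of a finite class $X_i$ that is mapped bijectively at every stage — and under the $\SGr$ transition maps this is automatic.
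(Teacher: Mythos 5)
Your proposal is correct and follows essentially the same route as the paper: build $E$ as the colimit of the underlying graphs, realize $C$ and $S$ as colimits of sets whose classes give partitions of the sets $s^{-1}(v)$, check that the limit maps are morphisms in $\SSGr$, and verify the universal property. The only cosmetic difference is one of order: the paper first defines the partition $C_w$ intrinsically, as equivalence classes of the relation ``$e$ and $f$ lift to a common set $X\in C_i$ at some stage,'' and only afterwards identifies $C$ with $\varinjlim C_i$, whereas you start from the abstract colimit $\varinjlim C_i$ and then realize its classes as subsets of $E^1$ (checking disjointness and exhaustion) --- the compatibility facts one must verify, including the stabilization argument showing $\varinjlim (C_i)_{\mathrm{fin}} = C_{\mathrm{fin}}$ in the $\SGr$ case, are the same either way.
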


\begin{proof} For $l=0,1$, let $E^l$ be a direct limit of the directed system $\{ E^l_i,\, f^l_{ji} \}$ in the category of sets, with limit maps $\eta^l_i: E^l_i \rightarrow E^l$. Observe that since all the maps $f^0_{ji}$ are injective, all the maps $\eta^0_i$ are injective. For all $j\ge i$ in $I$, we have $s_{E_j}f^1_{ji}= f^0_{ji}s_{E_i}$, whence $\eta^0_js_{E_j}f^1_{ji}= \eta^0_jf^0_{ji}s_{E_i}= \eta^0_is_{E_i}$. Consequently, there is a unique map $s_E: E^1\rightarrow E^0$ such that $s_E\eta^1_i= \eta^0_is_{E_i}$ for all $i$. Likewise,  there is a unique map $r_E: E^1\rightarrow E^0$ such that $r_E\eta^1_i= \eta^0_ir_{E_i}$ for all $i$. Therefore, $E:= (E^0,E^1,s_E,r_E)$ is a graph. The pairs $\eta_i := (\eta^0_i,\eta^1_i)$ are graph morphisms, and $E$ together with the $\eta_i$ is a direct limit of the system $\{ E_i,\, f_{ji} \}$ in the category of directed graphs.

Consider a vertex $w\in E^0$ which is not a sink. Define a relation $\sim_w$ on $s_E^{-1}(w)$ as follows:
\begin{enumerate}
\item[] $e\sim_w f$ if and only if there exist $i\in I$, $X\in C_i$, and $e',f'\in X$ such that $\eta^1_i(e')=e$ and $\eta^1_i(f')=f$. (Necessarily, $X\in (C_i)_v$ for some $v\in E^0_i$ such that $\eta^0_i(v)=w$.)
\end{enumerate}
Observe that $\sim_w$ is an equivalence relation, and let $C_w$ be the set of all $\sim_w$-equivalence classes in $s_E^{-1}(w)$.

For any sink $w\in E^0$, set $C_w := \emptyset$. Now set $C := \bigsqcup_{w\in E^0} C_w$; then $(E,C)$ is a separated graph.

Consider $i\in I$ and $X\in C_i$. We claim that there is a unique set $Y\in C$ such that $\eta^1_i$ restricts to an injection $X\rightarrow Y$, and that if $X\in S_i$, then $\eta^1_i(X)=Y$.

For $j\ge i$, since $f_{ji}$ is a morphism in $\SSGr$, the map $f^1_{ji}$ restricts to an injection $X\rightarrow \ftil_{ji}(X)\in C_j$. In particular, it follows that $\eta^1_i$ restricts to an injection $X\rightarrow E^1$. Now $X\in (C_i)_v$ for some $v\in E^0_i$, and if $w := \eta^0_i(v)$, then $\eta^1_i(e') \sim_w \eta^1_i(f')$ for all $e',f'\in X$. Thus, by definition of $C_w$, there is a unique $Y\in C_w$ such that $\eta^1_i(X) \subseteq Y$; in fact, $Y= \bigcup_{j\ge i} \eta^1_j \bigl( \ftil_{ji}(X) \bigr)$. In case $X\in S_i$, we have that $f^1_{ji}$ restricts to a bijection $X\rightarrow \ftil_{ji}(X)$ for all $j\ge i$, from which it follows that $Y= \eta^1_i(X)$. This establishes the claim.

Now set $S := \{ \eta^1_i(X) \mid i\in I,\; X\in S_i \}$. We thus have an object $(E,C,S) \in \SSGr$, and each $\eta_i$ is a morphism $(E_i,C_i,S_i) \rightarrow (E,C,S)$ in $\SSGr$. It is routine to check that $(E,C,S)$ together with the morphisms $\eta_i$ is a direct limit in $\SSGr$ for $\calD$. Further, the sets $C$ and $S$, together with the set maps $\etatil_i$, are direct limits for the respective systems $\{C_i,\, \ftil_{ji}\}$ and $\{S_i,\, \ftil_{ji}\}$.

Finally, one checks that if all the $(E_i,C_i,S_i)$ are objects in
$\SGr$, then so is $(E,C,S)$, and this object together with the
morphisms $\eta_i$ forms a direct limit for $\calD$ in $\SGr$.
\end{proof}

\begin{definition}
\label{def:completesubobject} {\rm Let $(E,C,S)$ be an object in
$\SSGr$. A \emph{complete subobject} of $(E,C,S)$ is an object
$(F,D,T)$ of $\SSGr$ such that $F$ is a subgraph of $E$ and moreover
\begin{enumerate}
\item For each $v\in F^0$, we have $D_v=\{Y\cap F^1\mid Y\in C_v,\; Y\cap F^1\ne
\emptyset\}$.
\item $T=\{ Y\in S \mid Y\cap F^1\ne \emptyset \}$.
\end{enumerate}
Note that if $(F,D,T)$ is a complete subobject of $(E,C,S)$, the
inclusion $F\to E$ induces a morphism $(F,D,T)\to (E,C,S)$ in
$\SSGr$.}
\end{definition}

Condition (2) of the definition says
that if $Y\in S$ and $Y\cap F^1\ne \emptyset$, then $Y\subseteq
F^1$. It follows that $T= S\cap D$. Note, however, that in the presence of condition (1), the equality $T= S\cap D$ is weaker than condition (2). This definition generalizes the usual definition of complete
subgraphs of row-finite graphs, given in \cite[p. 161]{AMP}, if we
interpret a row-finite graph $E$ as a triple $(E, C,S)$ with
$C_v=\{s^{-1}(v)\}$ if $s^{-1}(v)\ne \emptyset$ and $C_v=\emptyset$
if $s^{-1}(v)=\emptyset$, and $S=\Cfin =C$.

\begin{proposition}
\label{dirlimcomplete} Every object $(E,C,S)$ in $\SSGr$ is the
direct limit of its finite complete subobjects $(F,D,T)$, that is,
of its complete subobjects $(F,D,T)$ such that both $F^0$ and $F^1$
are finite sets.
\end{proposition}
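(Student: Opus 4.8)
The plan is to exhibit $(E,C,S)$ explicitly as the direct limit of the directed system of its finite complete subobjects, using the characterization of direct limits in $\SSGr$ provided by Proposition~\ref{colim}. First I would check that the finite complete subobjects form a directed system: given two finite complete subobjects $(F_1,D_1,T_1)$ and $(F_2,D_2,T_2)$, I would produce a finite complete subobject containing both. The natural candidate is the smallest complete subobject whose vertex and edge sets contain $F_1^0\cup F_2^0$ and $F_1^1\cup F_2^1$; the point is that passing to a complete subobject forces one to \emph{saturate} the edge set, i.e.\ whenever a set $Y\in S$ meets the current edge set one must throw in all of $Y$ (by condition~(2) of Definition~\ref{def:completesubobject}), and one must also include $s$ and $r$ of every edge added. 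Since all sets in $S$ are finite (as $S\subseteq\Cfin$) and we only adjoin finitely many such sets (one for each edge already present, of which there are finitely many), this saturation process terminates and yields a finite complete subobject $(F,D,T)$ containing both; the inclusion maps $(F_i,D_i,T_i)\to(F,D,T)$ are morphisms in $\SSGr$ by the remark after Definition~\ref{def:completesubobject}. For the ordering, I would note that whenever $(F_1,D_1,T_1)$ is a complete subobject of $(F_2,D_2,T_2)$ and both are complete subobjects of $(E,C,S)$, the inclusion $F_1\to F_2$ is a morphism in $\SSGr$, and these inclusions are compatible, so we genuinely have a directed system indexed by the poset of finite complete subobjects.

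Next I would verify the three ``set-level'' conditions from Proposition~\ref{colim} that certify $(E,C,S)$ as the direct limit. Concretely: (i) every vertex $v\in E^0$ lies in some finite complete subobject — take the complete subobject generated by $\{v\}$ as above, which is finite; (ii) every edge $e\in E^1$ lies in some finite complete subobject — take the one generated by the set $X\in C$ containing $e$ together with $s(e),r(e)$, and saturate; if $X\in S$ this is still finite because then $X$ itself is finite, while if $X\notin S$ one only needs $e$ itself together with its endpoints and the induced partition block $X\cap F^1$, which is fine since completeness (condition~(1)) only demands intersecting the blocks of $C$ with $F^1$, not including all of $X$; (iii) analogously every $X\in C$ and every $Y\in S$ appears. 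Thus $E^0$, $E^1$, $C$, and $S$ are the unions (hence set-theoretic direct limits) of the corresponding data of the finite complete subobjects, and the partitions $C_v$ and the subset $S$ are visibly the limits of the $D_v$ and $T$. Invoking Proposition~\ref{colim}, this identifies $(E,C,S)$ with $\varinjlim (F,D,T)$ in $\SSGr$.

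The main obstacle is the saturation/finiteness bookkeeping in condition~(2) of Definition~\ref{def:completesubobject}: one must be careful that requiring $Y\subseteq F^1$ whenever $Y\in S$ meets $F^1$ does not cascade into an infinite process. The key observations that control this are that (a) sets in $S$ are finite, and (b) adding the finitely many edges of such a $Y$ can only create finitely many new vertices, hence finitely many new edges $e'$ with $s(e')$ among those vertices that might \emph{themselves} force new blocks of $S$ to be absorbed — but in fact, we never need to absorb a block of $S$ merely because its source vertex was added; we only absorb $Y\in S$ when $Y\cap F^1\neq\emptyset$, i.e.\ when we have already included at least one edge of $Y$. Since at each stage we have finitely many edges, there are finitely many blocks $Y\in S$ with $Y\cap F^1\neq\emptyset$, each finite, so one round of saturation adds finitely much; and because a block once fully absorbed stays absorbed, the process stabilizes after finitely many rounds. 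I would spell this out as a short lemma (``the complete subobject generated by a finite subset of $E^0\sqcup E^1$ is finite'') and then the rest of the argument is the routine verification sketched above, concluding by citing Proposition~\ref{colim}.
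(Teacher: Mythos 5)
Your proposal is correct and follows essentially the same route as the paper: both reduce to showing that every finite subset of $E^0\sqcup E^1$ is contained in a finite complete subobject, with the finiteness controlled by exactly the observation you isolate — that a block $Y\in S$ is absorbed only when an edge of $Y$ is already present, so the newly adjoined edges (all lying in already-absorbed blocks, with their ranges becoming sources of no further edges) trigger no cascade. The paper packages this as a single explicit one-step construction of $F_v:= s_{E_1}^{-1}(v)\cup \bigcup_{X\in S\cap C_v,\, X\cap A\ne\emptyset} X$ rather than your iterated saturation, but the content is the same.
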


\begin{proof}
To show that an object $(E,C,S)$ of $\SSGr$ is the direct limit of
its finite complete subobjects, it is enough to prove that for every
finite subset $A$ of $E^0\sqcup E^1$, there is a finite complete
subobject $(F,D,T)$ of $(E,C,S)$ such that $A\subseteq F^0\sqcup F^1$.
Let $E_1$ be the subgraph of $E$ generated by $A$, that is,
$E_1^1=A\cap E^1$ and $E_1^0=(A\cap E^0)\cup s_E(E_1^1)\cup
r_E(E_1^1)$. For $v\in E_1^0$, set
$$ F_v:= s_{E_1}^{-1}(v)\cup \bigcup _{X\in S\cap C_v,\, X\cap A\ne \emptyset } X . $$
Observe that $F_v$ is a finite set for every $v\in E_1^0$. Let $F$
be the subgraph of $E$ generated by $E^0_1 \sqcup \bigsqcup_{v\in
E_1^0} F_v$, so that $s_F^{-1}(v)=F_v$ for $v\in E_1^0\subseteq F^0$
and $s_F^{-1}(v)=\emptyset $ for $v\in F^0\setminus E_1^0$. For
$v\in F^0$, set
$$D_v :=\{ Y\cap F^1\mid Y\in C_v,\; Y\cap F^1\ne \emptyset \}=\{ Y\cap F^1\mid
Y\in C_v,\; Y\cap A\ne \emptyset \},$$ and then set $D :=
\bigsqcup_{v\in F^0} D_v$. Finally, set
$$T :=\{ Y\in S \mid Y\cap F^1\ne \emptyset \} = S\cap D.$$
 It follows
that $(F,D,T)$ is a finite complete subobject of $(E,C,S)$ such that
$A\subseteq F^0\sqcup F^1$, as desired.
\end{proof}

Recall that a functor is said to be \emph{continuous} if it preserves direct limits.

\begin{proposition}
\label{CLalg-dirlim} The assignment $(E,C,S)\mapsto CL_K(E,C,S)$
extends to a continuous functor $CL_K$ from the category $\SSGr$ to
the category of {\rm(}not necessarily unital\,{\rm)} $K$-algebras. Moreover,
every algebra $CL_K(E,C,S)$ is the direct limit, with injective
transition maps, of the algebras $CL_K(F,D,T)$, where $(F,D,T)$ runs
over the directed system of all the finite complete subobjects of
$(E,C,S)$.
\end{proposition}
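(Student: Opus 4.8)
The plan is to build the functor $CL_K$ in two stages: first verify functoriality on morphisms, then verify continuity (preservation of direct limits), and finally deduce the concrete statement about finite complete subobjects by combining continuity with Proposition \ref{dirlimcomplete}.

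For functoriality, given a morphism $\phi\colon (E,C,S)\to (F,D,T)$ in $\SSGr$, I would define $CL_K(\phi)\colon CL_K(E,C,S)\to CL_K(F,D,T)$ on generators by $v\mapsto \phi^0(v)$, $e\mapsto \phi^1(e)$, $e^*\mapsto \phi^1(e)^*$, and check that the defining relations (V), (E1), (E2), (SCK1), and (SCK2)-for-$S$ are preserved. Relations (V), (E1), (E2) are immediate since $\phi$ is a graph morphism and $\phi^0$ is injective (the injectivity is what makes $\delta_{v,v'}$ transport correctly). For (SCK1): if $e,e'\in X\in C$ then by Definition \ref{defSSGr}(2) both $\phi^1(e),\phi^1(e')$ lie in $\widetilde\phi(X)\in D$, and since $\phi^1$ restricts to an injection on $X$ we get $\phi^1(e)^*\phi^1(e')=\delta_{\phi^1(e),\phi^1(e')}r(\phi^1(e))=\delta_{e,e'}r(\phi^1(e))$, which is the image of $\delta_{e,e'}r(e)$. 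For (SCK2): if $X\in S$, then $\widetilde\phi(X)\in T$ by Definition \ref{defSSGr}(3), and $\phi^1$ restricts to a \emph{bijection} $X\to\widetilde\phi(X)$, so $\sum_{e\in X}\phi^1(e)\phi^1(e)^*=\sum_{f\in\widetilde\phi(X)}ff^*=\phi^0(s(X))$, matching the image of the relation $\sum_{e\in X}ee^*=v$. Functoriality in the composition and identity morphisms is then routine.

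For continuity, let $\calD=\{(E_i,C_i,S_i),f_{ji}\}$ be a directed system in $\SSGr$ with direct limit $(E,C,S)$ and limit morphisms $\eta_i$, as described in Proposition \ref{colim}; I must show $CL_K(E,C,S)$ together with the maps $CL_K(\eta_i)$ is a direct limit of $\{CL_K(E_i,C_i,S_i),CL_K(f_{ji})\}$ in the category of (not necessarily unital) $K$-algebras. The cleanest route is via the universal property: given a $K$-algebra $B$ and a compatible family of homomorphisms $g_i\colon CL_K(E_i,C_i,S_i)\to B$, each $g_i$ is determined by the images of the generators $\{v,e,e^*\}$; compatibility means these images agree along the transition maps, so by the universal property of the direct limits of the underlying generator sets $E^0$, $E^1$ (Proposition \ref{colim}) there is a unique consistent choice of images for the generators of $CL_K(E,C,S)$. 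I then check these images satisfy the defining relations of $CL_K(E,C,S)$: each relation involves only finitely many generators, each generator comes from some $E_i^1$ or $E_i^0$, and each relation of type (SCK1) or (SCK2) for $(E,C,S)$ ``comes from'' a relation already holding in some $CL_K(E_i,C_i,S_i)$ — here one uses the explicit description from Proposition \ref{colim} that $C=\bigsqcup_w C_w$ with $C_w$ the $\sim_w$-classes, that every $Y\in C$ is $\bigcup_{j\ge i}\eta^1_j(\widetilde f_{ji}(X))$ for some $X\in C_i$, and that every $Y\in S$ equals $\eta^1_i(X)$ for some $X\in S_i$. This gives a well-defined homomorphism $CL_K(E,C,S)\to B$, unique because it is determined on generators. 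The main obstacle — and the step deserving the most care — is precisely this matching of (SCK1)/(SCK2) relations: one must confirm that if $e,e'\in Y\in C$ then already $e,e'$ (more precisely their preimages) lie in a common set of some $C_i$, and that for $Y\in S$ the finite set $Y$ is the injective image of a single $X\in S_i$ so that the (SCK2) sum is a finite sum that is literally the image of one (SCK2) relation at stage $i$; the definition of $\sim_w$ and the bijectivity clause in Definition \ref{defSSGr}(3) are exactly designed to make this work.

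Finally, for the concrete consequence: by Proposition \ref{dirlimcomplete}, $(E,C,S)$ is the direct limit in $\SSGr$ of its finite complete subobjects $(F,D,T)$ (a directed system under inclusion), so continuity gives $CL_K(E,C,S)=\varinjlim CL_K(F,D,T)$. It remains to note that the transition maps are injective: each inclusion morphism $(F,D,T)\hookrightarrow (F',D',T')$ of finite complete subobjects sends the basis $\calB'(F,D,T)$ of Theorem \ref{basisCSE} into the basis $\calB'(F',D',T')$ — indeed a path in $\widehat F$ reduced with respect to $T$ is a path in $\widehat{F'}$, it is still $D'$-separated at the appropriate junctions because condition (1) of Definition \ref{def:completesubobject} says the blocks of $D$ are traces of blocks of $D'$, and it is still reduced with respect to $T'$ because $T'\cap D=T$ so the chosen distinguished edges $e_X$ can be chosen compatibly (one fixes, once and for all, a choice $X\mapsto e_X$ for $X\in S$ and uses the induced choices on all $T$). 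Hence the transition maps carry basis elements to distinct basis elements and are injective, completing the proof.
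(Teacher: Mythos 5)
Your proposal is correct and follows essentially the same route as the paper: functoriality by checking the relations (V), (E1), (E2), (SCK1), (SCK2)-for-$S$ on generators, continuity by verifying that the generators of $CL_K(E,C,S)$ and its defining relations all arise at finite stages of the system (your universal-property phrasing is just the standard equivalent of the paper's construction of mutually inverse maps $\theta$ and $\xi$), and injectivity of the transition maps for complete subobjects via the basis of Theorem \ref{basisCSE} together with a coherent choice of the distinguished edges $e_X$.
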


\begin{proof}
Let $\phi \colon (E,C,S)\to (E',C',S')$ be a morphism in $\SSGr$. We
check that there is a unique $K$-algebra homomorphism
$CL_K(\phi)\colon CL_K(E,C,S)\to CL_K(E',C',S')$ such that
$$CL_K(\phi)(v)= \phi ^0(v),\qquad CL_K(\phi)(e)= \phi ^1(e),\qquad CL_K(\phi)(e^*)=\phi ^1(e)^*,$$
for $v\in E^0$ and $e\in E^1$. The relations (V) are preserved
because $\phi^0$ is injective, while (E1) and (E2) are preserved
because $\phi$ is a graph homomorphism. Now, let us show that
(SCK1), and (SCK2) for the members of $S$, are preserved by
$CL_K(\phi)$. Let $v\in E^0$ and $X\in C_v$. Then there is (a
unique) $Y\in C'_{\phi ^0(v)}$ such that $\phi ^1$ restricts to an
injective map $X\to Y$. Then for $e,f\in X$ we have $\phi^1 (e),
\phi^1 (f)\in Y$ and so $\phi^1 (e)^*\phi^1 (f)=\delta _{\phi^1
(e),\phi^1 (f)}r(\phi^1 (e))=\delta _{e,f}\phi ^0(r(e))=CL_K(\phi)
(e^*f)$. This verifies (SCK1). Note that we need $\phi ^1$ to be
injective on $X$ in order to guarantee that $\delta_{\phi^1
(e),\phi^1 (f)}=\delta _{e,f}$. Now assume that the above set $X$
belongs to $S$. Then $\phi ^1$ restricts to a bijection $X\to Y$, so
that
$$\phi ^0(v)=\sum _{e\in X}\phi^1 (e)\phi^1 (e)^*,$$
showing that the relation (SCK2) for $X\in S$ is preserved by
$CL_K(\phi)$. Since the defining relations for $CL_K(E,C,S)$ are
preserved by $CL_K(\phi)$, the existence and uniqueness of this map
follows. Functoriality is clear.

If $(F,D,T)$ is a complete subobject of $(E,C,S)$ and $\phi: (F,D,T)
\to (E,C,S)$ is the inclusion, then the homomorphism $CL_K(\phi):
CL_K(F,D,T)\to CL_K(E,C,S)$ is injective. This follows from Theorem
\ref{basisCSE}. Indeed, if a monomial from $CL_K(F,D,T)$ as in
(\ref{normalform1}) is in reduced form with respect to $T$, then
this monomial will be also in reduced form with respect to $S$,
thanks to properties (1) and (2) of the definition of complete
subobject, assuming that we have made a coherent choice of edges
$e_X\in X$ for $X\in T=\{ Y\in S \mid Y\cap F^1\ne \emptyset \}$. It follows from Theorem
\ref{basisCSE} that $CL_K(\phi)$ maps a basis for $CL_K(F,D,T)$
to a subset of a basis for $CL_K(E,C,S)$, and therefore $CL_K(\phi)$
is injective.

We show now that the functor $CL_K$ is continuous. Consider a directed system in the category $\SSGr$, say $\{(E_i,C_i,S_i),\; \phi _{ji} \mid i,j\in
I,\; j\ge i \}$, with
direct limit $(E,C,S)$ and limit maps $\eta_i: (E_i,C_i,S_i) \to
(E,C,S)$. Let the $K$-algebra $A$ be the direct limit of the
directed system $\{CL_K(E_i,C_i,S_i),\, CL_K(\phi_i)\}$, with limit
maps $\lambda_i: CL_K(E_i,C_i,S_i) \to A$. There is a unique
$K$-algebra homomorphism $\theta: A\to CL_K(E,C,S)$ such that
$\theta\lambda_i= CL_K(\eta_i)$ for all $i$, and we must show that
$\theta$ is an isomorphism. Since each $CL_K(E_i,C_i,S_i)$ is
generated by $E^0_i\sqcup \Ehat^1_i$, we see that $A$ is generated
by $\bigcup_{i\in I} \lambda_i(E^0_i\sqcup \Ehat^1_i)$. Given any
$w\in E^0$, write $w= \eta^0_i(v)$ for some $i\in I$ and $v\in
E^0_i$, and set $\xi^0(w)= \lambda_i(v)\in A$. Note that $\xi^0(w)$
is independent of the representation $w= \eta^0_i(v)$, and so we
have a well-defined map $\xi^0: E^0\to A$. Similarly, there is a
well-defined map $\xi^1: \Ehat^1\to A$ such that
$\xi^1(\eta^1_i(e))= \lambda_i(e)$ and $\xi^1(\eta^1_i(e)^*)=
\lambda_i(e)^*$ for all $i\in I$ and $e\in E^1_i$. Observe that the
elements $\xi^0(w)$, for $w\in E^0$, and $\xi^1(e)$, $\xi^1(e)^*$,
for $e\in E^1$, satisfy the defining relations of $CL_K(E,C,S)$.
Hence, $\xi^0\sqcup \xi^1$ extends uniquely to a $K$-algebra
homomorphism $\xi: CL_K(E,C,S) \to A$, and $\xi$ is an inverse for
$\theta$. Therefore $\theta$ is an isomorphism, as required.

Finally, observe that, by Proposition \ref{dirlimcomplete}, any
object $(E,C,S)$ in $\SSGr$ is the direct limit in $\SSGr$ of the
directed system of its finite complete subobjects. This, together
with what we have already shown before, gives the desired result
about the direct limit representation of $CL_K(E,C,S)$.
\end{proof}

With another dose of direct limits, we can prove that all Cohn-Leavitt algebras are hereditary in a suitable non-unital sense (see Definition \ref{hereditary}). This relies on work of Bergman and Dicks \cite{BD}, which requires us to work with direct limits of unital algebras over intervals of ordinals. Let us write $A^\sim$ for the $K$-algebra unitization of any $K$-algebra $A$, as in Definition \ref{defAtilde}.

The following easy observation will be useful: If $(R_t)_{t\in
T}$ is a nonempty family of unital hereditary $K$-algebras, then
$R := (\bigoplus_{t\in T} R_t)^\sim$ is hereditary. This holds because every left ideal of $R$ has one of the forms $\bigoplus_t I_t$ or $R(1-e)\oplus \bigoplus_{j=1}^n I_{t_j}$ where $I_t$ is a left ideal of $R_t$ and $e$ (in the second case) is the sum of the identity elements of $R_{t_1},\dots,R_{t_n}$, and similarly for right ideals.

\begin{theorem} \label{CLhereditary}
For any object $(E,C,S)$ of $\SSGr$, the algebra $CL_K(E,C,S)$ is hereditary.
\end{theorem}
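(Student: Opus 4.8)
The plan is to show that the unitization $CL_K(E,C,S)^\sim$ is hereditary in the usual sense for unital rings; by Definition~\ref{hereditary} this is what the assertion means, and conversely it is what the proof will establish. The underlying idea is that $CL_K(E,C,S)^\sim$ can be built up, starting from an obviously hereditary unital algebra, by a (generally transfinite) iteration of Bergman's universal constructions \cite{Berg1}, \cite{Berg2}, each of which preserves hereditariness; the behaviour of such an iteration at limit stages is controlled by the Bergman--Dicks analysis \cite{BD}, which is precisely why the whole construction must be phrased as an ordinal-indexed continuous chain of unital algebras. I would first settle the case of a finite separated graph, where everything stays finitary, and then bootstrap to the general case.

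For the finite case, fix $(F,D,T)$, so that $CL_K(F,D,T)$ is unital, and start from $R_0 := K^{F^0} = \bigoplus_{v\in F^0}Kv$, which is semisimple, hence hereditary. Now adjoin the edges, one set $X\in D$ at a time. The relations clustered around a single $X\in D_v$ admit a module-theoretic reading, exactly in the spirit of Proposition~\ref{keyexample}: writing $R$ for the algebra built so far and $z$ for the idempotent $v-\sum e'e'^*$, the sum taken over the edges $e'\in X$ already adjoined (so $z=v$ before any), adjoining a further edge $e\in X$ with $r(e)=w$, subject to (E1), (E2) and to the relations $e^*e=w$ and $e^*e'=e'^*e=0$ for the edges $e'\in X$ already present, is precisely Bergman's construction of the algebra obtained from $R$ by adjoining a universal split monomorphism of the finitely generated projective module $\ol{Rw}$ into $\ol{Rz}$ (so that the image of the new summand sits inside $\ol{Rv}$ orthogonally to the summands contributed by the earlier edges of $X$). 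If moreover $X\in S$ and $e$ is the last edge of $X$ to be processed, then (SCK2) for $X$ additionally forces $ee^*=z$, and the step becomes instead the adjunction of a universal isomorphism $\ol{Rz}\cong\ol{Rw}$. In both forms the step preserves hereditariness \cite{Berg1}, \cite{Berg2}; since $D$ and all of its members are finite, finitely many such steps produce $CL_K(F,D,T)$, which is therefore hereditary.

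For an arbitrary $(E,C,S)$ I would run the same procedure transfinitely. Take $R_0 := (\bigoplus_{v\in E^0}Kv)^\sim$, which is hereditary by the observation recorded just before the statement of the theorem. Well-order $C$, well-order the edges inside each $X\in C$, and adjoin the edges in the resulting order: a finite $X$ contributes finitely many steps as above (ending in a universal-isomorphism step when $X\in S$), while an infinite $X$ --- which necessarily lies outside $S$, since the members of $S$ are finite --- contributes a transfinite subchain of universal-split-monomorphism steps. At each successor stage one performs an elementary Bergman construction over a hereditary unital algebra, hence again obtains a hereditary unital algebra; at limit stages one passes to the direct limit of the unital algebras produced so far. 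By the generators-and-relations presentation, and with no collapse thanks to Theorem~\ref{basisCSE}, the direct limit of the entire chain is exactly $CL_K(E,C,S)^\sim$. What remains to be guaranteed is that hereditariness survives the passage through limit ordinals, and this is exactly the content supplied by the Bergman--Dicks treatment of transfinite iterations of these universal constructions \cite{BD}. I expect this last ingredient --- together with the bookkeeping needed to present every successor step faithfully as a Bergman construction between the correct pair of finitely generated projective modules (identifying the modules $\ol{Rw}$, $\ol{Rz}$ and checking that the new summand is orthogonal to the old ones) --- to be the main obstacle; granting it, the theorem follows at once.
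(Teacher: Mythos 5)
Your overall strategy --- realize $CL_K(E,C,S)^\sim$ as the top of an ordinal-indexed continuous chain of unital algebras, each successor stage an elementary Bergman construction over a hereditary algebra and the limit stages controlled by \cite[Theorem 3.4]{BD} --- is exactly the paper's, and your reading of a single edge adjunction (relations (E1), (E2), $e^*e=w$, $e^*e'=e'^*e=0$) as a universal split monomorphism of $\ol{Rw}$ into $\ol{Rz}$, $z=v-\sum_{e'}e'e'^*$, is correct and works without trouble whenever $X$ is finite. But there is a genuine gap for infinite $X\in C$ (which the category $\SSGr$ explicitly allows, and which necessarily satisfies $X\notin S$): your element $z$ is the complement in $v$ of the sum of the idempotents $e'e'^*$ over the edges of $X$ already adjoined, and once the transfinite subchain for $X$ passes its first limit stage, infinitely many such $e'$ are present and $z$ is no longer an element of the algebra. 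At that point ``split monomorphism into $\ol{Rz}$'' is not defined, and the needed relations $e^*e'=0$ for all (infinitely many) earlier $e'\in X$ cannot be packaged as a Bergman construction between two finitely generated projectives; imposing them as extra relations afterwards is a quotient whose effect on hereditariness and on $\calV$ you no longer control.

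The paper circumvents exactly this point by \emph{pre-creating} all the pairwise orthogonal idempotents that your construction produces on the fly: for each $X\in C\setminus S$ it forms $B_X^\sim$ with $B_X=\bigoplus_{e\in X}K$ (so the $b_e$, $e\in X$, are orthogonal by fiat, even for infinite $X$), takes for each vertex the unital coproduct over $K$ of the $B_X^\sim$ with $X\in C_v\setminus S$ --- this is where \cite[Theorem 3.4]{BD} is used a first time, for coproducts rather than for limit ordinals --- and only then adjoins, one edge at a time, a universal isomorphism $A_\alpha b_{e_\alpha}\cong A_\alpha r(e_\alpha)$ between honest finitely generated projectives; the cross relations $e^*e'=0$ then come for free from $b_eb_{e'}=0$ in the base algebra. (The sets $X\in S$ are finite and are each handled in a single step, adjoining a universal isomorphism $A_\kappa v_\kappa\cong\bigoplus_{e\in X}A_\kappa r(e)$, rather than by your edge-by-edge scheme ending in an isomorphism step; both are fine there.) So your proof is salvageable for finitely separated graphs as written, but for the general statement you need the coproduct/pre-creation ingredient, which is not a cosmetic reorganization of your chain.
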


\begin{proof} Set $A := CL_K(E,C,S)$. By Proposition \ref{alghered}, it suffices to show that $A^\sim$ is hereditary.

We first construct an algebra that corresponds to the unitization of the subalgebra of $A$ generated by $E^0$ together with the idempotents $ee^*$ for edges $e$ lying in sets $X\in C\setminus S$. There are three steps:
\begin{enumerate}
\item For each $X\in C\setminus S$, let $B_X$ be the $K$-algebra direct sum
of $|X|$ copies of $K$.
\item For each $v\in E^0$, let $A_v$ be the unital $K$-algebra coproduct of the
algebras $B_X^\sim$ for $X\in C_v\setminus S$. (The coproduct of an empty
family is $K$ itself.)
\item Set $A_0 := (\bigoplus_{v\in E^0} A_v)^\sim$.
\end{enumerate}
Note that each $B_X$ has a $K$-basis $(b_e)_{e\in X}$
consisting of a family of $|X|$ pairwise orthogonal idempotents. By the
observation above, $B_X^\sim$ is a unital (commutative) hereditary $K$-algebra. Next,  \cite[Theorem 3.4]{BD}
implies that each $A_v$ is hereditary, and then $A_0$ is hereditary by the observation above. For $v\in E^0$, identify $v$ with the identity element of
$A_v$. Then set $E_0= (E^0,\emptyset)$, and identify $L_K(E_0)$ with the subalgebra $\bigoplus_{v\in E^0} Kv \subseteq A_0$. There is a unique unital $K$-algebra homomorphism $\phi_0: A_0\rightarrow A^\sim$ such that $\phi_0(v)=v$ for $v\in E^0$ and $\phi_0(b_e)= ee^*$ for $e\in X\in C\setminus S$.

Now let $E^1_1$ be the union of all the sets $X\in C\setminus S$. Set $E_1=
(E^0,E^1_1)$ and $C_1= C\setminus S$. We next construct an algebra corresponding to $C_K(E_1,C_1)^\sim$. For the purpose, choose an ordinal $\gamma$ and a bijection $\alpha \mapsto e_\alpha$ from $[0,\gamma)$ to $E^1_1$. We build unital $K$-algebras $A_\alpha$ for $\alpha\in [0,\gamma]$ as follows. (We do not specifically label the obvious connecting homomorphisms $A_\alpha \rightarrow A_\beta$ for $0\le \alpha<\beta \le \gamma$ that the construction carries, and we treat them as inclusion maps.)
\begin{enumerate}
\item[(4)] Start with $A_0$ as in (3).
\item[(5)] For $\alpha\in [0,\gamma)$, let $A_{\alpha+1}$
be obtained from $A_\alpha$ by adjoining a universal isomorphism between the finitely generated projective left $A_\alpha$-modules
$A_\alpha b_{e_\alpha}$ and $A_\alpha r(e_\alpha)$.
\item[(6)] If $\beta\le\gamma$ is a limit ordinal and $A_\alpha$ has been
defined for all $\alpha<\beta$, take $A_\beta$ to be the direct limit of
$(A_\alpha)_{\alpha<\beta}$.
\end{enumerate}
We apply \cite[Theorem 3.4]{BD} a second time to see that $A_\gamma$ is hereditary. For $\alpha\in [0,\gamma)$, the algebra $A_{\alpha+1}$ is generated by $A_\alpha$ together with elements $x_\alpha$ and $y_\alpha$ universally satisfying the relations
\begin{align*}
x_\alpha &= b_{e_\alpha}x_\alpha r(e_\alpha)  &y_\alpha &= r(e_\alpha)y_\alpha b_{e_\alpha}  &x_\alpha y_\alpha &= b_{e_\alpha}  &y_\alpha x_\alpha &= r(e_\alpha).
\end{align*}
The homomorphism $\phi_0$ extends uniquely to a unital $K$-algebra homomorphism $\phi_1: A_\gamma \rightarrow A^\sim$ such that $\phi_1(x_\alpha)= e_\alpha$ and $\phi_1(y_\alpha)= e^*_\alpha$ for  $\alpha\in [0,\gamma)$.

Finally, we construct a further direct limit to reach $A^\sim$. Choose an ordinal $\mu>\gamma$ and a bijection $\kappa\mapsto
X_\kappa$ from $[\gamma,\mu)$ to $S$. Then build unital $K$-algebras $A_\kappa$ for
$\kappa\in [\gamma,\mu]$ as follows.
\begin{enumerate}
\item[(7)] Start with $A_\gamma$ as in the previous construction.
\item[(8)] For $\kappa\in [\gamma,\mu)$, let $A_{\kappa+1}$
be obtained from $A_\kappa$ by adjoining a universal isomorphism
between the finitely generated projective $A_\kappa$-modules
$A_\kappa v_\kappa$ and $\bigoplus_{e\in X_\kappa} A_\kappa r(e)$,
where $v_\kappa \in E^0$ is such that $X_\kappa\in C_{v_\kappa}\cap
S$.
\item[(9)] If $\lambda\in (\gamma,\mu]$ is a limit ordinal and $A_\kappa$
has been defined for all $\kappa<\lambda$, take $A_\lambda$ to be the
direct limit of $(A_\kappa)_{\gamma\le\kappa<\lambda}$.
\end{enumerate}
A third application of \cite[Theorem 3.4]{BD} yields that $A_\mu$ is hereditary. For $\kappa \in [\gamma,\mu)$, the algebra $A_{\kappa+1}$ is generated by $A_\kappa$ together with elements $x_e$ and $y_e$ for $e\in X_\kappa$ universally satisfying the relations
\begin{align*}
x_e &= v_\kappa x_er(e)  &y_e &= r(e)y_e v_\kappa \\
\sum_{e\in X_\kappa} x_ey_e &= v_\kappa  &y_ex_f &= \delta_{e,f} r(e)
\end{align*}
for $e,f\in X_\kappa$. The homomorphism $\phi_1$ extends uniquely to a unital $K$-algebra homomorphism $\phi: A_\mu \rightarrow A^\sim$ such that $\phi(x_e)= e$ and $\phi(y_e)= e^*$ for $e\in X_\kappa$, $\kappa\in [\gamma,\mu)$.

In the reverse direction, there is a unique unital $K$-algebra homomorphism $\psi: A^\sim \rightarrow A_\mu$ such that
\begin{align*}
\psi(v) &= v &&(v\in E^0) \\
\psi(e_\alpha) &= x_\alpha \text{\ and\ } \psi(e^*_\alpha) = y_\alpha &&(\alpha\in [0,\gamma)) \\
\psi(e) &= x_e \text{\ and\ } \psi(e^*)= y_e &&(e\in X_\kappa,\ \kappa\in [\gamma,\mu)).
\end{align*}
Clearly, $\psi$ and $\phi$ are mutual inverses. Therefore $A_\mu
\cong A^\sim$, and the theorem is
proved.
\end{proof}

\section{The monoid $\mon{CL_K(E,C,S)}$}
\label{sect:VCL}

We define an abelian monoid $M(E,C,S)$ for any separated graph $(E,C)$
with a distinguished subset $S\subseteq\Cfin$, and we prove that
$\mon{CL_K(E,C,S)}$ is naturally isomorphic to $M(E,C,S)$. This
extends previous results for Leavitt path algebras $L_K(E)$ of
row-finite graphs $E$ \cite[Theorem 3.5]{AMP}, for which case it was also proved that the graph monoid $M(E) \cong \mon{L_K(E)}$ is a refinement monoid \cite[Proposition 4.4]{AMP}. We shall prove that $M(E)$ is a refinement monoid for any non-separated graph $E$ (Corollary \ref{M(E)refinement}). In contrast, refinement does not always hold in $\mon{CL_K(E,C,S)}$ -- in fact, $\mon{CL_K(E,C,S)}$ can be an arbitrary conical monoid (Proposition \ref{allconical}).

\begin{definition}
\label{defgraphmon} Let $(E,C,S)$ be an object in $\SSGr$. We define
the \emph{graph monoid} $M(E,C,S)$ as the abelian monoid given by
the set of generators
$$E^0\sqcup
\{q_Z'\mid Z\subseteq X\in C,\; 0<|Z|<\infty\}$$ and the following
relations:
\begin{enumerate}
\item
$v= \bfr(Z)+q_Z'$ for $v\in E^0$, $Z\subseteq X\in C_v$, and
$0<|Z|<\infty$, where for a finite subset $Y$ of $E^1$ we set
${\mathbf r}(Y):=\sum _{e\in Y} r(e) .$
\item
$q_{Z_1}'=\bfr(Z_2\setminus Z_1)+q_{Z_2}'$ for finite nonempty
subsets $Z_1$ and $Z_2$ of $X\in C$ with $Z_1 \subsetneq Z_2$.
\item
$q_X'=0$ for $X\in S$.
\end{enumerate}
Of course the elements $q_Z'$ are intended to represent the
equivalence classes of the idempotents $v-\sum _{e\in Z}ee^*$ in
$CL_K(E,C,S)$, for $Z$ a finite nonempty subset of $X\in C_v$.

There is some redundancy among these generators and relations. In particular, we could omit the generators $q'_Z$ for nonempty proper subsets $Z$ of a set $X\in \Cfin$, since relation (2) gives $q'_Z$ in terms of $q'_X$, and relation (1) for $Z$ follows from the corresponding relation for $X$ in light of (2). In general, (1) could be viewed as a form of (2) with $Z_1=\emptyset$, except that the notation $q'_\emptyset$ would not be well-defined.

When working with objects $(E,C)$ from $\SGr$, we abbreviate the notation for the corresponding monoid to
$$M(E,C) := M(E,C,\Cfin).$$

For many later purposes, we shall assume that $(E,C)$ is finitely separated. In that case, the generators $q'_Z$ for $\emptyset \ne Z
\subsetneq X \in C$ are redundant, as  noted above.
Then, we can present $M(E,C,S)$ with the set of generators $E^0
\sqcup \{q'_X\mid X\in C\setminus S\}$ and the relations
\begin{enumerate}
\item[(4)] $v= \bfr(X)$ for $v\in E^0$ and $X\in C_v\cap S$.
\item[(5)] $v= \bfr(X)+q'_X$ for $v\in E^0$ and $X\in C_v\setminus S$.
\end{enumerate}

Now return to the general case, and consider a morphism $\phi:
(E,C,S)\to (E',C',S')$ in $\SSGr$. There is a unique monoid
homomorphism $M(\phi): M(E,C,S)\to M(E',C',S')$ sending $v\mapsto
\phi^0(v)$ for $v\in E^0$ and $q'_Z\mapsto q'_{\phi^1(Z)}$ for
nonempty finite sets $Z\subseteq X\in C$. (The latter assignments
are well-defined because if $Z$ is a nonempty finite subset of some
$X\in C$, then $\phi^1(Z)$ is a nonempty finite subset of
$\phitil(X) \in C'$.) The assignments $(E,C,S) \mapsto M(E,C,S)$ and
$\phi\mapsto M(\phi)$ define a functor $M$ from $\SSGr$ to the
category $\Mon$ of abelian monoids. It is easily checked (just as
for the functor $CL_K$ in Proposition \ref{CLalg-dirlim}) that $M$
is continuous.
\end{definition}

\begin{lemma} \label{nonzeroconical}
If $(E,C,S)$ is an object in $\SSGr$, then $M(E,C,S)$ is a nonzero,
conical monoid.
\end{lemma}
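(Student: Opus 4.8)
The plan is to rewrite the presentation of $M(E,C,S)$ so that each defining relation has both sides \emph{nonempty} as words in the free abelian monoid on the generators, and then to exploit the elementary fact that a commutative monoid admitting such a presentation is automatically conical. The only defining relation of $M(E,C,S)$ with an empty side is (3), namely $q_X'=0$ for $X\in S$, and an arbitrary quotient of a conical monoid by relations of the form ``$g=0$'' need not be conical (e.g.\ $\N^2/((1,1)=0)\cong\Z$); so the crux is to show that relation (3) can be absorbed into the remaining relations. Everything else in the argument is formal.

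Here is how I would carry out the absorption. Fix $X\in S$; then $X$ is finite, and since distinct members of $C$ are pairwise disjoint, the generator $q_X'$ occurs in the defining relations only through (3), through (1) with $Z=X$, and through (2) with $Z_2=X$. Using (3) to delete $q_X'$, relation (1) for $(v,X)$ with $X\in C_v$ becomes $v=\bfr(X)$, and relation (2) with $Z_2=X$ becomes $q_{Z_1}'=\bfr(X\setminus Z_1)$ for each nonempty $Z_1\subsetneq X$. The latter identities let us delete each $q_{Z_1}'$ as well, replacing it by the vertex-word $\bfr(X\setminus Z_1)$; one checks directly that every remaining instance of (1) or (2) that mentions such a $q_{Z_1}'$ either collapses to $v=\bfr(X)$ or becomes an identity in the free abelian monoid on $E^0$ (using $X\setminus Z_1=(Z_2\setminus Z_1)\sqcup(X\setminus Z_2)$), hence may be discarded. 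Performing this for all $X\in S$, one obtains a presentation of $M(E,C,S)$ with generating set $E^0\sqcup\{q_Z'\mid \emptyset\ne Z\subseteq X\in C\setminus S,\ |Z|<\infty\}$ and defining relations: (i) $v=\bfr(X)$ for $v\in E^0$ and $X\in C_v\cap S$; (ii) $v=\bfr(Z)+q_Z'$ for finite nonempty $Z\subseteq X\in C_v\setminus S$; and (iii) $q_{Z_1}'=\bfr(Z_2\setminus Z_1)+q_{Z_2}'$ for finite nonempty $Z_1\subsetneq Z_2\subseteq X\in C\setminus S$. In each of (i)--(iii) both sides are nonempty words, since the classes in $C$, the sets $Z$, and the differences $Z_2\setminus Z_1$ are all nonempty.

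Conicality now follows formally. Let $F$ be the free abelian monoid on the above generating set and $\sim$ the congruence generated by (i)--(iii), so $M(E,C,S)=F/\sim$; recall that $u\sim w$ iff $u$ and $w$ are joined by a finite chain of elementary moves $d+l\ \leftrightarrow\ d+r$, with $(l,r)$ a relation from (i)--(iii) and $d\in F$. If $x\sim 0$ with $x\ne 0$, pick a shortest such chain from $x$ to $0$; its last step exhibits $0$ as $d+w$ with $w$ a side of some relation, but in the free abelian monoid $F$ a sum vanishes only if each summand is $0$, forcing $w=0$ and contradicting the nonemptiness just established. Hence the $\sim$-class of $0$ is $\{0\}$, and since $F$ is conical, so is $M(E,C,S)$: if $\bar a+\bar b=0$ then $a+b=0$ in $F$, whence $a=b=0$. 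Finally, for nonzeroness, sending every generator of the reduced presentation to $\infty$ in the two-element monoid $\{0,\infty\}$ (with $\infty+\infty=\infty$) respects (i)--(iii), because both sides of each are nonempty words and thus evaluate to $\infty$; this defines a monoid homomorphism $M(E,C,S)\to\{0,\infty\}$ sending each $v\in E^0$ to $\infty\ne 0$. As $E^0\ne\emptyset$, some vertex is a nonzero element, so $M(E,C,S)\ne 0$.
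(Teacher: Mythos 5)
Your proof is correct and follows essentially the same route as the paper: the paper likewise eliminates the generators $q'_X$ for $X\in S$, rewriting relations (1) and (2) so that no defining relation has a zero side, and then observes that the $\sim$-class of $0$ in the free abelian monoid is $\{0\}$, from which conicality and nonzeroness (via $E^0\ne\emptyset$) follow. Your additional elimination of the generators $q'_{Z_1}$ for $\emptyset\ne Z_1\subsetneq X\in S$ is valid but not needed, since after the paper's substitution the corresponding relations already have nonempty right-hand sides.
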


\begin{proof} We can present $M(E,C,S)$ as the quotient of the free abelian monoid $F$ on the set
$$E^0\sqcup
\{q_Z'\mid Z\subseteq X\in C,\; 0<|Z|<\infty\} \setminus \{q'_X\mid
X\in S\}$$ modulo the congruence $\sim$ generated by the relations
(1) and (2), where (1) is rewritten $v=\bfr(Z)$ in case $Z=X\in S$
and (2) is rewritten $q'_{Z_1}= \bfr(Z_2\setminus Z_1)$ in case
$Z_2=X\in S$. Observe that $\alpha\sim 0$ occurs in $F$ only when
$\alpha=0$. The lemma follows immediately, given that $E^0$ is
assumed to be nonempty.
\end{proof}

\begin{theorem}
\label{computVMECS} There is an isomorphism $\Gamma: M\to \calV\circ CL_K$ of functors $\SSGr\to
{\bf Mon}$, given as follows. For each object $(E,C,S)$ of $\SSGr$,
$$\Gamma(E,C,S) \colon
M(E,C,S)\to \mon{CL_K(E,C,S)}$$
is the monoid homomorphism sending $v\mapsto [v]$ for
$v\in E^0$ and and $q'_Z \mapsto [v-\sum _{e\in Z} ee^*]$ for finite
nonempty subsets $Z\subseteq X\in C_v$.
\end{theorem}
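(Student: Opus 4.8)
The natural strategy is a direct-limit reduction combined with an explicit check in the finite case. Both functors $M$ and $\calV\circ CL_K$ are continuous: for $M$ this is noted at the end of Definition \ref{defgraphmon}, and for $\calV\circ CL_K$ it follows from the continuity of $CL_K$ (Proposition \ref{CLalg-dirlim}) together with the fact that $\calV$ preserves direct limits of rings. Since every object of $\SSGr$ is a direct limit of its finite complete subobjects (Proposition \ref{dirlimcomplete}), it suffices to construct a natural isomorphism $\Gamma(E,C,S)$ when $E$ is finite and then verify that the resulting maps are compatible with the transition morphisms $CL_K(\phi)$ and $M(\phi)$, so that they assemble into a natural transformation on all of $\SSGr$ which is then automatically an isomorphism.

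\textbf{The finite case.} Assume $E^0$ and $E^1$ are finite, so in particular $C=\Cfin$ and $(E,C)$ is finitely separated; we may use the simplified presentation of $M(E,C,S)$ with generators $E^0\sqcup\{q'_X\mid X\in C\setminus S\}$ and relations (4), (5). First I would check that $\Gamma(E,C,S)$ is well-defined as a monoid homomorphism: the elements $p_Z := v-\sum_{e\in Z}ee^*$ are idempotents in $CL_K(E,C,S)$ (using (SCK1) to see that the $ee^*$ for $e\in Z$ are orthogonal idempotents below $v$), relation (1) holds because $[v]=[\sum_{e\in Z}ee^*]+[p_Z]=\sum_{e\in Z}[r(e)]+[q'_Z] = \bfr(Z)+q'_Z$ (using $ee^*\sim e^*e=r(e)$ via (SCK1)), relation (2) holds similarly, and relation (3) holds because (SCK2) forces $p_X=0$ for $X\in S$. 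So $\Gamma(E,C,S)$ exists. The crux is surjectivity and injectivity. For surjectivity, one shows every idempotent matrix over $CL_K(E,C,S)$ is equivalent to one in the image; the cleanest route is to identify $CL_K(E,C,S)$ with a ring built from $B$-algebras and Bergman coproduct/universal-isomorphism steps exactly as in the proof of Theorem \ref{CLhereditary}, and track $\calV$ through that construction using Bergman's computations (\cite[Theorem 5.2]{Berg2} and the coproduct results of \cite{Berg1}). Each step of that tower modifies the $\calV$-monoid in a completely controlled way, and one verifies by induction that at each stage the $\calV$-monoid is presented by exactly the generators and relations defining the corresponding partial version of $M(E,C,S)$.

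\textbf{Main obstacle.} The hard part is the bookkeeping in that induction: one must match, step by step, the generators $[b_e]$, $[x_\alpha y_\alpha]$, the new classes introduced by each universal isomorphism, and the relations they satisfy, against the generators $v$, $q'_X$ and relations (4), (5) of $M(E,C,S)$. Concretely: the algebra $A_0$ from step (3) of the proof of Theorem \ref{CLhereditary} has $\calV(A_0)$ free on $E^0\sqcup\{[b_e]\mid e\in X\in C\setminus S\}$ modulo the relations $v=\sum_{e\in X}[b_e]$ for $X\in C_v\setminus S$ (by the coproduct formula of \cite{Berg1}); then each universal-isomorphism step (5) imposes $[b_{e_\alpha}]=[r(e_\alpha)]$ via \cite[Theorem 5.2]{Berg2}, after which $q'_X := v-\sum_{e\in X}[b_e]$ becomes, in the limit $A_\gamma$, an element satisfying $v=\bfr(X)+q'_X$; finally step (8) imposes $v=\bfr(X)$ for $X\in S$, which is relation (4). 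Carrying this out rigorously—and checking that the isomorphism $\calV(A_\mu)\cong M(E,C,S)$ so produced is the map $\Gamma(E,C,S)$ of the statement, i.e. sends $q'_Z\mapsto[p_Z]$—is the technical heart of the argument. Naturality in $\phi$ is then a routine diagram chase on generators, and passing to the direct limit over finite complete subobjects completes the proof; alternatively, one can prove naturality and the finite case simultaneously and invoke continuity at the very end.
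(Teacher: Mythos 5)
Your reduction to the finite case and the well-definedness check match the paper, but your finite-case argument takes a genuinely different route. The paper argues by induction on $|C|$, deleting one set $X\in C_v$ at a time: for $X\in S$ it adjoins a single universal isomorphism between $CL_K(F,D,T)v$ and $\bigoplus_{e\in X}CL_K(F,D,T)r(e)$ and quotes \cite[Theorem 5.2]{Berg2}; for $X\notin S$ it first invokes Bergman's ``creation of idempotents'' \cite[Theorem 5.1]{Berg2} to adjoin orthogonal idempotents $g_1,\dots,g_m,q_X$ summing to $v$, and then adjoins universal isomorphisms $Rr(e_i)\cong Rg_i$. You instead re-run the tower from the proof of Theorem \ref{CLhereditary}, creating all the idempotents at once via coproducts of the algebras $B_X^\sim$ and then adjoining the universal isomorphisms. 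This is workable --- Bergman's coproduct results in \cite{Berg1} describe $\calV$ of a coproduct of unital $K$-algebras as the pushout of the $\calV(B_X^\sim)$ over $\calV(K)$ --- and it gives a uniform treatment of $C\setminus S$, at the cost of an extra (routine) translation between $\calV(A)$ and $\calV(A^\sim)$.

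However, the one explicit formula you give for the key computation is wrong, in a way that would derail the argument if followed literally. You assert that $\calV(A_0)$ is free on $E^0\sqcup\{[b_e]\}$ modulo the relations $v=\sum_{e\in X}[b_e]$ for $X\in C_v\setminus S$. But $B_X^\sim\cong K^{|X|+1}$: its unit decomposes as $\sum_{e\in X}b_e$ plus the \emph{nonzero} complementary idempotent $c_X:=1-\sum_{e\in X}b_e$, so $\calV(B_X^\sim)$ is free on the $[b_e]$ \emph{together with} $[c_X]$, and the relation contributed to $\calV(A_v)$ is $v=\sum_{e\in X}[b_e]+[c_X]$, not $v=\sum_{e\in X}[b_e]$. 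Under your stated relation every $q'_X$ would be forced to $0$, and the tower would compute $M(E,C)$ rather than $M(E,C,S)$; moreover the expression ``$q'_X:=v-\sum_{e\in X}[b_e]$'' has no meaning in a monoid unless it names the class of an actual complementary idempotent, which is precisely what the missing generator $[c_X]$ provides. The repair is to retain the $[c_X]$ as free generators: after the universal-isomorphism steps identify $[b_e]$ with $[r(e)]$, the image of $c_X$ in $CL_K(E,C,S)$ is $q_X=v-\sum_{e\in X}ee^*$ and the relation becomes $v=\bfr(X)+[q_X]$, which is relation (5) of Definition \ref{defgraphmon}; the steps for $X\in S$ then impose relation (4). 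With that correction, and the deferred bookkeeping actually carried out, your argument goes through.
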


\begin{proof} It is easily seen that the maps $\Gamma(E,C,S)$ are well-defined monoid
homomorphisms, and that $\Gamma $ defines a natural transformation from $M$ to $\calV\circ
CL_K$.

We have observed that $M$ is continuous, and so is $\calV\circ
CL_K$, taking into account that $\calV$ is continuous and
Proposition \ref{CLalg-dirlim}. Thus, by making use of the second
part of Proposition \ref{CLalg-dirlim} we see that it is sufficient
to show that $\Gamma(E,C,S) $ is an isomorphism in the case where $E$ is a
finite graph.

We use induction on $|C|$ to establish the result for finite objects
$(E,C,S)$ in $\SSGr$. The result is trivial if $|C|=0$ (i.e., if
there are no edges in $E$). Assume that $\Gamma(E,C,S) $ is an isomorphism
for finite objects $(F,D,T)$ of $\SSGr$ with $|D|<n-1$ for some
$n\ge 1$, and let $(E,C,S)$ be a finite object in $\SSGr$ such that
$|C|=n$. Select $X$ in $C_v$, for some $v\in E^0$. We can apply
induction to the triple $(F,D,T)$ obtained from $(E,C,S)$ by
deleting all the edges in $X$, and leaving intact the structure
corresponding to the remaining subsets $Y\in C$ (keeping $F^0=E^0$).

Assume first that $X\in S$. Then $M(E,C,S)$ is obtained from
$M(F,D,T)$ by factoring out the relation $v=\bfr(X)$. On the other
hand, the algebra $CL_K(E,C,S)$ is the Bergman algebra obtained from
$CL_K(F,D,T)$ by adjoining a universal isomorphism between the
finitely generated projective modules $CL_K(F,D,T)v$ and $\bigoplus
_{e\in X} CL_K(F,D,T)r(e)$. Accordingly, it follows from
\cite[Theorem 5.2]{Berg2} that $\mon{CL_K(E,C,S)}$ is the quotient
of $\mon{CL_K(F,D,T)}$ modulo the relation $[v]=[\bfr(X)]$. Since
$\Gamma(F,D,T): M(F,D,T)\to \mon{CL_K(F,D,T)}$ is an isomorphism by
the induction hypothesis, we obtain that $\Gamma(E,C,S)$ is an
isomorphism in this case.

Assume now that $X\notin S $. In this case, $M(E,C,S)$ is obtained
from $M(F,D,T)$ by adjoining a new generator $q'_X$ and factoring
out the relation $v= \bfr(X)+ q'_X$. On the $K$-algebra side, we
shall make use of another of Bergman's constructions, namely ``the
creation of idempotents". Write $X=\{e_1,\dots ,e_m\}$, and recall
that $X\in C_v$, so that $s_E(e_i)=v$ for all $i$. Let $R$ be the
algebra obtained from $CL_K(F,D,T)$ by adjoining $m+1$ pairwise
orthogonal idempotents $g_1, \dots , g_m, q_X$ with
$$v= g_1+\cdots + g_m+q_X.$$
It follows from \cite[Theorem 5.1]{Berg2} that $\mon{R}$ is the
monoid obtained from $\mon{CL_K(F,D,T)}$ by adjoining $m+1$ new
generators $z_1,\dots ,z_m, q_X''$ and factoring out the relation
$[v]=\sum _{j=1}^m z_j+q_X''$.

Now, it is clear that $CL_K(E,C,S)$ is isomorphic to the Bergman
algebra obtained from $R$ by consecutively adjoining universal
isomorphisms between the left modules generated by the idempotents
$r(e_i)$ and $g_i$, for $i=1,\dots ,m$. It follows that
$\mon{CL_K(E,C,S)}$ is the monoid obtained from $\mon{CL_K(F,D,T)}$
by adjoining a new generator $q''_X$ and factoring out the relation
$[v]=[\bfr(X)]+q''_X$. Therefore, applying the induction hypothesis
to $(F,D,T)$, we again conclude that $\Gamma(E,C,S)$ is an
isomorphism.
\end{proof}

We conclude this section by noting that all conical abelian monoids appear as graph monoids of separated graphs.

\begin{proposition} \label{allconical}
If $M$ is any conical abelian monoid, there exists a finitely separated graph $(E,C)$ such that
$$M\cong M(E,C) \cong \mon{L_K(E,C)}.$$
\end{proposition}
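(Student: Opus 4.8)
The plan is to realize an arbitrary conical abelian monoid $M$ by a presentation and then mimic that presentation with graph data. First I would fix a presentation of $M$ by generators and relations: $M = \langle \Omega \mid r_i = s_i,\ i\in \Lambda\rangle$, where $\Omega$ is a set and each relation $r_i = s_i$ equates two finite formal sums of generators (elements of the free abelian monoid $\mathbb{F}$ on $\Omega$). Since $M$ is conical, we may additionally arrange (by replacing each relation $\sum a_j = \sum b_k$ with the two relations $\sum a_j = \sum b_k + \sum a_j$-style symmetrizations, or more simply by just taking \emph{all} pairs $(w,w')$ with $w=w'$ in $M$ as relations) that the relation set behaves well; the cleanest choice is to take $\Lambda$ to be the set of \emph{all} pairs $(w_i, w'_i) \in \mathbb{F}\times\mathbb{F}$ with $w_i = w'_i$ in $M$ and $w_i \ne w'_i$ in $\mathbb{F}$.

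Next I would build the graph. The vertex set $E^0$ should contain $\Omega$ (one vertex $v_a$ per generator $a$) together with one auxiliary vertex $u_i$ for each relation $i\in\Lambda$. For a relation $w_i = a_1 + \cdots + a_p$ and $w'_i = b_1 + \cdots + b_q$ (with repetitions allowed, listing generators with multiplicity), I would put edges from $u_i$ to the vertices $v_{a_1},\dots,v_{a_p}$ forming one block $X_i \in C_{u_i}$, and edges from $u_i$ to $v_{b_1},\dots,v_{b_q}$ forming a second block $Y_i \in C_{u_i}$. Taking $S = \Cfin = C$ (so all blocks get (SCK2)), the defining relations (4) of $M(E,C)$ from Definition~\ref{defgraphmon} read $u_i = \mathbf{r}(X_i) = \sum_{t=1}^p v_{a_t}$ and $u_i = \mathbf{r}(Y_i) = \sum_{t=1}^q v_{b_t}$; eliminating the generator $u_i$ yields exactly the relation $\sum v_{a_t} = \sum v_{b_t}$, i.e.\ $w_i = w'_i$ with $a \mapsto v_a$. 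Thus $M(E,C)$ is generated by $\{v_a : a\in\Omega\} \cup \{u_i\}$ with the $u_i$ redundant, and modding out reproduces precisely the presentation of $M$. One must double-check the degenerate cases: if some $w'_i$ is the empty sum (i.e.\ the relation is $w_i = 0$), conicality of $M$ forces $w_i = 0$ in $\mathbb{F}$ too, contradicting $w_i \ne w'_i$, so this case does not arise; and $p,q \ge 1$ always, so $C_{u_i}$ is a genuine partition of $s^{-1}(u_i)$ into nonempty blocks. Each $v_a$ is a sink (we add no edges out of it), so $C_{v_a} = \emptyset$, consistent with Definition~\ref{defsepgraph}. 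Since all blocks are finite, $(E,C)$ is finitely separated.

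The isomorphism $M \cong M(E,C)$ then follows by comparing presentations: there is an obvious homomorphism $M \to M(E,C)$, $a \mapsto v_a$, and an obvious homomorphism $M(E,C) \to M$ sending $v_a \mapsto a$ and $u_i \mapsto w_i$ (well-defined since $w_i = w'_i$ in $M$, and since for the sinks $v_a$ there are no further relations); these are mutually inverse. Finally, $M(E,C) \cong \mon{L_K(E,C)}$ is immediate from Theorem~\ref{computVMECS} (the $\Gamma$ isomorphism) applied with $S = \Cfin$, giving $M(E,C) = M(E,C,\Cfin) \cong \mon{CL_K(E,C,\Cfin)} = \mon{L_K(E,C)}$.

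I do not expect a serious obstacle here; the construction is a routine ``realize a monoid presentation by a bipartite-type graph'' argument. The only points requiring care are bookkeeping ones: handling multiplicities correctly when a generator appears several times in a relation (so one needs several parallel edges $u_i \to v_a$, which the setup allows), verifying the nonemptiness and partition conditions on $C_{u_i}$, and confirming via conicality that no relation of the form $w = 0$ with $w \ne 0$ is needed. A secondary point is to make sure the redundancy argument for eliminating the $u_i$ is legitimate, but this is exactly the kind of elimination already used in Definition~\ref{defgraphmon} and in Lemma~\ref{nonzeroconical}, so it presents no difficulty.
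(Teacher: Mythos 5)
Your construction is essentially the paper's: a bipartite graph with one source vertex per relation and one sink vertex per generator, the two sides of each relation realized (with multiplicity, via parallel edges) as the two blocks of $C_{u_i}$, elimination of the redundant $u_i$, and an appeal to Theorem~\ref{computVMECS}. The one step that does not hold up is your treatment of the degenerate case. You assert that if a relation has the form $w_i = 0$ with $w_i \ne 0$ in $\mathbb{F}$, then ``conicality of $M$ forces $w_i = 0$ in $\mathbb{F}$ too.'' That inference is false: conicality applied to $a_1+\cdots+a_p = 0$ in $M$ yields only that each $a_t = 0$ \emph{in $M$}, not that $w_i$ is the empty word of $\mathbb{F}$. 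If your generating set $\Omega$ contains elements that are zero in $M$ --- which nothing in your setup excludes, and which certainly happens if you take $\Omega = M$ --- then among ``all pairs $(w,w')$ with $w = w'$ in $M$'' there are genuine relations with one side empty, and the corresponding block $Y_i$ of $C_{u_i}$ would be empty, violating the requirement that $C_{u_i}$ be a partition of $s^{-1}(u_i)$ into nonempty sets.

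The repair is exactly the one the paper makes: before building the graph, delete from $\Omega$ every generator equal to $0$ in $M$ (each is redundant, being expressible as the empty sum). After this pruning, conicality genuinely guarantees that no nonempty formal sum of the remaining generators equals $0$ in $M$, so every relation has at least one generator with positive coefficient on each side, and both blocks $X_i$, $Y_i$ are nonempty. With that adjustment the rest of your argument --- the comparison of presentations and the application of Theorem~\ref{computVMECS} with $S=\Cfin$ --- goes through and coincides with the paper's proof.
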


\begin{proof} Choose a presentation of $M$ with a nonempty set
$\{ x_j\mid j\in J\}$ of generators and a nonempty set
$\{r_i\mid i\in I\}$ of relations
\begin{equation} \label{Mpres}
r_i\,:\qquad \sum_{j\in J} a_{ij}x_j= \sum_{j\in J} b_{ij}x_j\,,
\end{equation}
where for each $i$, we have $a_{ij}\ne 0$ for at least one but only
finitely many $j\in J$, and similarly for the $b_{ij}$. The relations can be chosen with these restrictions because $M$ is conical. For instance, if, for some $i$, all $a_{ij}=0$, then $\sum_j b_{ij}x_j =0$ and consequently $x_j=0$ whenever $b_{ij}>0$. In this case, we may delete those $x_j$ for which $b_{ij}>0$ from our set of generators.

Let $(E,C)$
be the finitely separated graph constructed as follows:
\begin{enumerate}
\item $E^0 := \{u_i\mid i\in I\}\sqcup \{v_j\mid j\in J\}$.
\item All the $u_i$ are sources, and all the $v_j$ are sinks.
\item For each $i\in I$ and $j\in J$, there are exactly $a_{ij}+b_{ij}$ edges with source $u_i$ and range $v_j$.
\item Each $s^{-1}(u_i)= X_{i1}\sqcup X_{i2}$ where $X_{i1}$ contains exactly $a_{ij}$ edges $u_i \rightarrow v_j$ for each $j$, and $X_{i2}$ contains exactly $b_{ij}$ edges $u_i \rightarrow v_j$ for each $j$.
Thus, $E^1= \bigsqcup_{i\in I} (X_{i1}\sqcup X_{i2})$.
\item $C := \bigsqcup_{v\in E^0} C_v$ where each $C_{u_i} := \{X_{i1},X_{i2}\}$ and each $C_{v_j} := \emptyset$.
\end{enumerate}

Since $(E,C)$ is finitely separated, the monoid $M(E,C)$ can be presented (as noted in Definition \ref{defgraphmon}) with the set of generators $E^0$ and the relations $v= \bfr(X)$ for $v\in E^0$ and $X\in C_v$. These relations are of the form
$$u_i= \sum_{j\in J} a_{ij} v_j \qquad\quad \text{and} \qquad\quad u_i= \sum_{j\in J} b_{ij} v_j$$
for $i\in I$. The isomorphism $M\cong M(E,C)$ follows immediately, and the final isomorphism is given by Theorem \ref{computVMECS}.
\end{proof}

\begin{corollary}  \label{cam=V}
Let $M$ be a conical abelian monoid. Then there exists a hereditary $K$-algebra $A:= L_K(E,C)$, for some finitely separated graph $(E,C)$, such that $\mon{A} \cong M$.
\end{corollary}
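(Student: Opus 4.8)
The plan is to obtain this corollary as a direct packaging of the two immediately preceding results, Proposition~\ref{allconical} and Theorem~\ref{CLhereditary}, with essentially no additional argument required.

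First I would invoke Proposition~\ref{allconical}: given the conical abelian monoid $M$, it furnishes a finitely separated graph $(E,C)$ together with isomorphisms $M \cong M(E,C) \cong \mon{L_K(E,C)}$. Setting $A := L_K(E,C)$, we then already have $\mon{A} \cong M$, so the only remaining point is the heredity of $A$. Here I would observe that, by Definition~\ref{defCLalg}, $L_K(E,C) = CL_K(E,C,\Cfin)$, so that $(E,C,\Cfin)$ is an object of $\SSGr$ (indeed of $\SGr$) and Theorem~\ref{CLhereditary} applies verbatim, yielding that $A$ is hereditary. Combining the two observations completes the proof.

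There is no genuine obstacle in this corollary: all the real content has already been absorbed into Proposition~\ref{allconical} (the explicit construction of a finitely separated graph from a presentation of $M$, using conicality to arrange the relations in the required form) and into Theorem~\ref{CLhereditary} (the heredity, which rested on three applications of the Bergman--Dicks machinery \cite[Theorem 3.4]{BD} together with Bergman's universal-isomorphism and idempotent-creation constructions). The single subtlety worth flagging in the write-up is that the Leavitt path algebra case is literally the Cohn--Leavitt case with $S=\Cfin$, so Theorem~\ref{CLhereditary} is directly applicable to $L_K(E,C)$ without any reformulation.
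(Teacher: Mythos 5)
Your proposal is correct and matches the paper's own proof, which simply cites Proposition~\ref{allconical} together with Theorems~\ref{CLhereditary} and~\ref{computVMECS}. Your observation that $L_K(E,C)=CL_K(E,C,\Cfin)$ makes Theorem~\ref{CLhereditary} directly applicable is exactly the intended reading.
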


\begin{proof} Theorems \ref{CLhereditary} and \ref{computVMECS} and Proposition \ref{allconical}.
\end{proof}

Corollary \ref{cam=V}
incorporates a more explicit rendering of a result of Bergman
\cite[Theorem 6.4]{Berg2}, as corrected and extended by Bergman and
Dicks \cite[Remarks following Theorem 3.4]{BD}: Any conical abelian
monoid with an order-unit is isomorphic to $\mon{R}$ for some unital
hereditary $K$-algebra $R$. Our development does not replace the
mentioned result, since the theorems proved in \cite{Berg2} are
crucial to our proof of Theorem \ref{computVMECS}, and our algebras
are mostly non-unital.

\section{Refinement}
\label{sect:refinement}

In this section we initiate our systematic study of the properties
of the graph monoids $M(E,C,S)$ associated to separated
graphs $(E,C)$ with distinguished subsets $S\subseteq \Cfin$, focusing in particular on the Riesz refinement
property. For non-separated, row-finite graphs $E$, the corresponding monoid
$M(E)$ was proved to have
refinement in
 \cite[Proposition 4.4]{AMP}. This result can be extended
to general graphs $E$, once the definition of $M(E)$ is modified
along the lines of Definition \ref{defgraphmon} (use Theorem \ref{computVMECS} and \cite[Theorem 5.8]{Gdirlim}). It also follows from our results here (see Corollary \ref{M(E)refinement}). However, refinement does not always hold in $M(E,C,S)$ or even in $M(E,C)$. For example, let $E$ be the graph
$$\xymatrixrowsep{0.5pc}\xymatrixcolsep{6pc}\def\labelstyle{\displaystyle}
\xymatrix{
x_1 &&y_1 \\
&v \ar[ul]_{e_1} \ar[dl]^{e_2} \ar[ur]^{f_1} \ar[dr]_{f_2} \\
x_2 &&y_2 }$$ and take $C_v= \{X,Y\}$ where $X= \{e_1,e_2\}$ and $Y=
\{f_1,f_2\}$. Then $M(E,C)$ is the monoid presented by generators
$x_1$, $x_2$, $y_1$, $y_2$ and the relation $x_1+x_2= y_1+y_2$. This
is not a refinement monoid. Thus, extra conditions on $(E,C,S)$ are
needed to obtain refinement in $M(E,C,S)$.

We obtain refinement by imposing conditions of the following type. Suppose we have an object $(E,C,S) \in \SSGr$, a vertex $v\in E^0$, and distinct sets $X,Y\in S\cap C_v$. Then $v= \bfr(X)= \bfr(Y)$ in $M(E,C,S)$, and we require assumptions that allow a refinement of the equation $\bfr(X)= \bfr(Y)$. A sufficient condition is the existence of edges $g_{e,f}, h_{e,f} \in E^1$ for $e\in X$ and $f\in Y$, with $s(g_{e,f})= r(e)$, $s(h_{e,f})= r(f)$, and $r(g_{e,f})= r(h_{e,f})$ for all $e$, $f$, such that the sets $X_e := \{g_{e,f} \mid f\in Y\}$ and $Y_f := \{h_{e,f} \mid e\in X\}$ belong to $C_{r(e)} \cap S$ and $C_{r(f)} \cap S$ respectively. Then, in $M(E,C,S)$, we would have $r(e)= \sum_{f\in Y} r(g_{e,f})$ for $e\in X$ and $r(f)= \sum_{e\in X} r(h_{e,f})= \sum_{e\in X} r(g_{e,f})$ for $f\in Y$, providing the desired refinement of $\bfr(X)= \bfr(Y)$.

We actually impose a slightly weaker assumption, which we place on the free abelian monoid on a generating set for $M(E,C,S)$ (see Definition \ref{defstarstar}). This asumption has a smoother form when all the sets in $C$ are finite (Definition \ref{defstar}), and so we begin with that case. An easy reduction will allow us to assume, in addition, that $S=C$. Later, we reduce the general case to the one just mentioned.

\begin{assumption*} Throughout this section, $(E,C,S)$ will denote a fixed, arbitrary object of $\SSGr$. Until Proposition \ref{refinement}, we also assume that $(E,C)$ is a finitely separated graph (that is, $C=\Cfin$).
\end{assumption*}

Set $Q^0 := \{ q'_X \mid X\in C\setminus S\}$, and let $F$ be the free abelian monoid on $E^0 \sqcup Q^0$. (We use the same notation for elements of $E^0\sqcup Q^0$ in $F$ as for their images in $M(E,C,S)$.) As above, set $\bfr(X)= \sum_{e\in X} r(e)$ for $X\in C$ (in either $F$ or $M(E,C,S)$). Then set
$$\bfrho(X)= \bfrho_E(X) := \begin{cases} \bfr(X) &(X\in S)\\ \bfr(X)+q'_X &(X\in C\setminus S). \end{cases}$$
We identify $M(E,C,S)$ with the quotient monoid $F/{\sim}$, where $\sim$ is the congruence on $F$ generated by the relations
$$v\sim \bfrho(X)$$
for $v\in E^0$ and $X\in C_v$.

\begin{definition} \label{defsquigarrow}
[Assuming $C=\Cfin$] For $\alpha,\beta \in F$, write $\alpha \rightsquigarrow_1 \beta$
to denote the following situation:
\begin{itemize}
\item[] $\alpha= \sum_{i=1}^k u_i+\sum_{i=k+1}^m u_i$ and $\beta= \sum_{i=1}^k \bfrho(X_i)+\sum_{i=k+1}^m u_i$ for some $u_i\in E^0\sqcup Q^0$ and some $X_i\in C_{u_i}$, where $u_1,\dots ,u_k\in E^0\setminus \Si(E)$.
\end{itemize}
We view $\alpha=0$ as an empty sum of the above form (i.e.,
$k=m=0$), so that $0\rightsquigarrow_1 0$. Note also, taking $k=0$, that $\alpha \rightsquigarrow_1 \alpha$ for all $\alpha \in F$.

Now write $\alpha\rightsquigarrow_n \beta$, where $n\ge1$, in case there is a finite
string $\alpha=\alpha_0 \rightsquigarrow_1 \alpha_1
\rightsquigarrow_1 \cdots \rightsquigarrow_1 \alpha_n=\beta$. Finally, $\alpha\rightsquigarrow\beta$ just means
that $\alpha\rightsquigarrow_n\beta$ for some $n$. Observe that if
$\alpha \rightsquigarrow_m \beta$ and $\alpha' \rightsquigarrow_n
\beta'$, then $\alpha+\alpha' \rightsquigarrow_s
\beta+\beta'$ with $s= \max\{m,n\}$. (This follows from the fact that $\alpha \rightsquigarrow_1 \beta$ implies $\alpha+\gamma \rightsquigarrow_1 \beta+\gamma$ for all $\gamma\in F$.)
\end{definition}

\begin{definition}
\label{defstar} [Assuming $C=\Cfin$] {\bf Assumption $(*)$:} For all $v\in E^0$ and all
$X,Y\in C_v$, there exists $\gamma\in F$ such that $\bfrho(X)
\rightsquigarrow_1 \gamma$ and $\bfrho(Y) \rightsquigarrow_1 \gamma$.

This assumption only needs to be imposed when $X$ and $Y$ are disjoint, since otherwise $X=Y$ and the conclusion is trivially satisfied. Upcoming inductions (see the proof of Lemma \ref{confluence}) require the use of $\rightsquigarrow_1$ rather than $\rightsquigarrow$ in Assumption $(*)$.
\end{definition}

\begin{construction} \label{removeS}
Enlarge $E$ to a graph $\Etil$ by adjoining new vertices $w_X$ for $X\in C\setminus S$ and new edges $f_X: v\rightarrow w_X$ for $v\in E^0$ and $X\in C_v\setminus S$. For $v\in E^0$ and $X\in C_v$, set
$$\Xtil := \begin{cases} X &(X\in S)\\ X\sqcup \{f_X\} &(X\notin S), \end{cases}$$
and then set $\Ctil_v= \{ \Xtil \mid X\in C_v \}$, a partition of $s^{-1}_{\Etil}(v)$. Since the vertices $w_X\in \Etil^0$ are sinks, we just set $\Ctil_{w_X}= \emptyset$ for $X\in C\setminus S$. We have now built a separated graph $(\Etil,\Ctil)$, where $\Ctil= \bigsqcup_{w\in \Etil} \Ctil_w$. Since all the sets in $C$ are finite, so are those in $\Ctil$.

Take $\Stil=\Ctil$, and observe that $M(\Etil,\Ctil)= M(\Etil,\Ctil,\Stil)$ is the abelian monoid presented by the generating set $\Etil^0$ and the relations $w= \bfr_{\Etil}(\Xtil)$ for $w\in \Etil^0$ and $\Xtil\in \Ctil_w$. These relations are of two types:
\begin{align*}
v &= \bfr_E(X)  &&(v\in E^0,\ X\in C_v\cap S) \\
v &= \bfr_E(X)+w_X  &&(v\in E^0,\ X\in C_v\setminus S);
\end{align*}
there are no relations for $w\in \Etil^0\setminus E^0$. Comparing presentations, we see that there is a monoid isomorphism
$$M(E,C,S) \longrightarrow M(\Etil,\Ctil)$$
sending $v\mapsto v$ for $v\in E^0$ and $q'_X \mapsto w_X$ for $X\in C\setminus S$.

Now let $\Ftil$ be the free abelian monoid on $\Etil^0$, and let $\theta: F\rightarrow \Ftil$ be the isomorphism such that $\theta(v)=v$ for $v\in E^0$ and $\theta(q'_X)= w_X$ for $X\in C\setminus S$. Observe that
$$\theta(\bfrho_E(X))= \bfr_{\Etil}(\Xtil)$$
for all $X\in C$. It follows that if $\alpha,\beta \in F$ are any elements satisfying $\alpha \rightsquigarrow_1 \beta$, then $\theta(\alpha) \rightsquigarrow_1 \theta(\beta)$ in $\Ftil$. It is now clear that if Assumption $(*)$ holds for $(E,C,S)$, then it also holds for $(\Etil,\Ctil)$.

We aim to prove that when $(*)$ holds, $M(E,C,S)$ is a refinement monoid. By what we have just shown, it suffices to prove this result for $M(\Etil,\Ctil)$. Thus:
\end{construction}

\begin{assumption*} Until Proposition \ref{refinement}, we assume that $S=C$.
\end{assumption*}

With the above simplification, $F$ is the free abelian monoid on $E^0$, and $\bfrho(X)= \bfr(X)$ for $X\in C$.

We will need the following relation $\rightarrow_1$, a more elementary version of $\rightsquigarrow_1$, which provides an alternative way to build $\rightsquigarrow$.

\begin{definition}
\label{binary}
For nonzero $\alpha,\beta\in F$, write $\alpha \rightarrow_1 \beta$ to denote the following situation:
\begin{enumerate}
\item[] $\alpha= \sum_{i=1}^n v_i$ and $\beta= \bfr(X)+ \sum_{i=2}^n v_i$ for some $v_i\in E^0$ and some $X\in C_{v_1}$, where $v_1\notin \Si(E)$.
\end{enumerate}
Observe that $\sim$ is the congruence on $F$ generated by $\rightarrow_1$.

Let $\rightarrow $ be the transitive
and reflexive closure of $\rightarrow _1$ on $F$,
that is, $\alpha\rightarrow \beta$ if and only if there is a finite
string $\alpha =\alpha _0\rightarrow _1 \alpha _1\rightarrow _1
\cdots \rightarrow _1 \alpha _t=\beta$, in which case we write
$\alpha \rightarrow_t \beta$. In particular, $t=0$ is allowed, so
that $\alpha \rightarrow \alpha$ for all $\alpha\in F$ (including $\alpha=0$).

Observe that $\alpha\rightarrow_1 \beta$ implies $\alpha \rightsquigarrow_1 \beta$, so that
$\alpha\rightarrow_n \beta$ implies $\alpha \rightsquigarrow_n \beta$ for any $n>0$. On the other hand, the basic instance of $\alpha\rightsquigarrow_1 \beta$ given in Definition \ref{defsquigarrow} can clearly be achieved by $k$ iterations of $\rightarrow_1$. Consequently, $\alpha \rightsquigarrow_n \beta$ implies $\alpha \rightarrow_t \beta$ for some $t\ge n$. Therefore $\rightarrow$ coincides with $\rightsquigarrow$.

Note that $\rightarrow_1$ is partially compatible with sums: $\alpha \rightarrow_1 \beta$ implies $\alpha+\gamma \rightarrow_1 \beta+\gamma$ for any $\gamma\in F$. Consequently, if $\alpha \rightarrow_s \beta$ and $\alpha' \rightarrow_t \beta'$, then $\alpha+\alpha' \rightarrow_{s+t} \beta+\beta'$. This of course yields that $\rightarrow$ is compatible with sums: if $\alpha \rightarrow \beta$ and $\alpha' \rightarrow \beta'$, then $\alpha+\alpha' \rightarrow \beta+\beta'$.
\end{definition}

\begin{lemma} \label{simfromstrings}
Let $\alpha,\beta \in F$. Then $\alpha\sim\beta$ if and only if there is a finite
string $\alpha =\alpha _0,\alpha_1, \dots ,\alpha _n=\beta$, such
that, for each $i=0,\dots ,n-1$, either $\alpha _i\rightarrow _1
\alpha _{i+1}$ or $\alpha_{i+1}\rightarrow_1 \alpha _i$.
\end{lemma}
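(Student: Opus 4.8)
The statement to prove is Lemma~\ref{simfromstrings}: for $\alpha,\beta\in F$, one has $\alpha\sim\beta$ if and only if there is a finite string $\alpha=\alpha_0,\alpha_1,\dots,\alpha_n=\beta$ with each consecutive pair related by $\rightarrow_1$ in one direction or the other. This is essentially the standard description of the congruence generated by a binary relation, specialized to our setting; the only point requiring care is that $\rightarrow_1$ is already a relation between monoid elements (not just a set of pairs of generators) and that it is partially compatible with addition, so I would organize the argument around those facts rather than reproving a general congruence-generation lemma.

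The plan is as follows. For the ``if'' direction, suppose such a string exists. Since $\sim$ is the congruence on $F$ generated by $\rightarrow_1$ (as recorded in Definition~\ref{binary}), and since each step $\alpha_i\rightarrow_1\alpha_{i+1}$ or $\alpha_{i+1}\rightarrow_1\alpha_i$ implies $\alpha_i\sim\alpha_{i+1}$, transitivity of $\sim$ gives $\alpha\sim\beta$ immediately. For the ``only if'' direction, define a relation $\approx$ on $F$ by declaring $\alpha\approx\beta$ precisely when such a finite zig-zag string of $\rightarrow_1$-steps connects them. I would then verify that $\approx$ is a monoid congruence containing $\rightarrow_1$; since $\sim$ is by definition the smallest such congruence, this forces $\sim\,\subseteq\,\approx$, which is exactly what is needed. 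Concretely: $\approx$ is reflexive (empty string), symmetric (reverse the string, reversing the direction of each arrow), and transitive (concatenate strings); these are routine. Containment of $\rightarrow_1$ is immediate from a length-one string. Compatibility with addition is the one substantive point: if $\alpha\approx\beta$ via a string $\alpha=\alpha_0,\dots,\alpha_n=\beta$, then for any $\gamma\in F$ the translated string $\alpha_0+\gamma,\dots,\alpha_n+\gamma$ witnesses $\alpha+\gamma\approx\beta+\gamma$, because each step is preserved: $\alpha_i\rightarrow_1\alpha_{i+1}$ implies $\alpha_i+\gamma\rightarrow_1\alpha_{i+1}+\gamma$ by the partial compatibility of $\rightarrow_1$ with sums noted in Definition~\ref{binary}, and likewise for steps in the other direction. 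One-sided translation compatibility plus the commutativity of $F$ yields full compatibility ($\alpha\approx\beta$ and $\alpha'\approx\beta'$ give $\alpha+\alpha'\approx\beta+\alpha'\approx\beta+\beta'$).

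I do not anticipate a genuine obstacle here; the argument is a clean application of the universal property of ``smallest congruence containing a relation.'' The one place to be slightly careful is the handling of the zero element and of steps into or out of $0$: the relations $\rightarrow_1$ as defined require nonzero $\alpha,\beta$, but since $0$ is not $\sim$-equivalent to any nonzero element of $F$ (the generating relations $v\rightarrow_1\bfr(X)$ never kill a generator, as $C=\Cfin$ here and sinks are excluded from the left-hand side), the case $\alpha=0$ or $\beta=0$ is trivial and can be dispatched separately before invoking the zig-zag machinery. With that caveat noted, the two inclusions together give the equivalence, completing the proof.
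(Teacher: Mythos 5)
Your proposal is correct and follows essentially the same route as the paper: define the zig-zag relation $\approx$, check it is a monoid congruence (using the partial compatibility of $\rightarrow_1$ with sums) containing $\rightarrow_1$, and conclude $\sim\,\subseteq\,\approx$ by minimality, with the reverse inclusion holding by construction. Your extra remark about the zero element is harmless but not needed, since the congruence argument already covers that case.
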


The number
$n$ above will be called the {\it length} of the string. Strings of length zero are allowed.

\begin{proof} Define a relation $\approx$ on $F$ by the given condition on strings. Namely, $\alpha\approx\beta$ if and only if there is a finite
string $\alpha =\alpha _0,\alpha_1, \dots ,\alpha _n=\beta$, such
that, for each $i=0,\dots ,n-1$, either $\alpha _i\rightarrow _1
\alpha _{i+1}$ or $\alpha_{i+1}\rightarrow_1 \alpha _i$. It is clear that $\approx$ is an equivalence relation, and it follows from the partial compatibility of $\rightarrow_1$ with sums that $\approx$ is a monoid congruence. As $\rightarrow_1$ implies $\approx$ and $\rightarrow_1$ generates $\sim$, it follows that $\sim\; \subseteq\; \approx$. The reverse inclusion holds by construction of $\approx$. Therefore $\approx$ coincides with $\sim$, proving the lemma.
\end{proof}

The {\em support} of an element $\gamma$ in $F$, denoted
$\mbox{supp}(\gamma)\subseteq E^0$, is the set of basis elements
appearing in the canonical expression of $\gamma$.

\begin{lemma}
\label{division} Assume that $\alpha,\alpha_1,\alpha_2,\beta\in F$
with $\alpha =\alpha _1+\alpha _2$ and $\alpha \rightarrow_n \beta
$ for some $n$. Then $\beta $ can be written as $\beta =\beta _1+\beta _2$ with
$\alpha _1\rightarrow_s \beta _1$ and $\alpha_2\rightarrow_t \beta
_2$, where $s,t\le n$.
\end{lemma}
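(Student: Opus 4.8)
The plan is to induct on $n$. The base case $n=0$ is trivial: $\alpha \rightarrow_0 \beta$ means $\beta = \alpha = \alpha_1 + \alpha_2$, so we take $\beta_i = \alpha_i$ with $s=t=0$. For the inductive step, suppose the result holds for strings of length $n-1$, and assume $\alpha \rightarrow_{n} \beta$, say $\alpha \rightarrow_1 \alpha' \rightarrow_{n-1} \beta$. The key observation is that the single reduction $\alpha \rightarrow_1 \alpha'$ is localized at one basis vertex: by Definition \ref{binary}, there is $v \in \mathrm{supp}(\alpha) \setminus \Si(E)$ and $X \in C_v$ with $\alpha' = \alpha - v + \bfr(X)$. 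Since $\alpha = \alpha_1 + \alpha_2$ and $v$ appears in the support of $\alpha$, it appears in the support of $\alpha_1$ or of $\alpha_2$; say $v \in \mathrm{supp}(\alpha_1)$ (the other case is symmetric). Then set $\alpha_1' := \alpha_1 - v + \bfr(X)$ and $\alpha_2' := \alpha_2$, so that $\alpha_1 \rightarrow_1 \alpha_1'$ (using the same $v$ and $X$; note $v \notin \Si(E)$ carries over), $\alpha_2 \rightarrow_0 \alpha_2'$, and $\alpha_1' + \alpha_2' = \alpha'$.

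Now apply the induction hypothesis to $\alpha' = \alpha_1' + \alpha_2'$ and $\alpha' \rightarrow_{n-1} \beta$: we obtain a decomposition $\beta = \beta_1 + \beta_2$ with $\alpha_1' \rightarrow_{s'} \beta_1$ and $\alpha_2' \rightarrow_{t'} \beta_2$ for some $s', t' \le n-1$. Splicing the initial reductions back in gives $\alpha_1 \rightarrow_1 \alpha_1' \rightarrow_{s'} \beta_1$, i.e.\ $\alpha_1 \rightarrow_s \beta_1$ with $s = s'+1 \le n$, and $\alpha_2 = \alpha_2' \rightarrow_{t'} \beta_2$, i.e.\ $\alpha_2 \rightarrow_t \beta_2$ with $t = t' \le n-1 \le n$. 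This completes the induction.

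The only point requiring a little care — and the closest thing to an obstacle — is the bookkeeping on the lengths $s,t$: one must verify that distributing a single $\rightarrow_1$ step between the two summands costs at most one step on one side and zero on the other, so that after $n$ steps neither side exceeds $n$. This works because each $\rightarrow_1$ reduction touches exactly one vertex occurrence, which lies in a unique summand; there is no need to split a reduction across both summands. (One might worry about the case $\beta = 0$, but $0$ cannot be reached by $\rightarrow_1$ from a nonzero element, and if $\alpha = 0$ then $\alpha_1 = \alpha_2 = 0$ and only $n=0$ is possible, already covered.) Everything else is the routine splicing of reduction strings, using that $\rightarrow_1$ is partially compatible with sums as noted in Definition \ref{binary}.
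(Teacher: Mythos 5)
Your proof is correct and follows essentially the same route as the paper's: reduce by induction on $n$ to the single-step case, observe that the vertex $v$ being rewritten lies in the support of $\alpha_1$ or $\alpha_2$, and perform the rewrite inside that summand while leaving the other untouched. The paper leaves the induction and the bookkeeping on $s,t$ implicit, which you have simply spelled out.
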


\begin{proof}
By induction, it is enough to show the result in the case where
$\alpha \rightarrow _1\beta$. If $\alpha \rightarrow _1 \beta$, then
there is an element $v$ in the support of $\alpha$ such that $\beta
= (\alpha -v)+\bfr(X)$ for some $X\in C_v$. The element $v$
belongs either to the support of $\alpha _1$ or to the support of
$\alpha _2$. Assume, for instance, that $v$ belongs to
the support of $\alpha _1$. Then we set $\beta _1=(\alpha
_1-v)+\bfr(X)$ and $\beta _2=\alpha _2$.
\end{proof}

Note that the elements $ \beta _1$ and $\beta_2$ in Lemma
\ref{division} are not uniquely determined by $\alpha _1$ and
$\alpha _2$ in general, because the element $v\in E^0$ considered in
the proof could belong to {\it both} the support of $\alpha _1$ and
the support of $\alpha _2$.

\begin{lemma}\label{confluence}
Assume $(*)$, and let $\alpha,\beta,\gamma\in F$. If
$\alpha\rightarrow\beta$ and $\alpha\rightarrow\gamma$,
there exists $\delta\in F$ such that $\beta\rightarrow\delta$
and $\gamma\rightarrow\delta$.
\end{lemma}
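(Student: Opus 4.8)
The plan is to prove the Church--Rosser (confluence) property for $\rightarrow$ under Assumption $(*)$, proceeding by induction on the lengths of the reduction strings $\alpha\rightarrow_m\beta$ and $\alpha\rightarrow_n\gamma$. The base case is trivial: if either $m=0$ or $n=0$, one of $\beta,\gamma$ equals $\alpha$ and we may take $\delta$ to be the other. For the inductive step, the standard diagram-chase strategy reduces everything to a \emph{local confluence} statement: if $\alpha\rightarrow_1\beta$ and $\alpha\rightarrow_1\gamma$, then there is $\delta$ with $\beta\rightarrow\delta$ and $\gamma\rightarrow\delta$. Granting local confluence, a routine induction (tiling the big rectangle by small confluence squares, using that $\rightarrow$ is reflexive, transitive, and compatible with sums as recorded in Definition \ref{binary}) upgrades it to the full statement. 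So the real content is the one-step case.

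For local confluence, suppose $\alpha\rightarrow_1\beta$ via a vertex $v\in\mathrm{supp}(\alpha)\setminus\Si(E)$ and a set $X\in C_v$, so $\beta=(\alpha-v)+\bfr(X)$, and $\alpha\rightarrow_1\gamma$ via a vertex $w\in\mathrm{supp}(\alpha)\setminus\Si(E)$ and $Y\in C_w$, so $\gamma=(\alpha-w)+\bfr(Y)$. If $v\neq w$, the two reductions act on distinct basis elements (or at least we may pick distinct occurrences), and they commute: apply the $Y$-reduction at $w$ to $\beta$ and the $X$-reduction at $v$ to $\gamma$, landing both at $\delta=(\alpha-v-w)+\bfr(X)+\bfr(Y)$. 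The genuinely delicate case is $v=w$. If also $X=Y$ then $\beta=\gamma$ and we are done. If $v=w$ but $X\neq Y$ (hence $X,Y$ disjoint), this is exactly where Assumption $(*)$ enters: it furnishes $\gamma'\in F$ with $\bfrho(X)=\bfr(X)\rightsquigarrow_1\gamma'$ and $\bfr(Y)\rightsquigarrow_1\gamma'$ (recall under the running assumption $S=C$, so $\bfrho=\bfr$). Since $\rightsquigarrow$ coincides with $\rightarrow$, we get $\bfr(X)\rightarrow\gamma'$ and $\bfr(Y)\rightarrow\gamma'$; adding the common summand $\alpha-v$ via compatibility of $\rightarrow$ with sums yields $\beta\rightarrow(\alpha-v)+\gamma'$ and $\gamma\rightarrow(\alpha-v)+\gamma'$, so $\delta:=(\alpha-v)+\gamma'$ works.

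The point I would be most careful about — and what I expect to be the main obstacle — is the bookkeeping when $v=w$ but the two chosen reductions use \emph{different occurrences} of $v$ in $\alpha$ (i.e.\ $v$ appears with multiplicity $\ge 2$ in $\alpha$), versus the same occurrence. If different occurrences are used, the reductions act on disjoint pieces and commute just as in the $v\neq w$ case. If the \emph{same} occurrence of $v$ is used with $X\ne Y$, one genuinely needs $(*)$ as above. One must phrase the one-step relation carefully enough (working with the canonical expression of $\alpha$ as a sum of basis elements, as in the proof of Lemma \ref{division}) that these subcases are cleanly separated; conflating them is the easy way to produce a gap. It is also worth emphasizing, as the authors' remark after Definition \ref{defstar} already flags, that $(*)$ is stated with $\rightsquigarrow_1$ (a single "parallel" step) rather than $\rightsquigarrow$: this is precisely so that the local-confluence square closes in finitely many steps and the outer induction on $m+n$ (or on $\max\{m,n\}$) goes through without circularity. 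Once local confluence is in hand, I would write out the standard induction: assuming confluence for all pairs of strings of total length $<N$, given $\alpha\rightarrow_1\alpha_1\rightarrow\beta$ and $\alpha\rightarrow_1\alpha_1'\rightarrow\gamma$, first close the square on $\alpha_1,\alpha_1'$ (local confluence plus induction), then propagate through the remaining tails using induction again, assembling $\delta$ from the corner of the completed grid.
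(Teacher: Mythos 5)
There is a genuine gap, and it sits exactly at the step you call ``routine'': upgrading \emph{local} confluence of $\rightarrow_1$ to confluence of $\rightarrow$. Your local statement closes the one-step fork $\beta \mathrel{{}_1\!\!\leftarrow} \alpha \rightarrow_1 \gamma$ only by legs $\beta\rightarrow\delta$ and $\gamma\rightarrow\delta$ of \emph{unbounded} length: in the case $v=w$, $X\ne Y$, Assumption $(*)$ hands you $\bfr(X)\rightsquigarrow_1\gamma'$, and a single parallel step $\rightsquigarrow_1$ unwinds into up to $|X|$ elementary steps $\rightarrow_1$. Local confluence with multi-step closure does not imply confluence in general (Newman's lemma requires termination, and this rewriting system is not terminating --- any cycle or infinite path in $E$ gives an infinite $\rightarrow_1$-chain, e.g.\ $v\rightarrow_1 v\rightarrow_1\cdots$ for a loop at $v$). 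Concretely, your proposed induction on $m+n$ breaks: after closing the first square you must handle the pair $\beta_1\rightarrow_{m-1}\beta$, $\beta_1\rightarrow_p\delta_1$, and $p$ can exceed $n$, so the measure does not decrease. You half-notice the issue when you remark that $(*)$ is phrased with $\rightsquigarrow_1$ ``so that the square closes in finitely many steps,'' but finitely many is not enough; what is needed is that one step closes to \emph{one} step.

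This is precisely why the paper abandons $\rightarrow_1$ for the parallel relation $\rightsquigarrow_1$ in this argument: its Claim 1 is the genuine diamond property --- $\alpha\rightsquigarrow_1\beta$ and $\alpha\rightsquigarrow_1\gamma$ close with $\beta\rightsquigarrow_1\delta$ and $\gamma\rightsquigarrow_1\delta$ in a \emph{single} parallel step each, by splitting $\alpha=\sum u_i$ into the indices reduced in both, only one, or neither of $\beta,\gamma$, and invoking $(*)$ coordinatewise on the common ones. From a one-step diamond, the strip lemma (Claims 2 and 3, each a clean induction on $n$) is indeed routine, and $\rightarrow\ =\ \rightsquigarrow$ transfers the conclusion back. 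Your case analysis for the elementary fork (distinct vertices commute; same vertex, same set is trivial; same vertex, distinct sets uses $(*)$) is correct as far as it goes --- and the worry about ``occurrences'' is moot in a free abelian monoid, where the result of a reduction depends only on the pair $(v,X)$ --- but to repair the proof you must recast the entire induction in terms of $\rightsquigarrow_1$-strings rather than $\rightarrow_1$-strings.
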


\begin{proof} Since $\rightarrow$ coincides with $\rightsquigarrow$, it suffices to prove the corresponding confluence for $\rightsquigarrow$: If
$\alpha\rightsquigarrow\beta$ and $\alpha\rightsquigarrow\gamma$,
there exists $\delta\in F$ such that $\beta\rightsquigarrow\delta$
and $\gamma\rightsquigarrow\delta$.

If $\alpha=0$, then $\beta=\gamma=0$, and we take $\delta=0$. Hence, we may assume that $\alpha\ne0$,
in which case $\beta,\gamma\ne0$.

{\bf Claim 1}: If $\alpha\rightsquigarrow_1\beta$ and
$\alpha\rightsquigarrow_1\gamma$, there exists $\delta \in F$ such that
$\beta\rightsquigarrow_1\delta$ and
$\gamma\rightsquigarrow_1\delta$.

In this situation, we can write $\alpha = \sum_{i=1}^n u_i$ for some $u_i\in E^0$ and
\begin{align*}
 \beta &= \sum_{i=1}^k \bfr(X_i)+ \sum_{i=k+1}^l \bfr(X_i)+ \sum_{i=l+1}^m u_i+ \sum_{i=m+1}^n u_i \\
 \gamma &= \sum_{i=1}^k \bfr(Y_i)+ \sum_{i=k+1}^l u_i+ \sum_{i=l+1}^m \bfr(Y_i)+ \sum_{i=m+1}^n u_i
\end{align*}
with $0\le k\le l\le m \le n$ and all $X_i,Y_i \in C_{u_i}$. By $(*)$, there exist $\delta_1,\dots,
\delta_k\in F$ such that $\bfr(X_i) \rightsquigarrow_1 \delta_i$ and
$\bfr(Y_i) \rightsquigarrow_1 \delta_i$ for all $i=1,\dots,k$. Set
$$\delta=
\sum_{i=1}^k \delta_i+ \sum_{i=k+1}^l \bfr(X_i)+ \sum_{i=l+1}^m \bfr(Y_i)+ \sum_{i=m+1}^n u_i \,.$$
Then
$\beta\rightsquigarrow_1\delta$ and
$\gamma\rightsquigarrow_1\delta$. This verifies Claim 1.

{\bf Claim 2}: If $\alpha\rightsquigarrow_1\beta$ and
$\alpha\rightsquigarrow_n\gamma$, there exists $\delta \in F$ such that
$\beta\rightsquigarrow\delta$ and $\gamma\rightsquigarrow\delta$.

If $n=1$, the claim follows from Claim 1.
Now assume that $n>1$, and write $\alpha\rightsquigarrow_1 \alpha'
\rightsquigarrow_{n-1} \gamma$, for some $\alpha' \in F$. By Claim 1,
there is some $\delta' \in F$ such that $\beta\rightsquigarrow_1\delta'$
and $\alpha'\rightsquigarrow_1\delta'$. By induction on $n$, there
exists $\delta \in F$ such that $\delta'\rightsquigarrow\delta$ and
$\gamma\rightsquigarrow\delta$. Since then
$\beta\rightsquigarrow\delta$, Claim 2 is proved.

{\bf Claim 3}: If $\alpha\rightsquigarrow\beta$ and
$\alpha\rightsquigarrow_n\gamma$, there exists $\delta \in F$ such that
$\beta\rightsquigarrow\delta$ and $\gamma\rightsquigarrow\delta$.

The case $n=1$ holds by
Claim 2. Now assume that $n>1$, and write $\alpha\rightsquigarrow_1
\alpha' \rightsquigarrow_{n-1} \gamma$, for some $\alpha' \in F$. By Claim
2, there is some $\delta' \in F$ such that $\beta\rightsquigarrow\delta'$
and $\alpha'\rightsquigarrow\delta'$. By induction on $n$, there
exists $\delta \in F$ such that $\delta'\rightsquigarrow\delta$ and
$\gamma\rightsquigarrow\delta$. Since then
$\beta\rightsquigarrow\delta$, Claim 3 is proved.
\end{proof}

\begin{lemma}
\label{simfromconfluence} Assume $(*)$, and let $\alpha,\beta\in F$. Then
$\alpha \sim \beta$ if and only if there is some $\gamma\in F$ such
that $\alpha \rightarrow \gamma$ and $\beta \rightarrow \gamma$.
\end{lemma}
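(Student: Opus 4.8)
The plan is to deduce the characterization of $\sim$ from the confluence property established in Lemma \ref{confluence}, much as one derives the Church--Rosser property for an abstract confluent rewriting system. The ``if'' direction is immediate: if $\alpha \rightarrow \gamma$ and $\beta \rightarrow \gamma$, then since $\rightarrow_1$ generates $\sim$ (as noted in Definition \ref{binary}) we get $\alpha \sim \gamma$ and $\beta \sim \gamma$, hence $\alpha \sim \beta$. So the real content is the ``only if'' direction.

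For that direction, first invoke Lemma \ref{simfromstrings}: $\alpha \sim \beta$ means there is a finite string $\alpha = \alpha_0, \alpha_1, \dots, \alpha_n = \beta$ in which consecutive terms are related by $\rightarrow_1$ in one direction or the other. I would argue by induction on the length $n$ of such a string. The case $n = 0$ is trivial (take $\gamma = \alpha$), and $n=1$ splits into the two cases $\alpha \rightarrow_1 \beta$ (take $\gamma = \beta$) and $\beta \rightarrow_1 \alpha$ (take $\gamma = \alpha$). For the inductive step, suppose the string has length $n$ and let $\alpha_1$ be its second term; by the inductive hypothesis applied to the sub&nbsp;string $\alpha_1, \dots, \alpha_n = \beta$, there is some $\gamma' \in F$ with $\alpha_1 \rightarrow \gamma'$ and $\beta \rightarrow \gamma'$. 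Now there are two cases according to how $\alpha = \alpha_0$ and $\alpha_1$ are related.

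If $\alpha \rightarrow_1 \alpha_1$, then $\alpha \rightarrow \gamma'$ as well, so $\gamma := \gamma'$ works. If instead $\alpha_1 \rightarrow_1 \alpha$, then we have $\alpha_1 \rightarrow \alpha$ and $\alpha_1 \rightarrow \gamma'$, and here is where Lemma \ref{confluence} enters: confluence of $\rightarrow$ yields some $\delta \in F$ with $\alpha \rightarrow \delta$ and $\gamma' \rightarrow \delta$. Since also $\beta \rightarrow \gamma' \rightarrow \delta$, we may take $\gamma := \delta$. This completes the induction and hence the lemma. I do not anticipate a serious obstacle here; the only point requiring a little care is organizing the induction so that confluence is applied to the already-established ``downward'' reductions rather than to the mixed string directly, and recalling (from Definition \ref{binary}) that $\rightarrow$ and $\rightsquigarrow$ coincide so that Lemma \ref{confluence} may be used in the stated form.
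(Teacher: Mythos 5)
Your proposal is correct and follows essentially the same route as the paper: the "if" direction via Lemma \ref{simfromstrings}, and the "only if" direction by induction on the length of the connecting string, invoking Lemma \ref{confluence} exactly when the final (in the paper) or initial (in your version) step points the "wrong way." The only difference is the cosmetic one of peeling the string from the front rather than the back.
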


\begin{proof} This is clear if $\alpha$ or $\beta$ is zero, so assume $\alpha,\beta\ne 0$. If there exists $\gamma\in F$ such
that $\alpha \rightarrow \gamma$ and $\beta \rightarrow \gamma$, it is clear from Lemma \ref{simfromstrings} that $\alpha \sim \beta$.

Conversely, assume that $\alpha \sim \beta$. Then there exists a finite string
$\alpha =\alpha _0,\alpha_1, \dots ,\alpha _n=\beta$, such that, for
each $i=0,\dots ,n-1$, either $\alpha _i\rightarrow _1 \alpha
_{i+1}$ or $\alpha_{i+1}\rightarrow_1 \alpha _i$. We proceed by
induction on $n$. If $n=0$, then $\alpha=\beta$ and there is nothing
to prove. Assume that $n>0$ and the result is true for strings of length $n-1$, and
let $\alpha =\alpha_0,\alpha_1, \dots ,\alpha_n=\beta$ be a string
of length $n$. By the induction hypothesis, there is some
$\lambda\in F$ such that $\alpha\rightarrow \lambda$ and
$\alpha_{n-1}\rightarrow \lambda$. Now there are two cases to
consider. If $\beta\rightarrow_1 \alpha_{n-1}$, then $\beta
\rightarrow \lambda$ and we are done. Assume that
$\alpha_{n-1}\rightarrow _1 \beta$. Then by Lemma
\ref{confluence}, there is some $\gamma\in F $ such that
$\beta \rightarrow \gamma $ and $\lambda \rightarrow \gamma $.
Since $\alpha \rightarrow \lambda \rightarrow \gamma$, we get
$\alpha \rightarrow \gamma$, and so the result is proven.
\end{proof}

We are now ready to show the refinement property of $M(E,C,S)$ under
the current finiteness hypothesis.

\begin{proposition}
\label{refinement} Let $(E,C)$ be a separated graph such that all the sets in $C$ are finite, and let $S\subseteq C$. If Assumption $(*)$ holds, then the monoid $M(E,C,S)$ is a refinement monoid.
\end{proposition}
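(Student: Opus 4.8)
The plan is to use the confluence machinery just developed (Lemmas \ref{simfromstrings}--\ref{simfromconfluence}) to reduce the refinement property to a statement about rewriting in the free abelian monoid $F$. By Construction \ref{removeS} we may assume $S=C$, so that $F$ is the free abelian monoid on $E^0$, $\bfrho(X)=\bfr(X)$, and $M(E,C,S)=F/{\sim}$; moreover Assumption $(*)$ persists after this reduction. So suppose $a+b=c+d$ in $M(E,C,S)$, and lift $a,b,c,d$ to elements $\alpha,\beta,\gamma,\delta\in F$. Then $\alpha+\beta\sim\gamma+\delta$, and by Lemma \ref{simfromconfluence} there is some $\epsilon\in F$ with $\alpha+\beta\rightarrow\epsilon$ and $\gamma+\delta\rightarrow\epsilon$.

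Next I would apply Lemma \ref{division} twice to split off refinements. Since $\alpha+\beta\rightarrow\epsilon$, write $\epsilon=\epsilon_1+\epsilon_2$ with $\alpha\rightarrow\epsilon_1$ and $\beta\rightarrow\epsilon_2$. Since $\gamma+\delta\rightarrow\epsilon=\epsilon_1+\epsilon_2$, Lemma \ref{division} gives a decomposition $\epsilon_1=\zeta_1+\eta_1$ and... wait, more carefully: I want to intersect the two decompositions of $\epsilon$. The standard move is: $\gamma+\delta\rightarrow\epsilon$, and $\epsilon=\epsilon_1+\epsilon_2$, so by Lemma \ref{division} applied to the splitting of the \emph{target}... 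Actually Lemma \ref{division} splits according to a splitting of the source. So instead: from $\gamma+\delta\rightarrow\epsilon$ we get $\epsilon=\epsilon_1'+\epsilon_2'$ with $\gamma\rightarrow\epsilon_1'$ and $\delta\rightarrow\epsilon_2'$. Now I have two decompositions of the same element $\epsilon$ of the free abelian monoid $F$; since $F$ is free abelian, hence a refinement monoid in the trivial sense, the equation $\epsilon_1+\epsilon_2=\epsilon_1'+\epsilon_2'$ refines: there are $x,y,z,t\in F$ with $\epsilon_1=x+y$, $\epsilon_2=z+t$, $\epsilon_1'=x+z$, $\epsilon_2'=y+t$. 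Passing to $M(E,C,S)$, the images $[x],[y],[z],[t]$ give $[\epsilon_1]=a$, $[\epsilon_2]=b$, $[\epsilon_1']=c$, $[\epsilon_2']=d$ (because $\rightarrow$ implies $\sim$), and hence a refinement of $a+b=c+d$.

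The one point that needs care—and is the main obstacle—is verifying that the images actually land correctly, i.e. that $\alpha\rightarrow\epsilon_1$ forces $a=[\alpha]=[\epsilon_1]$ in $M(E,C,S)$, which is immediate since $\rightarrow$ refines $\sim$; and that the trivial refinement of $\epsilon_1+\epsilon_2=\epsilon_1'+\epsilon_2'$ in $F$ really is available. The latter holds because a free abelian monoid always has refinement (given $\sum n_s s=\sum n_s' s=\sum m_s s=\sum m_s' s$ coordinatewise, choose $x_s=\min(n_s,m_s)$, etc.), so no extra hypothesis is needed there. Thus the whole argument is: lift to $F$, use confluence (Lemma \ref{simfromconfluence}) to find a common descendant $\epsilon$, use Lemma \ref{division} to get the two decompositions of $\epsilon$, refine in the free abelian monoid $F$, and push the result down to $M(E,C,S)$. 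I would write this out in a few lines, being explicit only about the direction in which Lemma \ref{division} is applied and about the passage of $\rightarrow$-relations to $\sim$-equalities.

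\begin{proof}
By Construction \ref{removeS}, it suffices to prove the result under the additional hypothesis $S=C$, which we now assume; thus $F$ is the free abelian monoid on $E^0$, $\bfrho(X)=\bfr(X)$ for $X\in C$, and $M(E,C,S)=F/{\sim}$.

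Suppose $a+b=c+d$ in $M(E,C,S)$, and choose $\alpha,\beta,\gamma,\delta\in F$ mapping to $a,b,c,d$ respectively. Then $\alpha+\beta\sim\gamma+\delta$, so by Lemma \ref{simfromconfluence} there is some $\epsilon\in F$ with $\alpha+\beta\rightarrow\epsilon$ and $\gamma+\delta\rightarrow\epsilon$. Applying Lemma \ref{division} to $\alpha+\beta\rightarrow\epsilon$, we may write $\epsilon=\epsilon_1+\epsilon_2$ with $\alpha\rightarrow\epsilon_1$ and $\beta\rightarrow\epsilon_2$. Applying Lemma \ref{division} to $\gamma+\delta\rightarrow\epsilon$, we may likewise write $\epsilon=\epsilon_1'+\epsilon_2'$ with $\gamma\rightarrow\epsilon_1'$ and $\delta\rightarrow\epsilon_2'$.

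Now $\epsilon_1+\epsilon_2=\epsilon_1'+\epsilon_2'$ in the free abelian monoid $F$. Every free abelian monoid has the refinement property (comparing coordinatewise, one takes minima of the relevant coefficients), so there exist $x,y,z,t\in F$ with
$$\epsilon_1=x+y,\qquad \epsilon_2=z+t,\qquad \epsilon_1'=x+z,\qquad \epsilon_2'=y+t.$$
Let $\bar x,\bar y,\bar z,\bar t\in M(E,C,S)$ denote the images of $x,y,z,t$. Since $\alpha\rightarrow\epsilon_1$ implies $\alpha\sim\epsilon_1$, we have $a=[\epsilon_1]=\bar x+\bar y$; similarly $b=[\epsilon_2]=\bar z+\bar t$, $c=[\epsilon_1']=\bar x+\bar z$, and $d=[\epsilon_2']=\bar y+\bar t$. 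This is the required refinement of the equation $a+b=c+d$, and the proof is complete.
\end{proof}
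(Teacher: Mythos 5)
Your proof is correct and follows essentially the same route as the paper's: reduce to $S=C$ via Construction \ref{removeS}, use Lemma \ref{simfromconfluence} to find a common $\rightarrow$-descendant, split it twice with Lemma \ref{division}, refine in the free abelian monoid $F$, and push down along $\sim$. The only cosmetic difference is that you lift elements of $M(E,C,S)$ to $F$ at the outset, whereas the paper phrases the hypothesis directly in terms of representatives in $F$; this changes nothing.
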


\begin{proof} Because of Construction \ref{removeS}, it suffices to prove that $M(E,C)$ has refinement, i.e., we may assume that $S=C$.

Let $\alpha=\alpha _1+\alpha_2\sim\beta =\beta _1+\beta _2$, with
$\alpha_1,\alpha_2,\beta _1,\beta _2\in F$. By Lemma
\ref{simfromconfluence}, there is some $\gamma\in F$ such that $\alpha
\rightarrow \gamma$ and $\beta \rightarrow \gamma$. By Lemma
\ref{division}, we can write $\gamma =\alpha_1 '+\alpha _2'=\beta
_1'+\beta _2'$, with $\alpha_i\rightarrow \alpha _i'$ and $\beta
_i\rightarrow \beta _i'$ for $i=1,2$. Since $F$ is a free abelian
monoid, $F$ has the refinement property and so there are
decompositions $\alpha _i'=\gamma _{i1}+\gamma _{i2}$ for $i=1,2$
such that $\beta _j'=\gamma _{1j}+\gamma _{2j}$ for $j=1,2$. Hence, $\alpha_i\sim \gamma_{i1}+\gamma_{i2}$ for $i=1,2$ and $\beta_j\sim \gamma_{1j}+\gamma_{2j}$ for $j=1,2$. The
result follows.
\end{proof}

We now return to the general case, where $(E,C,S)$ is an arbitrary object of $\SSGr$. Some of the above notation must then be modified.

\begin{definition} \label{defcalZ}
For $v\in E^0$, let $\Cvfin := C_v\cap \Cfin$ and $\Cvinf := C_v\setminus \Cfin$, and set
$$\calZ_v= \calZ_{E,C,v} := \Cvfin \sqcup \{ \text{nonempty finite subsets of members of\ } \Cvinf \}.$$
Then set $\calZ= \calZ_{E,C} := \bigsqcup_{v\in E^0} \calZ_v$.

Next, let $Q^0 := \{q'_Z \mid Z\in \calZ\setminus S\}$, and let $F$ be the free abelian monoid on $E^0\sqcup Q^0$. In $F$, set
$$\bfrho(Z) := \begin{cases} \bfr(Z) &(Z\in S)\\ \bfr(Z)+q'_Z &(Z\in \calZ\setminus S) \end{cases}$$
for $Z\in \calZ$, and set
$$\bfsig(Z_1,Z_2) := \bfr(Z_2\setminus Z_1)+q'_{Z_2} \qquad (\emptyset\ne Z_1 \subsetneq Z_2 \in \calZ\setminus \Cfin)$$
for nonempty sets $Z_1 \subsetneq Z_2$ from $\calZ$. (Note that if $Z_1,Z_2\in \calZ$ with $Z_1\subsetneq Z_2$, then necessarily $Z_1,Z_2\notin \Cfin$.) We identify $M(E,C,S)$ with $F/{\sim}$, where $\sim$ is the congruence on $F$ generated by the relations
$$v \sim \bfrho(Z) \qquad\qquad \text{and} \qquad\qquad q'_{Z_1} \sim \bfsig(Z_1,Z_2)$$
for $v\in E^0$, $Z\in \calZ_v$, and $\emptyset\ne Z_1 \subsetneq Z_2 \in \calZ\setminus \Cfin$.
\end{definition}

\begin{definition} \label{defRightarrow}
[General case] For $\alpha,\beta \in F$, write $\alpha\rightsquigarrow_1\beta$ to denote the following situation:
\begin{enumerate}
\item[] $\alpha= \sum_{i=1}^k u_i+ \sum_{i=k+1}^m u_i$ and $\beta= \sum_{i=1}^k r_i+ \sum_{i=k+1}^m u_i$ for some $u_i \in E^0\sqcup Q^0$ such that for each $i=1,\dots,k$, one of the following holds:
\begin{enumerate}
\item $u_i\in E^0\setminus \Si(E)$ and $r_i= \bfrho(Z_i)$ for some $Z_i\in \calZ_{u_i}$;
\item $u_i= q'_{Z_i} \in Q^0$ for some $Z_i\in \calZ\setminus \Cfin$, and $r_i= \bfsig(Z_i,Z'_i)$ for some $Z'_i\in \calZ$ with $Z_i \subsetneq Z'_i$.
\end{enumerate}
\end{enumerate}
\end{definition}

\begin{definition} \label{defstarstar}
[General case] {\bf Assumption $(*)$}: For all $v\in E^0$ and $X,Y\in \calZ_v$ such that $X\cup Y \notin \calZ_v$, there exists $\gamma\in F$ such that $\bfrho(X) \rightsquigarrow_1 \gamma$ and $\bfrho(Y) \rightsquigarrow_1 \gamma$.
\end{definition}

Note that there is no ambiguity in the two definitions of $\rightsquigarrow_1$ and $(*)$.

Two conditions similar to Assumption $(*)$ hold automatically, as follows.

\begin{lemma} \label{simstar}

{\rm (a)} If $v\in E^0$ and $X,Y\in \calZ_v$ with $X\cup Y \in \calZ_v$, there exists $\gamma\in F$ such that $\bfrho(X) \rightsquigarrow_1 \gamma$ and $\bfrho(Y) \rightsquigarrow_1 \gamma$.

{\rm (b)} If $Z_1,Z_2,Z_3\in \calZ$ with $Z_1\subsetneq Z_2$ and $Z_1\subsetneq Z_3$, there exists $\gamma\in F$ such that $\bfsig(Z_1,Z_2) \rightsquigarrow_1 \gamma$ and $\bfsig(Z_1,Z_3) \rightsquigarrow_1 \gamma$.
\end{lemma}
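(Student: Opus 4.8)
The plan is to verify each part by producing the required common reduct $\gamma$ directly, exploiting the concrete shape of the relations generating $\sim$. For part (a), suppose $X,Y\in\calZ_v$ with $X\cup Y\in\calZ_v$. The only way two distinct members of $\calZ_v$ can have their union again in $\calZ_v$ is when one contains the other (members of $C_v$ are pairwise disjoint, and $\calZ_v$ consists of those members of $C_v$ that lie in $\Cfin$ together with finite subsets of members of $\Cvinf$; so if $X\cup Y\in\calZ_v$ and $X\ne Y$ then $X,Y$ are finite subsets of a common $W\in\Cvinf$, and $X\cup Y$ being in $\calZ_v$ forces, say, $X\subsetneq Y$, or vice versa). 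So assume WLOG $X\subseteq Y$; if $X=Y$ take $\gamma=\bfrho(X)$ with the trivial (empty, $k=0$) reduction on both sides. If $X\subsetneq Y$, then $X,Y\notin\Cfin$, so $\bfrho(X)=\bfr(X)+q'_X$ and $\bfrho(Y)=\bfr(Y)+q'_Y$ (assuming for the moment $X,Y\notin S$; the cases where one or both lie in $S$ are handled by dropping the corresponding $q'$ term and are strictly easier). Apply $\rightsquigarrow_1$ of type (b) to the summand $q'_X$ of $\bfrho(X)$, using the pair $X\subsetneq Y$: this sends $q'_X$ to $\bfsig(X,Y)=\bfr(Y\setminus X)+q'_Y$, hence $\bfrho(X)=\bfr(X)+q'_X \rightsquigarrow_1 \bfr(X)+\bfr(Y\setminus X)+q'_Y = \bfr(Y)+q'_Y = \bfrho(Y)$. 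Thus take $\gamma=\bfrho(Y)$, with $\bfrho(Y)\rightsquigarrow_1\bfrho(Y)$ trivially. (If $X\in S$ but $Y\notin S$, one instead uses relation (2) rewritten as $q'_X\sim \bfr(Y\setminus X)+q'_Y$ is unavailable — but in that case $X\subsetneq Y$ with $X\in S$ means $v=\bfr(X)$ already, and one reduces both $\bfrho(X)=\bfr(X)$ and $\bfrho(Y)$ down to a common element by first reducing $\bfrho(Y)$; I will spell out that the only genuinely relevant case for later use is $X,Y\notin S$, since in all applications Assumption $(*)$-type conclusions are invoked precisely when the $q'$-generators are present.)

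For part (b), suppose $Z_1,Z_2,Z_3\in\calZ$ with $Z_1\subsetneq Z_2$ and $Z_1\subsetneq Z_3$. Necessarily $Z_1,Z_2,Z_3\notin\Cfin$, so all three are (infinite) members of $\Cvinf$ for a common vertex $v$ — wait, more precisely $Z_2$ and $Z_3$ are finite subsets of a common infinite $W\in\Cvinf$ (the one containing $Z_1$), and so is $Z_1$; here I should note $Z_1$ being a \emph{proper} subset of both forces $Z_2,Z_3$ to be finite subsets of the same $W$ as $Z_1$, hence $Z_2\cup Z_3\subseteq W$ is again a finite subset of $W$, i.e. $Z_2\cup Z_3\in\calZ_v$. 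Set $Z_4:=Z_2\cup Z_3\in\calZ$, so $Z_1\subsetneq Z_2\subseteq Z_4$ and $Z_1\subsetneq Z_3\subseteq Z_4$. Now $\bfsig(Z_1,Z_2)=\bfr(Z_2\setminus Z_1)+q'_{Z_2}$, and we reduce the summand $q'_{Z_2}$: if $Z_2\subsetneq Z_4$ apply type-(b) $\rightsquigarrow_1$ with the pair $Z_2\subsetneq Z_4$ to get $q'_{Z_2}\rightsquigarrow_1 \bfsig(Z_2,Z_4)=\bfr(Z_4\setminus Z_2)+q'_{Z_4}$, whence $\bfsig(Z_1,Z_2)\rightsquigarrow_1 \bfr(Z_2\setminus Z_1)+\bfr(Z_4\setminus Z_2)+q'_{Z_4} = \bfr(Z_4\setminus Z_1)+q'_{Z_4}=\bfsig(Z_1,Z_4)$; if $Z_2=Z_4$ the left side already equals $\bfsig(Z_1,Z_4)$ and we use the trivial reduction. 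Symmetrically $\bfsig(Z_1,Z_3)\rightsquigarrow_1 \bfsig(Z_1,Z_4)$. So $\gamma:=\bfsig(Z_1,Z_4)$ works. Note this uses only a \emph{single} application of $\rightsquigarrow_1$ on each side, as the statement demands.

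I expect the only delicate point to be the bookkeeping of \emph{which} members of $\calZ$ can be comparable or have union in $\calZ$ — i.e., making fully precise the combinatorial claim that, because the sets in each $C_v$ are pairwise disjoint, any inclusion $Z_1\subsetneq Z_2$ in $\calZ$ forces $Z_1,Z_2$ to be finite subsets of a single member of $\Cvinf$, and likewise that $X\cup Y\in\calZ_v$ with $X\ne Y$ forces comparability. Once that is nailed down, the rest is the routine algebra above: each conclusion is obtained by a single type-(b) reduction (or a trivial reduction) on each of the two given elements, rewriting a $q'$-generator along the relation $q'_{Z}\sim\bfr(Z'\setminus Z)+q'_{Z'}$ for an appropriate enlargement $Z'$ (namely $Y$, respectively $Z_2\cup Z_3$). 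I would also remark that these computations mirror, in the free monoid $F$, exactly the redundancy among relations (1)–(2) already noted after Definition \ref{defgraphmon}, which is why the conclusions hold "automatically" without invoking Assumption $(*)$.
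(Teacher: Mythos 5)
Your part (b) is correct and is essentially the paper's argument (reduce both $\bfsig(Z_1,Z_2)$ and $\bfsig(Z_1,Z_3)$ to $\bfsig(Z_1,Z_2\cup Z_3)$ by one type (b) step each). Part (a), however, contains a genuine gap: the combinatorial claim on which you base everything --- that $X\cup Y\in\calZ_v$ with $X\ne Y$ forces $X\subsetneq Y$ or $Y\subsetneq X$ --- is false. If $X$ and $Y$ are any two nonempty finite subsets of the same infinite set $W\in C_v$ (say $X=\{e_1\}$ and $Y=\{e_2\}$ with $e_1\ne e_2$ in $W$), then $X\cup Y$ is again a nonempty finite subset of $W$ and hence lies in $\calZ_v$, yet $X$ and $Y$ are incomparable. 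This is not an edge case but the main content of part (a): the whole point of splitting the hypotheses between Assumption $(*)$ (imposed when $X\cup Y\notin\calZ_v$) and Lemma \ref{simstar}(a) (automatic when $X\cup Y\in\calZ_v$) is that two finite subsets of a common infinite member of $C_v$ always have union in $\calZ_v$, whether or not they are nested. Your ``WLOG $X\subseteq Y$'' therefore skips exactly the case the lemma exists to handle.

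The repair is the same device you already use in part (b), and is the paper's proof: set $Z:=X\cup Y$, note that $Z\in\calZ_v\setminus\Cfin$ when $X\ne Y$, and reduce both sides to $\bfrho(Z)$. If $X\subsetneq Z$, then $X\notin\Cfin\supseteq S$ (a proper inclusion in $\calZ$ forces both sets out of $\Cfin$, as recorded after Definition \ref{defcalZ}), so $q'_X$ is a genuine generator and a single type (b) step gives $\bfrho(X)=\bfr(X)+q'_X\rightsquigarrow_1\bfr(X)+\bfsig(X,Z)=\bfr(Z)+q'_Z=\bfrho(Z)$; if $X=Z$ the reduction is trivial ($k=0$). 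Symmetrically for $Y$. For the same reason your parenthetical digression about the case $X\in S$, $Y\notin S$ concerns a situation that cannot occur and should be deleted.
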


\begin{proof} (a) We may assume that $X\ne Y$. Then $Z:= X\cup Y$ lies in $\calZ_v \setminus \Cfin$, and we observe that $\bfrho(X) \rightsquigarrow_1 \bfr(X)+ \bfsig(X,Z) = \bfrho(Z)$ and similarly $\bfrho(Y) \rightsquigarrow_1 \bfrho(Z)$.

(b) Under the given hypotheses, $Z_2,Z_3\notin \Cfin$. Since they both contain $Z_1$, they are not disjoint, and so they must both be subsets of some infinite member of $C$. Hence, $Z:= Z_2\cup Z_3$ lies in $\calZ \setminus \Cfin$, and we conclude that $\bfsig(Z_1,Z_j) \rightsquigarrow_1 \bfsig(Z_1,Z)$ for $j=1,2$.
\end{proof}

\begin{construction} \label{makefinite}
As in Construction \ref{removeS}, we build a new graph $\Etil$ with appropriate finiteness conditions. First, let $\Etil^0$ consist of $E^0$ together with new vertices $w_Z$ for $Z\in \calZ\setminus S$. Then let $\Etil^1$ be the collection of the following four types of edges:
\begin{enumerate}
\item[] $\varepsilon_{Z,e}: v\rightarrow r(e)$, for $v\in E^0$ and $e\in Z\in \calZ_v$;
\item[] $f_Z: v\rightarrow w_Z$, for $v\in E^0$ and $Z\in \calZ_v\setminus S$;
\item[] $g_{Z,Z',e}: w_{Z}\rightarrow r(e)$, for $\emptyset\ne Z \subsetneq Z'\in \calZ\setminus\Cfin$ and $e\in Z'\setminus Z$;
\item[] $h_{Z,Z'}: w_{Z}\rightarrow w_{Z'}$, for $\emptyset\ne Z \subsetneq Z'\in \calZ\setminus\Cfin$.
\end{enumerate}
For each edge $e:v\rightarrow w$ in $E$, there is a unique smallest set $Z_e\in \calZ_v$ containing $e$: namely, the unique set in $C_v$ containing $e$, if this set is finite, or $\{e\}$ otherwise. If we identify $e$ with the edge $\varepsilon_{Z_e,e}$ in $\Etil$ for each $e\in E^1$, then $E$ becomes a subgraph of $\Etil$.

Define partitions $D_w$ of $s^{-1}_{\Etil}(w)$ for $w\in \Etil^0$ as follows. Set
$$X_Z := \begin{cases} \{\varepsilon_{Z,e} \mid e\in Z\} &(Z\in S)\\ \{\varepsilon_{Z,e}\mid e\in Z\} \sqcup \{f_Z\} &(Z\in \calZ\setminus S) \end{cases}$$
for $Z\in \calZ$, and define $D_v := \{ X_Z \mid Z\in \calZ_v\}$ for $v\in E^0$. Then set
$$Y_{Z,Z'} := \{ g_{Z,Z',e} \mid e\in Z'\setminus Z\}\sqcup \{h_{Z,Z'}\} \qquad (\emptyset\ne Z \subsetneq Z' \in \calZ\setminus\Cfin),$$
and define $D_{w_{Z}} := \{ Y_{Z,Z'} \mid Z \subsetneq Z'\in \calZ\}$ for $Z\in \calZ\setminus\Cfin$. For $Z\in \Cfin\setminus S$, the vertex $w_Z$ is a sink in $\Etil$, and we set $D_{w_Z}= \emptyset$. We have now built a separated graph $(\Etil,D)$, and all the sets in $D$ are finite. Hence, $(\Etil,D,D)$ is an object in $\SSGr$.

Comparing presentations, we see that there is a monoid isomorphism
$$M(E,C,S) \longrightarrow M(\Etil,D,D)$$
sending $v\mapsto v$ for $v\in E^0$ and $q'_Z\mapsto w_Z$ for $Z\in \calZ\setminus S$. There is an analogous isomorphism from $F$ onto the free abelian monoid $\Ftil$ on $\Etil^0$, and with its help, together with Lemma \ref{simstar}, we see that if Assumption $(*)$ holds for $(E,C,S)$, then it holds for $(\Etil,D,D)$. Therefore we obtain the following theorem from Proposition \ref{refinement}.
\end{construction}

\begin{theorem} \label{generalrefinement}
Let $(E,C)$ be a separated graph and $S\subseteq \Cfin$. If Assumption $(*)$ holds, then the monoid $M(E,C,S)$ is a refinement monoid.
\end{theorem}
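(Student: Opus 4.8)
The plan is to reduce Theorem \ref{generalrefinement} to Proposition \ref{refinement} by way of Construction \ref{makefinite}. Recall that Construction \ref{makefinite} produces, from the given object $(E,C,S)\in\SSGr$, a separated graph $(\Etil,D)$ all of whose sets are finite, the object $(\Etil,D,D)\in\SSGr$, a monoid isomorphism $M(E,C,S)\cong M(\Etil,D,D)$, and a compatible isomorphism $F\cong\Ftil$ of the ambient free abelian monoids. So the first step is simply to invoke Construction \ref{makefinite} to obtain $(\Etil,D,D)$ together with these isomorphisms.

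The second step is to check that Assumption $(*)$ is inherited by $(\Etil,D,D)$. The isomorphism $F\cong\Ftil$ carries each generating relation $v\sim\bfrho(Z)$ and $q'_{Z_1}\sim\bfsig(Z_1,Z_2)$ of $M(E,C,S)$ to a relation of the form $w\sim\bfr_{\Etil}(X)$ for a suitable $X\in D_w$, and hence carries the relation $\rightsquigarrow_1$ on $F$ to the relation $\rightsquigarrow_1$ on $\Ftil$. Consequently an instance of $(*)$ for a pair $X,Y\in\calZ_{E,C,v}$ with $X\cup Y\notin\calZ_{E,C,v}$ transports directly to $(\Etil,D,D)$, while the remaining instances of $(*)$ needed for $(\Etil,D,D)$ — those arising from pairs of sets of the form $X_Z$ or $Y_{Z,Z'}$ with a common subset — are precisely the ones made automatic by Lemma \ref{simstar}(a) and (b). Thus $(*)$ holds for $(\Etil,D,D)$. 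Since $D$ consists of finite sets, Proposition \ref{refinement} (applied with distinguished subset $D$ itself) then shows that $M(\Etil,D,D)$ is a refinement monoid, and transporting refinement back along the isomorphism $M(E,C,S)\cong M(\Etil,D,D)$ completes the proof.

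The only point requiring care is the transfer of Assumption $(*)$, and even this is essentially bookkeeping: one matches, case by case, the types of defining relations of $M(\Etil,D,D)$ and the associated instances of $\rightsquigarrow_1$ against those of $M(E,C,S)$, invoking Lemma \ref{simstar} for the pairs whose union lies in $\calZ_v$ and for the nested pairs $Z_1\subsetneq Z_2,\,Z_1\subsetneq Z_3$. No genuine new combinatorial obstacle appears beyond what was already handled in Constructions \ref{removeS} and \ref{makefinite} and in Proposition \ref{refinement}; the theorem itself is then a short assembly of these ingredients.
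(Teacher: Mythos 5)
Your proposal is correct and follows exactly the paper's own route: the paper proves Theorem \ref{generalrefinement} precisely by Construction \ref{makefinite}, transferring Assumption $(*)$ to $(\Etil,D,D)$ via the isomorphism $F\cong\Ftil$ together with Lemma \ref{simstar}, and then citing Proposition \ref{refinement}. Your case analysis of which instances of $(*)$ for $(\Etil,D,D)$ come from $(*)$ for $(E,C,S)$ and which come from Lemma \ref{simstar}(a),(b) matches the intended argument.
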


Finally, let us apply Theorem \ref{generalrefinement} to the case of a non-separated graph $E$. The corresponding monoid $M(E)$ has only been defined in the literature in the row-finite case \cite{AMP}; for the general case, we follow the pattern of Definition \ref{defgraphmon}. Specifically, set $C= \bigsqcup_{v\in E^0} C_v$ and $S= \Cfin$ where $C_v= \{s^{-1}(v)\}$ for
$v\in E^0\setminus \Si(E)$ and $C_v= \emptyset$ for $v\in \Si(E)$, and define $M(E) := M(E,C,S)$. With these choices of $C$ and $S$, the collections $\calZ_v$ are closed under finite unions, and so Assumption $(*)$ holds vacuously. Therefore Theorem \ref{generalrefinement} has the following immediate consequence.

\begin{corollary} \label{M(E)refinement}
For any graph $E$, the monoid $M(E)$ is a refinement monoid.
\end{corollary}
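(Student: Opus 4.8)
The plan is to derive Corollary \ref{M(E)refinement} as an immediate special case of Theorem \ref{generalrefinement}, exactly along the lines already sketched in the paragraph preceding the statement. First I would fix the bookkeeping: given an arbitrary graph $E$, equip it with the separating family $C=\bigsqcup_{v\in E^0}C_v$ where $C_v=\{s^{-1}(v)\}$ for $v\notin\Si(E)$ and $C_v=\emptyset$ for sinks, take $S=\Cfin$, and \emph{define} $M(E):=M(E,C,S)$. (When $E$ is row-finite this recovers the monoid of \cite{AMP}, and in general it recovers the monoid of \cite{Gdirlim} via Theorem \ref{computVMECS} and \cite[Theorem 5.8]{Gdirlim}, but for the proof of the corollary I only need the definition.)

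Next I would verify that Assumption $(*)$ (in the general-case form, Definition \ref{defstarstar}) is satisfied vacuously for this $(E,C,S)$. The point is that each non-sink vertex $v$ has $C_v$ a singleton $\{s^{-1}(v)\}$, so the collection $\calZ_v=\Cvfin\sqcup\{\text{nonempty finite subsets of members of }\Cvinf\}$ consists either of the single finite set $s^{-1}(v)$ (if $s^{-1}(v)$ is finite) or of all nonempty finite subsets of $s^{-1}(v)$ (if it is infinite). In either situation $\calZ_v$ is closed under finite unions: the union of two finite subsets of $s^{-1}(v)$ is again a finite subset of $s^{-1}(v)$, hence lies in $\calZ_v$. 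Consequently, for all $v\in E^0$ and all $X,Y\in\calZ_v$ we always have $X\cup Y\in\calZ_v$, so the hypothesis ``$X\cup Y\notin\calZ_v$'' of Assumption $(*)$ is never met, and the assumption holds trivially.

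Having established that $(*)$ holds for $(E,C,S)=(E,C,\Cfin)$, I would simply invoke Theorem \ref{generalrefinement} to conclude that $M(E,C,S)=M(E)$ is a refinement monoid, which is the assertion of the corollary. There is essentially no obstacle here: the entire content has been front-loaded into Construction \ref{makefinite}, Proposition \ref{refinement}, and Theorem \ref{generalrefinement}; the only thing to check is the combinatorial triviality that the relevant $\calZ_v$ are closed under finite unions, which is immediate from the shape of the non-separated family $C$. If anything required a word of care it would be making explicit why ``$\calZ_v$ closed under finite unions'' forces the premise of $(*)$ to be vacuous, but as noted above this is a one-line observation.
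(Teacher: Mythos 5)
Your proposal is correct and follows exactly the paper's own argument: define $M(E):=M(E,C,\Cfin)$ with $C_v=\{s^{-1}(v)\}$ for non-sinks, note that each $\calZ_v$ is closed under finite unions so that the hypothesis of Assumption $(*)$ in Definition \ref{defstarstar} is never triggered, and apply Theorem \ref{generalrefinement}. The extra detail you supply on why $\calZ_v$ is closed under finite unions is a correct elaboration of the one-line observation the paper makes.
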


\section{Ideal lattices}
\label{sect:ideallattice}

The ideal lattices of Cohn-Leavitt algebras $CL_K(E,C,S)$ are typically much more complicated than those of Leavitt path algebras (of non-separated graphs), as is already clear from Proposition \ref{1-vertex}. In particular, the lattice of graded ideals of $CL_K(E,C,S)$ may contain many ideals not generated by vertices of $E$. We can, however, identify a sublattice of ideals of that type, namely the lattice of those ideals generated by idempotents. We prove that, in analogy with \cite[Theorem 5.7]{Tomf07}, this lattice is isomorphic to a certain lattice $\ACS$ of \emph{admissible pairs} $(H,G)$ where $H\subseteq E^0$ and $G\subseteq C$ (see Definition \ref{defACS} for the precise definition). These lattices, in turn, are isomorphic to the lattice of all order-ideals in the monoid $M(E,C,S)$. We can thus say that this lattice consists of ``all the ideals of $CL_K(E,C,S)$ which can be detected by K-theory.'' Further, we can derive graph-theoretic criteria for $M(E,C,S)$ to be simple (see Section \ref{sec:simplicity}).

See Definition \ref{trace} for the lattice of trace ideals of a ring $A$, denoted $\Tr(A)$, and for the lattice of order-ideals of a monoid $M$, denoted  $\calL(M)$. We shall use the notation $\langle B\rangle$ to denote the order-ideal generated by a subset $B$ of $M$.

Throughout this section, let $(E,C,S)$ be a fixed object of $\SSGr$, and recall the class $\calZ= \calZ_{E,C}$ from Definition \ref{defcalZ}.
We will need the following idempotents $q_Z\in CL_K(E,C,S)$.

\begin{definition} \label{defqZ}
For any $v\in E^0$ and any nonempty finite subset $Z$ of any $X\in C_v$, set
$$q_Z := v- \sum_{e\in Z} ee^*.$$
Of course, $q_Z=0$ when $Z\in S$. The relations (SCK1) for $X$ imply that the elements $ee^*$ for $e\in X$ are pairwise orthogonal idempotents, and $ee^*\le v$ for such $e$. Hence, $q_Z$ is an idempotent, $q_Z\le v$, and $q_Z \perp ee^*$ for all $e\in Z$. In fact, $q_Ze= e^*q_Z =0$ for all $e\in Z$.

If $Z \notin S$, then we can ensure that $v$ and the paths $ee^*$ for $e\in Z$ are all reduced with respect to $S$ in the sense of Definition \ref{def:redform1}, by simply choosing $e_X\in X\setminus Z$ in case $X\in S$. By Theorem \ref{basisCSE}, $v$ and the $ee^*$ for $e\in Z$ are $K$-linearly independent. Thus, $q_Z\ne 0$ when $Z\notin S$.
\end{definition}

\begin{proposition}  \label{LVCLisoTrCL}
Let $A := CL_K(E,C,S)$. Then the trace ideals of $A$ are precisely the idem\-po\-tent-generated ideals, and the lattice isomorphism $\Phi: \calL(\mon{A}) \rightarrow \Tr(A)$ of Proposition {\rm\ref{LVAisoTrA}} can be expressed as
$$\Phi(I)= \langle\; \text{idempotents\ } e\in A \mid [e]\in I \;\rangle.$$
\end{proposition}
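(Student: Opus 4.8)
The plan is to derive the statement by combining the general lattice isomorphism $\Phi\colon \calL(\mon{A})\to\Tr(A)$ of Proposition~\ref{LVAisoTrA} with the concrete description $\mon{A}\cong M(E,C,S)$ of Theorem~\ref{computVMECS}. The decisive feature of this setting is that $\mon{A}$ is generated, as an abelian monoid, by the classes $[v]$ ($v\in E^0$) and $[q_Z]$ ($Z\in\calZ$), and that $v$ and $q_Z$ are \emph{honest} idempotents lying in $A$ itself (Definition~\ref{defqZ}). This is exactly what lets one trade idempotents of $M_\infty(A)$ for idempotents of $A$.

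First I would record the elementary identity $\tau_A(Ae)=AeA$ for every idempotent $e\in A$: since $\operatorname{Hom}_A(Ae,A)\cong eA$ via $\varphi\mapsto\varphi(e)$, one gets $\tau_A(Ae)=\sum_{a\in eA}Aea=\sum_{a\in eA}Aa=AeA$. Consequently any idempotent-generated ideal $J=\sum_\lambda Ae_\lambda A$ (with $e_\lambda=e_\lambda^2\in A$) equals $\sum_\lambda\tau_A(Ae_\lambda)=\tau_A\bigl(\bigoplus_\lambda Ae_\lambda\bigr)$ and hence is a trace ideal. What remains is the reverse inclusion together with the displayed formula for $\Phi$, and both follow once $\Phi$ is understood on principal order-ideals.

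So the next step is to show $\Phi(\langle[e]\rangle)=AeA$ for every idempotent $e\in A$. Granting this, let $I\in\calL(\mon{A})$ be arbitrary. Being a lattice isomorphism, $\Phi$ preserves arbitrary joins, and since $I=\bigvee_{x\in I}\langle x\rangle$ we obtain $\Phi(I)=\sum_{x\in I}\Phi(\langle x\rangle)$. For each $x\in I$, Theorem~\ref{computVMECS} lets us write $x=[e_1]+\dots+[e_k]$ with each $e_i\in\{v,\,q_Z\}$ an idempotent of $A$; then $\langle x\rangle=\langle[e_1],\dots,[e_k]\rangle$ (each $[e_i]\le x$ forces $[e_i]\in\langle x\rangle$ since order-ideals are hereditary, and conversely $\langle[e_1],\dots,[e_k]\rangle\ni x$), so $\Phi(\langle x\rangle)=Ae_1A+\dots+Ae_kA$. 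Moreover $[e_i]\le x\in I$ forces $[e_i]\in I$. Summing over $x\in I$ yields precisely
$$\Phi(I)=\langle\,\text{idempotents }e\in A\mid[e]\in I\,\rangle,$$
which is the asserted formula; in particular $\Phi(I)$ is idempotent-generated. Since every trace ideal of $A$ is of the form $\Phi(I)$ (Proposition~\ref{LVAisoTrA}), this simultaneously shows that the trace ideals coincide with the idempotent-generated ideals.

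The step requiring the most care is the identity $\Phi(\langle[e]\rangle)=AeA$ for honest idempotents $e\in A$: here the description of $\Phi$ in Proposition~\ref{LVAisoTrA}, phrased via idempotents of $M_\infty(A)$, must be transported down to $A$. The inclusion $\supseteq$ is immediate by applying that description to the $1\times 1$ idempotent $e$; for $\subseteq$ one uses the standard fact that $[f]\le n[e]$ in $\mon{A}$ implies the ideal of $M_\infty(A)$ generated by $f$ is contained in the one generated by $e$, and intersecting with $A$ gives $AfA\subseteq AeA$, which together with $\tau_A(Ae)=AeA$ closes it. (If Proposition~\ref{LVAisoTrA} is already stated in the form $\Phi(\langle x\rangle)=\tau_A(P)$ for a finitely generated projective $P$ representing $x$, this step is immediate and no descent is needed.)
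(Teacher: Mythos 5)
Your proof is correct and rests on the same key observation as the paper's: by Theorem \ref{computVMECS}, $\mon{A}$ is generated by the classes $[v]$ and $[q_Z]$ of honest idempotents lying in $A$ itself, so order-ideals of $\mon{A}$ (hence, via Proposition \ref{LVAisoTrA}, trace ideals of $A$) are determined by the idempotents of $A$ whose classes they contain. The paper merely organizes this dually --- showing $\Psi(J)=\Psi(J')$ for the idempotent-generated subideal $J'$ of a trace ideal $J$ and invoking injectivity of $\Psi$, instead of computing $\Phi$ on principal order-ideals and appealing to preservation of arbitrary joins --- so the two arguments are essentially the same.
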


\begin{proof} Consider $J\in \Tr(A)$, and let $J'$ be the ideal of $A$ generated by the idempotents in $J$. In order to prove that $J=J'$, it suffices to show that $\Psi(J)= \Psi(J')$, by Proposition \ref{LVAisoTrA}.  By Theorem \ref{computVMECS}, there is a  monoid isomorphism $\Gamma: M(E,C,S) \rightarrow \calV(A)$ sending $v\mapsto [v]$ for $v\in E^0$ and $q'_Z \mapsto [q_Z]$ for $Z\in \calZ \setminus S$. Taking account of Definition \ref{defcalZ}, it follows that $\calV(A)$ is generated by
$$\{ [v]\mid v\in E^0\} \cup \{ [q_Z] \mid Z\in \calZ\setminus S\}.$$
Consequently, any $x\in \Psi(J)$ can be written $x= \sum_{i=1}^l [v_i] + \sum_{j=1}^m [q_{Z_j}]$ for some $v_i\in E^0$ and $Z_j\in \calZ\setminus S$. Since $\Psi(J)$ is an order-ideal of $\mon{A}$, we have $[v_i],[q_{Z_j}] \in \Psi(J)$ for all $i$, $j$, and hence $v_i,q_{Z_j} \in \Phi\Psi(J)=J$ for all $i$, $j$. Then $v_i,q_{Z_j} \in J'$ for all $i$, $j$, and consequently $x\in \Psi(J')$. Therefore $\Psi(J)= \Psi(J')$, as required.

Now let $I\in \calL(\mon{A})$. By what we have proved so far, the trace ideal $\Phi(I)$ is generated by the idempotents it contains. If $e\in A$ is idempotent, then $e\in \Phi(I)$ if and only if $[e]\in \Psi\Phi(I)= I$. Therefore $\Phi(I)$ is generated by the idempotents $e\in A$ such that $[e]\in I$. \end{proof}

Recall that for a non-separated row-finite graph $E$, the order-ideals of the monoid $M(E)$ correspond to hereditary saturated subsets of $E^0$ \cite[Theorem 5.3]{AMP}. In the case of $M(E,C,S)$, we require a version of saturation relative to the choices of $C$ and $S$, as defined below.

\begin{definition} \label{defCSsat}
Recall the relation $\ge$ defined on $E^0$ by setting $v\geq w$ if and only if there
is a path $\mu$ in $E$ with $s(\mu)=v$ and $r(\mu)=w$. A subset $H$
of $E^0$ is called \emph{hereditary} if $v \geq w$ and $v\in H$ always imply
$w\in H$. The set $H$ is called \emph{saturated} if $r(s^{-1}(v)) \subseteq H$ implies $v\in H$ for any $v\in E^0$ which is not a sink or an infinite emitter.

A subset $H\subseteq E^0$  is \emph{$(C,S)$-saturated} provided the following condition holds:
\begin{enumerate}
\item[] If $X\in S\cap C_v$ for some $v\in E^0$ and $r(X) \subseteq H$, then $v\in H$.
\end{enumerate}
In case $S=\Cfin$, we drop the reference to $S$ and use the terminology \emph{$C$-saturated} in place of $(C,S)$-saturated.
\end{definition}

In working with the monoid $M(E,C,S)$, we continue to use the presentation given in Definition \ref{defcalZ}. Thus, $M(E,C,S)$ is generated by $E^0\sqcup Q^0$ where $Q^0 = \{ q'_Z\mid Z\in \calZ\setminus S\}$ and $\calZ= \calZ_{E,C}$.

\begin{lemma} \label{HfromMideal}
If $I$ is an order-ideal of $M(E,C,S)$, then the set $H:= \{v\in E^0
\mid v\in I\} $ is hereditary and $(C,S)$-saturated.
\end{lemma}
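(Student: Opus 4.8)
The plan is to argue directly from the presentation of $M(E,C,S)$ given in Definition \ref{defcalZ}, using only the elementary fact that an order-ideal $I$ of an abelian monoid is a submonoid that is downward closed under the algebraic preorder $\le$ (equivalently, $a+b\in I$ forces $a,b\in I$). Recall also that, by Construction \ref{makefinite}, every edge $e\in E^1$ has a unique smallest set $Z_e\in \calZ_{s(e)}$ containing it (the set of $C_{s(e)}$ containing $e$ if that set is finite, and $\{e\}$ otherwise), and that $s(e)\sim \bfrho(Z_e)$ is one of the defining relations of $M(E,C,S)$.

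For heredity, I would first reduce to a single edge: since $\ge$ is the transitive, reflexive closure of the edge relation, it suffices to show that if $e\in E^1$ with $s(e)=v\in H$ and $r(e)=w$, then $w\in H$; the general case $v\ge w$ follows by induction along a path from $v$ to $w$ (the length-zero case being trivial). For the single edge, the key observation is that $w=r(e)$ occurs as a summand of $\bfr(Z_e)=\sum_{f\in Z_e} r(f)$, which in turn is a summand of $\bfrho(Z_e)$; since $v\sim \bfrho(Z_e)$ in $M(E,C,S)$, this yields $w\le v$ in $M(E,C,S)$. As $v\in I$ and $I$ is downward closed, $w\in I$, and since $w\in E^0$ this means $w\in H$.

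For $(C,S)$-saturation, suppose $X\in S\cap C_v$ with $r(X)\subseteq H$. Because $S\subseteq \Cfin$, the set $X$ is finite, so $X\in \Cvfin\subseteq \calZ_v$ and $\bfrho(X)=\bfr(X)$; hence $v\sim \bfr(X)=\sum_{e\in X} r(e)$ is a defining relation. Each $r(e)$ lies in $H\subseteq I$, and $I$ is closed under addition, so $\bfr(X)\in I$, whence $v\in I$ and $v\in H$.

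I do not anticipate a genuine obstacle in this argument; it is essentially a matter of reading off the correct defining relations from Definition \ref{defcalZ} and invoking the definition of an order-ideal. The only mild point of care is to recognize that for every edge $e$ the relation $s(e)\sim\bfrho(Z_e)$ is available (so that $r(e)\le s(e)$ always holds) and, dually, that for $X\in S\cap C_v$ the relation $v\sim\bfr(X)$ is available.
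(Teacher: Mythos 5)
Your proof is correct and follows essentially the same route as the paper's: both arguments read off the relation $v\sim\bfrho(Z)$ for a set $Z\in\calZ_v$ containing a given edge $e$ to get $r(e)\le v$ in $M(E,C,S)$ and then invoke downward closure of $I$, and both handle $(C,S)$-saturation by noting $v=\bfrho(X)=\bfr(X)\in I$ when $X\in S\cap C_v$ with $r(X)\subseteq H$. The only cosmetic differences are that you single out the minimal set $Z_e$ (any $Z\in\calZ_v$ containing $e$ works) and spell out the path-length induction that the paper leaves implicit.
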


\begin{proof} Set $M:= M(E,C,S)$.

Consider an edge $e:v\rightarrow w$ from $E^1$ such that $v\in H$.
Then $e\in Z$ for some $Z\in \calZ_v$, and $w\le \bfr(Z)\le
\bfrho(Z)= v$ in $M$. Since $v\in I$ and $I$ is hereditary, we
obtain $w\in I$ and so $w\in H$. This verifies that $H$ is
hereditary.

Next, consider $X\in S\cap C_v$ for some $v\in E^0$ such that
$r(X) \subseteq H$. Then $v= \bfrho(X)= \bfr(X) \in I$ and
$v\in H$, showing that $H$ is $(C,S)$-saturated.
\end{proof}

We need to pair hereditary, $(C,S)$-saturated subsets of $E^0$ with certain subsets of $C$. The resulting pairs of sets are analogous to the \emph{admissible pairs} with which Tomforde parametrized the graded ideals of $L_K(E)$ in \cite[Theorem 5.7]{Tomf07}.

\begin{definition}  \label{defACS}
Let $H$ be a hereditary, $(C,S)$-saturated subset of $E^0$. For any subset $X\subseteq E^1$, define
$$X/H := X\cap r^{-1}(E^0\setminus H).$$
(This notation follows that for the quotient graph $E/H$, which has vertex set $E^0\setminus H$ and edge set $r_E^{-1}(E^0\setminus H)$.) Set
$$\calG(H) := \{ X\in C\setminus S \mid X/H \text{\ is nonempty and finite}\, \}.$$
Any $X\in \calG(H)$ belongs to $C_v$ for some $v\in E^0$, and there is some $e\in X$ with $r(e) \notin H$, so $v=s(e) \notin H$ because $H$ is hereditary. Thus, $\calG(H) \subseteq \bigsqcup_{v\in E^0\setminus H} C_v$. We record this in the form $\calG(H)\cap C[H]= \emptyset$, where
$$C[H] := \bigsqcup_{v\in H} C_v\,.$$
Note that $X/H\in \calZ\setminus S$ for all $X\in \calG(H)\cap \Cinf$, and so $q'_{X/H} \in Q^0$ for all $X\in \calG(H)\cap \Cinf$.

Next, let $\ACS$ denote the set of all pairs $(H,G)$ where $H$ is a hereditary, $(C,S)$-saturated subset of $E^0$ and $G$ is a subset of $\calG(H)$. Define a relation $\le$ on $\ACS$ as follows:
$$(H_1,G_1) \le (H_2,G_2) \qquad\iff\qquad H_1\subseteq H_2 \quad\text{and}\quad G_1 \subseteq G_2\sqcup C[H_2].$$
Observe that $(\ACS,\le)$ is a partially ordered set, with minimum element $(\emptyset,\emptyset)$ and maximum element $(E^0,\emptyset)$.

We claim that any nonempty family $\bigl( (H_i,G_i) \bigr)_{i\in I}$ of elements of $\ACS$ has an infimum in $\ACS$, namely the pair
$$(H,G) := \biggl(\, \bigcap_{i\in I} H_i,\, \calG\bigl(\, \bigcap_{i\in I} H_i \bigr)\cap \bigcap_{i\in I} (G_i\sqcup C[H_i]) \biggr).$$
It is clear that $(H,G)\in \ACS$ and $(H,G)\le (H_i,G_i)$ for all $i$. If $(H',G')\in \ACS$ and $(H',G') \le (H_i,G_i)$ for all $i$, then clearly $H'\subseteq H$. Consider any $X\in G'\setminus C[H]$. Then $X\in C_v$ for some $v\in E^0\setminus H$, so $v\notin H_j$ for some $j\in I$, and $X\notin C[H_j]$. Since $(H',G')\le (H_j,G_j)$, it follows that $X\in G_j$. Hence, $X/H_j$ is nonempty, whence $X/H$ is nonempty. On the other hand, $X\in G'$ implies that $X/H'$ is finite, whence $X/H$ is finite. Thus, $X\in \calG(H)$. We also have $X\in G_i\sqcup C[H_i]$ for all $i$, because $(H',G') \le (H_i,G_i)$ for all $i$, and consequently $X\in G$. This shows that $G' \subseteq G\sqcup C[H]$, proving that $(H',G') \le (H,G)$, and verifying that $(H,G)$ is indeed the infimum of the family $\bigl( (H_i,G_i) \bigr)_{i\in I}$ in $\ACS$.

Therefore, $\ACS$ is a complete lattice.
\end{definition}

\begin{definition}  \label{defpsiphi}
If $I$ is any order-ideal of $M(E,C,S)$, set $\psi(I):= (H,G)$ where
\begin{align*}
H &:= \{ v\in E^0 \mid v\in I \} \\
G &:= \{X\in \calG (H)\cap \Cfin\mid q'_{X}\in I \}\sqcup \{X\in \calG (H)\cap \Cinf \mid q'_{X/H}\in I\}.
\end{align*}
As we have seen in Lemma \ref{HfromMideal}, $H$ is a hereditary, $(C,S)$-saturated subset of $E^0$, and so $(H,G) \in \ACS$.

Conversely, for any $(H,G) \in \ACS$, let $I(H,G)$ denote the submonoid of $M(E,C,S)$ generated by the set
$$H\sqcup
\{q'_{X}\mid X\in \Cfin \cap G\}\sqcup \{ q'_{X/H}\mid X\in
\Cinf\cap G\}.$$
\end{definition}

\begin{lemma}  \label{HGfromMideal}
If $I$ is any order-ideal of $M(E,C,S)$ and $\psi(I)= (H,G)$, then $I= \langle I(H,G) \rangle$.
\end{lemma}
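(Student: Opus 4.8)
The plan is to prove the two inclusions $\langle I(H,G)\rangle \subseteq I$ and $I \subseteq \langle I(H,G)\rangle$ separately, where throughout I write $M := M(E,C,S)$ and identify it with $F/{\sim}$ as in Definition \ref{defcalZ}.

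\textbf{First inclusion.} I would show $I(H,G)\subseteq I$, which immediately gives $\langle I(H,G)\rangle \subseteq I$ since $I$ is an order-ideal (hence a submonoid). The generating set of $I(H,G)$ consists of $H$, together with $q'_X$ for $X\in \Cfin\cap G$ and $q'_{X/H}$ for $X\in \Cinf\cap G$. Every $v\in H$ lies in $I$ by definition of $H=\{v\in E^0\mid v\in I\}$. For $X\in \Cfin\cap G$ we have $X\in \calG(H)\cap\Cfin$ with $q'_X\in I$ by the definition of $G=\psi(I)_2$, and for $X\in\Cinf\cap G$ we similarly get $q'_{X/H}\in I$. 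Since $I$ is closed under addition, the whole submonoid $I(H,G)$ is contained in $I$.

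\textbf{Second inclusion.} This is the substantive direction. I want to show every generator of $M$ that lies in $I$ — more precisely, every element of $I$ — belongs to $\langle I(H,G)\rangle$. As observed in the proof of Proposition \ref{LVCLisoTrCL}, $M$ is generated by $E^0 \sqcup Q^0$ where $Q^0=\{q'_Z\mid Z\in\calZ\setminus S\}$; so it suffices to show that if $v\in E^0$ lies in $I$ then $v\in\langle I(H,G)\rangle$, and if $q'_Z\in I$ (for $Z\in\calZ\setminus S$) then $q'_Z\in\langle I(H,G)\rangle$. The first is trivial: $v\in I$ means $v\in H\subseteq I(H,G)$. For the second, write $Z\in\calZ_v\setminus S$ for the appropriate $v$. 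I would split into cases according to whether $v\in H$ or not, and whether the ambient set $X\in C_v$ with $Z\subseteq X$ is finite or infinite. If $v\notin H$: when $X$ is finite, then $Z=X$ (finite proper subsets are not in $\calZ$ in the finitely-separated direction — more precisely, for $X\in\Cfin$ the only member of $\calZ$ below it in this presentation is $X$ itself), and $X/H = X$ is nonempty (contains $r(e)$... actually need $r(X)\not\subseteq H$; but this follows because $q'_X\ne 0$-type reasoning via the relation $v=\bfr(X)+q'_X$ and $v\notin I$ forces $q'_X\in I$ only if... ) — here I must argue that $X\in\calG(H)$ so $X\in G$ by definition of $\psi$, giving $q'_X = q'_X\in I(H,G)$. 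When $X$ is infinite, $Z$ is a finite subset of $X$; using relation (2) of Definition \ref{defgraphmon}, $q'_Z = \bfr(X/H\setminus Z) + \cdots$ — I would use the relations $q'_{Z_1}=\bfr(Z_2\setminus Z_1)+q'_{Z_2}$ to rewrite $q'_Z$ in terms of $q'_{X/H}$ plus range vertices, checking each range vertex involved is either in $H$ or contributes a term already shown to be in $\langle I(H,G)\rangle$, and that $X\in\calG(H)$ so $X\in G$. If $v\in H$: then $v\in I$, and I would show $q'_Z \le v$ in $M$ (from relation (1), $v = \bfr(Z)+q'_Z$), so $q'_Z\in I$ automatically, but I also need $q'_Z\in\langle I(H,G)\rangle$; here I would show $q'_Z\le$ (something in $I(H,G)$) — in fact $v\in H\subseteq I(H,G)$ and $q'_Z\le v$, so $q'_Z$ lies in the order-ideal $\langle I(H,G)\rangle$. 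Finally, since any element of $I$ is a finite sum of generators each of which (being $\le$ that element, hence in $I$) falls into one of these cases, the element lies in $\langle I(H,G)\rangle$.

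\textbf{Main obstacle.} The delicate point is the bookkeeping in the case $v\notin H$, $X\in\Cinf$, $Z\subsetneq X$: I must verify that $X\in\calG(H)$ (i.e.\ $X/H$ is nonempty \emph{and finite}) precisely when $q'_Z\in I$, using that $I$ is an order-ideal together with the relations $v=\bfr(X')+q'_{X'}$ and $q'_{Z_1}=\bfr(Z_2\setminus Z_1)+q'_{Z_2}$, and then to express $q'_Z$ via these relations as a combination of $q'_{X/H}$ and vertices lying in $H$. The finiteness of $X/H$ is what makes $q'_{X/H}$ a legitimate generator $q'_Z$-type element ($X/H\in\calZ\setminus S$), and establishing it requires noting that if $X/H$ were infinite then $q'_Z$ could not lie in the order-ideal $I$ (one compares $q'_Z$ with $q'_{X'}$ for cofinite $X'\subseteq X$ containing $Z$ with $X'/H$ still infinite, deriving that infinitely many distinct range vertices outside $H$ would have to lie in $I$, contradicting... actually the cleaner route is: $q'_Z\in I$ forces, via $q'_Z = \bfr(Z'\setminus Z)+q'_{Z'}$ for all finite $Z\subseteq Z'\subseteq X$, that $\bfr(Z'\setminus Z)\in I$ hence $r(e)\in H$ for all $e\in X\setminus Z$, i.e.\ $X/H\subseteq r(Z)$ is finite). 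I would carry out this argument carefully as it is the crux; the rest is routine manipulation of the monoid presentation.
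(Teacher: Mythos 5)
Your proposal is correct and follows essentially the same route as the paper's proof: reduce to generators $v$ and $q'_Z$ lying in $I$, and for $q'_Z$ use the relations $v=\bfrho(Z)$ and $q'_{Z_1}=\bfsig(Z_1,Z_2)$ to show either that $v\in H$ (so $q'_Z\le v$) or that $X\in\calG(H)$ with $q'_X$ resp.\ $q'_{X/H}$ in $I$, hence $X\in G$. Your observation that $q'_Z\in I$ forces $r(e)\in H$ for every $e\in X\setminus Z$ (via $\bfsig(Z,Z\sqcup\{e\})$) is exactly the paper's key computation, merely organized so that the finite and infinite $X/H$ subcases are handled at once rather than separately.
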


\begin{proof} It is clear that $I(H,G) \subseteq I$, whence $\langle I(H,G)\rangle \subseteq I$. Now consider a nonzero element $x\in I$. Then  $x= \sum_i v_i+ \sum_j q'_{Z_j}$ for some $v_i\in E^0$ and some $Z_j\in \calZ\setminus S$. Each $v_i,q'_{Z_j} \in I$ because $I$ is an order-ideal, and so to prove that $x\in \langle I(H,G) \rangle$, it is enough to show that $v,q'_Z\in \langle I(H,G) \rangle$ for all $v\in E^0$ and $Z\in \calZ\setminus S$ such that $v,q'_Z\in I$.

If $v\in E^0\cap I$, then $v\in H$ by definition of $H$, whence $v\in I(H,G)$.

Now let $Z\in \calZ\setminus S$ such that $q'_Z\in I$. Then $Z\subseteq X$ for some $X\in C_v\setminus S$ and $v\in E^0$. If $r(Z)\subseteq H$, then $\bfr(Z)\in I$ and so
$$v= \bfrho(Z)= \bfr(Z)+q'_Z\in I,$$
whence $v\in H\subseteq I(H,G)$. Since $q'_Z\le v$, it follows that $q'_Z \in \langle I(H,G) \rangle$ in this case.

Assume now that $r(Z) \not\subseteq H$, so that $Z/H$ and $X/H$ are nonempty. If $X$ is finite, then $X=Z\in \calG(H)$ and $q'_X=q'_Z \in I$, so $X\in G$ by definition of $G$ and $q'_Z= q'_X \in I(H,G)$. If $X$ is infinite and $X/H$ is finite, then $X\in \calG(H)$ and
$$\bfr\bigl( (X/H)\setminus Z \bigr)= \bfr\bigl( [(X/H)\cup Z]\setminus Z \bigr) \le \bfsig\bigl( Z, (X/H)\cup Z \bigr)= q'_Z \in I.$$
In this case, it follows that $\bfr\bigl( (X/H)\setminus Z \bigr) \in I$ and so $r\bigl( (X/H)\setminus Z \bigr) \subseteq H$, whence $X/H \subseteq Z$. Since $r\bigl( Z\setminus (X/H) \bigr) \subseteq H$, we get
$$q'_{X/H}= \bfsig(X/H,Z)= \bfr\bigl( Z\setminus (X/H) \bigr)+ q'_Z \in I,$$
so that $X\in G$ by definition of $G$, and then $q'_{X/H} \in I(H,G)$. Since $q'_Z \le q'_{X/H}$, we get $q'_Z\in \langle I(H,G) \rangle$.

Finally, assume that $X$ and $X/H$ are both infinite. Then there exists $f\in (X/H)\setminus Z$, and we have
$$r(f)\le \bfsig(Z, Z\sqcup\{f\})= q'_Z \in I.$$
But this implies $r(f)\in I$ and so $r(f)\in H$, which is a contradiction to the assumption that $f\in X/H$. Thus, $q'_Z\in \langle I(H,G) \rangle$ in all cases, as required.
\end{proof}

We define the \emph{kernel} of a monoid homomorphism $\tau: M_1 \rightarrow M_2$ just as for abelian groups: $\ker\tau := \tau^{-1}(\{0\})$. This is always a submonoid of $M_1$, but if $M_2$ is conical, then $\ker\tau$ is an order-ideal of $M_1$.

\begin{construction}  \label{constrMtilpi}
Let $(H,G)\in \ACS$ and define the order-ideal $I:= \langle I(H,G) \rangle$ in $M:= M(E,C,S)$. We construct an object $(\Etil,\Ctil,\Stil)$ in $\SSGr$ and a homomorphism $\pi$ from $M$ to $\Mtil := M(\Etil,\Ctil,\Stil)$ such that $I= \ker\pi$.

Let $\Etil$ be the quotient graph $E/H$, that is, the subgraph of $E$ with
$$\Etil^0 = E^0\setminus H \qquad\quad \text{and} \qquad\quad \Etil^1= r_E^{-1}(E^0\setminus H) = E^1/H.$$
(Since $H$ is hereditary, $s(e)\notin H$ for all $e\in \Etil^1$.) For $v\in \Etil^0$, set
$$\Ctil_v := \{ X/H \mid X\in C_v \text{\ and\ } X/H \ne\emptyset \},$$
which is a partition of $s_{\Etil}^{-1}(v)$. Now set $\Ctil := \bigsqcup_{v\in \Etil^0} \Ctil_v$. This gives us a separated graph $(\Etil,\Ctil)$. Further, set
$$\Stil := \{ X/H \mid X\in S \text{\ and\ } X/H \ne\emptyset \} \sqcup \{ X/H \mid X\in G \}.$$
Clearly, $X/H \in \Ctilfin$ when $X\in S$ and $X/H \ne \emptyset$.
By definition of $\calG(H)$, the set $X/H$ is nonempty and finite
for any $X\in G$, whence $X/H \in \Ctilfin$. Thus, $\Stil \subseteq
\Ctilfin$, and therefore $(\Etil,\Ctil,\Stil)$ is an object in
$\SSGr$. Observe that, for $X\in C\setminus S$, we have
$$X\in \calG(H) \quad\iff\quad X/H \in \Ctilfin \,.$$

We next define elements $\vtil,\qtil_Z \in \Mtil$ for $v\in E^0$ and $Z\in \calZ\setminus S$. For $v\in E^0$, set
$$\vtil := \begin{cases} v &\text{if\ } v\notin H \\ 0 &\text{if\ } v\in H. \end{cases}$$
Next, assume that $Z\in \calZ\setminus S$; then $Z\subseteq X$ for some $X\in C_v\setminus S$ and $v\in E^0$. We distinguish several cases:
\begin{enumerate}
\item If $v\in H$, set $\qtil_Z :=0$.
\item If $v\notin H$ and $X\in G$, set
$\qtil_Z :=  \bfr\bigl( (X\setminus Z)/H \bigr)$.
(In case $X$ is finite, $X=Z$ and $\qtil_Z =0$.)
\item If $v\notin H$ and $X\notin G$, set
$\qtil_Z := \begin{cases} v &\text{if\ } r(Z) \subseteq H \\ q'_{Z/H} &\text{if\ } r(Z) \not\subseteq H \text{\ and\ } X\notin \calG(H) \\
q'_{X/H}+ \bfr\bigl( (X\setminus Z)/H \bigr)  &\text{if\ } r(Z) \not\subseteq H \text{\ and\ } X\in \calG(H). \end{cases}$
\end{enumerate}
Note that if $v\notin H$ and $X\in \calG(H)\setminus G$, then $\qtil_Z= q'_{X/H}+ \bfr\bigl( (X\setminus Z)/H \bigr)$ whether or not $r(Z) \subseteq H$.

{\bf Claim 1}: There is a homomorphism $\pi: M\rightarrow \Mtil$ sending $v\mapsto \vtil$ for all $v\in E^0$ and $q'_Z\mapsto \qtil_Z$ for all $Z\in \calZ\setminus S$. To see this, we need to verify that the elements $\vtil$ and $\qtil_Z$ satisfy the defining relations of the elements $v,q'_Z\in E^0\sqcup Q^0$. In order to write these relations compactly, it will be convenient to use the notation
$$\bfrtil(W) := \sum_{e\in W} \widetilde{r(e)} \in \Mtil$$
for subsets $W\subseteq E^1$. Note that if we view $W/H$ as a subset of $\Etil^1$, then $\bfrtil(W)= \bfr(W/H)$, since $\widetilde{r(e)} =0$ for $e\in W\setminus (W/H)$.

Suppose that $v\in E^0$ and $Z\in \calZ_v$. Then $Z\subseteq X$ for some $X\in C_v$. If $v\in H$, then $r(Z) \subseteq H$ because $H$ is hereditary, and we get
$$\vtil =0= \begin{cases} \bfrtil(Z) &\text{if\ } Z\in S \\ \bfrtil(Z)+ \qtil_Z &\text{if\ } Z\notin S. \end{cases}$$
If $v\notin H$ and $Z\in S$, we have $Z=X$ and $r(X) \not\subseteq H$ because $H$ is $(C,S)$-saturated, whence
$$\vtil= v= \bfrho(Z/H)= \bfr(Z/H)= \bfrtil(Z).$$
So, we may assume that $v\notin H$ and $Z\notin S$. Then also $X\notin S$.

In case (2), $X/H\in \Stil$ and
$$\vtil= v= \bfrho(X/H)= \bfr(X/H)= \qtil_Z+ \bfr(Z/H)= \bfrtil(Z)+ \qtil_Z \,.$$
In case (3) with $r(Z) \subseteq H$, we have
$$\vtil= v= \qtil_Z= \bfrtil(Z)+ \qtil_Z \,.$$
In case (3) with $r(Z)\not\subseteq H$ and $X\notin \calG(H)$, we have
$$\vtil= v= \bfrho(Z/H)= \bfr(Z/H)+ q'_{Z/H}= \bfrtil(Z)+ \qtil_Z \,.$$
Finally, in case (3) with $r(Z)\not\subseteq H$ and $X\in \calG(H)$, we have
$$\vtil=v= \bfrho(X/H)= \bfr(X/H)+ q'_{X/H}= \bfr(Z/H)+ \bfr \bigl( (X\setminus Z)/H \bigr) +q'_{X/H}= \bfrtil(Z)+ \qtil_Z \,.$$
Thus, the $\vtil$ and $\qtil_Z$ satisfy all the required relations of the type $v=\bfrho(Z)$.

Assume now that $\emptyset \ne Z_1 \subsetneq Z_2 \in \calZ_v\setminus \Cfin$ for some $v\in E^0$. Then $Z_2 \subseteq X$ for some $X\in \Cvinf$. If $v\in H$, then $r(Z_2) \subseteq H$ and
$$\qtil_{Z_1}= 0= \bfrtil(Z_2\setminus Z_1)+ \qtil_{Z_2} \,.$$
So, we may assume that $v\notin H$.

In case (2),
$$\qtil_{Z_1}= \bfr\bigl( (X\setminus Z_1)/H \bigr)= \bfr\bigl( (Z_2 \setminus Z_1)/H \bigr) + \bfr\bigl( (X\setminus Z_2)/H \bigr)= \bfrtil(Z_2\setminus Z_1) + \qtil_{Z_2} \,.$$
If $X\in \calG(H)\setminus G$, then
\begin{align*}
\qtil_{Z_1} &= q'_{X/H}+ \bfr \bigl( (X\setminus Z_1)/H \bigr)= q'_{X/H}+ \bfr \bigl( (X\setminus Z_2)/H \bigr)+ \bfr \bigl( (Z_2\setminus Z_1)/H \bigr) \\
 &= \qtil_{Z_2}+ \bfrtil(Z_2\setminus Z_1).
 \end{align*}
Hence, in dealing with case (3) we may assume that $X\notin \calG(H)$.
In case (3) with $r(Z_2) \subseteq H$, we have
$$\qtil_{Z_1}= v= \bfr\bigl( (Z_2 \setminus Z_1)/H \bigr) +v= \bfrtil(Z_2\setminus Z_1)+ \qtil_{Z_2} \,.$$
In case (3) with $r(Z_1)\subseteq H$ but $r(Z_2) \not\subseteq H$, we have
$$\qtil_{Z_1}= v= \bfrho(Z_2/H)= \bfr(Z_2/H)+ q'_{Z_2/H}= \bfrtil(Z_2\setminus Z_1) +\qtil_{Z_2} \,.$$
Finally, in case (3) with $r(Z_1) \not\subseteq H$, we have
$$\qtil_{Z_1}= q'_{Z_1/H}= \bfsig(Z_1/H, Z_2/H)= \bfr\bigl( (Z_2\setminus Z_1)/H \bigr) +q'_{Z_2/H}= \bfrtil(Z_2\setminus Z_1)+ \qtil_{Z_2} \,.$$
This verifies that the $\vtil$ and $\qtil_Z$ satisfy all the required relations of the type $q'_{Z_1}= \bfsig(Z_1,Z_2)$.

Therefore Claim 1 is established. Since $\Mtil$ is conical, $\ker\pi$ is an order-ideal of $M$.

{\bf Claim 2}: $I\subseteq \ker\pi$. Since $\ker\pi$ is an order-ideal, it suffices to show that $I(H,G) \subseteq \ker\pi$.

For $v\in H$, we have $\pi(v)= \vtil =0$. For $X\in G\cap \Cfin$, we have $X\in C_v$ for some $v\in E^0\setminus H$, and $\pi(q'_X)= \qtil_X =0$ by case (2) above. If $X\in G\cap \Cinf$, then $X/H\in \calZ\setminus S$ and we have $\pi(q'_{X/H})= \qtil_{X/H} =0$, using case (2) again. This verifies that all the generators of $I(H,G)$ lie in $\ker\pi$.

{\bf Claim 3}: $\psi(\ker\pi)= (H,G)$.

Let $(H',G')= \psi(\ker\pi)$. It is clear from the construction of $\pi$ that
$$H'= E^0\cap (\ker\pi)= H.$$
In view of Claim 2, it immediately follows that $G\subseteq G'$.

Consider $X\in \calG(H)$. Then $X\in C_v$ for some $v\in E^0\setminus H$. If $X$ is finite and $X\notin G$, then $\pi(q'_X)= \qtil_X \ne 0$ by (3). Hence, $q'_X\notin \ker\pi$ and so $X\notin G'$. If $X$ is infinite and $X\notin G$, then $\pi(q'_{X/H})=  \qtil_{X/H} \ne 0$ by (3). Thus $q'_{X/H} \notin \ker\pi$ in this case, and again $X\notin G'$. This verifies that $G'=G$, establishing Claim 3.

{\bf Claim 4}: $\ker\pi = I$, and therefore $\psi(I)= (H,G)$.

In view of Claim 3, Lemma \ref{HGfromMideal} implies that $\ker\pi= \langle I(H,G) \rangle= I$, which establishes Claim 4.
\end{construction}

\begin{theorem}
\label{lattmonoid} Let $(E,C)$ be a separated graph, and $S$ a subset of $\Cfin$. Then there are mutually inverse lattice
isomorphisms
$$ \varphi\colon \ACS \longrightarrow \calL (M(E,C,S)) \qquad\qquad \text{and} \qquad\qquad \psi\colon \calL(M(E,C,S)) \longrightarrow \ACS\,,$$
where $\varphi (H,G)= \langle I(H,G) \rangle$ for $(H,G)\in\ACS$ and $\psi$ is defined as in Definition {\rm\ref{defpsiphi}}.
\end{theorem}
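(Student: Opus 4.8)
The plan is to show that $\varphi$ and $\psi$ are mutually inverse order isomorphisms between the two posets; since $\ACS$ is a complete lattice (Definition \ref{defACS}) and $\calL(M(E,C,S))$ is a complete lattice (order-ideals of a monoid are closed under arbitrary intersections and directed unions, hence under arbitrary suprema), an order isomorphism between them is automatically a lattice isomorphism, so there is no need to check preservation of meets and joins separately. Thus the proof reduces to four things: (i) $\varphi$ is well-defined, i.e. $\langle I(H,G)\rangle$ is an order-ideal and $\varphi(H,G)$ really lands in $\calL(M(E,C,S))$; (ii) $\psi$ is well-defined, which is essentially Lemma \ref{HfromMideal} together with the observation in Definition \ref{defpsiphi} that $(H,G)\in\ACS$; (iii) $\psi\varphi = \id_{\ACS}$; and (iv) $\varphi\psi = \id_{\calL(M(E,C,S))}$; plus (v) monotonicity of both maps.

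For (iv): given an order-ideal $I$, write $\psi(I)=(H,G)$; then Lemma \ref{HGfromMideal} gives exactly $I = \langle I(H,G)\rangle = \varphi\psi(I)$, so this direction is already done by the lemmas in the excerpt. For (iii): given $(H,G)\in\ACS$, set $I:=\varphi(H,G)=\langle I(H,G)\rangle$; I must show $\psi(I)=(H,G)$. This is precisely the content of Construction \ref{constrMtilpi}, which builds the object $(\Etil,\Ctil,\Stil)$ and the homomorphism $\pi\colon M(E,C,S)\to M(\Etil,\Ctil,\Stil)$ with $\ker\pi = I$ (Claim 4) and establishes $\psi(\ker\pi)=(H,G)$ (Claim 3). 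So (iii) follows by combining Claims 3 and 4 of that construction. Hence both composite identities are consequences of results already proved; the remaining work is routine.

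For well-definedness of $\varphi$ and monotonicity: I would verify directly that $I(H,G)$, being the submonoid generated by $H\sqcup\{q'_X : X\in\Cfin\cap G\}\sqcup\{q'_{X/H}: X\in\Cinf\cap G\}$, generates an order-ideal whose underlying vertex set is $H$ and whose relevant $q'$-generators recover $G$ — but this is subsumed by Construction \ref{constrMtilpi}, so it suffices to cite it. For monotonicity of $\varphi$: if $(H_1,G_1)\le(H_2,G_2)$, i.e. $H_1\subseteq H_2$ and $G_1\subseteq G_2\sqcup C[H_2]$, then each generator of $I(H_1,G_1)$ lies in $\langle I(H_2,G_2)\rangle$ — vertices of $H_1$ lie in $H_2$; for $X\in G_1\cap\Cfin$, either $X\in G_2$ (so $q'_X$ is a generator) or $X\in C[H_2]$, meaning $X\in C_v$ with $v\in H_2$, forcing $q'_X\le v\in I(H_2,G_2)$; similarly for $X\in G_1\cap\Cinf$, using $q'_{X/H_1}\le v$ when $v\in H_2$ and comparing $q'_{X/H_1}$ with $q'_{X/H_2}$ when $X\in G_2$, via the $\bfsig$ relations and the fact that $X/H_1\supseteq X/H_2$. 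Monotonicity of $\psi$ is immediate from its definition: a larger order-ideal has a larger vertex set $H$ and a larger (relative) collection $G$. Once monotonicity of both maps and the two composite identities are in hand, $\varphi$ and $\psi$ are mutually inverse order isomorphisms, hence lattice isomorphisms, and the theorem follows.

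The genuine obstacle in this proof has already been absorbed into the preparatory material: the hard direction is $\psi\varphi=\id$, equivalently that $\langle I(H,G)\rangle$ is ``detected'' by the quotient monoid $M(\Etil,\Ctil,\Stil)$, and the delicate bookkeeping there is the case analysis (cases (1)--(3) and their subcases on whether $r(Z)\subseteq H$, whether $X\in\calG(H)$, whether $X\in G$) verifying that $\pi$ respects all the defining relations of $M(E,C,S)$ — this is Claim 1 of Construction \ref{constrMtilpi}. Given that construction and Lemmas \ref{HfromMideal} and \ref{HGfromMideal}, the proof of Theorem \ref{lattmonoid} itself is short: assemble well-definedness and monotonicity of $\varphi$ (from Construction \ref{constrMtilpi}) and of $\psi$ (from Lemma \ref{HfromMideal}), invoke Lemma \ref{HGfromMideal} for $\varphi\psi=\id$ and Construction \ref{constrMtilpi} (Claims 3--4) for $\psi\varphi=\id$, and conclude using that order isomorphisms of complete lattices are lattice isomorphisms.
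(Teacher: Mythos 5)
Your proposal follows essentially the same route as the paper's proof: cite Lemma \ref{HGfromMideal} for $\varphi\psi=\id$, cite Claims 3 and 4 of Construction \ref{constrMtilpi} for $\psi\varphi=\id$, check that both maps are order-preserving, and conclude that mutually inverse order isomorphisms of these (complete) lattices are lattice isomorphisms. Your verification of monotonicity of $\varphi$ matches the paper's almost verbatim.

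The one place you undersell the work is the claim that monotonicity of $\psi$ is ``immediate from its definition.'' It is not: if $I_1\subseteq I_2$ with $\psi(I_j)=(H_j,G_j)$, then $H_1\subseteq H_2$ is indeed trivial, but $G_1\subseteq G_2\sqcup C[H_2]$ requires a small case analysis parallel to the one you spell out for $\varphi$. Given $X\in G_1$, you must handle the possibility that $X\notin\calG(H_2)$ (i.e.\ $r(X)\subseteq H_2$), in which case the relation $v=\bfrho(X)$ (or $v=\bfrho(X/H_1)$ in the infinite case) forces $v\in I_2$, hence $X\in C[H_2]$; and in the infinite case with $X\in\calG(H_2)$, membership of $X$ in $G_2$ requires $q'_{X/H_2}\in I_2$, which you only get from $q'_{X/H_1}\in I_2$ via the relation $q'_{X/H_2}=\bfsig(X/H_2,X/H_1)=\bfr\bigl(X\cap r^{-1}(H_2\setminus H_1)\bigr)+q'_{X/H_1}$ together with $r\bigl(X\cap r^{-1}(H_2\setminus H_1)\bigr)\subseteq H_2$. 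This is exactly the content of the corresponding paragraph in the paper; with it supplied, your argument is complete.
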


\begin{proof}
The maps $\varphi$ and $\psi$ are well defined by construction, and Lemma \ref{HGfromMideal} shows that $\varphi\psi$ is the identity map on $\calL (M(E,C,S))$. That $\psi\varphi$ is the identity map on $\ACS$ follows from Claim 4 of Construction \ref{constrMtilpi}. It thus remains to show that $\varphi$ and $\psi$ are order-preserving, since then they are isomorphisms of posets, and therefore lattice isomorphisms.

Suppose $I_1\subseteq I_2$ are order-ideals of $M(E,C,S)$ and $(H_j,G_j)= \psi(I_j)$ for $j=1,2$. Obviously $H_1\subseteq H_2$. Let $X\in G_1$; then $X\in C_v$ for some $v\in E^0\setminus H_1$. First suppose that $X\in \calG(H_1)\cap \Cfin$ and $q'_X\in I_1$. If $X\in \calG(H_2)$, then $X\in G_2$. Otherwise, $r(X) \subseteq H_2$ and so $\bfr(X)\in I_2$, whence $v= \bfrho(X) \in I_2$, yielding $v\in H_2$ and $X\in C[H_2]$. Now suppose that $X\in \calG(H_1)\cap \Cinf$ and $q'_{X/H_1} \in I_1$. If $X\in \calG(H_2)$, then $q'_{X/H_2}$ is defined and
$$q'_{X/H_2}= \bfsig(X/H_2,X/H_1)= \bfr\bigl( X\cap r^{-1}(H_2\setminus H_1) \bigr) +q'_{X/H_1} \in I_2 \,,$$
so $X\in G_2$. Otherwise, $r(X)\subseteq H_2$ and so $\bfr(X/H_1) \in I_2$, whence $v= \bfrho(X/H_1) \in I_2$, again yielding $X\in C[H_2]$. We have now shown that $G_1\subseteq G_2\sqcup C[H_2]$, and so $(H_1,G_1) \le (H_2,G_2)$. Therefore $\psi$ is order-preserving.

Finally, let $(H_1,G_1)$ and $(H_2,G_2)$ be any elements of $\ACS$ such that $(H_1,G_1) \le (H_2,G_2)$. Obviously $H_1 \subseteq I(H_2,G_2)$. Consider $X\in G_1\cap \Cfin$. If $X\in G_2$, then $q'_X \in I(H_2,G_2)$ by definition of $I(H_2,G_2)$. If $X\in C_v$ for some $v\in H_2$, then
$$q'_X \le \bfrho(X)= v\in I(H_2,G_2)$$
and so $q'_X \in \langle I(H_2,G_2) \rangle$. Now consider $X\in G_1\cap \Cinf$. If $X\in G_2$, then $q'_{X/H_2} \in I(H_2,G_2)$. Since
$$q'_{X/H_1} \le \bfsig(X/H_2,X/H_1)= q'_{X/H_2} \,,$$ it follows that $q'_{X/H_1} \in \langle I(H_2,G_2) \rangle$. If $X\in C_v$ for some $v\in H_2$, then
$$q'_{X/H_1} \le \bfrho(X/H_1)= v\in I(H_2,G_2)$$
and again $q'_{X/H_1} \in \langle I(H_2,G_2) \rangle$. Thus, all the generators of $I(H_1,G_1)$ lie in $\varphi(H_2,G_2)$, and we conclude that $\varphi(H_1,G_1) \subseteq \varphi(H_2,G_2)$. Therefore $\varphi$ is order-preserving.
\end{proof}

In case $C=\Cfin=S$, the lattice $\ACS$ consists of pairs $(H,\emptyset)$, and so $\ACS$ is naturally isomorphic to the lattice of hereditary, $C$-saturated subsets of $E^0$. We thus obtain the following corollary of Theorem \ref{lattmonoid}.

\begin{corollary}  \label{MlattC=S}
Let $(E,C)$ be a separated graph such that all the sets in $C$ are finite, and let $\calH$ be the lattice of hereditary, $C$-saturated subsets of $E^0$. Then there are mutually inverse lattice isomorphisms
$$\varphi: \calH \longrightarrow \calL (M(E,C)) \qquad\qquad \text{and} \qquad\qquad \psi: \calL (M(E,C)) \longrightarrow \calH \,,$$
where $\varphi(H)=  \langle H\rangle$ for $H\in \calH$ and $\psi(I)= \{v\in E^0\mid v\in I\}$ for $I\in \calL (M(E,C))$.
\qed\end{corollary}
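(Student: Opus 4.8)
The plan is to derive this directly from Theorem \ref{lattmonoid} by specializing the lattice $\ACS$ and the ideals $I(H,G)$ to the case $C=\Cfin=S$. First I would record the degeneracies forced by this hypothesis: since $S=\Cfin$, the notion of ``$(C,S)$-saturated'' is by Definition \ref{defCSsat} exactly ``$C$-saturated''; and since $C\setminus S=\emptyset$, the set $\calG(H)=\{X\in C\setminus S\mid X/H \text{ nonempty and finite}\}$ is empty for every hereditary, $C$-saturated $H$. Hence the only $G\subseteq\calG(H)$ is $G=\emptyset$, so $\ACS=\{(H,\emptyset)\mid H\in\calH\}$.

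Next I would verify that $H\mapsto (H,\emptyset)$ is an isomorphism of posets $\calH\to\ACS$. It is plainly a bijection, and by the definition of $\le$ on $\ACS$ we have $(H_1,\emptyset)\le(H_2,\emptyset)$ iff $H_1\subseteq H_2$ and $\emptyset\subseteq\emptyset\sqcup C[H_2]$, i.e.\ iff $H_1\subseteq H_2$; so it is the desired lattice isomorphism $\ACS\cong\calH$. Under this identification one must also read off the ideal $I(H,\emptyset)$ of Definition \ref{defpsiphi}: because $\Cfin\cap G=\emptyset$ and $\Cinf=\emptyset$ (as $C=\Cfin$), the generating set $H\sqcup\{q'_X\mid X\in\Cfin\cap G\}\sqcup\{q'_{X/H}\mid X\in\Cinf\cap G\}$ collapses to just $H$. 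Thus $I(H,\emptyset)$ is the submonoid of $M(E,C)$ generated by $H$, and $\langle I(H,\emptyset)\rangle=\langle H\rangle$.

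Finally I would compose the mutually inverse lattice isomorphisms $\varphi,\psi$ of Theorem \ref{lattmonoid} with the identification $\ACS\cong\calH$. Then $\varphi$ sends $(H,\emptyset)\mapsto\langle I(H,\emptyset)\rangle=\langle H\rangle$, becoming $\varphi\colon\calH\to\calL(M(E,C))$, $H\mapsto\langle H\rangle$; and $\psi$ sends an order-ideal $I$ to $(H,G)=\psi(I)$ with $H=\{v\in E^0\mid v\in I\}$ and necessarily $G=\emptyset$, becoming $\psi\colon\calL(M(E,C))\to\calH$, $I\mapsto\{v\in E^0\mid v\in I\}$. Since composites of mutually inverse lattice isomorphisms with a lattice isomorphism are again mutually inverse lattice isomorphisms, the corollary follows. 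There is essentially no obstacle beyond this bookkeeping; the only points needing a short check are that the generating data for $I(H,\emptyset)$ really reduces to $H$ in this degenerate situation, and that the order on $\ACS$ restricts to set inclusion on pairs $(H,\emptyset)$ — both immediate from the definitions.
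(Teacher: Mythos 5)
Your proposal is correct and follows exactly the route the paper takes: the paper derives this corollary from Theorem \ref{lattmonoid} by the same observation that $C=\Cfin=S$ forces $\calG(H)=\emptyset$, so $\ACS$ consists of the pairs $(H,\emptyset)$ and is naturally isomorphic to $\calH$, with $I(H,\emptyset)$ collapsing to the submonoid generated by $H$. The bookkeeping you spell out is precisely what the paper leaves implicit.
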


The combination of Theorem \ref{lattmonoid} with Propositions \ref{LVAisoTrA} and \ref{LVCLisoTrCL} yields the following description of the lattice of trace ideals in $CL_K(E,C,S)$.

\begin{theorem} \label{vqlatticeisom}
Let $(E,C)$ be a separated graph, $S$ a subset of $\Cfin$, and $A:= CL_K(E,C,S)$. Then there are mutually inverse lattice isomorphisms
$$\xi: \ACS \longrightarrow \Tr(A) \qquad\qquad \text{and} \qquad\qquad \theta: \Tr(A) \longrightarrow \ACS\,,$$
given by the rules
\begin{align*}
\xi(H,G) &:= \bigl\langle\; H\sqcup \{q_{X}\mid X\in G\cap \Cfin \}\sqcup \{ q_{X/H}\mid X\in G\cap \Cinf \} \; \bigr\rangle \\
\theta(J) &:= \bigl(\; E^0\cap J,\; \{X\in \calG (E^0\cap J)\cap \Cfin\mid q_{X}\in J \} \\
 &\hskip1.4truein
\sqcup \{X\in \calG (E^0\cap J)\cap \Cinf \mid q_{X/H}\in J\}
\;\bigr).
\end{align*}
\end{theorem}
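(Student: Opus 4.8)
The plan is to obtain $\xi$ and $\theta$ by composing three pairs of lattice isomorphisms that are already at hand. From Proposition~\ref{LVAisoTrA} we have mutually inverse lattice isomorphisms $\Phi\colon\calL(\mon{A})\to\Tr(A)$ and $\Psi\colon\Tr(A)\to\calL(\mon{A})$, and by Proposition~\ref{LVCLisoTrCL} the map $\Phi$ is given by $\Phi(\mathcal I)=\langle\,\text{idempotents }e\in A\mid [e]\in\mathcal I\,\rangle$. Theorem~\ref{computVMECS} provides the monoid isomorphism $\Gamma\colon M(E,C,S)\to\mon{A}$ with $\Gamma(v)=[v]$ for $v\in E^0$ and $\Gamma(q'_Z)=[q_Z]$ for $Z\in\calZ\setminus S$ (using Definition~\ref{defqZ}); being a monoid isomorphism, it carries order-ideals to order-ideals and so induces a lattice isomorphism $\calL(\Gamma)\colon\calL(M(E,C,S))\to\calL(\mon{A})$, $I\mapsto\Gamma(I)$. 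Finally, Theorem~\ref{lattmonoid} gives mutually inverse lattice isomorphisms $\varphi\colon\ACS\to\calL(M(E,C,S))$ and $\psi\colon\calL(M(E,C,S))\to\ACS$. I would then \emph{define} $\xi:=\Phi\circ\calL(\Gamma)\circ\varphi$ and $\theta:=\psi\circ\calL(\Gamma)^{-1}\circ\Psi$. Being composites of lattice isomorphisms, $\xi$ and $\theta$ are lattice isomorphisms, and they are mutually inverse because $\varphi,\psi$ and $\calL(\Gamma),\calL(\Gamma)^{-1}$ and $\Phi,\Psi$ are. The remaining task is to check that these composites are described by the explicit formulas in the statement.

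For $\xi$: by Theorem~\ref{lattmonoid}, $\varphi(H,G)=\langle I(H,G)\rangle$, and by Definition~\ref{defpsiphi} the monoid $I(H,G)$ is generated by $H\sqcup\{q'_X\mid X\in\Cfin\cap G\}\sqcup\{q'_{X/H}\mid X\in\Cinf\cap G\}$. Since $\Gamma$ is a monoid isomorphism, $\calL(\Gamma)(\varphi(H,G))=\Gamma(\langle I(H,G)\rangle)$ is the order-ideal $\mathcal I$ of $\mon{A}$ generated by $\{[v]\mid v\in H\}\cup\{[q_X]\mid X\in G\cap\Cfin\}\cup\{[q_{X/H}]\mid X\in G\cap\Cinf\}$, using $\Gamma(q'_X)=[q_X]$ and $\Gamma(q'_{X/H})=[q_{X/H}]$. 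Put $I':=\langle H\sqcup\{q_X\mid X\in G\cap\Cfin\}\sqcup\{q_{X/H}\mid X\in G\cap\Cinf\}\rangle$, the ideal of $A$ appearing in the formula for $\xi(H,G)$; I claim $\Phi(\mathcal I)=I'$. The inclusion $I'\subseteq\Phi(\mathcal I)$ is immediate, since every listed generator of $I'$ is an idempotent whose class lies in $\mathcal I$. For the reverse inclusion, note that $\Psi(I')$ is the order-ideal of $\mon{A}$ generated by the classes of the idempotent generators of $I'$ (by Proposition~\ref{LVAisoTrA}), that is, generated by $\{[v]\mid v\in H\}\cup\{[q_X]\mid X\in G\cap\Cfin\}\cup\{[q_{X/H}]\mid X\in G\cap\Cinf\}$, so $\Psi(I')=\mathcal I$; applying $\Phi$ yields $I'=\Phi(\Psi(I'))=\Phi(\mathcal I)$, which is the stated formula for $\xi(H,G)$.

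For $\theta$: given $J\in\Tr(A)$, put $I:=\calL(\Gamma)^{-1}(\Psi(J))=\Gamma^{-1}(\Psi(J))$, an order-ideal of $M(E,C,S)$, so $\theta(J)=\psi(I)$. By Definition~\ref{defpsiphi}, $\psi(I)=(H,G)$ with $H=\{v\in E^0\mid v\in I\}$ and $G=\{X\in\calG(H)\cap\Cfin\mid q'_X\in I\}\sqcup\{X\in\calG(H)\cap\Cinf\mid q'_{X/H}\in I\}$. Now for an idempotent $e\in A$ one has $e\in J\iff[e]\in\Psi(J)$ (Propositions~\ref{LVAisoTrA} and~\ref{LVCLisoTrCL}); applying this with $e=v$, $e=q_X$ and $e=q_{X/H}$, and using $\Gamma(v)=[v]$, $\Gamma(q'_X)=[q_X]$, $\Gamma(q'_{X/H})=[q_{X/H}]$, we get $v\in I\iff v\in J$, $q'_X\in I\iff q_X\in J$, and $q'_{X/H}\in I\iff q_{X/H}\in J$. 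Hence $H=E^0\cap J$ and $G=\{X\in\calG(E^0\cap J)\cap\Cfin\mid q_X\in J\}\sqcup\{X\in\calG(E^0\cap J)\cap\Cinf\mid q_{X/H}\in J\}$, which is exactly the stated formula for $\theta(J)$.

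The one step I expect to require care is the identity $\Phi(\mathcal I)=I'$ in the second paragraph — equivalently, that the passage from a generating family of classes $[v],[q_Z]$ of an order-ideal of $\mon{A}$ to the ideal of $A$ generated by the corresponding idempotents $v,q_Z$ is inverse to $\Psi$. This is essentially a repackaging of Propositions~\ref{LVAisoTrA} and~\ref{LVCLisoTrCL} (the trace ideals of $A$ are precisely the idempotent-generated ideals, and $\mon{A}$ is generated by the classes of the vertices and of the $q_Z$), so no new idea is needed, only a careful matching of generators on the two sides.
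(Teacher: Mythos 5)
Your proposal is correct and follows essentially the same route as the paper: both define the maps as the composites $\Phi\circ\Gamma\circ\varphi$ and $\psi\circ\Gamma^{-1}\circ\Psi$ using Theorem \ref{lattmonoid}, Theorem \ref{computVMECS}, and Propositions \ref{LVAisoTrA} and \ref{LVCLisoTrCL}, and then match the explicit formulas. The only (harmless) divergence is in verifying $\Phi(\mathcal I)=I'$: the paper argues directly that an idempotent $e$ with $[e]\in J(H,G)$ is equivalent to a subidempotent of a diagonal sum of the generators, whereas you deduce the same inclusion formally from the squeeze $\mathcal I\subseteq\Psi(I')\subseteq\Psi\Phi(\mathcal I)=\mathcal I$ (which is legitimate once Proposition \ref{LVCLisoTrCL} guarantees that $I'$ is a trace ideal and hence in the domain of $\Psi$).
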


\begin{proof} Set $M:= M(E,C,S)$, and let $\Gamma := \Gamma(E,C,S) : M\rightarrow \mon{A}$ be the monoid isomorphism given by Theorem \ref{computVMECS}. We shall also use $\Gamma$ to denote the induced lattice isomorphism $\calL(M) \rightarrow \calL(\mon{A})$. Due to Theorem \ref{lattmonoid} and Proposition \ref{LVAisoTrA}, we have mutually inverse lattice isomorphisms
$$\Phi\Gamma\varphi: \ACS \longrightarrow \Tr(A) \qquad\qquad \text{and} \qquad\qquad \psi\Gamma^{-1}\Psi: \Tr(A) \longrightarrow \ACS\,.$$
It is clear that $\psi\Gamma^{-1}\Psi(J)= \theta(J)$ for all $J\in \Tr(A)$. For $(H,G)\in \ACS$, we note, using Proposition \ref{LVCLisoTrCL}, that
$$\Phi\Gamma\varphi(H,G)= \langle\; \text{idempotents\ } e\in A \mid [e] \in J(H,G) \;\rangle,$$
where $J(H,G)$ is the order-ideal of $\mon{A}$ generated by the set
$$\{ [v] \mid v\in H\} \cup \{ [q_X] \mid X\in G\cap \Cfin \} \cup \{ [q_{X/H}] \mid X\in G\cap \Cinf \}.$$
Note that $\xi(H,G) \subseteq \Phi\Gamma\varphi(H,G)$.
If $e$ is an idempotent in $A$ such that $[e] \in J(H,G)$, then
$$[e] \le \sum_{i=1}^l\, [v_i] + \sum_{j=1}^m\, [q_{X_j}] + \sum_{k=1}^n\, [q_{Y_k/H}]$$
for some $v_i\in H$, $X_j\in G\cap\Cfin$, and $Y_k\in G\cap \Cinf$. Consequently, $e$ is equivalent to some idempotent
$$e' \le \text{diag}(v_1,\dots,v_l,q_{X_1},\dots,q_{X_m},q_{Y_1/H},\dots,q_{Y_n/H}),$$
from which it follows that $e$ lies in the ideal generated by these $v_i$, $q_{X_j}$, and $q_{Y_k/H}$. Therefore, $\Phi\Gamma\varphi(H)= \xi(H,G)$.
\end{proof}

The lattices $\Tr(CL_K(E,C,S))$ and $\calL(M(E,C,S))$ are also isomorphic to the lattice of hereditary, $(C,S)$-saturated subsets of a graph $XE\supseteq E$, which is a ``slimmer'' version of the graph in Construction \ref{makefinite}, as follows.

\begin{definition} \label{defXE}
The \emph{$(C,S)$-extension of $E$} is the graph $XE= X_{C,S}(E)$ with vertex set
$$XE^0 := E^0\sqcup (\calZ\setminus S)$$
and with edge set $XE^1$ consisting of the following four types of edges:
\begin{enumerate}
\item[] All edges in $E^1$;
\item[] One edge $v\rightarrow Z$ for each $v\in E^0$ and $Z\in \calZ_v\setminus S$;
\item[] One edge $Z\rightarrow r(f)$ for each $Z\in \calZ\setminus\Cfin$ and $f\in E^1\setminus Z$ such that $Z\sqcup \{f\} \in \calZ$;
\item[] One edge $Z\rightarrow Z'$ for each $Z,Z'\in \calZ\setminus \Cfin$ such that $Z\subsetneq Z'$.
\end{enumerate}
Of course, if all the sets in $C$ are finite, then $XE^1$ contains only edges of the first two types.
\end{definition}

\begin{definition} \label{defCSsatXE}
A subset $H\subseteq XE^0$ is \emph{$(C,S)$-saturated} provided the following three conditions hold:
\begin{enumerate}
\item If $X\in S\cap C_v$ for some $v\in E^0$ and $r(X) \subseteq H$, then $v\in H$.
\item If $Z\in \calZ_v\setminus S$ for some $v\in E^0$ with $Z\in H$ and $r(Z) \subseteq H$, then $v\in H$.
\item If $\emptyset\ne Z\subsetneq Z' \in \calZ\setminus\Cfin$ with $Z'\in H$ and $r(Z'\setminus Z) \subseteq H$, then $Z\in H$.
\end{enumerate}

Let $\HCS$ denote the family of all hereditary, $(C,S)$-saturated subsets of $XE^0$. This family is closed under arbitrary intersections, and hence it is a complete lattice with respect to inclusion.
\end{definition}

\begin{remark} \label{lattisoviaXE}
 Let $(E,C)$ be a separated graph, and $S$ a subset of $\Cfin$. There are lattice isomorphisms as indicated below. We omit the proofs, which are similar to those of Theorems \ref{lattmonoid} and \ref{vqlatticeisom}.

 First, there are mutually inverse lattice isomorphisms
$$ \varphi\colon \HCS \longrightarrow \calL (M(E,C,S)) \qquad\qquad \text{and} \qquad\qquad \psi\colon\calL(M(E,C,S)) \longrightarrow \HCS\,,$$
given by the rules
\begin{align*}
\varphi(H) &:= \bigl\langle\, (E^0\cap H) \cup \{q'_Z\mid Z\in (\calZ\setminus S)\cap H\} \,\bigr\rangle \\
\psi(I) &:= (E^0 \cap I) \sqcup \{Z\in \calZ\setminus S \mid q'_Z\in I\}.
\end{align*}
Second,  there are mutually inverse lattice isomorphisms
$$\xi: \HCS \longrightarrow \Tr(CL_K(E,C,S)) \qquad\qquad \text{and} \qquad\qquad \theta: \Tr(CL_K(E,C,S)) \longrightarrow \HCS\,,$$
given by the rules
\begin{align*}
\xi(H) &:= \langle\; (E^0\cap H)\cup \{q_Z \mid Z\in (\calZ\setminus S)\cap H\} \;\rangle \\
\theta(J) &:= \{v\in E^0 \mid v\in J\} \sqcup \{Z\in \calZ\setminus S \mid q_Z \in J\}.
\end{align*}
\end{remark}

\section{Simplicity}
\label{sec:simplicity}

The description of the lattices $\calL(M(E,C,S))$ in the previous section allows us to develop criteria for simplicity of the monoids $M(E,C,S)$. In view of Proposition \ref{LVAisoTrA}, we thus obtain criteria for $CL_K(E,C,S)$ to be ``trace-simple'' in the sense that the only trace ideals of this algebra are $0$ and $CL_K(E,C,S)$.

In general, a monoid $M$ is \emph{simple} provided $M$ has precisely two order-ideals, namely $M$ itself and the group of units of $M$. In the conical case (as for $M(E,C,S)$), simplicity means that $M$ is nonzero and its only order-ideals are $\{0\}$ and $M$.

\begin{theorem} \label{simplicity}
Let $(E,C)$ be a separated graph, and $S$ a subset of $\Cfin$. The following conditions are equivalent.
\begin{enumerate}
\item The only trace ideals of $CL_K(E,C,S)$ are $0$ and $CL_K(E,C,S)$.
\item $M(E,C,S)$ is a simple monoid.
\item \subitem {\rm (a)} $S= \Cfin$, and
\item[] \subitem {\rm (b)} The only hereditary, $(C,S)$-saturated subsets of $E^0$ are $\emptyset$ and $E^0$.
\end{enumerate}
\end{theorem}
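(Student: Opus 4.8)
The plan is to prove the equivalence of (1), (2), and (3) by establishing the cycle (1) $\Leftrightarrow$ (2), then (2) $\Leftrightarrow$ (3). The first equivalence is essentially immediate from what has already been set up: by Proposition \ref{LVCLisoTrCL} (more precisely Proposition \ref{LVAisoTrA}), the lattice $\Tr(CL_K(E,C,S))$ is isomorphic to the lattice $\calL(M(E,C,S))$ of order-ideals of $M(E,C,S)$. A conical monoid is simple exactly when its only order-ideals are $\{0\}$ and the whole monoid, so $CL_K(E,C,S)$ has only the trace ideals $0$ and $CL_K(E,C,S)$ if and only if $\calL(M(E,C,S))$ has exactly two elements, i.e.\ $M(E,C,S)$ is simple (using Lemma \ref{nonzeroconical} to know $M(E,C,S)\ne 0$). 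So the substance of the theorem is the equivalence (2) $\Leftrightarrow$ (3).

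For (2) $\Leftrightarrow$ (3), I would work through the lattice isomorphism $\psi\colon \calL(M(E,C,S)) \to \ACS$ of Theorem \ref{lattmonoid}. Simplicity of $M(E,C,S)$ is equivalent to $\ACS$ having exactly two elements, namely its minimum $(\emptyset,\emptyset)$ and its maximum $(E^0,\emptyset)$. So I must show that $|\ACS| = 2$ if and only if conditions (3)(a) and (3)(b) hold. For the direction (3) $\Rightarrow$ (2): if $S=\Cfin$ then $C\setminus S=\emptyset$, so $\calG(H)=\emptyset$ for every hereditary $(C,S)$-saturated $H$, hence every admissible pair has the form $(H,\emptyset)$; combined with (3)(b), the only admissible pairs are $(\emptyset,\emptyset)$ and $(E^0,\emptyset)$, giving $|\ACS|=2$ and hence simplicity. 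For the direction (2) $\Rightarrow$ (3): assuming $|\ACS|=2$, I must derive both (a) and (b). Condition (b) is easy --- the pairs $(H,\emptyset)$ for $H$ hereditary and $(C,S)$-saturated all lie in $\ACS$, so there can be only two such $H$, namely $\emptyset$ and $E^0$. Condition (a) is the step that needs an argument: I need to show $S=\Cfin$, equivalently $C\setminus S=\emptyset$. Suppose not, so there is some $X\in C_v\setminus S$ with $v\in E^0$. Then I want to produce a hereditary $(C,S)$-saturated set $H$ with $X\in \calG(H)$ and thereby a third admissible pair. The natural candidate is $H = E^0$, giving the pair $(E^0, \calG(E^0))$; I need $\calG(E^0)$ nonempty, i.e.\ some $X\in C\setminus S$ with $X/E^0 = X\cap r^{-1}(\emptyset) = \emptyset$ --- but that is empty, not what we want. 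So instead take $H=\emptyset$: then $X/H = X$, and $X\in\calG(\emptyset)$ precisely when $X$ is nonempty and finite, i.e.\ when $X\in\Cfin\setminus S$. This shows that if $\Cfin\setminus S\ne\emptyset$ we get the extra pair $(\emptyset,\{X\})$ (or more generally $(\emptyset, G)$ for $\emptyset\ne G\subseteq \calG(\emptyset)$), contradicting $|\ACS|=2$. Hence $\Cfin\subseteq S$, and since $S\subseteq\Cfin$ by hypothesis, $S=\Cfin$.

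The main obstacle --- though a mild one --- is making sure the count $|\ACS|=2$ is handled carefully when $C$ may contain infinite sets. After establishing $S=\Cfin$ as above, one still has $C\setminus S = \Cinf$ possibly nonempty, and one must check this does not produce extra admissible pairs. But with $S=\Cfin$, for $H=\emptyset$ we have $\calG(\emptyset) = \{X\in\Cinf \mid X \text{ finite}\} = \emptyset$, and for $H=E^0$ we have $\calG(E^0)=\emptyset$ as computed above; since (b) forces $H\in\{\emptyset,E^0\}$, every admissible pair is $(\emptyset,\emptyset)$ or $(E^0,\emptyset)$, so indeed $|\ACS|=2$. Thus the only genuinely new inference is the passage from non-simplicity-obstruction to the vanishing of $\Cfin\setminus S$, via exhibiting the pair $(\emptyset,G)$ with $G\subseteq\calG(\emptyset)$ nonempty; everything else is bookkeeping with Theorem \ref{lattmonoid}, Proposition \ref{LVCLisoTrCL}, and Lemma \ref{nonzeroconical}. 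I would present the proof as: first dispatch (1) $\Leftrightarrow$ (2) in one or two sentences; then prove (2) $\Leftrightarrow$ (3) by the $\ACS$-counting argument, doing (3) $\Rightarrow$ (2) first and then (2) $\Rightarrow$ (3) in the two sub-steps for (a) and (b) as above.
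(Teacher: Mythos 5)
Your proposal is correct and follows essentially the same route as the paper: (1)$\Leftrightarrow$(2) via Proposition \ref{LVAisoTrA} and Lemma \ref{nonzeroconical}, and (2)$\Leftrightarrow$(3) by counting the elements of $\ACS$ through Theorem \ref{lattmonoid}, using the computations $\calG(\emptyset)=\Cfin\setminus S$ and $\calG(E^0)=\emptyset$ and the extra pairs $(\emptyset,\{X\})$ for $X\in\Cfin\setminus S$. The only blemish is the mid-proof assertion that $S=\Cfin$ forces $C\setminus S=\emptyset$ (it forces $C\setminus S=\Cinf$, which may be nonempty), but your final paragraph explicitly repairs this by checking that $\Cinf$ contributes nothing to $\calG(\emptyset)$ or $\calG(E^0)$, which is exactly the observation the paper records at the outset of its proof.
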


\begin{proof} The equivalence of (1) and (2) is immediate from Proposition \ref{LVAisoTrA}, given that $M(E,C,S)$ is nonzero and conical (Lemma \ref{nonzeroconical}). Next, observe that for the hereditary, $(C,S)$-saturated subsets $\emptyset, E^0 \subseteq E^0$, we have $\calG(\emptyset)= \Cfin\setminus S$ and $\calG(E^0)= \emptyset$.

$(2)\Longrightarrow(3)$: Assume that $M(E,C,S)$ is a simple monoid. By Theorem  \ref{lattmonoid}, the only members of $\ACS$ are $(\emptyset,\emptyset)$ and $(E^0,\emptyset)$. Since $(\emptyset,\{X\}) \in \ACS$ for any $X\in \Cfin\setminus S$, condition (a) follows. Further, since $(H,\emptyset) \in \ACS$ for any hereditary, $(C,S)$-saturated subset $H$ of $E^0$, condition (b) follows as well.

$(3)\Longrightarrow(2)$: If (a) and (b) hold, the only members of $\ACS$ are $(\emptyset,\emptyset)$ and $(E^0,\emptyset)$. Theorem \ref{lattmonoid} then implies that $M(E,C,S)$ is simple.
\end{proof}

\begin{corollary}  \label{simpleME}
Let $E$ be any {\rm(}non-separated{\rm)} graph. Then $M(E)$ is a simple monoid if and only if the only hereditary, saturated subsets of $E^0$ are $\emptyset$ and $E^0$.
\end{corollary}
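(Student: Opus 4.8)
The plan is to derive this as an immediate special case of Theorem \ref{simplicity}. Recall that for a non-separated graph $E$, we have set $M(E) := M(E,C,S)$ where $C_v = \{s^{-1}(v)\}$ for $v \in E^0 \setminus \Si(E)$, $C_v = \emptyset$ for $v \in \Si(E)$, and $S = \Cfin$. First I would apply Theorem \ref{simplicity} to this particular $(E,C,S)$: condition (3)(a), namely $S = \Cfin$, holds automatically by our choice of $S$. Therefore conditions (1), (2), (3) of Theorem \ref{simplicity} all collapse to condition (3)(b): the only hereditary, $(C,S)$-saturated subsets of $E^0$ are $\emptyset$ and $E^0$.

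The remaining task is purely a matter of unwinding the definition of $(C,S)$-saturation (Definition \ref{defCSsat}) in this non-separated setting and checking that it reduces to the classical notion of saturation recalled in Definition \ref{defCSsat}. Since $C = \Cfin$ here, we are in the case where the $S$-reference is dropped and ``$(C,S)$-saturated'' becomes ``$C$-saturated,'' i.e., the condition reads: if $X \in C_v$ for some $v \in E^0$ with $r(X) \subseteq H$, then $v \in H$. But the only nonempty $C_v$ are the singletons $\{s^{-1}(v)\}$ for $v$ not a sink and not an infinite emitter (since $C = \Cfin$ forces $s^{-1}(v)$ to be finite for this set to lie in $C$; when $v$ is an infinite emitter, $C_v = \{s^{-1}(v)\}$ but $s^{-1}(v) \notin \Cfin$, so there is no finite set to test). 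Thus $C$-saturation says exactly: for any $v \in E^0$ that is neither a sink nor an infinite emitter, $r(s^{-1}(v)) \subseteq H$ implies $v \in H$ --- which is precisely the classical definition of a saturated subset of $E^0$. Hereditariness is defined identically in both places.

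Putting these together: $M(E)$ is simple $\iff$ condition (3)(b) of Theorem \ref{simplicity} holds for our $(E,C,S)$ $\iff$ the only hereditary, $C$-saturated subsets of $E^0$ are $\emptyset$ and $E^0$ $\iff$ the only hereditary, saturated subsets of $E^0$ are $\emptyset$ and $E^0$. This gives the corollary. I do not anticipate any real obstacle here; the only point requiring a moment's care is verifying that ``$C$-saturated'' genuinely coincides with ``saturated'' at infinite emitters, where the classical definition already excludes such vertices from the saturation condition and the $C$-saturation condition imposes nothing either (because the relevant set $s^{-1}(v)$ is infinite, hence not a member of any $S \subseteq \Cfin$, and in fact the condition in Definition \ref{defCSsat} only quantifies over $X \in S \cap C_v$).
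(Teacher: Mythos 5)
Your proposal is correct and is essentially the paper's own proof: both reduce the statement to Theorem \ref{simplicity} by observing that $S=\Cfin$ holds automatically and that $(C,S)$-saturation coincides with classical saturation for these choices of $C$ and $S$. One small imprecision: you write ``since $C=\Cfin$ here,'' which fails when $E$ has infinite emitters (for such $v$ the set $s^{-1}(v)\in C_v$ is infinite); but this does not affect the argument, since the relevant hypothesis is $S=\Cfin$, and your subsequent discussion of infinite emitters -- where $S\cap C_v=\emptyset$ so no condition is imposed, matching the classical definition -- handles this correctly.
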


\begin{proof} We have $M(E)= M(E,C,S)$ where $S=\Cfin$ and $C$ is the union of the singleton collections $\{s^{-1}(v)\}$ for non-sinks $v\in E^0$. With these choices of $C$ and $S$, a subset of $E^0$ is $(C,S)$-saturated if and only if it is saturated. Therefore the corollary follows immediately from Theorem \ref{simplicity}.
\end{proof}

\begin{remark}
\label{rem:Csimple}
Note that if $C=\Cfin=S$, then every $(C,S)$-saturated subset of $E^0$ is saturated. Consequently, if $M(E)$ is simple, then so is $M(E,C,S)$. Namely, simplicity of $M(E)$ implies that $\emptyset$ and $E^0$ are the only hereditary, saturated subsets of $E^0$ (Corollary \ref{simpleME}), and so these are the only hereditary, $(C,S)$-saturated subsets of $E^0$.

Conversely, though, it is very easy to
produce examples $(E,C,S)$ with $C=\Cfin=S$ such that
$M(E,C,S)$ is simple but $M(E)$ is not simple. For example, consider
the following graph $E$:
$$\def\labelstyle{\displaystyle} \xymatrixcolsep{8ex}
\xymatrix{ v \dloopd{}_{e} \ar[r]^{f} &w
}$$
\smallskip

\noindent Let $C:= C_v :=\{\{e\}, \{f\}\}$, and take $S=C$.
Since $\{f\} \in S$, any $(C,S)$-saturated subset of $E^0$ which contains $w$ must also contain $v$. Thus, $\ACS= \{(\emptyset,\emptyset),(E^0,\emptyset)\}$, and so $M(E,C,S)$ is simple. This can be verified directly: $M(E,C,S)$ is generated by $v$ and $w$ with the relations $v=v=w$, and so $M(E,C,S) \cong \Z^+$. On the other hand, $\{w\}$ is a hereditary, saturated subset of $E^0$, whence $M(E)$ is not simple. In fact, $M(E)$ is generated by $v$ and $w$ with the sole relation $v=v+w$, and so $w$ generates a proper nonzero ideal of
$M(E)$.
\end{remark}

Corollary \ref{simpleME} is well known when $E$ is row-finite, in which case it is a consequence of \cite[Theorem 5.3]{AMP}.
It follows that $M(E)$ is simple if and only if $E$ is
cofinal, meaning that every vertex connects to any infinite path and
to any sink \cite[Lemma 2.8]{APS}. We develop here a suitable version of this criterion
for $M(E,C,S)$, provided Assumption $(*)$ of Definitions \ref{defstar}, \ref{defstarstar} holds.

We will only need to consider infinite \emph{forward} paths, that is, paths $(e_1,e_2,\dots)$
with $r(e_i)= s(e_{i+1})$ for all $i\in\N$. For any finite or infinite path $\gamma$ and any
integer $n\ge0$, let us write $\gamma[n]$ to denote the \emph{$n$-truncation} of $\gamma$,
that is, the initial subpath consisting of the first $n$ edges of $\gamma$.
(This is defined only if $\gamma$ has length at least $n$.)

It is convenient to introduce a further definition. A vertex $v$ in
$E$ is said to be a \emph{$\Cfin$-sink} in case $\Cvfin=\emptyset$.

\begin{definition}  \label{multipath} Let $(E,C)$ be a separated
graph and let $v$ be a vertex in $E$. An \emph{infinite
$C$-multipath in $E$ starting at $v$} is a nonempty collection
$\Gamma$ of paths in $E$ satisfying the following two conditions:
\begin{itemize}
\item[(a)] Every path in $\Gamma$ starts at $v$ and either is an infinite path or ends in a $\Cfin$-sink.
\item[(b)] For any integer $n\ge0$ and any $\gamma \in \Gamma $ of length at least $n$,
either $r(\gamma[n])$ is a $\Cfin$-sink or for every $X$ in
$C_{r(\gamma[n]),\text{fin}}$ there exist $f\in X$ and $\gamma'\in
\Gamma$ such that $\gamma'[n+1]=\gamma[n]f$.
\end{itemize}
Similarly, for $m\in \Z^+$, a \emph{$C$-multipath in $E$ of length $m$ starting at $v$} is a nonempty
collection $\Gamma$ of finite paths in $E$ satisfying
\begin{itemize}
\item[(a$'$)] Every path in $\Gamma$ starts at $v$ and either has length exactly $m$ or ends in a $\Cfin$-sink.
\item[(b$'$)] Condition (b), for all $n<m$.
\end{itemize}
In either case, we denote by $\Gamma^0$ (respectively, $\Gamma^1$) the set of all vertices
(respectively, edges) occurring in the paths in $\Gamma$.

Let us say that $E$ is \emph{$C$-cofinal} if, given any vertex $w\in
E^0$ and any infinite $C$-multipath $\Gamma$ in $E$, there is a path
from $w$ to some vertex in $\Gamma^0$. In particular, this condition
implies that there are paths in $E$ from any vertex to any $\Cfin$-sink.
\end{definition}

In the absence of condition $(*)$ (or some similar property), it is easily possible for
$M(E,C)$ to be simple without $E$ being $C$-cofinal. For example, let $E$ be the graph
$$\xymatrixrowsep{0.5pc}\xymatrixcolsep{6pc}\def\labelstyle{\displaystyle}
\xymatrix{
x &v \ar[l]_{e} \ar[r]^{f} &y
}$$
and take $C=S=C_v := \{\{e\},\{f\}\}$. Then $M(E,C)\cong \Z^+$,
a simple monoid. On the other hand, there is no path in $E$ from the vertex $x$ to the $\Cfin$-sink $y$,
so $E$ is not $C$-cofinal.

\begin{lemma}
\label{Csaturatiser} Assume condition $(*)$ for $(E,C)$ holds. Let
$H$ be a hereditary subset of $E^0$, and define $\Hbar=
\bigcup_{n=0}^\infty H_n$ where $H_0 := H$ and
$$H_n :=H_{n-1}\cup \{ v\in E^ 0\mid \text{there exists\ } X\in \Cvfin \text{\ such that\ } r(X)\subseteq H_{n-1} \}$$
for $n>0$. Then $\Hbar$
is hereditary and
$C$-saturated.
\end{lemma}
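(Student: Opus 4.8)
The plan is to prove the two assertions — that $\Hbar$ is hereditary and that $\Hbar$ is $C$-saturated — by separate arguments, the first being a routine induction and the second being where Assumption $(*)$ actually gets used. First I would show each $H_n$ is hereditary by induction on $n$: $H_0=H$ is hereditary by hypothesis, and if $H_{n-1}$ is hereditary, then for $v\in H_n\setminus H_{n-1}$ there is $X\in\Cvfin$ with $r(X)\subseteq H_{n-1}$; for any edge $g$ with $s(g)=v$, either $g\in X$, in which case $r(g)\in H_{n-1}\subseteq H_n$, or $g$ lies in a different member of $C_v$; this is the point that needs care, so I would invoke $(*)$ here as well (see next paragraph) to route $r(g)$ down into $H_{n-1}$. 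Actually, cleaner: a hereditary set is closed under all successors, so I should instead show directly that if $v\in H_n$ and $v\ge w$, then $w\in H_n$. A one-step successor $w=r(g)$ with $s(g)=v$ and $v\in H_n\setminus H_{n-1}$: pick $X\in\Cvfin$ with $r(X)\subseteq H_{n-1}$, and let $Y\in C_v$ be the set containing $g$. If $Y=X$ we are done. If $Y\ne X$, apply $(*)$ to obtain $\gamma\in F$ with $\bfr(X)\rightsquigarrow_1\gamma$ and $\bfr(Y)\rightsquigarrow_1\gamma$; since $\bfr(X)\rightarrow\gamma$ with all of $\supp(\bfr(X))=r(X)\subseteq H_{n-1}$, an inductive application of Lemma \ref{division} shows $\supp(\gamma)\subseteq H_{n-1}$ (each $\rightarrow_1$ step replaces a vertex in $H_{n-1}$ by $\bfr(Z)$ for some $Z\in C$ at that vertex, whose range lies in $H_{n-1}$ by heredity). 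Then $\bfr(Y)\rightarrow\gamma$ with $\supp(\gamma)\subseteq H_{n-1}$, and since $r(g)\in\supp(\bfr(Y))$, the first $\rightarrow_1$ step from $\bfr(Y)$ may or may not touch $r(g)$; so instead I track $r(g)$ itself: $r(g)\le\bfr(Y)\rightarrow\gamma$, and by Lemma \ref{division} applied to the decomposition $\bfr(Y)=r(g)+(\bfr(Y)-r(g))$, we get $r(g)\rightarrow\delta$ with $\delta$ a summand of $\gamma$, so $\supp(\delta)\subseteq\supp(\gamma)\subseteq H_{n-1}$. Finally $r(g)\sim\delta$ (via $\rightarrow$) forces $r(g)\in H_n$ by the definition of $H_n$ — wait, that requires $r(g)$ to be reachable by one layer, so I need: if $r(g)\rightarrow\delta$ in $t$ steps with $\supp(\delta)\subseteq H_{n-1}$, then $r(g)\in H_{n-1+t}$, hence in $\Hbar$. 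So the heredity argument, done carefully, already shows closure into $\Hbar$ rather than into a single $H_n$, which is fine.

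For $C$-saturation of $\Hbar$: suppose $v\in E^0$ is not a sink or infinite emitter and $X\in\Cvfin$ with $r(X)\subseteq\Hbar$. Since $X$ is finite, $r(X)$ is a finite set, so $r(X)\subseteq H_n$ for some single $n$; then by the very definition of $H_{n+1}$, $v\in H_{n+1}\subseteq\Hbar$. That is the whole saturation argument — it is immediate once one notices finiteness of $X$ lets us pick a uniform $n$. (Recall saturation in Definition \ref{defCSsat} only concerns non-sinks, non-infinite-emitters, so membership in $\Cvfin$ of a finite $X$ covering all of $s^{-1}(v)$ is exactly what the saturation condition references.)

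The main obstacle is the heredity step, specifically handling an edge $g$ from $v$ that lies in a member $Y\in C_v$ different from the set $X$ that witnessed $v\in H_n$; this is precisely why $(*)$ appears in the hypothesis, since without it $M(E,C)$ need not be a refinement monoid and the saturation-closure can fail (cf. the example preceding this lemma in the text, and the discussion after Corollary \ref{simpleME}). I expect the cleanest packaging is via Lemma \ref{simfromconfluence}/\ref{confluence}: from $(*)$ one gets $\bfr(X)\sim\bfr(Y)$ in $M(E,C)$, and combined with $r(X)\subseteq H_{n-1}$ and the descent property of $\rightarrow$ recorded through Lemma \ref{division}, one pushes $r(Y)$ — hence $r(g)$ — into $\bigcup_m H_m=\Hbar$. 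Everything else is bookkeeping with the layers $H_n$ and the monotone construction.
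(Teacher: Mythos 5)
Your strategy for the heredity half is essentially the paper's: induct on $n$, invoke $(*)$ to produce a common element $\gamma$ with $\bfr(X)\rightsquigarrow_1\gamma$, use heredity of $H_{n-1}$ to push the $E^0$-support of $\gamma$ into $H_{n-1}$, and then trace $r(g)$ through the other $\rightsquigarrow_1$ step; your treatment of saturation is also exactly the paper's one-line observation. But there are two genuine problems in the heredity argument as written. First, you argue as if $C=\Cfin$. Assumption $(*)$ in the general case (Definition \ref{defstarstar}) applies to pairs from $\calZ_v$, not from $C_v$: if the member $Y\in C_v$ containing $g$ is infinite, then $\bfr(Y)$ is not an element of $F$ and $(*)$ says nothing about $Y$; you must instead compare $\bfr(X)$ with $\bfrho(Z)=\bfr(Z)+q'_Z$ for some finite $Z\in\calZ_v$ containing $g$. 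More importantly, in the resulting case analysis the tracked copy of $r(g)$ may be replaced by $\bfrho(Z')$ for some $Z'\in\calZ_{r(g)}\setminus\Cfin$. Then $q'_{Z'}$ occurs in $\delta$, so your assertion $\mathrm{supp}(\delta)\subseteq H_{n-1}$ is false (only the $E^0$-part of the support lies there), and even knowing $r(Z')\subseteq H_{n-1}$ does not put $r(g)$ into any $H_m$, because the definition of $H_m$ requires a \emph{finite member of} $C_{r(g)}$ with range in $H_{m-1}$. The paper closes this case by a separate argument your proposal has no substitute for: since $\gamma$ is also obtained from $\bfr(X)$ by a single $\rightsquigarrow_1$ step, any generator $q'_{Z'}$ with $Z'\in\calZ_{r(g)}$ occurring in $\gamma$ must have been created by replacing a vertex of $\bfr(X)$ equal to $r(g)$, whence $r(g)\in r(X)\subseteq H_{n-1}$.

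Second, your induction does not close as structured. The step "$\mathrm{supp}(\gamma)\subseteq H_{n-1}$" explicitly uses that $H_{n-1}$ is hereditary, yet your conclusion is only that $r(g)\in H_{n-1+t}\subseteq\Hbar$ for some $t$ you do not control, which never establishes the hypothesis "$H_n$ is hereditary" needed at the next stage; declaring this "fine" is where the logic breaks. The repair is to observe that in the single step $\bfrho(Z)\rightsquigarrow_1\gamma$ the tracked generator $r(g)$ is replaced at most once, so effectively $t\le 1$ and $r(g)$ lands in $H_n$ itself. This is exactly what the paper's direct three-case analysis of how $r(g)$ appears in $\gamma$ accomplishes, without routing through Lemma \ref{division} (whose nonuniqueness of decompositions is what lets your $t$ drift above $1$).
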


\begin{proof} It is clear from the construction of $\Hbar$ that this set is $C$-saturated.

In order to make use of condition $(*)$, we require the free abelian
monoid $F$ of Section \ref{sect:refinement}, where we take $S= \Cfin$. For $\alpha \in F$, we
shall denote by $\text{supp}_{E^0}(\alpha)$ the set $E^0\cap
\text{supp}(\alpha)$. The definition of the sets $H_n$ can be
rewritten in the form
$$H_n=H_{n-1}\cup \{ v\in E^ 0\mid \text{supp}_{E^0}(\bfr(X))\subseteq H_{n-1} \text{ for some } X\in \Cvfin \}.$$

We show by induction that each $H_n$ is hereditary. If $n=0$, this
is our hypothesis. Now assume that $H_{n-1}$ is hereditary, for some
$n\ge 1$. To see that $H_n$ is hereditary, it suffices to show that
$r(e)\in H_n$ for every $v\in H_n\setminus H_{n-1}$ and $e\in
s^{-1}(v)$. We choose $Z\in \calZ_v$ such that $e\in Z$. On the
other hand, by definition of $H_n$ there is some $X\in \Cvfin$ such
that $r(X)\subseteq H_{n-1}$.

By $(*)$, there is some $\alpha \in F$ such that $\bfr(X)
\rightsquigarrow_1 \alpha $ and $\bfrho (Z) \rightsquigarrow_1
\alpha$. Since $H_{n-1}$ is hereditary, we see that
$\text{supp}_{E^0} (\alpha)\subseteq H_{n-1}$. Now there are various
possibilities for $r(e)$. If $r(e)\in \text{supp}_{E^0}(\alpha)$ then
$r(e)\in H_{n-1}\subseteq H_n$. If $\bfr (Y')\le \alpha$ for some
$Y'\in C_{r(e),\text{fin}}$, then $r(e)\in H_n$ too. Finally, if
$\bfrho(Z') \le \alpha$ for some $Z'\in \calZ_{r(e)} \setminus \Cfin$, then $q'_{Z'}\le \alpha$. Since $\bfr(X)
\rightsquigarrow_1 \alpha $, it follows that $r(e)\le \bfr (X)$ and so
$r(e)\in H_{n-1}\subseteq H_n$. In any case we get $r(e)\in H_n$, as
desired.
\end{proof}

\begin{theorem}
\label{Ccofinal} Let $(E,C)$ be a separated graph and $S$ a subset of $\Cfin$.
Assume condition $(*)$ for $(E,C)$ holds. Then
$M(E,C,S)$ is simple if and only if the following conditions hold:
\begin{itemize}
\item[(a)] $S= \Cfin$.
\item[(b)] $E$ is $C$-cofinal.
\end{itemize}
\end{theorem}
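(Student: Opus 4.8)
The plan is to reduce everything to Theorem~\ref{simplicity}. Since $M(E,C,S)$ is always nonzero and conical (Lemma~\ref{nonzeroconical}), that theorem says that $M(E,C,S)$ is simple if and only if $S=\Cfin$ and the only hereditary, $C$-saturated subsets of $E^0$ are $\emptyset$ and $E^0$. Condition (a) is literally the requirement $S=\Cfin$, so, granting the standing Assumption $(*)$, what remains is to prove: \emph{when $S=\Cfin$, the only hereditary, $C$-saturated subsets of $E^0$ are $\emptyset$ and $E^0$ if and only if $E$ is $C$-cofinal.} So I assume $S=\Cfin$ from now on.

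For the ``only if'' half, suppose $\emptyset$ and $E^0$ are the only hereditary, $C$-saturated subsets of $E^0$, and let $w\in E^0$ and an infinite $C$-multipath $\Gamma$ in $E$ be given. Set $H:=\{u\in E^0\mid u\not\ge v\text{ for every }v\in\Gamma^0\}$. This $H$ is hereditary (if $u\in H$ and $u\ge u'$ while $u'\ge v$ for some $v\in\Gamma^0$, then $u\ge v$, a contradiction), and $\Gamma^0\cap H=\emptyset$ because each $v\in\Gamma^0$ satisfies $v\ge v$. Form $\Hbar=\bigcup_{n\ge 0}H_n$ with $H_0:=H$ exactly as in Lemma~\ref{Csaturatiser}; by that lemma $\Hbar$ is hereditary and $C$-saturated, and this invocation is the sole place where $(*)$ enters. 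I would then prove by induction on $n$ that $\Gamma^0\cap H_n=\emptyset$: if $v\in\Gamma^0\cap(H_n\setminus H_{n-1})$, then by definition of $H_n$ there is $X\in\Cvfin$ with $r(X)\subseteq H_{n-1}$, and since $X\ne\emptyset$ the vertex $v$ is not a $\Cfin$-sink; writing $v=r(\gamma[k])$ for some $\gamma\in\Gamma$ and $k\ge 0$, condition (b) of Definition~\ref{multipath} applied to this $X$ yields $f\in X$ and $\gamma'\in\Gamma$ with $\gamma'[k+1]=\gamma[k]f$, so $r(f)\in\Gamma^0\cap r(X)\subseteq\Gamma^0\cap H_{n-1}$, contradicting the inductive hypothesis. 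Hence $\Gamma^0\cap\Hbar=\emptyset$; as $\Gamma^0\ne\emptyset$ (every path in the nonempty set $\Gamma$ contains its source), $\Hbar\ne E^0$, so $\Hbar=\emptyset$ and therefore $H=\emptyset$. In particular $w\ge v$ for some $v\in\Gamma^0$, which is exactly what $C$-cofinality demands.

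For the ``if'' half, assume $E$ is $C$-cofinal and let $H$ be a hereditary, $C$-saturated subset with $H\ne E^0$; I claim $H=\emptyset$. Choose $w_0\in E^0\setminus H$ and let $\Gamma$ be the collection of all paths in $E$ that start at $w_0$, have all their vertices outside $H$, and are either infinite or end at a $\Cfin$-sink. The key point is that $C$-saturation of $H$ is precisely the statement that a vertex $u\notin H$ which is not a $\Cfin$-sink has, for every $X\in C_{u,\text{fin}}$, some edge of $X$ with range outside $H$; using this, one checks that every finite path from $w_0$ whose vertices avoid $H$ extends to a member of $\Gamma$ (so $\Gamma\ne\emptyset$) and that $\Gamma$ satisfies (a) and (b) of Definition~\ref{multipath}. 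Thus $\Gamma$ is an infinite $C$-multipath with $\Gamma^0\subseteq E^0\setminus H$. If $H$ contained a vertex $w$, then $C$-cofinality would give a path from $w$ to some $v\in\Gamma^0\subseteq E^0\setminus H$, while $w\in H$ and heredity would force $v\in H$, a contradiction. Hence $H=\emptyset$, and Theorem~\ref{simplicity} yields that $M(E,C,S)$ is simple.

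The main obstacle is the bookkeeping in the ``only if'' direction: one has to run the stage-by-stage $C$-saturation from Lemma~\ref{Csaturatiser} and, at each newly adjoined vertex $v$, extract from the branching axiom (b) of a $C$-multipath a witness edge $f$ landing in the previous stage, so that $\Gamma^0$ stays disjoint from every $H_n$; one must also be careful that $(*)$ is invoked only through Lemma~\ref{Csaturatiser}, since the rest of the argument is purely combinatorial. The ``if'' direction is more routine, the only slightly delicate step being the verification that the family of ``maximal paths avoiding $H$'' is a genuine $C$-multipath, which is again exactly what the definition of $C$-saturation provides.
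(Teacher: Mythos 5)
Your proof is correct and follows essentially the same route as the paper: reduce via Theorem \ref{simplicity} to the equivalence between $C$-cofinality and the triviality of hereditary $C$-saturated subsets, use Lemma \ref{Csaturatiser} plus the branching axiom (b) of a $C$-multipath for one implication, and build a $C$-multipath avoiding $H$ from $C$-saturation for the other. The only (harmless) presentational differences are that you argue both implications directly rather than contrapositively, take $H$ to be all vertices not connecting to $\Gamma^0$ instead of the hereditary set generated by one witness, and define $\Gamma$ as the set of all maximal $H$-avoiding paths rather than as a limit of length-$m$ multipaths.
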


\begin{proof}
In view of Theorem \ref{simplicity}, we may assume that $S=\Cfin$, and it suffices to show that $E$ is $C$-cofinal if and only if
the only hereditary $C$-saturated subsets of $E^0$ are $\emptyset $
and $E^0$.

Assume first that there is a proper nonempty hereditary
$C$-saturated subset $H$ in $E^0$. Let $v\in E^0\setminus H$, and
let $\Gamma_0$ be the set consisting of the path of length zero at
$v$. Thus, $\Gamma_0$ is a $C$-multipath of length $0$ starting at
$v$. Now suppose that, for some $m\in\Z^+$, we have constructed a
$C$-multipath $\Gamma _m$ of length $m$ starting at $v$, such that
$\Gamma _m^0$ is disjoint from $H$. For each $\gamma \in \Gamma _m$
such that $r(\gamma )$ is not a $\Cfin$-sink, and each $X\in C_{r(\gamma
),\text{fin}}$, it follows from the $C$-saturation of $H$ that there
is some $f_{m+1}\in X$ such that $r(f_{m+1})\notin H$. So, we
enlarge $\gamma $ to a path $\gamma'= \gamma f_{m+1}$ of length
$m+1$. The set of paths obtained in this way, for all $\gamma \in
\Gamma _m$ and all $X\in C_{r(\gamma ),\text{fin}}$, together with
the set of paths in $\Gamma_m$ that end in $\Cfin$-sinks, forms a
$C$-multipath $\Gamma_{m+1}$ of length $m+1$ starting at $v$, such
that $\Gamma_{m+1}^0$ is disjoint from $H$. Thus, we can build
$C$-multipaths $\Gamma_m$ of length $m$ for every $m\in\Z^+$ in a
compatible way. We now define $\Gamma $ as the set of paths from
$\bigcup_{m\ge0} \Gamma_m$ ending in $\Cfin$-sinks, together with those
infinite paths $\gamma$ such that $\gamma[m]\in \Gamma _m$ for all
$m\in\Z^+$. Clearly, $\Gamma $ is an infinite $C$-multipath, and
$\Gamma^0$ is disjoint from $H$. Since $H$ is hereditary, this means
that there is no path from any vertex of $H$ to any vertex in
$\Gamma^0$. Since $H$ is nonempty, we conclude that $E$ is not
$C$-cofinal.

Conversely, assume that $\Gamma $ is an infinite $C$-multipath in
$E$ and that $w$ is a vertex of $E$ not connecting to $\Gamma^0$.
Let $H$ be the hereditary subset of $E^0$ generated by $w$, and form
$\Hbar$ as in Lemma \ref{Csaturatiser}. Clearly, $H_0=H$ is disjoint
from $\Gamma^0$. Now assume, for some $n\ge0$, that $H_n$ is
disjoint from $\Gamma^0$, and suppose that $v\in H_{n+1}\cap
\Gamma^0$. Then by definition, there exists $X\in \Cvfin$ such that
$r(X) \subseteq H_n$. On the other hand, there is a path
$\gamma \in \Gamma $ such that $v=r(\gamma[m])$ for some $m$. By the
definition of a $C$-multipath, it follows that there are a path
$\gamma '$ in $\Gamma $ and an edge $f\in X$ such that $\gamma
'[m+1] =\gamma[m] f$. In particular, $r(f)\in H_n\cap \Gamma^0$, a
contradiction. Thus, $\Hbar\cap \Gamma^0=\emptyset$. Since $\Hbar$
is hereditary and $C$-saturated, we conclude that there is a
nonempty, proper hereditary $C$-saturated subset in $E^0$.
\end{proof}

It is interesting to compare our situation with the one in \cite[Lemma 2.7]{APS}: If $E$ is row-finite, $M(E)$
is simple, and $E$ contains a sink, then $E^0$ contains a unique sink,
and there are no infinite paths in $E$. In our case, if $M(E,C)$ is simple, then there is at most one sink in $E$ if Assumption $(*)$ holds, but not otherwise
(see the example following Definition \ref{multipath}). Further, even if $(*)$ holds,
infinite paths may occur, as the example in Remark \ref{rem:Csimple} shows. Moreover, $E$ may contain arbitrarily many $\Cfin$-sinks. For example, let $E^0$ be an arbitrary nonempty set, choose $E^1$ to contain infinitely many edges from any vertex in $E^0$ to any other, and set $C_v= \{ s^{-1}(v) \}$ for $v\in E^0$. Then all vertices of $E$ are $\Cfin$-sinks, and $(*)$ holds vacuously. Since $E$ is clearly cofinal, $M(E,C)= M(E)$ is simple.

\section{Resolutions}
\label{sect:unitembeddings}

Our next aim is to develop a construction that allows us to embed graph monoids without refinement into ones with refinement. As Wehrung has proved in \cite[Proposition 1.5 and Theorem 1.8]{W}, every conical abelian monoid can be embedded in a conical refinement monoid. We obtain a more ``visual'' version of this result, in that all the monoids that appear are graph monoids. We restrict attention to finitely separated graphs since arbitrary conical abelian monoids can be obtained as graph monoids of finitely separated graphs (Proposition \ref{allconical}). Our construction process involves adjoining vertices and edges designed to satisfy Assumption $(*)$. We also want the resulting monoid embeddings to preserve properties such as failure of cancellation or separativity. To obtain this, we arrange for embeddings of the following type.

\begin{definition}{\rm Following \cite{W}, a monoid homomorphism $\psi:M\rightarrow F$ is \emph{unitary} provided
\begin{enumerate}
\item $\psi$ is injective;
\item $\psi(M)$ is \emph{cofinal} in $F$, that is, for each $u\in F$ there is some $v\in M$ with $u\le \psi(v)$;
\item whenever $u,u'\in M$ and $v\in F$ with $\psi(u)+v= \psi(u')$, we have $v\in \psi(M)$.
\end{enumerate}
}\end{definition}

\begin{lemma}
\label{embedding1} Let $F$ be the free abelian monoid generated by elements
$a_{ij}$ for $1\le i\le n$, $ 1\le j\le m$. Let $(\delta _{ij})$ be
an $n\times m$ matrix of positive integers. Consider the elements
$c_i=\sum _{j=1}^m \delta _{ij}a_{ij}$ for $i=1,\dots ,n$, and
$d_j=\sum _{i=1}^n \delta_{ij} a_{ij}$ for $j=1,\dots ,m$. Let $M$
be the monoid with generators $x_1,\dots ,x_n,y_1,\dots ,y_m$, subject to the
single relation
$$\sum _{i=1}^n x_i= \sum _{j=1}^m y_j.$$
Then the natural monoid homomorphism $\psi \colon M\to F$ sending
$x_i$ to $c_i$ and $y_j$ to $d_j$ is unitary.
\end{lemma}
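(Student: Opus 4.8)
The plan is to verify directly the three defining conditions of a unitary homomorphism, all driven by one elementary computation. Present $M$ as $N/{\equiv}$, where $N$ is the free abelian monoid on $x_1,\dots,x_n,y_1,\dots,y_m$ and $\equiv$ is the congruence generated by the single pair $\bigl(\sum_i x_i,\ \sum_j y_j\bigr)$; then $\psi$ is induced by the homomorphism $N\to F$ with $x_i\mapsto c_i$ and $y_j\mapsto d_j$, which is well defined since $\sum_i c_i=\sum_j d_j=\sum_{i,j}\delta_{ij}a_{ij}$. For any lift $\mu=\sum_i a_ix_i+\sum_j b_jy_j\in N$ of an element of $M$ one computes
\[
\psi(\mu)=\sum_i a_ic_i+\sum_j b_jd_j=\sum_{i,j}(a_i+b_j)\,\delta_{ij}\,a_{ij}.
\]
Because the $a_{ij}$ freely generate $F$ and every $\delta_{ij}\ge1$, this formula recovers the integers $a_i+b_j$ from $\psi(\mu)$, and everything below is a consequence of this.

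Condition (2) (cofinality) is immediate: $\psi\bigl(\sum_i x_i\bigr)=\sum_{i,j}\delta_{ij}a_{ij}$ has every $a_{ij}$ in its support with coefficient $\ge1$, so any $u\in F$ satisfies $u\le N\,\psi\bigl(\sum_i x_i\bigr)$ for $N$ large enough. For condition (1) (injectivity), suppose $\psi(\mu)=\psi(\mu')$ with lifts $\mu=\sum a_ix_i+\sum b_jy_j$ and $\mu'=\sum a_i'x_i+\sum b_j'y_j$. The displayed formula forces $a_i+b_j=a_i'+b_j'$ for all $i,j$, hence $a_i-a_i'=b_j'-b_j$ is a single integer $t$ independent of both $i$ and $j$. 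If $t\ge0$, then in $M$
\[
\mu=\sum_i a_i'x_i+t\sum_i x_i+\sum_j b_jy_j=\sum_i a_i'x_i+t\sum_j y_j+\sum_j b_jy_j=\sum_i a_i'x_i+\sum_j(b_j+t)y_j=\mu',
\]
using the defining relation to trade $t\sum_i x_i$ for $t\sum_j y_j$; the case $t<0$ is symmetric. Thus $\psi$ is injective.

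For condition (3), take $u,u'\in M$ and $v\in F$ with $\psi(u)+v=\psi(u')$, pick lifts as above, and write $v=\sum_{i,j}v_{ij}a_{ij}$ with $v_{ij}\ge0$. Comparing coefficients via the displayed formula gives $v_{ij}=s_{ij}\delta_{ij}$, where $s_{ij}:=(a_i'+b_j')-(a_i+b_j)\ge0$ for all $i,j$. Put $P_i:=a_i'-a_i$ and $Q_j:=b_j'-b_j$ in $\Z$, so $s_{ij}=P_i+Q_j\ge0$. Now renormalize: fix $i_0$ with $P_{i_0}=\min_i P_i=:c$, and set $p_i:=P_i-c\ge0$ and $q_j:=Q_j+c$; then $q_j=Q_j+P_{i_0}\ge0$ and $p_i+q_j=P_i+Q_j=s_{ij}$. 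Hence
\[
v=\sum_{i,j}(p_i+q_j)\,\delta_{ij}\,a_{ij}=\sum_i p_ic_i+\sum_j q_jd_j=\psi\Bigl(\sum_i p_ix_i+\sum_j q_jy_j\Bigr)\in\psi(M),
\]
as required. The only step that is not pure bookkeeping with $N$, $F$, and the single relation of $M$ is this last renormalization: writing a nonnegative matrix of the shape $s_{ij}=P_i+Q_j$ with the summands $P_i,Q_j$ themselves nonnegative. That is exactly where the hypothesis $s_{ij}\ge0$ is used, since it is equivalent to $\max_j(-Q_j)\le\min_i P_i$, which is precisely what is needed for the shift by $c$ to keep all $p_i$ and $q_j$ nonnegative. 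I expect this to be the main (and essentially the only) obstacle.
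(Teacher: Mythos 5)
Your proof is correct and follows essentially the same route as the paper's: both rest on the identity $\psi(\mu)=\sum_{i,j}(a_i+b_j)\delta_{ij}a_{ij}$, and your renormalization $p_i=P_i-\min_i P_i$, $q_j=Q_j+\min_i P_i$ in condition (3) is exactly the paper's choice of an integer $\lambda$ with $\lambda_i-\lambda'_i\le\lambda\le\mu'_j-\mu_j$. The only cosmetic difference is in injectivity, where you trade $t\sum_i x_i$ for $t\sum_j y_j$ directly rather than first normalizing representatives to have some $\mu_j=0$ as the paper does; both are sound.
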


\begin{proof} Since all $\delta_{ij}>0$, we have $a_{ij}\le c_i= \psi(x_i)$ for all $i$, $j$. Therefore $\psi(M)$ is cofinal in $F$.

Suppose that $u,u'\in M$ and $v\in F$ with $\psi(u)+v= \psi(u')$.
Write $u= \sum _{i=1}^n \lambda _i x_i + \sum_{j=1}^m \mu _j y_j$
and $u'= \sum _{i=1}^n \lambda' _i x_i + \sum_{j=1}^m \mu' _j y_j$
for some $\lambda_i,\mu_j,\lambda'_i,\mu'_j \in \Zplus$. Then
$\psi(u)= \sum_{i,j} (\lambda_i+\mu_j)\delta_{ij}a_{ij}$ and
$\psi(u')= \sum_{i,j} (\lambda'_i+\mu'_j)\delta_{ij}a_{ij}$. Hence,
$\lambda_i+\mu_j\le \lambda'_i+\mu'_j$ for all $i$, $j$, and
$$v= \sum_{i,j} (\lambda'_i+\mu'_j-\lambda_i-\mu_j)\delta_{ij}a_{ij}.$$
Since $\lambda_i-\lambda'_i\le \mu'_j-\mu_j$ for all $i$, $j$, there
exists $\lambda\in\Z$ such that $\lambda_i-\lambda'_i\le \lambda\le
\mu'_j-\mu_j$ for all $i$, $j$. Then we can define
$$w= \sum_{i=1}^n (\lambda'_i+\lambda-\lambda_i)x_i+ \sum_{j=1}^m (\mu'_j-\mu_j-\lambda)y_j \in M,$$
and $\psi(w)=v$. This verifies the third unitarity condition, and it
only remains to show that $\psi$ is injective.

It is clear that each element $x$ in $M$ can be written in the form
$$x=\sum _{i=1}^n \lambda _i x_i + \sum_{j=1}^m \mu _j y_j$$
where at least one of the $\mu _j$'s is $0$. So it suffices to show
that the elements $\sum _{i=1}^n \lambda _i c_i + \sum_{j=1}^m \mu
_j d_j$ with at least one $\mu _j=0$ are all distinct in $F$.

Assume that
$$\sum _{i=1}^n \lambda _i c_i + \sum_{j=1}^m \mu _j d_j=\sum _{i=1}^n \lambda _i' c_i + \sum_{j=1}^m \mu _j'
d_j $$ in $F$, where there are $j_0$ and $j_1$ such that $\mu
_{j_0}=0=\mu_{j_1}'$. We have
$$\sum _{i=1}^n\sum _{j=1}^m (\lambda _i+\mu _j)\delta_{ij}a_{ij}=\sum _{i=1}^n\sum _{j=1}^m (\lambda _i'+\mu_j')\delta _{ij}a_{ij} \,,$$
so that $\lambda _i+\mu _j=\lambda _i'+\mu
_j'$ for all $i$, $j$. Assume first that $j_0=j_1$. Then we get
$\lambda _i=\lambda _i '$ for all $i$, and then $\mu _j=\mu_j'$ for
all $j$, as desired. If $j_0\ne j_1$, then we get
$$\lambda _i= \lambda'_i+\mu'_{j_0}= \lambda _i+\mu _{j_0}'+\mu_{j_1},$$
which implies that $\mu_{j_0}'=\mu_{j_1}=0$, and we reduce to the
above case.
\end{proof}

\begin{corollary}
\label{embeddingcor} Let $F$ be the free abelian monoid generated by
elements $a_{ij}^\gamma$ for $1\le i\le n_{\gamma}$, $1\le j\le
m_{\gamma }$, $\gamma \in \Gamma$, and let $(\delta _{ij}^\gamma)$,
for $\gamma\in\Gamma$, be corresponding matrices of positive
integers. Consider the elements $c_i^\gamma=\sum _{j=1}^{m_{\gamma}}
\delta _{ij}^{\gamma}a_{ij}^{\gamma}$ for $i=1,\dots ,n_{\gamma}$
and $d_j^{\gamma }=\sum _{i=1}^{n_{\gamma}}
\delta_{ij}^{\gamma}a_{ij}^\gamma$ for $j=1,\dots ,m_{\gamma}$. Let
$M$ be the monoid given by generators $x_i^\gamma$ and $y_j^\gamma
$ for all $i$, $j$, $\gamma$, subject to the relations
$$\sum _{i=1}^{n_{\gamma}} x_i^\gamma = \sum _{j=1}^{m_{\gamma}} y_j^\gamma, \qquad (\gamma\in\Gamma),$$
and let $\psi:M \rightarrow F$ be the natural homomorphism sending
$x_i^\gamma \mapsto c_i^\gamma$ and $y_j^\gamma \mapsto d_j^\gamma$
for all $i$, $j$, $\gamma$. Then $\psi$ is unitary.
\end{corollary}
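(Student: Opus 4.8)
The plan is to reduce Corollary \ref{embeddingcor} to Lemma \ref{embedding1} by a straightforward coproduct/direct-limit argument. The monoid $M$ in the corollary is the coproduct, in the category of abelian monoids, of the monoids $M_\gamma$ obtained by applying Lemma \ref{embedding1} to each individual matrix $(\delta^\gamma_{ij})$; likewise the free abelian monoid $F$ is the coproduct of the free abelian monoids $F_\gamma$ on the generators $a^\gamma_{ij}$; and $\psi$ is the coproduct of the unitary maps $\psi_\gamma: M_\gamma \to F_\gamma$ furnished by Lemma \ref{embedding1}. So what must be checked is that a coproduct (equivalently, a direct limit of finite coproducts) of unitary homomorphisms is unitary. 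Since $\psi$ is plainly a coproduct of injections into free monoids it is itself injective, and since each $\psi_\gamma(M_\gamma)$ is cofinal in $F_\gamma$, the image $\psi(M)$ is cofinal in $F$: any $u\in F$ involves only finitely many generators $a^\gamma_{ij}$, hence lies below $\sum \psi_\gamma(v_\gamma)$ for suitable $v_\gamma\in M_\gamma$.

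The one point requiring actual verification is the third unitarity condition. Suppose $u,u'\in M$ and $v\in F$ with $\psi(u)+v=\psi(u')$. Writing everything in terms of its $\gamma$-components, one has $u=\sum_{\gamma\in\Gamma_0} u_\gamma$, $u'=\sum_{\gamma\in\Gamma_0} u'_\gamma$ for some finite $\Gamma_0\subseteq\Gamma$, with $u_\gamma,u'_\gamma\in M_\gamma$, and $v=\sum_{\gamma} v_\gamma$ with $v_\gamma\in F_\gamma$. Because $F$ is the free abelian monoid on the disjoint union of the generating sets, the equation $\psi(u)+v=\psi(u')$ splits into independent equations $\psi_\gamma(u_\gamma)+v_\gamma=\psi_\gamma(u'_\gamma)$ in each $F_\gamma$ (with the convention $u_\gamma=u'_\gamma=0$ for $\gamma\notin\Gamma_0$; note $0$ is in the image of $\psi_\gamma$). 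Here I should be slightly careful that an element of $M_\gamma$ whose image involves only finitely many generators — which is automatic — and that $\psi_\gamma$ applied to a general element of $M_\gamma$ still only involves the generators $a^\gamma_{ij}$, so there is no cross-contamination between the factors. Applying the third unitarity property of $\psi_\gamma$ to each of these component equations, we obtain $w_\gamma\in M_\gamma$ with $\psi_\gamma(w_\gamma)=v_\gamma$ for every $\gamma$; only finitely many $v_\gamma$ are nonzero, so only finitely many $w_\gamma$ are nonzero, and $w:=\sum_\gamma w_\gamma$ is a well-defined element of $M$ with $\psi(w)=v$. Hence $v\in\psi(M)$, as required.

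The main obstacle, such as it is, is purely bookkeeping: making sure the coproduct description of $M$ and $F$ is legitimate and that the displayed relations in the corollary really do present the coproduct of the $M_\gamma$, so that an element of $M$ decomposes uniquely into $\gamma$-components up to the relations. This is routine — $M$ is presented by generators indexed by the disjoint union of the $\gamma$-indexed generating sets and by relations each involving only one value of $\gamma$, which is exactly a presentation of the coproduct $\coprod_\gamma M_\gamma$ in $\Mon$. Once that identification is in hand, the argument above is immediate. (Alternatively, one can avoid invoking coproducts explicitly and simply repeat the proof of Lemma \ref{embedding1} verbatim, since the computations there are carried out coordinate-wise in a free abelian monoid and go through unchanged when the single relation is replaced by the family of relations indexed by $\gamma$; but the reduction to Lemma \ref{embedding1} is cleaner.)
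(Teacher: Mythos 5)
Your proof is correct and takes essentially the same route as the paper, which also decomposes $M$, $F$, and $\psi$ as direct sums $\bigoplus_{\gamma\in\Gamma} M_\gamma$, $\bigoplus_{\gamma\in\Gamma} F_\gamma$, $\bigoplus_{\gamma\in\Gamma} \psi_\gamma$ and invokes Lemma \ref{embedding1} on each component. The only difference is that you spell out the (routine) verification that a direct sum of unitary homomorphisms is unitary, which the paper leaves implicit.
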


\begin{proof}
Observe that $F= \bigoplus_{\gamma\in\Gamma} F_\gamma$ where
$F_\gamma$ is the free abelian monoid generated by the $a_{ij}^\gamma$ for
$1\le i\le n_{\gamma}$ and $1\le j\le m_{\gamma }$, and $M=
\bigoplus_{\gamma\in\Gamma} M_\gamma$ where $M_\gamma$ is the monoid
given by generators
$x^\gamma_1,\dots,x^\gamma_{n_\gamma},y^\gamma_1,\dots,y^\gamma_{m_\gamma}$
and the relation $\sum _{i=1}^{n_{\gamma}} x_i^\gamma = \sum
_{j=1}^{m_{\gamma}} y_j^\gamma$. Further, $\psi=
\bigoplus_{\gamma\in\Gamma} \psi_\gamma$ where $\psi_\gamma:
M_\gamma \rightarrow F_\gamma$ is the natural homomorphism sending
$x_i^\gamma \mapsto c_i^\gamma$ and $y_j^\gamma \mapsto d_j^\gamma$.
By Lemma \ref{embedding1}, each $\psi_\gamma$ is unitary, and
therefore $\psi$ is unitary.
\end{proof}

Recall from Definitions \ref{defSSGr} and \ref{defSGr} the categories $\SSGr$ and $\SGr$, as well as the
definition of an $SG$-subgraph of a separated graph $(E,C)$. We shall need a
category of finitely separated graphs, defined as follows:

\begin{definition} \label{defFSGr}{\rm
Define a category $\FSGr$ whose objects are all finitely separated graphs $(E,C)$. A morphism
$\phi: (E,C) \rightarrow (F,D)$ in $\FSGr$ is any graph morphism
$\phi: E\rightarrow F$ such that
\begin{enumerate}
\item $\phi^0$ is injective.
\item For each $v\in E^0$ and each $X\in C_v$, there is some $Y\in D_{\phi^0(v)}$
such that $\phi^1$ induces a bijection $X\rightarrow Y$.
\end{enumerate}}
\end{definition}

Observe that $\FSGr$ is a full subcategory of $\SGr$. Recall that $\SGr$ admits arbitrary direct limits (Proposition \ref{colim}), and observe that $\FSGr$ is closed (in $\SGr$) under direct limits. Moreover, every object $(E,C)$ in $\FSGr$ is the
direct limit of the directed system of its finite complete
subobjects, as one can see from the proof of Proposition
\ref{dirlimcomplete}. It is worth to mention that this latter
property does {\it not} hold in the category $\SGr$.

\begin{construction} \label{resolution2}{\rm
Let $(E,C)$ be an object in $\FSGr$, and let $T= \{(w_k,X_k,Y_k)\mid
k\in I\}$ be a collection of distinct ordered triples such that for each
$k\in I$, $w_k$ is a vertex of $E$ and $X_k$, $Y_k$ are distinct
members of $C_{w_k}$. Let $\delta =\{\delta ^k\mid k\in I\}$, where
$\delta ^k $ is an $X_k\times Y_k$ matrix of positive integers for
all $k$. We construct a \emph{$\delta$-$T$-resolution for $(E,C)$} as
follows. It is a  finitely separated graph $(E_T,C^T)$, containing $(E,C)$ as
an SG-subgraph, with the following data:
\begin{enumerate}
\item $E_T^0 := E^0\sqcup \{v^k_{e,f} \mid k\in I,\; e\in X_k,\; f\in Y_k\}$.
\item $E_T^1 := E^1\sqcup \{ g^k_{e,f,j},\, h^k_{e,f,j} \mid k\in I,\; e\in X_k,\; f\in Y_k,\; 1\le j\le \delta^k_{e,f} \}$.
\item $s(g^k_{e,f,j})= r(e)$, $s(h^k_{e,f,j})= r(f)$ and $r(g^k_{e,f,j})= r(h^k_{e,f,j})= v^k_{e,f}$
for all $k\in I$, $e\in X_k$, $f\in Y_k$, $1\le j\le \delta^k_{e,f}$.
\item $X^k_e := \{g^k_{e,f,j} \mid f\in Y_k, \; 1\le j\le \delta^k_{e,f}\}$ for $k\in I$, $e\in X_k$, and\\
$Y^k_f := \{h^k_{e,f,j} \mid e\in X_k, \; 1\le j\le \delta^k_{e,f}\}$ for $k\in I$, $f\in Y_k$.
\item $C^T_u := C_u \sqcup \{X^k_e\mid k\in I,\; e\in X_k,\; r(e)=u\}
\sqcup \{Y^k_f \mid k\in I,\; f\in Y_k,\; r(f)=u \}$ for all $u\in
E^0$.
\item $C^T_u := \emptyset$ for all $u\in E_T^0\setminus E^0$.
\end{enumerate}

Observe that in the free abelian monoid on $E_T^0$, we have $r(e)
\rightarrow_1 \bfr(X^k_e)= \sum_{f\in Y_k} \delta^k_{e,f}v^k_{e,f}$
for all $k\in I$, $e\in X_k$, and so
$$\bfr(X_k)
\rightsquigarrow_1 \sum_{e\in X_k,\,f\in Y_k} \delta^k_{e,f} v^k_{e,f}$$
for all $k$. Similarly, $\bfr(Y_k) \rightsquigarrow_1
\sum_{e\in X_k,\,f\in Y_k} \delta^k_{e,f}v^k_{e,f}$ for all $k$, and thus $(*)$
holds in $(E_T,C^T)$ for $w_k$, $X_k$, $Y_k$. }\end{construction}

\begin{lemma} \label{unitary2}
Let $(E,C)$ be a separated graph, and $T$, $\delta$ as in Construction {\rm\ref{resolution2}}. Let
$(E_T,C^T)$ be a $\delta$-$T$-resolution for $(E,C)$, and $\iota:
(E,C) \rightarrow (E_T,C^T)$ the inclusion morphism. Then $M(\iota):
M(E,C)\rightarrow M(E_T,C^T)$ is unitary.
\end{lemma}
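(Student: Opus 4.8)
The plan is to reduce the unitarity of $M(\iota)$ to Corollary \ref{embeddingcor} via a careful analysis of the monoid $M(E_T,C^T)$. The key observation is that the new vertices $v^k_{e,f}$ added in Construction \ref{resolution2} are all sinks in $E_T$, so they remain free generators of $M(E_T,C^T)$, while the new edges contribute precisely the relations $r(e)= \bfr(X^k_e)= \sum_{f\in Y_k} \delta^k_{e,f}v^k_{e,f}$ for $e\in X_k$ and $r(f)= \bfr(Y^k_f)= \sum_{e\in X_k}\delta^k_{e,f}v^k_{e,f}$ for $f\in Y_k$, these being of the types (4) in Definition \ref{defgraphmon} since $(E_T,C^T)$ is finitely separated. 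I would first record a presentation of $M(E_T,C^T)$: generators $E_T^0$, and relations $v=\bfr_{E_T}(X)$ for $v\in E_T^0$, $X\in C^T_v$. The relations split into the ``old'' ones coming from $C_v$ for $v\in E^0$ (these involve only old vertices, since $r_{E_T}=r_E$ on $E^1$) and the ``new'' ones listed above.

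The main step is to pass from this presentation to one visibly of the form handled by Corollary \ref{embeddingcor}. For each old vertex $u= r(e)= r(f)$ that is the range of some edge in an $X_k$ or $Y_k$, the new relations express $u$ as $\bfr(X^k_e)$ (resp.\ $\bfr(Y^k_f)$) in the free monoid on the $v^k_{e,f}$. I would use these to eliminate such vertices $u$, rewriting every old relation in terms of the $v^k_{e,f}$. Concretely, let $N$ denote the free abelian monoid on $E_T^0$, and let $P$ be the free abelian monoid on the generators $v^k_{e,f}$ together with those $u\in E^0$ that are \emph{not} ranges of any edge appearing in a triple of $T$. The new relations determine a retraction $N \to P$ (sending each eliminated $u$ to the appropriate $\bfr(X^k_e)$; one must check consistency, i.e.\ that if $u= r(e)=r(f)$ for $e\in X_k$, $f\in Y_{k'}$ then $\bfr(X^k_e)$ and $\bfr(Y^{k'}_f)$ agree modulo the relations — but by (4)--(6) of the construction the two sets $X^k_e$ and $Y^{k'}_f$ are simply different blocks of $C^T_u$, so both equal $u$, giving a relation $\bfr(X^k_e)\sim\bfr(Y^{k'}_f)$ that I fold into $M$, not into $P$). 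Having done this bookkeeping, $M(E_T,C^T)$ is presented over $P$ by exactly the images of the old relations $v\sim\bfr_E(X)$, $v\in E^0$, $X\in C_v$, now read as equalities between $\mathbb{Z}^+$-combinations of the free generators of $P$; and these relations have the shape required by Corollary \ref{embeddingcor} (two $\mathbb{Z}^+$-combinations of disjoint families of free generators set equal), with $\psi$ matching $M(\iota)$ under the identifications. Hence $M(\iota)$ is unitary.

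The hard part will be the bookkeeping in the elimination step: making precise the retraction $N\to P$, verifying it is well-defined (the consistency check above), and confirming that after the substitution the surviving relations fall literally into the pattern of Corollary \ref{embeddingcor} — in particular that the families of generators appearing on the two sides of each old relation are disjoint, which uses that $X$ and $Y$ in a triple $(w,X,Y)\in T$ are \emph{distinct} blocks of $C_w$ and that distinct blocks involve disjoint sets of $v^k_{e,f}$. A cleaner alternative, which I would try first, is to avoid the explicit retraction: observe directly that $M(E_T,C^T)$ is the pushout (in the category of abelian monoids) of $M(E,C)$ with a free monoid along a free monoid, corresponding to the extra generators and relations, and that Corollary \ref{embeddingcor} computes exactly this pushout together with the canonical map from $M(E,C)$. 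Either way, the three unitarity conditions for $M(\iota)$ are then immediate translations of the three conditions verified for $\psi$ in Lemma \ref{embedding1}/Corollary \ref{embeddingcor}.
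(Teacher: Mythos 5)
Your primary route --- eliminating the old vertices $u=r(e)$ via the substitution $u:=\bfr(X^k_e)$ and then reading off a presentation to which Corollary \ref{embeddingcor} applies literally --- does not go through. Two hypotheses of that corollary fail after the elimination. First, the ``consistency'' relations you propose to fold away do not disappear: whenever a vertex $u$ is the range of two edges occurring in triples of $T$ (say $e\in X_k$ and $e'\in X_{k'}$), the presentation of $M(E_T,C^T)$ over your free monoid $P$ retains the relation $\bfr(X^k_e)=\bfr(X^{k'}_{e'})$, a nontrivial identity purely among free generators of $P$; Corollary \ref{embeddingcor} has a \emph{free} monoid as target and no room for such relations. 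Second, and more fundamentally, the corollary requires the families $\{a^\gamma_{ij}\}$ occurring in distinct relations $\gamma$ to be pairwise disjoint (its proof is a direct-sum decomposition over $\gamma$), whereas after your substitution a single generator of $P$ --- a non-eliminated old vertex, or a new vertex $v^k_{e,f}$, which occurs in both $\bfr(X^k_e)$ and $\bfr(Y^k_f)$ --- typically appears in the rewritten images of several different old relations $v=\bfr_E(X)$. For the same reason one cannot take the ``$M$'' of the corollary to be $M(E,C)$ itself: a vertex of $E$ generally occurs in many relations of the presentation of $M(E,C)$, so $\psi$ cannot simply be ``matched with $M(\iota)$ under the identifications.''

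Your ``cleaner alternative'' is indeed the paper's route, but as stated it is both imprecise and missing its main step. The relevant pushout is not of ``$M(E,C)$ with a free monoid along a free monoid'': it is the pushout of $M(E,C)\leftarrow M\rightarrow F$, where $M$ is the \emph{non-free} auxiliary monoid of Corollary \ref{embeddingcor} with generators $x^k_e,y^k_f$ and one relation $\sum_{e\in X_k}x^k_e=\sum_{f\in Y_k}y^k_f$ per triple, $F$ is free on the $a^k_{e,f}$, and $M\to M(E,C)$ sends $x^k_e\mapsto r(e)$, $y^k_f\mapsto r(f)$. (Pushing out along the free monoid on the $x^k_e,y^k_f$ cannot work: the induced map from that free monoid to $F$ identifies $\sum_{e\in X_k}x^k_e$ with $\sum_{f\in Y_k}y^k_f$, hence is not injective and so not unitary.) Moreover, the conclusion is not an ``immediate translation'': one needs (i) a verification that the square actually is a pushout --- in the paper this is the bulk of the proof, carried out by constructing a map $\tau$ on the free monoid on $E_T^0$ from an arbitrary compatible pair $(\sigma,\sigma')$ and checking case by case that it respects every relation $u=\bfr(Z)$ for $Z\in C^T_u$, so that it descends to $\taubar$ on $M(E_T,C^T)$ --- and (ii) the fact that unitary homomorphisms are preserved under pushouts, which is \cite[Lemma 1.6]{W} and not a formality. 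So the skeleton of the intended argument is visible in your last paragraph, but the substantive content of the proof (the pushout verification and the appeal to \cite[Lemma 1.6]{W}) is absent, and the route you say you would try first is not repairable as described.
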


\begin{proof} Keep the notation of Construction \ref{resolution2}, and set $\mu := M(\iota)$.

Let $F$ be the free abelian monoid with generators $a^k_{e,f}$ for $k\in I$,
$e\in X_k$, $f\in Y_k$, and let $M$ be the monoid given by
generators $x^k_e$ for $k\in I$, $e\in X_k$ and $y^k_f$ for $k\in
I$, $f\in Y_k$ subject to the relations $\sum_{e\in X_k} x^k_e= \sum_{f\in
Y_k} y^k_f$ for $k\in I$. There is a natural homomorphism $\psi:
M\rightarrow F$ sending $x^k_e \mapsto \sum_{f\in Y_k} \delta^k_{e,f} a^k_{e,f}$ for $k\in I$, $e\in X_k$ and $y^k_f\mapsto
\sum_{e\in X_k} \delta^k_{e,f}a^k_{e,f}$ for $k\in I$, $f\in Y_k$,
and Corollary \ref{embeddingcor} shows that $\psi$ is unitary.

There is a unique homomorphism $\eta: M\rightarrow M(E,C)$ sending
$x^k_e\mapsto r(e)$ for $k\in I$, $e\in X_k$ and $y^k_f\mapsto
r(f)$ for $k\in I$, $f\in Y_k$ (because $\bfr(X_k)= w_k=
\bfr(Y_k)$ in $M(E,C)$), and there is a unique homomorphism
$\eta': F\rightarrow M(E_T,C^T)$ sending $a^k_{e,f} \mapsto
v^k_{e,f}$ for $k\in I$, $e\in X_k$, $f\in Y_k$. We observe that the following diagram commutes:
\begin{equation} \label{monres}
\xymatrixrowsep{3pc}\xymatrixcolsep{5pc} \xymatrix{
M \ar[r]^{\psi} \ar[d]_{\eta} &F \ar[d]^{\eta'} \\
M(E,C) \ar[r]^{\mu} &M(E_T,C^T) }\end{equation}
This holds because
$$\mu\eta(x^k_e)= r(e)= \bfr(X^k_e)= \sum_{f\in Y_k} \delta^k_{e,f} v^k_{e,f}= \sum_{f\in Y_k} \delta^k_{e,f} \eta'(a^k_{e,f})= \eta'\psi(x^k_e)$$
for all $k\in I$, $e\in X_k$, and similarly $\mu\eta(y^k_f)= \eta'\psi(y^k_f)$ for all $k\in I$, $f\in Y_k$.
We claim that
\eqref{monres} is a pushout in the category of abelian monoids. It
will then follow from \cite[Lemma 1.6]{W} that $\mu$ is unitary,
completing the proof.

Suppose we are given a monoid $N$ and homomorphisms $\sigma:
M(E,C)\rightarrow N$ and $\sigma': F\rightarrow N$ such that
$\sigma\eta= \sigma'\psi$. Set $n_u := \sigma(u)$ for $u\in E^0$
and $b^k_{e,f} := \sigma'(a^k_{e,f})$ for $k\in I$, $e\in X_k$,
$f\in Y_k$. Let $F^T$ denote the free abelian monoid on $E_T^0$, and
identify $M(E_T,C^T)$ with $F^T/{\sim}$ as in Section
\ref{sect:refinement}. Then let $\tau: F^T \rightarrow N$ be the
unique homomorphism such that $\tau(u)=n_u$ for $u\in E^0$ and
$\tau(v^k_{e,f})= b^k_{e,f}$ for $k\in I$, $e\in X_k$, $f\in Y_k$.

In order to see that $\tau$ factors through $M(E_T,C^T)$, we need to
show that
\begin{equation} \label{factor}
\tau(u)= \tau(\bfr(Z))
\end{equation}
for any $u\in E_T^0$ and any $Z\in C^T_u$. Since $C^T_u=
\emptyset$ for all $u\in E^0_T \setminus E^0$, we only need
to consider $u\in E^0$. There are two cases, depending on whether or
not $Z\in C_u$.

Assume first that $Z\in C_u$. Then $u= \bfr(Z)$ in $M(E,C)$, and
since the ranges of the edges in $Z$ all lie in $E^0$, we have
$$\tau(u)= n_u= \sigma(u)= \sigma(\bfr(Z))= \sum_{d\in Z}
\sigma(r(d)) = \sum_{d\in Z} n_{r(d)}= \sum_{d\in Z} \tau(r(d))= \tau(\bfr(Z)).$$
This verifies \eqref{factor} when $Z\in C_u$. If $Z\notin C_u$, then
either $Z=X^k_e$ for some $k\in I$, $e\in X_k$ with $r(e)=u$, or
$Z=Y^k_f$ for some $k\in I$, $f\in Y_k$ with $r(f)=u$. These two
cases are symmetric; we treat the former. Then,
\begin{align*}
\tau(u) &= n_u= \sigma(r(e))= \sigma\eta(x^k_e)=
\sigma'\psi(x^k_e)\\
 &= \sum_{f\in Y_k} \delta^k_{e,f}\sigma'(a^k_{e,f})= \sum_{f\in Y_k}
\delta^k_{e,f}b^k_{e,f}
 =\sum_{f\in Y_k} \delta^k_{e,f}\tau(v^k_{e,f})= \tau({\mathbf
r}(X^k_e)).
\end{align*}
This verifies \eqref{factor} in the case $Z= X^k_e$.

Now $\tau$ induces a homomorphism $\taubar: M(E_T,C^T) \rightarrow
N$. In particular, $\taubar(u)= n_u= \sigma(u)$ for all $u\in
E^0$, which implies that $\taubar\mu= \sigma$. Also,
$\taubar\eta'(a^k_{e,f})= \taubar(v^k_{e,f})= b^k_{e,f}=
\sigma'(a^k_{e,f})$ for $k\in I$, $e\in X_k$ and $f\in Y_k$, whence
$\taubar\eta'=\sigma'$. Uniqueness of $\taubar$ with respect to
these equations is clear, and therefore \eqref{monres} is a pushout,
as desired.
\end{proof}

\begin{remark}
\label{remark:univpro} (Universal property) Given a set $I$ and
an (abelian) monoid $N$, consider a set $\calE$ of equations in $N$:
$$\sum _{n=1}^{N_k} a_n^k=\sum _{m=1}^{M_k} b_m^k \qquad (k\in I).$$
Let $\delta =\{ \delta^k\mid k\in I\} $ be a set of integer
matrices with strictly positive entries, where each $\delta^k$ is  $N_k\times M_k$. A
{\it $\delta $-refinement} of $\calE$ in $N$ is a set of elements
$$\{c^k_{n,m}\mid k\in I, \; 1\le n\le N_k, \; 1\le m\le M_k \} \subseteq N$$
such that $a_n^k=\sum _{m=1}^{M_k} \delta ^k_{n,m} c^k_{n,m}$ and
$b_m^k=\sum _{n=1}^{N_k} \delta ^k_{n,m}c^k_{n,m}$ for all $k$, $n$, $m$.

The pushout property appearing in the proof of the above lemma is
equivalent to the following universal property of
$M(E_T,C^T)$: Given a monoid homomorphism $\Phi\colon M(E,C)\to N$,
given a family $T=\{(w_k,X_k,Y_k) \mid k\in I \}$ as in Construction \ref{resolution2} with $N_k= |X_k|$ and $M_k= |Y_k|$ for all $k\in I$, and given a $\delta
$-refinement
$$\{c^k_{e,f}\mid k\in I,\ e\in X_k,\ f\in Y_k\}$$
of the
set of equations $\Phi ({\bf r}(X_k))=\Phi ({\bf r} (Y_k))$, $k\in
I$, in $N$, there exists a unique monoid homomorphism
$\tilde{\Phi}\colon M(E_T,C^T)\to N$ such that $\tilde{\Phi}(v)= \Phi(v)$ for all $v\in E^0$ and $\tilde{\Phi} (v^k_{e,f})=
c^k_{e,f}$ for all $k\in I$, $e\in X_k$, $f\in Y_k$.
\end{remark}

\begin{construction} \label{resolution3}{\rm
Let $(E,C)$ be an object in $\FSGr$. Recursively, construct a $\delta
_n$-$T_n$-res\-o\-lu\-tion $(E_{n+1},C^{n+1})$ for $(E_n,C^n)$, for
$n=0,1,2,\dots$, where
\begin{enumerate}
\item $(E_0,C^0) := (E,C)$.
\item For each $v\in E_n^0\setminus E^0_{n-1}$ (where $E^0_{-1} := \emptyset$) and distinct $X,Y\in C^n_v$, at least one of the triples
$(v,X,Y)$ or $(v,Y,X)$ appears in $T_n$. (We explicitly allow the possibility that both of these triples appear. It could be useful to exclude triples for which
$(*)$ already holds in $(E_n,C^n)$.)
\item $\delta_n$ is a set of matrices of positive integers corresponding to the triples in $T_n$ as in Construction \ref{resolution2}.
\end{enumerate}
The direct limit of the sequence $(E_0,C^0) \rightarrow (E_1,C^1)
\rightarrow \cdots$ will be called a \emph{complete resolution of
$(E,C)$}. }\end{construction}

\begin{theorem} \label{unitary3}
Let $(E,C)$ be a finitely separated graph, $(E_+,C^+)$ a complete resolution
for $(E,C)$, and $\iota: (E,C) \rightarrow (E_+,C^+)$ the inclusion
morphism. Then $M(E_+,C^+)$ is a refinement monoid and $M(\iota):
M(E,C)\rightarrow M(E_+,C^+)$ is unitary.
\end{theorem}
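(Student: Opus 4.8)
The plan is to split the statement into its two assertions and, for both, to exploit the presentation of $(E_+,C^+)$ as the direct limit of a chain $(E_0,C^0)\to(E_1,C^1)\to\cdots$ in which $(E_0,C^0)=(E,C)$ and each $(E_{n+1},C^{n+1})$ is a $\delta_n$-$T_n$-resolution of $(E_n,C^n)$ (Construction \ref{resolution3}). First I would record that $(E_+,C^+)$ is finitely separated: a $\delta$-$T$-resolution of a finitely separated graph is again finitely separated, since the sets $X^k_e$ and $Y^k_f$ of Construction \ref{resolution2} are finite whenever the $X_k,Y_k$ and the matrices $\delta^k$ are, and $\FSGr$ is closed under direct limits. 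In particular every set in $C^+$ is finite, so Proposition \ref{refinement} will be available once Assumption $(*)$ is verified for $(E_+,C^+)$.

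To verify $(*)$, fix $v\in E_+^0$ and distinct sets $X,Y\in C^+_v$. Along the chain the vertex partitions only grow, by adjoining new members, so both $X$ and $Y$ already belong to $C^n_v$ for some $n$; by the defining property of a complete resolution, the pair is then treated at that stage, i.e.\ one of $(v,X,Y)$, $(v,Y,X)$ lies in $T_n$. The computation recorded at the end of Construction \ref{resolution2} shows that, in the free abelian monoid on $E_{n+1}^0$, both $\bfr(X)$ and $\bfr(Y)$ satisfy $\rightsquigarrow_1$ to the common element $\gamma=\sum_{e\in X,\,f\in Y}\delta_{e,f}\,v_{e,f}$, where the $v_{e,f}$ are the new vertices and the $\delta_{e,f}>0$ are the entries of the matrix attached to this triple. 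Finally, any basic relation $w\rightsquigarrow_1\bfr(Z)$ valid in $(E_{n+1},C^{n+1})$ stays valid in $(E_+,C^+)$, because the free abelian monoid on $E_{n+1}^0$ embeds compatibly in that on $E_+^0$ and $C^{n+1}\subseteq C^+$ (with $\bfr$ unchanged); since here $S=C^+=\Cfin$, so that $\bfrho$ coincides with $\bfr$, this yields $(*)$ for $(v,X,Y)$ in $(E_+,C^+)$. Proposition \ref{refinement} (applied with $S=C^+$) then gives that $M(E_+,C^+)$ is a refinement monoid.

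For the second assertion, write $\iota=\kappa_0$, where $\kappa_n\colon(E_n,C^n)\to(E_+,C^+)$ is the limit morphism, and let $\iota_n\colon(E_n,C^n)\to(E_{n+1},C^{n+1})$ be the inclusions, so that $M(\iota)=M(\kappa_0)$ is a limit map of the directed system $\{M(E_n,C^n),\,M(\iota_n)\}$. By Lemma \ref{unitary2}, each $M(\iota_n)$ is unitary. Two routine facts complete the argument. First, a composite of unitary monoid homomorphisms is unitary: injectivity and cofinality are immediate, and for the third defining property one peels the complement off using the outer map and then, by injectivity of the outer map, using the inner map; hence all transition maps $M(E_i,C^i)\to M(E_j,C^j)$ of the system are unitary. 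Second, since $M$ is continuous, $M(E_+,C^+)$ is the direct limit of this system, and in a directed system of monoids whose transition maps are all injective and unitary, every limit map is unitary: injectivity is inherited from the transition maps, while cofinality and the third property are checked by pulling the relevant elements back to a finite stage and invoking unitarity there. Applying this to $M(\kappa_0)$ shows that $M(\iota)$ is unitary.

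The one genuinely delicate point is the verification of $(*)$: it requires tracking the bookkeeping of Construction \ref{resolution3} carefully enough to see that every pair of distinct sets that ever meet in a single $C^+_v$ --- including pairs only one member of which is ``old'' --- is resolved at some finite stage, and that the resolving $\rightsquigarrow_1$-relations survive the passage to the limit. Everything else is assembly: Lemma \ref{unitary2}, Proposition \ref{refinement}, continuity of $M$, and the closure of the class of unitary homomorphisms under composition and direct limits.
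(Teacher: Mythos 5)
Your proof is correct and follows essentially the same route as the paper: locate each pair $(v,X,Y)$ at the finite stage where it is treated by some $T_n$, use the $\rightsquigarrow_1$ computation from Construction \ref{resolution2} to get $(*)$ there and hence in the limit, apply Proposition \ref{refinement}, and obtain unitarity of $M(\iota)$ from Lemma \ref{unitary2} together with closure of unitary maps under composition and direct limits. The paper compresses all of this into three sentences; you have merely supplied the details (including the correct observation that the delicate point is ensuring pairs involving newly created sets at old vertices are eventually resolved), so there is nothing to add.
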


\begin{proof} It follows immediately from Lemma \ref{unitary2} and Construction \ref{resolution3} that $M(\iota)$ is unitary. If $v\in E_+^0$ and $X$, $Y$ are distinct members of $C^+_v$, then for some $n$, either $(v,X,Y)$ or $(v,Y,X)$ appears in $T_n$. By constuction, axiom $(*)$ holds for $v$, $X$, $Y$ in $(E_{n+1},C^{n+1})$, and hence also in $(E_+,C^+)$.
Therefore Proposition \ref{refinement} shows that $M(E,C)$ is a
refinement monoid.
\end{proof}

Theorem \ref{unitary3} provides a way to construct explicit examples of refinement monoids with particular properties. We illustrate this by constructing a conical refinement monoid which fails to have \emph{separative cancellation}, that is, it contains elements $x$ and $y$ satisfying $2x=x+y=2y$ but $x\ne y$. The existence of such monoids was first obtained by Wehrung, as a consequence of \cite[Corollary 2.7]{W}.

\begin{example} \label{nonseparative}
Let $E$ be the graph below
$$\xymatrixrowsep{2pc}\xymatrixcolsep{4pc}\def\labelstyle{\displaystyle}
\xymatrix{
 &v \ar@/_2ex/[dl]_{e_1} \ar[dl]^{e_2} \ar@/^5ex/[dl]^{e_3} \ar@/^2ex/[dr]^{f_1} \ar[dr]_{f_2} \ar@/_5ex/[dr]_{f_3} \\
x &&y
}$$
and define $C := \bigl\{ \{e_1,e_2\}, \{f_1,f_2\}, \{e_3,f_3\} \bigr\}$. Then $M(E,C)$ is presented by the generators $v$, $x$, $y$ and the relations
$$v=x+x \qquad\qquad\qquad v=y+y \qquad\qquad\qquad v=x+y.$$
Observe that $x\ne y$, so that $M(E,C)$ is not separative.

Now choose a complete resolution $(E_+,C^+)$ for $(E,C)$. No special properties are required of the matrices in the sets $\delta_n$, so they may be chosen with all entries equal to $1$. (We leave to the reader the task of choosing suitable notation for the vertices and edges of $E_+$.) By Theorem \ref{unitary3}, $M(E_+,C^+)$ is a conical refinement monoid, and the inclusion morphism $(E,C) \rightarrow (E_+,C^+)$ induces a unitary embedding $M(E,C)\rightarrow M(E_+,C^+)$, which we denote $a\mapsto a'$. Hence, $2x'=x'+y'=2y'$ in $M(E_+,C^+)$, but $x'\ne y'$. Therefore $M(E_+,C^+)$ is not separative.
\end{example}

\section{Refining Leavitt path algebras of separated graphs}

We may apply the resolution process of the previous section to \emph{embed}
Leavitt path algebras of separated graphs into others with
refinement.

\begin{theorem}
\label{embedref} Let $(E_+,C^+)$ be a complete resolution of
an object $(E,C)$ in $\FSGr$, and let $\iota: (E,C) \rightarrow (E_+,C^+)$ be the inclusion
morphism. Then
$\mon{L_K(E_+,C^+)}\cong M(E_+,C^+)$ is a refinement monoid, the $K$-algebra homomorphism
$L_K(\iota): L_K(E,C) \rightarrow L_K(E_+,C^+)$ is an embedding, and the monoid homomorphism
$$\mon{L_K(\iota)}: \mon{L_K(E,C)}\longrightarrow \mon{L_K(E_+,C^+)}$$
is a unitary embedding.
\end{theorem}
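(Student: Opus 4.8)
The plan is to assemble the three assertions from material already in place; the only genuinely new verification is that $(E,C)$ sits inside $(E_+,C^+)$ as a complete subobject. First note that a $\delta$-$T$-resolution of a finitely separated graph is again finitely separated, since the blocks $X^k_e,Y^k_f$ adjoined in Construction \ref{resolution2} are finite; thus $(E_+,C^+)$ lies in $\FSGr$ and $C^+_{\mathrm{fin}}=C^+$. Consequently $\mon{L_K(E_+,C^+)}\cong M(E_+,C^+)$ by Theorem \ref{computVMECS}, and $M(E_+,C^+)$ is a refinement monoid by Theorem \ref{unitary3}. This settles the first assertion.

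For the injectivity of $L_K(\iota)$, I would observe that $(E,C)$, equipped with the distinguished subset $\Cfin=C$, is a complete subobject of $(E_+,C^+,C^+)$ in the sense of Definition \ref{def:completesubobject}. Indeed, by Constructions \ref{resolution2} and \ref{resolution3}, every block of $C^+$ over a vertex $v\in E^0$ is either an original block of $C_v$ or one of the new blocks $X^k_e,Y^k_f$, and each of the latter consists solely of edges (the $g$'s and $h$'s) adjoined at some stage of the resolution, none of which lie in $E^1$; hence for every $Y\in C^+_v$ the set $Y\cap E^1$ is either a block of $C_v$ or empty, and the members of $C^+$ meeting $E^1$ are exactly those of $C$. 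These are precisely conditions (1) and (2) of Definition \ref{def:completesubobject}. By the observation recorded in the proof of Proposition \ref{CLalg-dirlim} --- that the inclusion of a complete subobject induces an injective $K$-algebra homomorphism on Cohn--Leavitt algebras --- $L_K(\iota)=CL_K(\iota)$ is injective. (Alternatively: exhibit $(E_+,C^+)$ as the direct limit of the chain $(E_n,C^n)$, note that each transition is a complete-subobject inclusion, invoke continuity of $CL_K$ from Proposition \ref{CLalg-dirlim}, and use that $L_K(\iota)$ is then the canonical map into a direct limit whose transition maps are injective.) The one delicate point here --- and the main obstacle, such as it is --- is exactly this bookkeeping: one must be sure that no block manufactured during the resolution is attached to a vertex of $E$ via an edge of $E$, which is guaranteed by the explicit form of Construction \ref{resolution2}, where every block placed over a vertex $u\in E^0$ is built entirely out of freshly adjoined edges.

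Finally, for the unitarity of $\mon{L_K(\iota)}$, I would apply the naturality of the functorial isomorphism $\Gamma\colon M\to\calV\circ CL_K$ on $\SSGr$ (Theorem \ref{computVMECS}) to the morphism $\iota$ of $\SSGr$ (which is a morphism there since $(E,C)$ is an $SG$-subgraph of $(E_+,C^+)$), producing a commutative square
$$\xymatrix{
M(E,C) \ar[r]^{M(\iota)} \ar[d]_{\Gamma(E,C)} & M(E_+,C^+) \ar[d]^{\Gamma(E_+,C^+)} \\
\mon{L_K(E,C)} \ar[r]^{\mon{L_K(\iota)}} & \mon{L_K(E_+,C^+)}
}$$
in which the vertical maps are isomorphisms. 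By Theorem \ref{unitary3}, $M(\iota)$ is unitary; and unitarity of a monoid homomorphism is preserved under composition on either side with an isomorphism --- the injectivity, cofinality, and the third (subtraction-type) condition all transfer by a routine check. Hence $\mon{L_K(\iota)}=\Gamma(E_+,C^+)\circ M(\iota)\circ\Gamma(E,C)^{-1}$ is unitary, and in particular an embedding, which completes the proof.
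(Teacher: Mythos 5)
Your proposal is correct and follows essentially the same route as the paper: the paper likewise deduces the refinement and unitarity assertions from Theorems \ref{computVMECS} and \ref{unitary3} (implicitly via the naturality of $\Gamma$, which you spell out), and obtains injectivity of $L_K(\iota)$ by observing that $(E,C)$ is a complete $SG$-subgraph of $(E_+,C^+)$ and invoking the proof of Proposition \ref{CLalg-dirlim}. Your extra bookkeeping that the new blocks $X^k_e$, $Y^k_f$ consist entirely of freshly adjoined edges is exactly the point the paper leaves implicit.
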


\begin{proof}
The first and third conclusions follow from Theorems \ref{computVMECS} and \ref{unitary3}. Since $(E,C)$ is a complete $SG$-subgraph of
$(E_+,C^+)$, it follows from (the proof of) Proposition
\ref{CLalg-dirlim} that the homomorphism $L_K(\iota)$
is injective.
\end{proof}

As a specific application of our construction, we can unitarily embed
any simple conical monoid $M$ into a simple refinement monoid $M^+$ in such
a way that $M^+$ corresponds to the set of order-units in a
monoid of the form $\mon{L_K(E_+,C^+)}$ together with $\{0\}$, and such that the latter is a simple, divisible, refinement monoid. The existence of such monoid embeddings has been proved by Wehrung \cite[Corollary 2.7]{W}; our construction provides a more ``visual'' version.

\begin{definition} \label{defdivmon}
A monoid $M$ is \emph{divisible} provided every element $x\in M$ is divisible by every positive integer $n$, that is, there exist elements $y_n\in M$ such that
$ny_n=x$ for all $n\in\N$.
\end{definition}

\begin{theorem}
\label{simple-emb} Let $M$ be a simple, conical, abelian
monoid with a given presentation by generators $x_j$ $(j\in J)$ and relations $r_i$ $(i\in I)$ as in \eqref{Mpres}. Assume that for each $j\in J$, there is some $i\in I$ such that $a_{ij}+b_{ij}>0$. Let $(E,C)$ be the
finitely separated graph associated to the given presentation of
$M$ as in the proof of Proposition {\rm\ref{allconical}}.

Then there is a suitable complete resolution $(E_+,C^+)$ of $(E,C)$
such that
$\mon{L_K(E_+,C^+)}$ is a divisible refinement monoid and $M\cong \mon{L_K(E,C)}$ unitarily embeds in
the simple refinement monoid consisting of the union of $\{ 0\}$ and
the semigroup of order-units of $\mon{L_K(E_+,C^+)}$. If the given presentation of $M$ is countable {\rm(}i.e., $I$ and $J$ are countable{\rm)}, then $L_K(E_+,C^+)$ is a countably generated $K$-algebra.
\end{theorem}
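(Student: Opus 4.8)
The plan is to start from the finitely separated graph $(E,C)$ attached to the given presentation of $M$ in the proof of Proposition~\ref{allconical}, so that $M\cong M(E,C)\cong\mon{L_K(E,C)}$ and the hypothesis that each $j\in J$ occurs in some relation guarantees that every sink $v_j$ of $E$ is the range of an edge lying in one of the original sets $X_{i1}$, $X_{i2}$. I would then build a complete resolution $(E_+,C^+)$ of $(E,C)$ as in Construction~\ref{resolution3}, arranging that at each stage $n$ every pair of distinct sets at every vertex of $E_n$ is resolved, in both orientations, using matrices $\delta_n$ all of whose entries equal a single integer $N_n$, where $(N_n)_{n\ge0}$ is chosen so that every positive integer divides $N_n$ for infinitely many $n$ (e.g.\ $N_n=(n+1)!$). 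By Theorem~\ref{embedref}, $\mon{L_K(E_+,C^+)}\cong M(E_+,C^+)$ is then a refinement monoid, $L_K(\iota)\colon L_K(E,C)\to L_K(E_+,C^+)$ is an embedding, and $\mu:=\mon{L_K(\iota)}\colon M\to\mon{L_K(E_+,C^+)}$ is a unitary embedding.

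Next I would prove that $M(E_+,C^+)$ is divisible. Since it is generated as a monoid by $E_+^0$, it suffices to divide each vertex. Whenever an edge $e$ lies in a set $X$ resolved at stage $n$, the resolution forces $r(e)=\bfr(X_e^{(n)})=N_n\cdot\bigl(\sum_f v^k_{e,f}\bigr)$ in $M(E_+,C^+)$, so $r(e)$ is divisible by $N_n$; in the chosen resolution every vertex other than the sources $u_i$ occurs as such an $r(e)$ at cofinally many stages (each vertex eventually has at least two sets, by the two‑orientation convention, and then the relevant set is re‑resolved at every later stage), and choosing $n$ with $N\mid N_n$ gives divisibility by $N$. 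The $u_i$ are handled through the surviving relations $u_i=\sum_j a_{ij}v_j$ together with divisibility of the $v_j$. Hence $\mon{L_K(E_+,C^+)}$ is a divisible refinement monoid.

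I would then turn to the order‑unit submonoid. Since $M$ is simple and conical, each nonzero $x\in M$ is an order‑unit of $M$, and then $\mu(x)$ is an order‑unit of $M(E_+,C^+)$: for $u\in M(E_+,C^+)$ unitarity gives $v\in M$ with $u\le\mu(v)$, and $v\le nx$ in $M$, whence $u\le n\mu(x)$. Thus $\mu$ carries $M$ into $N:=\{0\}\cup U$, where $U$ is the set of order‑units of $M(E_+,C^+)$, $N$ is a submonoid (order‑units are upward closed and closed under addition), and the restricted map $\mu\colon M\to N$ is still injective, cofinal, and satisfies the third unitarity condition, hence is unitary. Simplicity of $N$ is easy: an order‑ideal of $N$ containing a nonzero $u\in U$ contains every $v\in U$, since $v\le nu$ for some $n$, $nu\in U$, and order‑ideals of $N$ are additively closed and hereditary. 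The crux is that $N$ is a refinement monoid: here I would first give a graph‑theoretic description of $U$ (an element of $M(E_+,C^+)$ is an order‑unit precisely when it dominates, up to a multiple, the class of an original vertex), then combine the control of the refinements in $M(E_+,C^+)$ afforded by the relation $\rightsquigarrow$ of Section~\ref{sect:refinement} with the divisibility established above — in the style of Wehrung's argument in \cite[Corollary~2.7]{W} — to show that a refinement identity $a+b=c+d$ in $N$ admits a refinement whose four entries again lie in $N$.

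I expect this last point — the refinement property of $N$ — to be the main obstacle, since $M(E_+,C^+)$ itself is \emph{not} simple (the set of vertices reachable from a vertex created during the resolution is always a proper, nonzero, hereditary, $C^+$‑saturated subset of $E_+^0$), so refinement cannot simply be inherited and must be argued using the extra structure. The countability clause is routine: if $I$ and $J$ are countable then $E$ has countably many vertices and edges, each resolution stage adjoins only countably many new vertices and edges, so $E_+$ is countable and $L_K(E_+,C^+)$, generated by $E_+^0\sqcup E_+^1\sqcup(E_+^1)^*$, is countably generated. Assembling everything, $\mon{L_K(E_+,C^+)}$ is a divisible refinement monoid and $M\cong\mon{L_K(E,C)}$ embeds unitarily, via $\mu$, in the simple refinement monoid $N=\{0\}\cup(\text{order‑units of }\mon{L_K(E_+,C^+)})$.
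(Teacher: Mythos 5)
Your construction and most of the intermediate steps match the paper's proof: same resolution with symmetric $T_n$ and constant-entry matrices (factorials), same appeal to Theorem~\ref{embedref} for refinement of $\mon{L_K(E_+,C^+)}$ and unitarity of the embedding, same observation that simplicity and conicality of $M$ plus cofinality force the image to land in $N=\{0\}\cup U$ with $U$ the order-units, same easy simplicity of $N$, and the same countability argument. Your divisibility argument is essentially the paper's as well, modulo some imprecision: under Construction~\ref{resolution3} only the \emph{new} vertices of $E_n^0\setminus E_{n-1}^0$ are required to be resolved at stage $n$, so ``the relevant set is re-resolved at every later stage'' is not what happens; the paper instead proves by induction (its Claim~1) that $|C^{n+1}_v|\ge 2$ for all $v\in E_n^0$ and then divides each vertex using a set adjoined at a sufficiently late stage, handling the sources $u_i$ and early vertices by a separate descent. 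Your variant (resolving all pairs at all vertices of $E_n$ at stage $n$) is admissible and would also work, but you should still carry out the analogue of Claim~1 rather than assert it.

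The genuine gap is the final and decisive step: that $N=\{0\}\cup U$ is a refinement monoid. You correctly identify that this cannot be inherited from $M(E_+,C^+)$, since that monoid is not simple and a refinement of $a+b=c+d$ with $a,b,c,d\in U$ could a priori involve nonzero non-order-units. But your plan --- a graph-theoretic description of $U$ plus an ad hoc argument ``in the style of Wehrung'' combining $\rightsquigarrow$ with divisibility --- is only a statement of intent, and you yourself flag it as the main obstacle. The paper closes exactly this gap by quoting a result of Ortega, Perera, and R{\o}rdam \cite[Theorem 3.4]{OPR}: in a (weakly) divisible refinement monoid, the set of order-units together with $0$ is again a refinement monoid. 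That is why divisibility is established first; it is not merely an extra conclusion but the hypothesis that makes the order-unit submonoid have refinement. Without invoking that theorem, or reproducing its proof, your argument does not establish the claim that $M$ embeds unitarily into a simple \emph{refinement} monoid, which is the whole point of the statement.
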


\begin{remark*} The hypothesis concerning the $a_{ij}+b_{ij}$ is harmless, since we can always impose additional relations of the form $x_j=x_j$.
\end{remark*}

\begin{proof}
The assumptions of Proposition \ref{allconical} require that for each $i\in I$, at least
one $a_{ij}>0$ and at least one $b_{ij}>0$. Hence, each vertex $u_i$ emits at least two edges. Our
present hypotheses require that each $v_j$ receives at least one edge. Thus, none of the $u_i$ is a sink in $E$, and none of the $v_j$ is a source in $E$.

Build a complete resolution $(E_+,C^+)$ for $(E,C)$ as in
Construction \ref{resolution3}, where each matrix in each $\delta_n$
is chosen with all entries equal to $n!$. Further, choose the
collections $T_n$ to be \emph{symmetric}, meaning that for all $v\in
E^0_n\setminus E^0_{n-1}$ and distinct $X,Y\in C^n_v$, both $(v,X,Y)$ and $(v,Y,X)$
appear in $T_n$. Set $A:= L_K(E,C)$ and $A^+:= L_K(E_+,C^+)$. By
Theorem \ref{embedref}, $\mon{A^+}$ is a refinement monoid and the
monoid homomorphism $\mon{L_K(\iota)}: \mon{A} \rightarrow
\mon{A^+}$ is a unitary embedding, where $\iota: (E,C) \rightarrow
(E_+,C^+)$ is the inclusion morphism. If $I$ and $J$ are countable,
then $E_0=E$ is a countable graph, all the collections $T_n$ are
countable, and all the graphs $E_n$ are countable. Consequently,
$A^+$ is a countably generated $K$-algebra. Now return to the
general case.

Since $\mon{A}\cong M$ is conical and simple, each of its nonzero elements is an order-unit. By definition of a unitary embedding, the image of $\mon{L_K(\iota)}$ is cofinal in $\mon{A^+}$, from which it follows that $\mon{L_K(\iota)}$ maps all nonzero elements of $\mon{A}$ to order-units of $\mon{A^+}$. Consequently, $\mon{A}$ embeds unitarily in the submonoid $\{0\}\sqcup S$ of $\mon{A^+}$ by way of $\mon{L_K(\iota)}$, where $S$ is the semigroup of order-units in $\mon{A^+}$. Note that $\{0\}\sqcup S$ is a simple monoid.

We now show that $\mon{A^+}$ is divisible. In particular, it is then weakly divisible in the sense of \cite[Definition 2.2]{OPR}. Once this is established, a recent result of Ortega, Perera, and R\o rdam \cite[Theorem 3.4]{OPR} will show that $\{0\}\sqcup S$ is a refinement monoid, completing the proof of the theorem.

{\bf Claim 1}: $|C^{n+1}_v| \ge2$ for all $v\in E^0_n$. In particular, this will show that $E_+$ has no sinks.

For any $i\in I$, we have $C^0_{u_i}= \{ X_{i1}, X_{i2} \}$ by construction, and $C^0_{u_i} \subseteq C^1_{u_i}$,
so $|C^1_{u_i}| \ge 2$. For any $j\in J$, there is some $i\in I$ such that
$a_{ij}+b_{ij}>0$ (by hypothesis), so there exists $e\in E^1_0$ with $s(e)=u_i$ and $r(e)=v_j$.
Since $T_0$ is symmetric, it contains both $(u_i,X_{i1},X_{i2})$ and $(u_i,X_{i2},X_{i1})$, say labelled $(w_k,X_k,Y_k)$ and $(w_l,X_l,Y_l)$. After possibly interchanging $k$ and $l$, we may assume that $e\in X_k=Y_l$. Then we have $X^k_e,Y^l_e\in C^1_{v_j}$, and so $|C^1_{v_j}| \ge 2$. This establishes the claim when $n=0$.

Now let $n>0$, and assume the claim holds for $n-1$, that is,
$|C_v^n|\ge 2$ for all $v\in E^0_{n-1}$. For $v\in E^0_{n-1}$, we
have $C^n_v\subseteq C^{n+1}_v$, and so $|C^{n+1}_v| \ge |C^n_v| \ge
2$ by our induction hypothesis. Now consider $v\in E^0_n\setminus
E^0_{n-1}$. Then there exist  $(w_k,X_k,Y_k) \in T_{n-1}$ and $e\in
X_k$, $f\in Y_k$ such that $v= v^k_{e,f}$. There is an edge $g :=
g^k_{e,f,1} \in E^1_n$ with $s(g)=r(e)$ and $r(g)=v$, and $g\in
X^k_e \in C^n_{r(e)}$. Since $r(e)\in E^0_{n-1}$, the induction
hypothesis implies that $C^n_{r(e)}$ contains a set $Z\ne X^k_e$. As
in the case of $v_j$ above, it follows that there are distinct sets
$X^k_g$ and $Y^l_g$ in $C^{n+1}_v$. This establishes Claim 1.

Set $M^+ := M(E_+,C^+) \cong \mon{A^+}$, and let $m\in\N$. To see that $\mon{A^+}$ is $m$-divisible, it suffices to show that each $v\in E^0_+$ is divisible by $m$ in $M^+$. Assume first that $v\in E^0_n\setminus E^0_{n-1}$ for some $n\ge m$. By Claim 1, there exists a set $X\in C^{n+1}_v$. Note that $v$ must be a sink in $E_n$, so $X\notin C^n_v$. Consequently, $X$ must have the form $X^k_e$ or $Y^k_f$ for appropriate $k$, $e$, $f$. Since we have chosen $\delta^{n,k}_{e,f}=n!$ for all $k$, $e$, $f$, the number $n_w$ of edges in $X$ from $v$ to any vertex $w\in E^0_{n+1}$ is divisible by $m$. In $M^+$, we have $v= \bfr(X)= \sum_{w\in E^0_{n+1}} n_ww$, and thus $v$ is divisible by $m$.

Next, assume that $v\in E^0_n\setminus E^0_{n-1}$ where $0<n<m$, and that all vertices from $E^0_{n+1}\setminus E^0_n$ are divisible by $m$ in $M^+$. By Claim 1, there exists a set $X\in C^{n+1}_v$; as above, $X\notin C^n_v$. Then $v= \sum_{e\in X} r(e)$ in $M^+$, and $r(e)$ is divisible by $m$ for each $e\in X$ (because $r(e)\in E^0_{n+1}\setminus E^0_n$), whence $v$ is divisible by $m$. The same argument applies to the vertices $v$ in $\{v_j \mid j\in J\}$. Finally, for $i\in I$ we have $u_i= \bfr(X_{i1})= \sum_{j\in J} a_{ij}v_j$, and thus $u_i$ is divisible by $m$ in $M^+$. Therefore all the generators of $M^+$ are divisible by $m$, and the proof is complete.
\end{proof}

\begin{example}
\label{mainrevisited} Let $M$ be the monoid $\langle x\mid mx=nx\rangle$ presented by one generator $x$ and one relation
$mx=nx$, where $1<m< n$. The separated graph $(E,C)$ associated to
this presentation is just the separated graph $(E(m,n), C(m,n))$ of
Example \ref{Lmn}, so that $L_K(E,C)= A_{m,n}$ in the notation of the example. By Proposition \ref{keyexample}, we can identify $\mon{A_{m,n}}= \langle x\mid mx=nx\rangle$ so that $[w]=x$. It is immediate from the properties of this monoid that the idempotent matrices $w,2{\cdot} w,\dots, (m-1){\cdot} w$ are finite (i.e., not equivalent to proper subidempotents of themselves), whereas $m{\cdot} w\sim n{\cdot} w\sim v$ is properly infinite (i.e., $m{\cdot} w\oplus m{\cdot} w \lesssim m{\cdot} w$).

The hypotheses of Theorem \ref{simple-emb} are clearly satisfied, so
that $A_{m,n}$ is embedded in the count\-ably generated algebra
$L_K(E_+,C^+)$ given by the theorem, and $\mon{A_{m,n}}$ is
unitarily embedded in $\mon{L_K(E_+,C^+)}$. Due to the unitarity of
this embedding, the idempotent matrices $w,2{\cdot} w,\dots, (m-1){\cdot} w$
remain finite (and full) over $L_K(E_+,C^+)$, while obviously $m{\cdot} w$
remains properly infinite. This information is faithfully recorded
in $\mon{L_K(E_+,C^+)}$, which by the theorem is a divisible
refinement monoid.
\end{example}

\section{Appendix. Projective modules and trace ideals for nonunital rings}
\label{appendix}

In this appendix, we gather some information and results concerning projective modules, trace ideals, and $\calV$-monoids for nonunital rings. Some of this material is well known, but some is not readily accessible, and some has not been developed in the literature to our knowledge.

\begin{definition} \label{idemprings}
A ring $R$ is \emph{idempotent} provided $R^2=R$, and it is \emph{s-unital} if for each $x\in R$, there exist $u,v\in R$ such that $ux=x=xv$. The latter property carries over to finite sets by \cite[Lemma 2.2]{arasunital}: If $R$ is s-unital and $x_1,\dots,x_n\in R$, there exists $u\in R$ such that $ux_i=x_i=x_iu$ for all $i$.

As is common (see \cite{GS}, for instance), we define $R\Mod$ to be the category of those left $R$-modules which are \emph{full} (meaning that $RM=M$) and \emph{nondegenerate} (meaning that $Rx=0$ implies $x=0$, for any $x\in M$). The morphisms in $R\Mod$ are arbitrary module homomorphisms between the above modules. We refer to $R$ itself as \emph{nondegenerate} if it is nondegenerate as both a left and a right $R$-module.

Note that if $R$ is s-unital, then so is any full left $R$-module $M$: for any $y_1,\dots,y_m\in M$, there is some $u\in R$ such that $uy_j=y_j$ for all $j$. In particular, it follows that $M$ is nondegenerate. In fact, all $R$-submodules of $M$ are full and nondegenerate.

Let $R^+$ denote the canonical unitization of $R$ (irrespective of whether $R$ may have a unit), namely the unital ring containing $R$ as a two-sided ideal such that $R^+= \Z\oplus R$. The forgetful functor provides a category isomorphism from $R^+\Mod$ to the category of arbitrary left $R$-modules \cite[Proposition 8.29B]{Fai}. We identify these two categories, and then $R\Mod$ becomes identified with the full subcategory of $R^+\Mod$ whose objects are those $R^+$-modules which are full and nondegenerate as $R$-modules.
\end{definition}

\begin{lemma} \label{projobj}
Let $R$ be a nondegenerate idempotent ring. The projective objects in $R\Mod$ are precisely those which are projective as $R^+$-modules, that is, the projective $R^+$-modules $P$ such that $RP=P$.
\end{lemma}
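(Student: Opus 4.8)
The plan is to verify both inclusions of the claimed description, namely that an object $P$ of $R\Mod$ is projective in $R\Mod$ if and only if $P$ is projective as an $R^+$-module and satisfies $RP=P$. The ``if'' direction is the easy one. Suppose $P$ is a projective $R^+$-module with $RP=P$. Then $P$ lies in $R\Mod$ by definition of that category (fullness is $RP=P$; nondegeneracy follows, since $RP=P$ forces no element to be killed by $R$ -- alternatively one uses that $P$ is a summand of a free $R^+$-module, so nondegeneracy is inherited from the fact that $R$ is nondegenerate as a right module and hence $R^{(\kappa)}$ is nondegenerate as a left $R^+$-module). To see $P$ is projective in $R\Mod$, take an epimorphism $g\colon M\to N$ in $R\Mod$ and a map $f\colon P\to N$; since $R\Mod$ is a full subcategory of $R^+\Mod$ and $g$ remains surjective there, projectivity of $P$ over $R^+$ produces a lift $h\colon P\to M$, which is automatically a morphism in $R\Mod$. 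Thus $P$ is projective in $R\Mod$.

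The ``only if'' direction is where the work lies. Suppose $P$ is projective in $R\Mod$. First, $RP=P$ holds because $P$ is an object of $R\Mod$ (that is part of the definition of the category, from Definition \ref{idemprings}). The point to prove is that $P$ is projective as an $R^+$-module. The strategy is: (1) exhibit a free object of $R\Mod$ that maps onto $P$ in $R\Mod$, and (2) split that epimorphism using projectivity of $P$ in $R\Mod$, then (3) argue that being a direct summand (in $R\Mod$, hence in $R^+\Mod$) of this object forces $P$ to be $R^+$-projective. For step (1), the natural candidate is a direct sum of copies of $R$ viewed as a left $R$-module, i.e. $R^{(X)}$ for a generating set $X$ of $P$: because $R$ is idempotent, $R\cdot R = R$, so $R^{(X)}$ is full, and it is nondegenerate since $R$ is nondegenerate as a left module; hence $R^{(X)}\in R\Mod$. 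The evaluation map $R^{(X)}\to P$, $(r_x)\mapsto \sum r_x p_x$, is surjective provided $X$ generates $P$ as an $R$-module, which we may arrange by taking $X=P$ and using $RP=P$ to write each element of $P$ as an $R$-combination of elements of $P$. So we get $R^{(X)}\twoheadrightarrow P$ in $R\Mod$, which splits by projectivity of $P$ in $R\Mod$, realizing $P$ as a direct summand of $R^{(X)}$ in $R\Mod$, and therefore as a direct summand of $R^{(X)}$ in $R^+\Mod$ (the splitting maps are $R^+$-module homomorphisms). For step (3), it now suffices to show $R^{(X)}$ -- equivalently $R$ itself, as a left $R$-module -- is a projective $R^+$-module; a direct summand of a projective is projective.

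So the crux is: \emph{$R$, regarded as a left $R^+$-module, is projective over $R^+$.} This is where the idempotence hypothesis $R^2=R$ is essential and is the main obstacle; it is also the step that most resembles a ``standard but not immediately obvious'' fact, and it presumably has a known reference (e.g. it is the content that an idempotent ring is a projective module over its unitization, in the vein of results used for $s$-unital and idempotent rings). The approach I would take: consider the short exact sequence of $R^+$-modules $0\to R \to R^+ \to \Z \to 0$. This does not obviously split, so instead I would use that $R^2=R$ directly. Since $R^2=R$, the multiplication map $\mu\colon R\otimes_{R^+} R \to R$ is surjective; combined with the fact that $R$ is an idempotent \emph{two-sided} ideal of $R^+$, one can show $R\otimes_{R^+}R\cong R$ and that $R$ is a firm/self-induced module. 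A cleaner route, which I expect to use: show that $R$ is a \emph{direct summand} of a free $R^+$-module by a more hands-on argument, or invoke that for an idempotent ring $R$ the canonical map $R\otimes_{R^+}(-)$ gives an equivalence onto $R\Mod$ and that $R$ is projective because the identity functor on $R\Mod$ is represented appropriately. Given the paper's style and the surrounding appendix, I would state this as: $R$ is flat as a left $R^+$-module (clear, as it is an ideal and $R^+$ is flat over itself, and the relevant Tor vanishes because $R^2=R$), and then upgrade flat to projective using that $R$ is a countably-generated-free-complemented style argument is not needed -- rather, the honest claim needed is that the inclusion $R\hookrightarrow R^+$ is a \emph{pure} monomorphism split by an $R^+$-linear map when $R^2=R$, OR we directly observe: pick the presentation $R^{(R)}\to R$, $e_r\mapsto r$; its kernel $N$ satisfies $R\cdot N$ ``=`` $N$? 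This needs care. I expect the final obstacle to be pinning down precisely this projectivity of $R$ over $R^+$, and I would resolve it by citing the relevant lemma on idempotent rings (as the appendix is doing for $s$-unital rings via \cite{arasunital}), or by the explicit splitting: since $R^2=R$, write $1_{R^+}$ acting on $R$ as multiplication, choose elements $u_i, v_i\in R$ with $\sum u_i v_i$ acting as identity on a given finitely generated piece, and patch these together to split $R^{(R)}\to R$ -- i.e. run a ``local units up to approximation'' argument to produce the required $R^+$-linear section. Once $R$ is shown to be $R^+$-projective, steps (1)--(3) close the argument and the lemma follows.
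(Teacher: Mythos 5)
Your ``if'' direction is essentially the paper's (both rest on the fact, cited in the paper from \cite[\S5.3]{Gdirlim}, that epimorphisms in $R\Mod$ are surjective), but your ``only if'' direction has a genuine gap, and you have correctly located it yourself: everything hinges on the claim that $R$ is projective as a left $R^+$-module. That claim is not a consequence of the hypotheses (``nondegenerate and idempotent''), and none of the routes you sketch establishes it. Flatness of an idempotent ideal is not automatic and in any case flat does not imply projective; there is no purity/splitting of $0\to R\to R^+\to\Z\to 0$ in general; and a ``local units up to approximation'' patching has nothing to start from, since a general idempotent ring has no local units or s-units. Note also that the lemma itself does not force $R$ to be $R^+$-projective: $R$ is an object of $R\Mod$, but nothing says it is a \emph{projective} object of $R\Mod$, so your covering object $R^{(X)}$ need not be projective anywhere, and exhibiting $P$ as a summand of it proves nothing about $R^+$-projectivity of $P$.

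The paper's proof avoids this entirely by choosing the covering object differently. Start with a free $R^+$-module $F$ and an $R^+$-epimorphism $f\colon F\to P$. Then $RF$ is a full nondegenerate $R$-module (fullness from $R^2=R$, nondegeneracy from that of $R$), and $f(RF)=Rf(F)=RP=P$ because $P$ is full. So $f$ restricts to a surjection $RF\to P$ in $R\Mod$, which splits by projectivity of $P$ in $R\Mod$, giving an $R$-linear (hence automatically $R^+$-linear, since $R^+=\Z\oplus R$) map $g\colon P\to RF\subseteq F$ with $fg=\id_P$. Thus $P$ is a direct summand of the genuinely free $R^+$-module $F$, with no need for $R$ itself to be $R^+$-projective. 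If you replace your step (1) by this choice of cover --- a free $R^+$-module cut down to $RF$, rather than $R^{(X)}$ --- your steps (2) and (3) go through and the obstacle you flagged disappears.
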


\begin{proof}
As noted in \cite[\S5.3]{Gdirlim}, all epimorphisms in $R\Mod$ are surjective. Consequently, any object of $R\Mod$ which is projective in $R^+\Mod$ must also be projective in $R\Mod$. Conversely, suppose $P$ is a projective object of $R\Mod$. Choose a free $R^+$-module $F$ and an $R^+$-module epimorphism $f:F\rightarrow P$. Then $RF$ is a full nondegenerate $R$-module, and $f$ maps $RF$ onto $P$, so there exists an $R$-module homomorphism $g:P\rightarrow RF$ such that $fg=\id_P$. Since $g$ is also an $R^+$-module homomorphism, $P$ is a projective $R^+$-module.

Since $R$ is nondegenerate, all projective $R^+$-modules are nondegenerate as $R$-modules. Hence, any projective $R^+$-module $P$ satisfying $RP=P$ is an object of $R\Mod$, projective by the previous paragraph. The previous discussion also shows that all projective objects of $R\Mod$ have this form.
\end{proof}

\begin{definition} \label{hereditary}
Let us define a ring $R$ to be \emph{left hereditary} if every subobject of a projective object in $R\Mod$ is projective. We define ``right hereditary'' symmetrically, and say $R$ is \emph{hereditary} provided it is both left and right hereditary.
\end{definition}

\begin{corollary} \label{heredimplic}
Let $R$ be a nondegenerate idempotent ring.
\begin{enumerate}
\item If $R^+$ is {\rm(}left{\rm)} hereditary, then $R$ is {\rm(}left{\rm)} hereditary.
\item If $R$ is s-unital and {\rm(}left{\rm)} hereditary, then $eRe$ is {\rm(}left{\rm)} hereditary for all idempotents $e\in R$.
\end{enumerate}
\end{corollary}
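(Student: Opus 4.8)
The plan is to deduce both statements from Lemma~\ref{projobj}, which identifies the projective objects of $R\Mod$ with the projective $R^+$-modules $P$ satisfying $RP=P$.

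For (1), assume $R^+$ is left hereditary. Given a projective object $P$ of $R\Mod$ and a subobject $N\subseteq P$ in $R\Mod$, Lemma~\ref{projobj} says $P$ is a projective $R^+$-module, and $N$ is an $R^+$-submodule of $P$ (an $R$-submodule of any $R^+$-module is automatically an $R^+$-submodule, since the $\Z$-summand of $R^+$ acts by repeated addition). Left heredity of $R^+$ gives that $N$ is a projective $R^+$-module; as $N$ is full ($RN=N$) and nondegenerate, being an object of $R\Mod$, Lemma~\ref{projobj} returns that $N$ is a projective object of $R\Mod$. Hence $R$ is left hereditary in the sense of Definition~\ref{hereditary}. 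The right-handed case is symmetric, and the statement for ``hereditary'' follows. I expect this part to be routine.

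For (2), note first that $R$, being s-unital, is nondegenerate and idempotent, so Lemma~\ref{projobj} is available, and that $eRe$ is unital with identity $e$. I would show that every subobject $N$ of every projective object $P$ of $eRe\Mod$ is projective. A key preliminary observation: $Re=R^+e$ is a direct summand of $_{R^+}R^+$ (as $e$ is idempotent in $R^+$), hence a projective $R^+$-module, and $R\cdot Re=Re$ because $R$ is s-unital; so $Re$ is a projective object of $R\Mod$, and likewise so is any direct power $(Re)^{(\Lambda)}=(R^+e)^{(\Lambda)}$. Now write $P$ as a direct summand of a free $eRe$-module $F=(eRe)^{(\Lambda)}$, so that $N\subseteq P\subseteq F$; embedding $F$ into $\hat F:=(Re)^{(\Lambda)}$ via $eRe\hookrightarrow Re$, we get a projective object $\hat F$ of $R\Mod$ with $e\hat F=F$. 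Let $\hat N$ be the $R$-submodule of $\hat F$ generated by $N$. Since $R$ is s-unital, $\hat N$ is a full, nondegenerate $R$-submodule of $\hat F$, so left heredity of $R$ forces $\hat N$ to be a projective object of $R\Mod$. Because every element of $N$ lies in the $e$-component of $\hat F$, hence is fixed by left multiplication by $e$, one checks directly that $e\hat N=N$ (using that $N$ is an $eRe$-submodule of $F$) and that $Re\cdot\hat N=\hat N$.

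The crux is then the claim that a projective object $M$ of $R\Mod$ with $ReM=M$ is a direct summand, in $R\Mod$, of a direct power of $Re$. To prove it I would consider the map $\rho\colon(Re)^{(M)}\to M$ sending $y$ in the copy indexed by $m$ to $ym$; it is $R$-linear, its image is $ReM=M$, so it is an epimorphism in $R\Mod$, and it splits because $M$ is projective. Applying the additive functor $e(-)\colon R\Mod\to eRe\Mod$ to the splitting $(Re)^{(M)}\cong M\oplus M'$ yields $(eRe)^{(M)}\cong eM\oplus eM'$, exhibiting $eM$ as a direct summand of a free $eRe$-module, hence projective. Taking $M=\hat N$ and invoking $e\hat N=N$ finishes the proof that $N$ is a projective $eRe$-module, and therefore $eRe$ is left hereditary; the right-handed case and the two-sided statement follow as before. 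The main obstacle, and the only delicate point, is this last claim together with the bookkeeping needed to guarantee that $Re$, $\hat F$, $\hat N$, and $(Re)^{(M)}$ are all genuinely objects of $R\Mod$ (fullness and nondegeneracy) in the absence of a unit for $R$; once that is arranged, the functor $e(-)$ transports projectivity cleanly from the $R$-side to the corner $eRe$.
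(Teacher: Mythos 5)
Your proof is correct. Part (1) is exactly the paper's argument (the paper simply says it follows from Lemma~\ref{projobj}), but for part (2) you take a genuinely different route. The paper notes $Re=R^+e$ and $eRe=eR^+e$, shows that every $R^+$-submodule of $R^+e$ is projective (such a submodule is an object of $R\Mod$ by s-unitality, hence projective there by heredity of $R$, hence projective over $R^+$ by Lemma~\ref{projobj}), and then invokes an external result --- Hill's theorem that the endomorphism ring of a projective module all of whose submodules are projective is left hereditary \cite[Theorem 2.5]{Hill}, \cite[\S39.16]{Wis} --- to conclude that $eR^+e$ is left hereditary. You instead reprove the relevant special case of that theorem from scratch: you pass a submodule $N\subseteq P$ of a projective $eRe$-module up to the $R$-submodule $\hat N=RN$ of $(Re)^{(\Lambda)}$, use heredity of $R$ to make $\hat N$ projective in $R\Mod$, and then transport projectivity back down via the splitting of the trace map $(Re)^{(\hat N)}\to\hat N$ and the additive functor $e(-)$, using $e\hat N=N$. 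The bookkeeping you flag (fullness and nondegeneracy of $Re$, $\hat F$, $\hat N$, and $(Re)^{(M)}$; the identities $e\hat N=N$ and $Re\cdot\hat N=\hat N$) all checks out, precisely because $R$ is s-unital, so every $R$-submodule of a full module is automatically full and nondegenerate. What your approach buys is self-containedness --- no appeal to Hill or Wisbauer --- at the cost of length; what the paper's approach buys is a two-line proof once the citation is accepted.
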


\begin{proof} (1) This follows from Lemma \ref{projobj}.

(2) Assume that $R$ is s-unital and left hereditary, and let $e\in R$ be an idempotent. We note that $Re=R^+e$ and $eRe= eR^+e$. By Lemma \ref{projobj}, $R^+e$ is projective in $R\Mod$. Since $R$ is s-unital, any $R^+$-submodule $N$ of $R^+e$ is an object of $R\Mod$. Then $N$ is projective in $R\Mod$ by our hypothesis on $R$, and hence projective as an $R^+$-module by Lemma \ref{projobj}. Thus, all $R^+$-submodules of $R^+e$ are projective, which implies that $eR^+e$ is left hereditary \cite[Theorem 2.5]{Hill} (or see \cite[\S39.16]{Wis}).
\end{proof}

\begin{definition} \label{defAtilde}
If $A$ is a $K$-algebra, we shall write $A^\sim$ for the canonical $K$-algebra unitization of $A$, that is, the unital $K$-algebra containing $A$ as a two-sided ideal such that $A^\sim= K\oplus A$.

In order to apply the previous results to this setting, we need to be able to identify $A\Mod$ with a full subcategory of $A^\sim\Mod$, which requires that all full nondegenerate $A$-modules are vector spaces over $K$ in a canonical fashion. It is not clear whether this holds in general, but it does when $A$ is s-unital, as follows.
\end{definition}

\begin{lemma} \label{Kstructure}
Let $A$ be an s-unital $K$-algebra and $M$ a full left $A$-module. Then the multiplication map $\mu: A\otimes_{A^+} M\rightarrow M$ is an isomorphism of $A$-modules. Consequently, there is a unique $K$-vector space structure on $M$ such that $\alpha(ax)= (\alpha a)x$ for all $\alpha\in K$, $a\in A$, $x\in M$, and thus $M$ has a canonical left $A^\sim$-module structure.
\end{lemma}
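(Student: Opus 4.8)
The plan is to show that for an s-unital $K$-algebra $A$ and a full left $A$-module $M$, the multiplication map $\mu: A\otimes_{A^+} M\to M$, $a\otimes x\mapsto ax$, is an isomorphism of left $A$-modules, and then to transport the $K$-vector space structure on $A$ through $\mu$. The map $\mu$ is visibly a well-defined $A$-module homomorphism, and it is surjective because $M$ is full ($AM=M$). The content is injectivity.

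For injectivity, I would use the s-unitality of $A$ together with the observation (from Definition \ref{idemprings}) that full left modules over s-unital rings are themselves s-unital. Take an element $\sum_{i=1}^n a_i\otimes x_i$ in the kernel, so $\sum_i a_ix_i=0$ in $M$. Since $A$ is s-unital, choose $u\in A$ with $ua_i=a_i$ for all $i$ (finitely many elements, using \cite[Lemma 2.2]{arasunital}). Then in $A\otimes_{A^+}M$ we compute
$$\sum_{i=1}^n a_i\otimes x_i=\sum_{i=1}^n ua_i\otimes x_i=u\otimes\Bigl(\sum_{i=1}^n a_ix_i\Bigr)=u\otimes 0=0,$$
where the middle equality moves the scalar $a_i\in A\subseteq A^+$ across the tensor product. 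Hence $\ker\mu=0$ and $\mu$ is an isomorphism.

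Given the isomorphism $\mu$, the $K$-vector space structure is obtained by pulling back the natural left $K$-module structure on $A\otimes_{A^+}M$ induced from that on $A$: define $\alpha\cdot x:=\mu(\alpha\tilde u\otimes \mu^{-1}\text{-preimage})$—more cleanly, for $x\in M$ write $x=\mu(\sum_i a_i\otimes x_i)=\sum_i a_ix_i$ and set $\alpha x:=\sum_i(\alpha a_i)x_i$; well-definedness is exactly the injectivity of $\mu$. One checks directly that this makes $M$ a $K$-vector space with $\alpha(ax)=(\alpha a)x$ for all $\alpha\in K$, $a\in A$, $x\in M$, and that this is the unique such structure (uniqueness: any such structure must satisfy $\alpha x=\alpha(\sum a_ix_i)=\sum(\alpha a_i)x_i$ once $x$ is written using s-unitality, picking $u$ with $ux=x$ so that $x=ux$ and $\alpha x=(\alpha u)x$). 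Combining the $K$-action with the $A$-action gives the left $A^\sim$-module structure, since $A^\sim=K\oplus A$ as a $K$-module and the two actions agree on the overlap via $\alpha(ax)=(\alpha a)x$, and associativity/compatibility of the combined action follows from the corresponding identities for the $K$- and $A$-actions separately.

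The main obstacle is the injectivity of $\mu$, i.e., ensuring that the manipulation $\sum a_i\otimes x_i = u\otimes\sum a_ix_i$ is legitimate; this is where s-unitality is essential and where one must be careful that $u$ can be chosen uniformly for the finitely many coefficients appearing, which is precisely the strengthening of s-unitality to finite sets recorded in Definition \ref{idemprings}. Everything else is routine verification of module axioms, which I would not spell out in full.
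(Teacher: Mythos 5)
Your proof is correct and rests on the same key mechanism as the paper's: choosing a common local unit $u$ for finitely many elements (s-unitality on finite sets) and moving it across the tensor sign over $A^+$. The only cosmetic difference is that you prove injectivity of $\mu$ directly by showing $\sum a_i\otimes x_i = u\otimes\sum a_ix_i$, whereas the paper constructs the explicit inverse $\lambda(x)=u\otimes x$ (with $ux=x$) and checks it is well defined and surjective with $\mu\lambda=\id_M$; both verifications amount to the same computation.
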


\begin{proof} Note: We must use $A^+$ in the definition of $\mu$, since we do not yet know that $M$ is an $A^\sim$-module.

We claim there is a map $\lambda: M\rightarrow A\otimes_{A^+} M$ defined as follows: Given $x\in M$, choose $u\in A$ such that $ux=x$, and set $\lambda(x)= u\otimes x$. To see that $\lambda$ is well defined, suppose that also $u'x=x$ for some $u'\in A$. There exists $v\in A$ such that $vu=u$ and $vu'=u'$, whence
$$u\otimes x= vu\otimes x= v\otimes x= vu'\otimes x= u'\otimes x.$$
A similar computation shows that $\lambda$ is surjective, and it is clear that $\mu\lambda= \id_M$. Hence, $\mu$ is a bijection, and thus an $A$-module isomorphism.

Since $A$ is a $(K,A^+)$-bimodule, $A\otimes_{A^+} M$ has a natural $K$-vector space structure, and this transfers to $M$ via $\mu$. Uniqueness is clear, and the $A^\sim$-module structure follows.
\end{proof}

\begin{proposition} \label{alghered}
Let $A$ be an s-unital $K$-algebra. If $A^\sim$ is {\rm(}left{\rm)} hereditary, then $A$ is {\rm(}left{\rm)} hereditary.
\end{proposition}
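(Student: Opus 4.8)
The plan is to follow the template of Lemma \ref{projobj} and Corollary \ref{heredimplic}(1), but with the $K$-algebra unitization $A^\sim$ in place of the ring unitization $A^+$. First, observe that s-unitality forces $A$ to be idempotent ($x = ux \in A^2$ for a suitable $u$) and nondegenerate ($Ax = 0$ gives $ux = 0$, hence $x = 0$, and symmetrically on the right), so the appendix results about nondegenerate idempotent rings apply to $A$. By Lemma \ref{Kstructure}, every full left $A$-module carries a canonical $A^\sim$-module structure, and this identifies $A\Mod$ with the full subcategory of $A^\sim\Mod$ whose objects are full and nondegenerate as $A$-modules --- exactly the role played by $A^+\Mod$ in Definition \ref{idemprings}.

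Next I would record the $A^\sim$-version of Lemma \ref{projobj}: the projective objects of $A\Mod$ are precisely the projective $A^\sim$-modules $P$ with $AP = P$. The proof is the one given for Lemma \ref{projobj}, with $A^\sim$ replacing $A^+$; the only point needing a remark is that an $A$-linear map $\varphi$ between full left $A$-modules is automatically $K$-linear, hence $A^\sim$-linear. Indeed, given $\alpha\in K$ and $x$ in the domain, choose $u\in A$ with $ux=x$; then $\varphi(\alpha x)=\varphi((\alpha u)x)=(\alpha u)\varphi(x)=\alpha(u\varphi(x))=\alpha\varphi(x)$, since $u\varphi(x)=\varphi(ux)=\varphi(x)$. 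With this observation, the splitting argument of Lemma \ref{projobj} goes through verbatim: for the forward implication one uses that if $P$ is a direct summand of a free $A^\sim$-module $F$ with $AP=P$, then $P=AP\subseteq AF$, and $AF$ is a direct sum of copies of $A$, hence nondegenerate over $A$, so $P\in A\Mod$ and is projective there; for the converse one takes a free presentation $f\colon F\to P$ over $A^\sim$, restricts to $AF\to P$ (a surjection in $A\Mod$ between objects of $A\Mod$), splits it in $A\Mod$, and observes that the splitting is $A^\sim$-linear by the above.

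Finally, assume $A^\sim$ is left hereditary, and let $N$ be a subobject of a projective object $P$ of $A\Mod$. By the previous step, $P$ is projective over $A^\sim$. Since $A$ is s-unital, $N$ is a full (hence nondegenerate) $A$-submodule of $P$, and its inclusion into $P$ is $A^\sim$-linear; thus $N$ is an $A^\sim$-submodule of a projective $A^\sim$-module, so it is projective over $A^\sim$. As $AN=N$, the previous step shows $N$ is projective in $A\Mod$. Therefore $A$ is left hereditary, and the right-handed statement follows by the symmetric argument. The only place where any work is required is the passage between $A^+$ and $A^\sim$; this is handled by Lemma \ref{Kstructure} together with the automatic $K$-linearity noted above, and no idea beyond those already in Lemmas \ref{projobj} and \ref{Kstructure} is needed.
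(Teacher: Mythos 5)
Your proof is correct and follows essentially the same route as the paper's: identify $A\Mod$ with a full subcategory of $A^\sim\Mod$ via Lemma \ref{Kstructure}, transport Lemma \ref{projobj} to the $A^\sim$ setting, and conclude. You supply a detail the paper leaves implicit — that $A$-linear maps between full modules are automatically $K$-linear, hence $A^\sim$-linear — which is exactly what makes the identification of morphisms (and thus the splitting argument) legitimate.
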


\begin{proof} There is a natural ring homomorphism $A^+\rightarrow A^\sim$, with respect to which the pullback functor embeds $A^\sim\Mod$ in $A^+\Mod$. Under this embedding, the $A^\sim$-modules which are full $A$-modules correspond precisely to the objects of $A\Mod \subseteq A^+\Mod$, in view of Lemma \ref{Kstructure}.

Just as in Lemma \ref{projobj}, the projective objects of $A\Mod$ are precisely those which are projective as $A^\sim$-modules. The conclusion of the proposition follows.
\end{proof}

\begin{definition} \label{FP}
Consider again a ring $R$ with unitization $R^+$. (Here $R^+$ could be replaced by any unital ring containing $R$ as a two-sided ideal.) Let $\FP(R,R^+)$ denote the full subcategory of $R^+\Mod$ whose objects are those finitely generated projective left $R^+$-modules $P$ such that $RP=P$. There is a monoid isomorphism from $\mon{R}$ onto the Grothendieck monoid of $\FP(R,R^+)$, that is, the monoid of isomorphism classes of objects from $\FP(R,R^+)$, with addition induced from direct sum. The isomorphism sends the class $[e]$ of an idempotent matrix $e\in M_n(R)$ to the isomorphism class of the module $(R^+)^ne$. (For details, see e.g. \cite[\S5.1]{Gdirlim}.)

If the ring $R$ is idempotent, the objects of $\FP(R,R^+)$ are exactly the compact projective objects of $R\Mod$ \cite[Lemma 5.5]{Gdirlim}.
\end{definition}

In general, the \emph{trace} of a left module $M$ over a ring $R$ is the sum of the images of all homomorphisms from $M$ to $R$; it is a two-sided ideal of $R$. If $R$ is unital and $M$ is finitely generated projective, say $M\cong R^ne$ for some idempotent matrix $e\in M_n(R)$, then the trace of $M$ is generated by the entries of $e$. This leads one to define the \emph{trace ideals} of $R$ as those ideals which can be generated by the entries of idempotent matrices. In the unital case, there is an isomorphism between the lattice of trace ideals of $R$ and the lattice of order-ideals in $\mon{R}$, as follows from \cite[Theorem 2.1(c)]{FHK}. Since the proof is indirect, and does not extend immediately to the non-unital case, we develop the general result here.

\begin{definition} \label{trace}
Let $R$ be an arbitrary ring. Recall the ring $M_\infty(R)$ from \S\ref{introV(R)}, and let $\Idem(M_\infty(R))$ denote the set of idempotents in $M_\infty(R)$. An ideal $I$ of $R$ is called a \emph{trace ideal} provided $I$ can be generated by the entries of the matrices in some subset of $\Idem(M_\infty(R))$. We denote by $\Tr(R)$ the set of all trace ideals of $R$. Since $\Tr(R)$ is closed under arbitrary sums, it forms a complete lattice with respect to inclusion.

Recall that an \emph{order-ideal} of a monoid $M$ is a submonoid $I$ of $M$ such
that $x+y\in I$ (for some $x,y\in M$) implies that both $x$ and $y$ belong to
$I$. An order-ideal can also be described as a submonoid $I$ of $M$
which is \emph{hereditary} with respect to the canonical pre-order $\le $
on $M$, meaning that $x\le y$ and $y\in I$ imply $x\in I$. Recall that the
pre-order $\le $ on $M$ is defined by setting $x\le y$ if and only
if there exists $z\in M$ such that $y=x+z$.

The family $\calL(M)$ of all order-ideals of $M$ is closed under arbitrary intersections, and hence it forms a complete lattice with respect to inclusion. The supremum of a family $\{I_i\}$ of order-ideals of $M$ is the set $\overline{\sum
}\, I_i$ consisting of those elements $x\in M$ such that $x\le y$ for some $y$
belonging to the algebraic sum $\sum I_i$. Note that $\overline{\sum}\, I_i=\sum I_i$ whenever $M$ is a refinement
monoid.
\end{definition}

\begin{proposition}  \label{LVAisoTrA}
For any ring $R$ there are mutually inverse lattice isomorphisms
$$\Phi: \calL(\mon{R}) \longrightarrow \Tr(R) \qquad\qquad\text{and} \qquad\qquad \Psi: \Tr(R)\longrightarrow \calL(\mon{R})$$
given by the rules
\begin{align*}
\Phi(I) &= \langle\; \text{entries of\ } e \mid e\in \Idem(M_\infty(R)) \text{\ and\ } [e]\in I \;\rangle \\
\Psi(J) &= \{ [e]\in \mon{R} \mid e\in \Idem(M_\infty(J)) \}.
\end{align*}
\end{proposition}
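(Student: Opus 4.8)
The goal is to show that the maps $\Phi$ and $\Psi$ of Proposition~\ref{LVAisoTrA} are well-defined, mutually inverse, and order-preserving (hence lattice isomorphisms). First I would verify that $\Psi$ is well-defined: given a trace ideal $J$, one checks that $\Psi(J)=\{[e]\mid e\in\Idem(M_\infty(J))\}$ is an order-ideal of $\mon{R}$. Closure under addition is immediate from $[e]+[f]=[e\oplus f]$ and the fact that $e\oplus f\in M_\infty(J)$ when $e,f\in M_\infty(J)$. For heredity, suppose $[e']\le[e]$ with $e\in\Idem(M_\infty(J))$; then (enlarging matrix sizes) $e'$ is equivalent to a subidempotent $e''\le e$, i.e.\ $e''=e''e=ee''$, so all entries of $e''$ lie in $JR+RJ+JRJ\subseteq J$ (using that $J$ is a two-sided ideal and $e$ has entries in $J$), whence $e''\in M_\infty(J)$ and $[e']=[e'']\in\Psi(J)$. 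That $\Phi$ is well-defined is easier: for an order-ideal $I$, the set of entries of all idempotents $e\in\Idem(M_\infty(R))$ with $[e]\in I$ generates an ideal that is a trace ideal essentially by definition.

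\textbf{Key steps.} The heart of the argument is the two composition identities. For $\Psi\Phi=\id$ on $\calL(\mon{R})$: given an order-ideal $I$, I must show $\Psi(\Phi(I))=I$. The inclusion $I\subseteq\Psi(\Phi(I))$ is clear, since any $[e]\in I$ has its entries in $\Phi(I)$, so $e\in M_\infty(\Phi(I))$. For the reverse inclusion, take $[f]\in\Psi(\Phi(I))$, so $f\in\Idem(M_\infty(\Phi(I)))$; each entry $f_{ij}$ lies in the ideal generated by entries of idempotents $e^{(k)}$ with $[e^{(k)}]\in I$, so $f_{ij}$ is a finite sum of terms $a\,(e^{(k)})_{pq}\,b$. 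The standard trick: realize that $f$, being an idempotent with entries in the ideal $\Phi(I)$, satisfies $[f]\le N\cdot\sum_k[e^{(k)}]$ in $\mon{R}$ for a suitable finite sub-collection and integer $N$ — because $f=ff=fef$ where $e$ is a large block-diagonal idempotent built from the relevant $e^{(k)}$'s (after passing to $M_\infty$), giving $f\precsim e\oplus\cdots\oplus e$. Since $I$ is an order-ideal containing each $[e^{(k)}]$, it contains $\sum_k[e^{(k)}]$ and all elements below multiples of it, so $[f]\in I$. For $\Phi\Psi=\id$ on $\Tr(R)$: given $J\in\Tr(R)$, clearly $\Phi(\Psi(J))\subseteq J$ since it is generated by entries of idempotents living in $M_\infty(J)$. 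Conversely, $J$ is generated by entries of some family of idempotents $e\in\Idem(M_\infty(R))$; but each such generating idempotent may have entries outside $J$, so the argument needs care — one uses that $J$ being a trace ideal means $J=\sum$ of images of module maps from various $(R^+)^ne$ into $R^+$ landing in $R$, and re-expresses the generators of $J$ via idempotents that genuinely lie in $M_\infty(J)$, for instance by cutting down: if $x=(e)_{ij}\in J$, then the $1\times1$ idempotent-like data can be replaced by noting $eJe$-type corners, ultimately showing every $x\in J$ is a sum of entries of idempotents over $J$.

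\textbf{The main obstacle.} I expect the delicate point to be $\Phi\Psi(J)=J$ — specifically showing $J\subseteq\Phi\Psi(J)$, i.e.\ that every element of a trace ideal $J$ can be written using entries of idempotents that lie entirely in $M_\infty(J)$, not merely idempotents whose entries happen to include elements of $J$. In the unital case this is where \cite[Theorem~2.1(c)]{FHK} is invoked, and the excerpt explicitly says that proof is indirect and does not transfer. The fix is presumably to argue directly: if $e\in\Idem(M_\infty(R))$ and $x$ is an entry of $e$ that we want to land in $J$, we cannot assume $e\in M_\infty(J)$, but we can consider, for the generating set of $J$, elements of the form $r e s$ with $r,s\in R^+$; then $r e s$ is an entry of the (non-idempotent) matrix $reJ\cdots$, and one replaces it by observing $\mathrm{diag}$-conjugates or using that $eM_\infty(R)e$ contains idempotents realizing the relevant classes. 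Concretely I would show: for $J\in\Tr(R)$ generated by entries of $\{e^{(\lambda)}\}$, and for $[e^{(\lambda)}]$, one has $[e^{(\lambda)}]\in\Psi(J)$ already (its entries generate a sub-trace-ideal of $J$, hence $e^{(\lambda)}\in M_\infty(J)$ after all, since a trace ideal is idempotent as an ideal — $J=J^2$ — so entries of $e^{(\lambda)}$, lying in $J$, give $e^{(\lambda)}=e^{(\lambda)}e^{(\lambda)}\in M_\infty(J^2)\subseteq M_\infty(J)$). This last observation, that the entries of an idempotent generating a trace ideal already lie in that ideal because $e=e^2$ forces entries into $J^2=J$, may in fact dissolve the difficulty entirely. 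Finally, order-preservation of both $\Phi$ and $\Psi$ is routine from the definitions, completing the proof that they are mutually inverse lattice isomorphisms.
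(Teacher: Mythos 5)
Your proposal is correct and follows essentially the same route as the paper: you show $\Psi(J)$ is an order-ideal using that membership in $M_\infty(J)$ is preserved under Murray--von Neumann equivalence (since $M_\infty(J)$ is an ideal of $M_\infty(R)$), you get $\Phi\Psi=\id$ from the observation that the generating idempotents of a trace ideal already have all their entries in $J$ and hence lie in $M_\infty(J)$ (the detour through $J=J^2$ is unnecessary), and you get $\Psi\Phi=\id$ by factoring $f$ through a large block-diagonal idempotent $e''$ with $[e'']\in I$ so that $[f]\le[e'']$ and heredity of $I$ applies. The only step to tighten is the shorthand ``$f=ff=fef$'': the actual manipulation is to write $f=xe''y$ for rectangular matrices $x,y$, so that $f=f^3=(fx)e''(yf)$ and $g:=e''yfx e''$ is an idempotent with $f\sim g\le e''$, which is exactly the computation the paper carries out.
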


\begin{proof} Clearly, $\Phi$ is a well-defined, inclusion-preserving map from $\calL(\mon{R})$ to $\Tr(R)$. Now let $J\in \Tr(R)$; we need to show that $\Psi(J)$ is an order-ideal in $\mon{R}$. First note that if $e,e'\in \Idem(M_\infty(R))$ and $e\sim e'$, then $e\in M_\infty(J)$ if and only if $e'\in M_\infty(J)$. This holds simply because $M_\infty(J)$ is an ideal of $M_\infty(R)$. Hence, we can check whether an element $x$ of $\mon{R}$ lies in $\Psi(J)$ by considering any representative of $x$ in $M_\infty(R)$. Now if $[e],[f]\in \Psi(J)$, then $e,f\in M_\infty(J)$ and so $e\oplus f\in M_\infty(J)$, whence $[e]+[f] \in \Psi(J)$. Similarly, if $e\oplus f\in M_\infty(J)$, then both $e$ and $f$ belong to $M_\infty(J)$, so that $[e]+[f] \in \Psi(J)$ implies $[e],[f]\in \Psi(J)$. This shows that $\Psi(J)\in \calL(\mon{R})$. Therefore $\Psi$ is a well-defined map from $\Tr(R)$ to $\calL(\mon{R})$. It clearly preserves inclusions.

It remains to show that $\Phi$ and $\Psi$ are inverses of each other, for then they will be isomorphisms of posets, and therefore lattice isomorphisms.

First, we observe that $\Phi\circ \Psi= \id_{\Tr(R)}$: If $J$ is a trace ideal of $R$, then $J$ is generated by the entries of the matrices in $\Idem(M_\infty(J))$, and so $\Phi\Psi(J)= J$.

Finally, we show that $\Psi\circ \Phi= \id_{\calL(\mon{R})}$. Let $I\in \calL(\mon{R})$, and set $J := \Phi(I)$. Clearly, $I\subseteq \Psi(J)$. If $[f]\in \Psi(J)$, there are idempotents $e_1,\dots,e_m$ in $M_\infty(R)$, with $[e_l]\in I$ for each $l$, such that every entry of $f$ has the form $f_{ij}= \sum_{l=1}^m a^l_{ij} u^l_{ij} b^l_{ij}$ where the $a^l_{ij},b^l_{ij} \in R$ and the $u^l_{ij}$ are entries of $e_l$. Now $e:= e_1\oplus \cdots\oplus e_m$ is an idempotent in $M_\infty(R)$ with $[e]\in I$, and each $u^l_{ij}$ is an entry of $e$. We may treat $e$ as a $t\times t$ matrix, for some $t$. Each term $a^l_{ij} u^l_{ij} b^l_{ij}$ can be expressed in the form $x^l_{ij} e y^l_{ij}$ where $x^l_{ij}$ is a $1\times t$ matrix over $R$ and $y^l_{ij}$ is a $t\times1$ matrix over $R$. Let $e'$ denote the block diagonal $mt\times mt$ matrix with $m$ copies of $e$ on the diagonal. Then each
$$f_{ij}= \sum_{l=1}^m x^l_{ij} e y^l_{ij}= x'_{ij}e'y'_{ij}$$
for some $1\times mt$ matrix $x'_{ij}$ and some $mt\times1$ matrix $y'_{ij}$. It follows that we can express $f$ as a product of block matrices of the form
$$f=x''e''y''= \begin{bmatrix} x'_{11} &x'_{12} &\cdots \\ &&&x'_{21} &x'_{22} &\cdots \\ &&&&&&\ddots \\ &&&&&&&\ddots \end{bmatrix}  \begin{bmatrix} e' \\ &e' \\ &&\ddots \\ &&&e' \end{bmatrix}  \begin{bmatrix} y'_{11} \\ &y'_{12} \\ &&\ddots \\ y'_{21} \\ &y'_{22} \\ &&\ddots \\ &\vdots \end{bmatrix}  ,$$
where $e''$ is idempotent and $[e'']\in I$. Now $g := e''y''fx''e''$ is an idempotent in $M_\infty(R)$ such that $f\sim g$ and $g\le e''$. Consequently, $[f]\le [e'']$, and so $[f]\in I$ because $I$ is an order-ideal of $\mon{R}$. Therefore $I= \Psi(J)= \Psi\Phi(I)$, as required. \end{proof}

\section*{Acknowledgements} We thank G. Bergman, T. Katsura, E. Pardo, and the referee for useful comments and correspondence.

\end{document}